\newtheorem{theorem}{Theorem}[section]
\newtheorem{question}{Question}[section]
\newtheorem{corr}{Corollary}[section]
\newtheorem{propos}{Proposition}[section]
\newtheorem*{hypothesis}{Hypothesis}
\newtheorem*{cond}{Necessary condition}
\theoremstyle{definition}
\newtheorem{definition}{Definition}[section]
\newtheorem*{ax1}{Axiom I}
\newtheorem*{ax2}{Axiom II}
\newtheorem*{ax3}{Axiom III}
\newtheorem*{ax4}{Axiom IV}
\newtheorem*{ax5}{Axiom V}
\theoremstyle{remark}
\newtheorem{remark}{Remark}[section]
\newcommand{\V}{\mathrm{Vert}}
\newcommand{\cub}{\mathop{\rm cub}\nolimits}
\newcommand{\norm}{\mathrm{norm}}
\newcommand{\diff}{d}
\newcommand{\Lk}{\mathop{\rm link}\nolimits}
\newcommand{\codim}{\mathop{\rm codim}\nolimits}
\newcommand{\cone}{\mathop{\rm cone}\nolimits}
\newcommand{\bd}{\beta}
\newcommand{\Sign}{\mathop{\rm sign}\nolimits}
\newcommand{\Hom}{\mathop{\rm Hom}\nolimits}
\newcommand{\sing}{{\rm sing}}
\newcommand{\SPL}{{\rm SPL}}
\newcommand{\BO}{{\rm BO}}
\newcommand{\SO}{{\rm SO}}
\newcommand{\CC}{{\rm C}}
\newcommand{\PP}{\mathcal P}
\newcommand{\tCC}{\widetilde{\rm C}}
\newcommand{\CS}{{\rm CS}}
\newcommand{\LL}{\Cal L}
\newcommand{\LLL}{\Cal{L}_{\Cal T}}
\newcommand{\Ch}{{\rm Ch}}
\newcommand{\ch}{{\rm ch}}
\newcommand{\HS}{{\rm HS}}
\newcommand{\PM}{{\rm PM}}
\newcommand{\hY}{\widehat{Y}}
\newcommand{\bS}{{\mathbf{K}}}
\newcommand{\bt}{{\mathbf{t}}}
\newcommand{\bQ}{{\mathbf{Q}}}
\newcommand{\tbQ}{{\mathbf{\widetilde{Q}}}}
\newcommand{\St}{\mathop{\rm star}\nolimits}
\newcommand{\Cal}[1]{\mathcal{#1}}
\renewcommand{\Bbb}[1]{\mathbb{#1}}
\author{A.\,A.\,Gaifullin}
\thanks{The work was partially supported by the Russian Foundation
for Fundamental Research, grant no.~05-01-01032-a, and the
Russian Leading Scientific School Support, grant no.~4182.2006.1.}
\title{Construction of combinatorial manifolds with the prescribed
sets of links of vertices}
\date{}
\begin{document}
\sloppy

\begin{abstract}
To each oriented closed combinatorial manifold we assign the set
(with repetitions) of isomorphism classes of links of its
vertices. The obtained transformation~$\LL$ is the main object of
study of the present paper. We pose a problem on the inversion of
the transformation~$\LL$. We shall show that this problem is
closely related to N.\,Steenrod's problem on realization of cycles
and to the Rokhlin-Schwartz-Thom construction of combinatorial
Pontryagin classes. It is easy to obtain a condition of {\it
balancing} that is a necessary condition for a set of isomorphism
classes of combinatorial spheres to belong to the image of the
transformation~$\LL$. In the present paper we give an explicit
construction providing that each balanced set of isomorphism
classes of combinatorial spheres gets into the image of~$\LL$
after passing to a multiple set and adding several pairs of the
form~$(Z,-Z)$, where $-Z$ is the sphere~$Z$ with the orientation
reversed. This construction enables us, for a given singular
simplicial cycle of a space~$R$, to construct explicitly a
combinatorial manifold~$M$ and a mapping~$\varphi:M\to R$ such
that $\varphi_*[M]=r[\xi]$ for some positive integer~$r$. The
construction is based on resolving singularities of the
cycle~$\xi$. We give applications of our main construction to
cobordisms of manifolds with singularities and cobordisms of
simple cells. In particular, we prove that every rational additive
invariant of cobordisms of manifolds with singularities admits a
local formula. Another application is the construction of explicit
(though inefficient) local combinatorial formulae for polynomials
in the rational Pontryagin classes of combinatorial manifolds.
\end{abstract}

\maketitle

\tableofcontents

\section{Introduction}

A {\it combinatorial sphere} is a simplicial complex
piecewise-linearly homeomorphic to the boundary of a simplex. A
{\it combinatorial manifold} is a simplicial complex such that the
link of every its vertex is a combinatorial sphere. All manifolds
considered are supposed to be closed. An {\it isomorphism} of
oriented combinatorial manifolds is an orientation-preserving
simplicial mapping that has a simplicial inverse.

To each triangulation of a manifold one may assign various
combinatorial data characterizing it. The simplest example of such
data is the $f$-vector $(f_0,f_1,\ldots,f_n)$, where by~$f_i$ we
denote the number of~$i$-dimensional simplices of the
triangulation. More complicated combinatorial data in a sense
describe the mutual disposition of simplices. Certain functions in
combinatorial data yield invariants of a manifold independent of
the triangulation. For example the Euler characteristic of a
manifold is expressed via its $f$-vector. In this paper we assign
to each oriented combinatorial manifold the set (with repetitions)
of isomorphism classes of links of its vertices. The interest to
such combinatorial data caused among others by the fact that the
Pontryagin numbers of a manifold can be computed from the set of
isomorphism classes of links of its vertices. The existence of
functions computing  the Pontryagin numbers from the set of
isomorphism classes of links of vertices follows from a result of
N.~Levitt and C.~Rourke~\cite{LeRo78} (see
also~\cite{Gai04},~\cite{Gai05}). The problem of finding explicit
formulae will be discussed below.

Thus the main object of study of the present paper is the
transformation~$\LL$ assigning the set of isomorphism classes of
links of vertices to each oriented combinatorial manifold~$K$. The
links of vertices of~$K$ are endowed with the induced
orientations. Thus the set of isomorphism classes of links of
vertices is a set with repetitions of isomorphism classes of {\it
oriented} combinatorial spheres. In the sequel we often do not
distinguish between isomorphic combinatorial spheres. Also we
often do not distinguish between a combinatorial sphere and its
isomorphism class. We denote the isomorphism class of a
combinatorial sphere by the same letter as a combinatorial sphere
itself. For simplicity we shall often say that the
transformation~$\LL$ assigns to each combinatorial manifold the
set of links of its vertices.

Studying the transformation~$\LL$ it is natural to pose a problem
on its inversion.

\begin{question}
Given a set $Y_1,Y_2,\ldots,Y_k$ of oriented $(n-1)$-dimensional
combinatorial spheres, is there an oriented $n$-dimensional
combinatorial manifold whose set of links of vertices coincides up
to an isomorphism with the set $Y_1,Y_2,\ldots,Y_k$?
\end{question}

Consider the free Abelian group generated by isomorphism classes
of oriented $(n-1)$-dimensional spheres. We take the quotient of
this group by the relations~$Y+(-Y)=0$, where by~$-Y$ we denote
the combinatorial sphere~$Y$ with the orientation reversed. The
obtained Abelian group will be denoted by~$\Cal T_n$. We put,
$$
\diff Y=\sum\Lk y,
$$
where the sum is taken over all vertices~$y$ of the combinatorial
sphere~$Y$. Thus we obtain a differential~$d:\Cal T_n\to\Cal
T_{n-1}$ that turns the graded group~$\Cal T_*$ into a chain
complex. The complex~$\Cal T_*$ was originally defined by the
author in~\cite{Gai04}.

Let us now consider the transformation~$\LLL$ assigning to each
oriented $n$-dimensional combinatorial manifold the sum of links
of its vertices in the group~$\Cal T_n$. Since $\Cal T_n$ is a
group, the transformation~$\LLL$ is simpler to deal with than the
transformation~$\LL$. Another advantage of the
transformation~$\LLL$ consists in the fact that it commutes modulo
elements of order~$2$ with the passing to the barycentric
subdivision. The problem of the inversion on the
transformation~$\LLL$ can be formulated in the following way.

\begin{question}
Given a set $Y_1,Y_2,\ldots,Y_k$ of oriented $(n-1)$-dimensional
combinatorial spheres, is there an oriented $n$-dimensional
combinatorial manifold whose set of links of vertices coincides up
to an isomorphism with the set
$$
Y_1,Y_2,\ldots,Y_k, Z_1,Z_2\ldots,Z_l,-Z_1,-Z_2\ldots,-Z_l
$$
for some set~$Z_1,Z_2,\ldots,Z_l$ of oriented $(n-1)$-dimensional
combinatorial spheres?
\end{question}
\begin{cond}
The answer to Question~1.2 and all the more to Question~1.1 could
be ``yes'' only if the vertices of the disjoint union~$Y_1\sqcup
Y_2\sqcup\ldots\sqcup Y_k$ can be paired off so that the links of
vertices of each pair are isomorphic to each other with the
isomorphism reversing the orientation.
\end{cond}
A set of oriented combinatorial spheres satisfying this necessary
condition will be called {\it balanced}. Obviously, balanced sets
of combinatorial spheres are exactly cycles of the complex~$\Cal
T_*$.

The main result of the present paper is an explicit construction
that gives the following partial answer to Question~1.2.
\begin{theorem}
Suppose $Y_1,Y_2,\ldots,Y_k$ is a balanced set of oriented
$(n-1)$-dimensional combinatorial spheres. Then there is an
oriented $n$-dimensional combinatorial manifold~$K$ whose set of
links of vertices coincides up to an isomorphism with the set
$$\underbrace{Y_1,\ldots,Y_1}_{r},\underbrace{Y_2,\ldots,Y_2}_{r},
\ldots,\underbrace{Y_k,\ldots,Y_k}_{r},Z_1,Z_2,\ldots,Z_l,-Z_1,-Z_2,\ldots,-Z_l$$
for some positive integer~$r$ and some oriented
$(n-1)$-dimensional combinatorial spheres~$Z_1,Z_2,\ldots,Z_l$.
\end{theorem}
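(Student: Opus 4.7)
The plan is to construct $K$ by gluing together many copies of cones on the $Y_i$'s, where the gluing instructions are read off from the pairing of vertices provided by the balancing hypothesis. Concretely, the balancing gives a partition of the vertices of $\bigsqcup_{i=1}^k Y_i$ into pairs $\{y,y'\}$, together with orientation-reversing isomorphisms $\phi\:\Lk_{Y_i} y\to\Lk_{Y_j} y'$. I would start by forming the disjoint union $W_0=\bigsqcup_i \cone(Y_i)$, with apex $v_i\in\cone(Y_i)$, so that $\Lk v_i=Y_i$ automatically. The problem is that the non-apex vertices $y\in Y_i$ have link $\cone(\Lk_{Y_i} y)$, which is a disk rather than a sphere. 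The natural idea is to identify paired vertices: if $y\sim y'$ then glue the cones $\cone(Y_i)$ and $\cone(Y_j)$ in a way that makes the two disk-links combine into the suspension $\Sigma(\Lk_{Y_i} y)$, which is a combinatorial sphere.

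To make this gluing simplicially well-defined, I would work with thickened cones $Y_i\times[0,1]$ with $Y_i\times\{0\}$ coned off to $v_i$, triangulated so that a neighbourhood of $v_i$ still looks like $\cone(Y_i)$ (preserving $\Lk v_i=Y_i$) while the boundary collar $Y_i\times[1-\varepsilon,1]$ is subdivided finely enough that each boundary vertex has a closed star disjoint from the stars of the other boundary vertices. Within such a collar the gluing prescribed by each pair $\{y,y'\}$ becomes a local identification of two product neighbourhoods, and these local identifications can be carried out simultaneously without interfering with each other or with the apex. To realise the pairing $\pi$ as an actual combinatorial perfect matching between isomorphic pieces — rather than merely an abstract pairing — I would need to take $r$ copies of every $\cone(Y_i)$ for an integer $r$ chosen large enough that the partner assignment can be simultaneously realised by honest simplicial isomorphisms of the collared stars; this is where the multiplicity $r$ in the statement comes from.

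The main obstacle, and the crux of the proof, is to control the vertex links contributed by the subdivided collar. After the gluing the apex vertices yield exactly $r$ copies of each $Y_i$, and the identified pairs of boundary vertices yield suspensions $\Sigma(\Lk_{Y_i} y)$; but the collar subdivision introduces many brand-new interior vertices whose links are, \emph{a priori}, arbitrary combinatorial spheres. My plan is to choose the collar subdivisions with enough symmetry that, for each new vertex lying on one side of a gluing, the map $\phi$ produces a partner new vertex on the other side whose link is isomorphic to the original with the orientation reversed. Then the contribution of the new vertices to the multiset of links is exactly a union of pairs $(Z_j,-Z_j)$, as required. The hard steps are the construction of such symmetric subdivisions compatible with every $\phi$ simultaneously, and the verification that the resulting complex $K$ is a combinatorial manifold; these rest on the facts that the suspension of a combinatorial sphere is a combinatorial sphere, and that an orientation-reversing PL isomorphism between combinatorial spheres can, after passing to a sufficiently fine common subdivision, be realised as a simplicial isomorphism admitting a symmetric extension over a collar.
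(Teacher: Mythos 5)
There is a genuine gap at the heart of your construction: identifying only the closed stars of the paired boundary vertices does not produce a closed complex. The boundary of $\bigsqcup_i\cone(Y_i)$ is all of $Y=Y_1\sqcup\ldots\sqcup Y_k$, and after you glue $\St y$ to $\St\tilde{y}$ for each pair of your (shrunken, pairwise disjoint) vertex stars, every $(n-1)$-simplex of the collar boundary lying outside the union of those stars is still a free face of exactly one $n$-simplex; points there have disk links, so the result is a manifold with nonempty boundary, and no amount of symmetrizing the collar subdivision fixes this. To close up you must pair off \emph{all} facets of the cones, i.e.\ extend the vertex pairing to a global face-pairing of $Y$ compatible with the anti-isomorphisms $\chi_y:\St y\to\St\tilde{y}$. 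That is the actual content of the theorem: the stars of the $n$ vertices of a given $(n-1)$-simplex overlap, the corresponding $\chi_y$ generically disagree on the overlaps, and passing to subdivisions does not help (it replaces $Y_i$ by $Y_i'$, whose vertices are no longer the paired ones). The multiplicity $r$ exists precisely to resolve these conflicts --- one must take enough copies that all the mutually incompatible ways of propagating the local identifications can be realized simultaneously on different copies --- not, as you suggest, merely to realize the abstract matching by honest simplicial isomorphisms.

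For contrast, the paper does not glue cones at all. It encodes $Y'$ as an edge-coloured homogeneous graph of degree $n$ (Pezzana--Ferri), extends it to a degree-$2n$ graph $\Gamma$ on a vertex set $U\times S$ by building commuting fixed-point-free involutions $\Phi_j^1$ out of the $\chi_y$ (\S\S 2.3--2.5; this is where all the difficulty sits and where $S$, hence $r$, is produced), and obtains a closed cubic cell manifold $X=\tbQ(\Gamma)$ whose vertex links are $r$ copies of each $Y_i'$. The return from $Y_i'$ to $Y_i$ and the appearance of the cancelling pairs $(Z_j,-Z_j)$ are then handled by a separate chain-homotopy argument in $\Cal T_*$ (the operator $\bd$ is chain homotopic to the identity modulo $2$-torsion, via the spheres $\widehat{Y}_i$), with $K$ taken to be a disjoint union of $X'$ and copies of $-\widehat{Y}_i$ and $-\widehat{Y_i'}$. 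Your suspension observation $\cone(\Lk y)\cup\cone(\Lk\tilde{y})\cong\Sigma(\Lk y)$ is correct but only ever handles the paired vertices themselves, which is the easy part of the problem.
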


Questions~1.1 and~1.2 make sense not only for triangulations but
also for cubic decompositions of manifolds and even for a wider
class of {\it cubic cell} decompositions of manifolds. A {\it
cubic cell complex} (or a {\it cubic cell decomposition}) is a
decomposition into cubes such that two cubes may intersect by a
subcomplex of their boundaries rather than by a unique face. A
{\it link} of a vertex of a cubic cell complex is a simplicial
cell complex cut by cubes of the original complex in a small
sphere around this vertex. A {\it cubic cell combinatorial
manifold} is a cubic cell complex such that the link of each its
vertex is piecewise-linearly homeomorphic to the boundary of a
simplex. (Rigorous definitions will be given in section~2.1.)

For cubic cell combinatorial manifolds we shall obtain the
following partial answer to Question~1.1.
\begin{theorem}
Suppose $Y_1,Y_2,\ldots,Y_k$ is a balanced set of oriented
$(n-1)$-dimensional combinatorial spheres. Then there is an
oriented cubic cell combinatorial manifold~$X$ whose set of links
of vertices coincides up to an isomorphism with the set
$$
\underbrace{Y_1',\ldots,Y_1'}_{r},\underbrace{Y_2',\ldots,Y_2'}_{r},
\ldots,\underbrace{Y_k',\ldots,Y_k'}_{r}
$$
for some positive integer~$r$. Here $Y_i'$ is the first
barycentric subdivision of~$Y_i$.
\end{theorem}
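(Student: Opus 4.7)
The approach is to deduce Theorem 1.2 from Theorem 1.1 by a localized cubification procedure. First I would invoke Theorem 1.1 to obtain an oriented simplicial $n$-manifold $K$ whose vertices split into ``good'' vertices $w_{i,s}$ ($1\le i\le k$, $1\le s\le r$) with $\Lk w_{i,s}\cong Y_i$ and ``bad'' pairs $(u_j^+, u_j^-)$ with $\Lk u_j^\pm\cong \pm Z_j$, the pairing furnished by orientation-reversing isomorphisms $\phi_j\colon\Lk u_j^+\to\Lk u_j^-$. The task is then to modify $K$ locally so that the good vertices acquire the refined link $Y_i'$ (as required by the conclusion of Theorem 1.2), while the bad pairs are annihilated so that no auxiliary $\pm Z_j$ links survive.

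The principal technical construction is a \emph{cubic cone} $C^\square(Y)$ attached to any oriented combinatorial $(n-1)$-sphere $Y$: an $n$-dimensional cubic cell disk with a single interior ``apex'' vertex $w$ whose link is combinatorially $Y'$. Its top-dimensional $n$-cubes would be indexed by maximal flags of proper faces of $Y$ (equivalently, by the top-dimensional $(n-1)$-simplices of $Y'$), with face sharing dictated by flag containment; each such cube contributes exactly one top simplex to the link at $w$, so that the simplices of $\Lk w$ fit together into a copy of $Y'$. The combinatorial check that these cubes assemble into a genuine cubic cell disk whose apex has link $Y'$ is the main calculation; it amounts to tracking how sub-flags correspond to subfaces of cubes.

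With $C^\square(Y_i)$ in hand, the manifold $X$ is then assembled as follows. For each good vertex $w_{i,s}$ in $K$, replace the simplicial open star $\St(w_{i,s})$ by an isomorphic copy of $C^\square(Y_i)$, subdividing the complement of the stars in $K$ into cubes compatibly along the common boundary. The good vertices now have the correct link $Y_i'$, but the vertices $u_j^\pm$ with links $\pm Z_j'$ remain. These are eliminated in pairs by a cubic surgery: excise small cubic neighborhoods of $u_j^+$ and $u_j^-$, and use $\phi_j$ to identify the resulting boundary copies of $Z_j'$ through a cubic cylinder $|Z_j'|\times [0,1]$ that contains no interior vertices. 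Such a vertex-free cubic filling is available because $|L|\times I$ admits a canonical cubification with one cube per maximal flag in $L$, introducing no new vertices in the interior of the $I$-direction.

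The principal obstacle is the construction of $C^\square(Y)$ together with its compatible attachment to the cubified complement of the open stars: one must verify both that the flag-indexed cubes at the apex yield a manifold disk with the prescribed apex link, and that the local cubic structure along the gluing matches after the auxiliary subdivision of the complement. A secondary subtlety lies in the cylindrical cancellation step, where one must ensure that all vertices not already present on the two excised boundaries are genuinely absent from the filling and that the cubic cell manifold property (PL-sphere link at every vertex) is preserved across every interface. Once these local pieces are verified, the closed cubic cell manifold $X$ has as its multiset of vertex links precisely $\underbrace{Y_1',\ldots,Y_1'}_{r},\ldots,\underbrace{Y_k',\ldots,Y_k'}_{r}$, as required.
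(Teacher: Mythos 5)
There is a genuine gap, and it sits exactly where the real difficulty of Theorem~1.2 lies. Your plan is to replace the stars of the good vertices of $K$ by cubical cones and then ``subdivide the complement of the stars into cubes compatibly along the common boundary.'' But the conclusion of Theorem~1.2 demands that \emph{every} vertex of $X$ have link isomorphic to some $Y_i'$ --- there is no provision for auxiliary pairs $(Z,-Z)$, unlike in Theorem~1.1. Any cubical cell structure on the complement of the open stars (an $n$-manifold with boundary whose interior is nonempty) necessarily has vertices in its interior, and each such vertex contributes an uncontrolled link to $X$; the same happens at the interfaces of your surgery cylinders, where the vertices lying on the excised boundary spheres acquire links glued together from the two sides, with no reason to be of the form $Y_i'$. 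So the construction produces extra vertices whose links are neither prescribed nor cancellable, and the statement being proved forbids both. In the paper's construction there is no complement at all: $X=\tbQ(\Gamma)$ is assembled from a coloured-graph datum so that its $n$-cubes are indexed by the maximal flags of the $Y_i$ (times the auxiliary set $S$) and its vertices are \emph{exactly} the connected components of the subgraph $\Gamma_{\mathbf 0}\cong\Upsilon^{\sqcup r}$; the closed stars of these vertices (your ``cubic cones'') tile all of $X$, overlapping along their boundaries --- which is what any local-replacement scheme would have to achieve and yours does not.

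A second problem is the logical direction. In the paper Theorem~1.1 is \emph{not} proved independently: the simplicial manifold $K$ of Theorem~1.1 is built in \S 2.8 out of the cubic cell manifold $X$ of Theorem~1.2, as $X'\sqcup X'$ together with the correction terms $\widehat{Y}_i$ and $\widehat{Y_i'}$. Deducing Theorem~1.2 from Theorem~1.1 is therefore circular unless you supply an independent proof of Theorem~1.1, which you do not. Your local models are sound as far as they go --- a cubical $n$-disk with a single interior vertex of link $Y'$ and one $n$-cube per maximal flag of $Y$ does exist (it is the star of a vertex of the paper's $X$), and $|L|\times I$ does admit a cubification with no interior vertices when $L$ carries a cubical structure --- but they cannot be assembled in the order you propose.
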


An explicit construction of such cubic cell combinatorial manifold
will be given in sections~2.1--2.6. In sections~2.7--2.8 we use
the main construction to give an explicit construction of the
combinatorial manifold~$K$ in Theorem~1.1.

In the constructions of complexes~$X$ and~$K$ we never require
that~$Y_i$ are combinatorial spheres. Indeed, instead of a
balanced set of combinatorial spheres we may consider a balanced
set of arbitrary normal pseudo-manifolds. Certainly, in this case
the complexes~$X$ and~$K$ will be normal pseudo-manifolds rather
than combinatorial manifolds. In~\S 2 we describe our main
construction in a general context, that is, for normal
pseudo-manifolds. The formulations of analogues of Theorems~1.1
and~1.2 in this context are given in section~2.2.

Suppose~$Y_1,Y_2,\ldots,Y_k$ is a balanced set of oriented
combinatorial spheres. Assume that we are given a fixed pairing
off of vertices of combinatorial spheres~$Y_1,Y_2,\ldots,Y_k$ and
a fixed orientation-reversing isomorphisms of the links of the
vertices of each pair. Then Theorem~1.2 can be strengthened in the
following way. The cubic cell manifold~$X$ can be chosen so that
to agree with the fixed pairing off and isomorphisms
(Theorem~2.3).

In section~2.9 we point out a connection of our main construction
with the construction of~{\it small covers} due to M.~Davis and
T.~Januszkiewicz~\cite{DaJa91} (see also~\cite{BuPa04}).

\begin{remark}
Questions~1.1 and~1.2 are examples of a wide-spread in topology
problem of characterization of sets of local data that can be
realized as local invariants of a certain global object. Another
example of such problem is the problem of finding relations
between the cobordism classes of cycles realizing the Pontryagin
classes of a stably complex manifold. This problem was solved by
V.~M.~Buchstaber and A.~P.~Veselov~\cite{BuVe96}. One more example
of a problem of the considered type is the problem of
characterization of possible sets of local weights of a $\Bbb
Z_p$-action with isolated fixed points on a closed stably complex
manifold (see, for example,~\cite{BuNo71}).
\end{remark}

\S\S 3--5 are devoted to applications of the main construction.

In \S 3 we study cobordisms of manifolds with singularities. Under
a manifold with singularities of class~$\CC$ we mean a
pseudo-manifold such that the links of all its vertices belong to
the class~$\CC$. For~$\CC$ one can take an arbitrary class of
pseudo-manifolds satisfying certain natural axioms. Notice that
the cobordism groups determined by our construction do not
coincide with the cobordism groups determined by the classical
Sullivan-Baas construction (see~\cite{Sul67,Sul99,Baa73}). In more
details the connection between these two constructions will be
discussed in section~3.5. Considering the class~$\CC$ instead of
the class of combinatorial spheres we may define an analogue of
the cochain complex~$\Cal T_*$, which shall be denoted by~$\Cal
T_*^{\CC}$. We apply our main construction of~\S 2 to study the
connection between the cobordism groups of oriented manifolds with
singularities of class~$\CC$ and the homology groups of the
complex~$\Cal T_*^{\CC}$. The main theorem of~\S 3 claims that
these two groups are isomorphic modulo the class of torsion
groups. In section~3.4 we study the dual cochain complex~$\Cal
T^*_{\CC}(A)=\Hom(\Cal T_*^{\CC},A)$, where $A$ is an Abelian
group. Cocycles of this complex give local formulae for $A$-valued
additive invariants of cobordisms of oriented manifolds with
singularities of class~$\CC$. We prove that any $\Bbb Q$-valued
invariant admits a local formula unique up to a coboundary of the
complex~$\Cal T^*_{\CC}(\Bbb Q)$. In particular, we obtain a
simpler and more direct proof of the author's theorem~\cite{Gai04}
claiming that the graded cohomology group of the complex~$\Cal
T^*(\Bbb Q)=\Hom(\Cal T_*,\Bbb Q)$ is additively isomorphic to the
polynomial ring in Pontryagin classes with rational coefficients.

In~\S 4 we apply the main construction to the problem on resolving
singularities of pseudo-manifolds and to N.~Steenrod's problem on
realization of homology classes by images of the fundamental
classes of manifolds. An approach to N.~Steenrod's problem based
on resolving singularities of cycles is due to
D.~Sullivan~\cite{Sul71}. Suppose $Z$ is a pseudo-manifold,
$\Sigma\subset Z$ is a subset such that $Z\setminus\Sigma$ is an
oriented manifold. Resolving singularities of~$Z$ in sense of
D.~Sullivan is a mapping~$g:M\to Z$ such that $M$ is a manifold
and the restriction
$$
g|_{g^{-1}(Z\setminus\Sigma)}:g^{-1}(Z\setminus\Sigma)\to
Z\setminus\Sigma
$$
is a diffeomorphism (respectively, a piecewise-linear
homeomorphism, depending on a considered category of manifolds).
The first example of a pseudo-manifold whose singularities cannot
be resolved was obtained by R.~Thom~\cite{Tho58a}. It is a
$7$-dimensional cycle representing a $7$-dimensional integral
homology class not realizable in sense of N.~Steenrod.

By a {\it blow-up} of the pair~$(Z,\Sigma)$ D.~Sullivan calls a
mapping $g:(\widetilde{Z},\widetilde{\Sigma})\to (Z,\Sigma)$ such
that $\widetilde{Z}\setminus\widetilde{\Sigma}$ is a manifold and
$g|_{g^{-1}(Z\setminus\Sigma)}$ is a diffeomorphism (respectively,
a piecewise-linear homeomorphism). In~\cite{Sul71} he constructed
a complete obstruction $\mathfrak{v}_s\in H_s(Z;\Omega_{n-s-1})$
to the existence of a blow-up $(\widetilde{Z},\widetilde{\Sigma})$
of the pair~$(Z,\Sigma)$ such that
$\dim\widetilde{\Sigma}<\dim\Sigma$ (see also~\cite{Kat73}). Here
$s=\dim\Sigma$, $n=\dim Z$, and $\Omega_q$ is the $q$-dimensional
smooth (respectively, piecewise-linear) oriented cobordism group.
If all Sullivan obstructions vanish in succession, then one can
consistently decrease the dimension of the set~$\Sigma$ so as to
resolve singularities of~$Z$. D.~Sullivan noticed that his
obstructions provide geometric interpretation for the
differentials of the Atiyah-Hirzebruch spectral sequence in smooth
(respectively, piecewise-linear) cobordism. Indeed, the
fundamental class~$[Z]\in H_n(Z;\Bbb Z)=H_n(Z;\Omega_0)$ is a
cycle with respect to the differentials~$d_2,\ldots,d_{s-1}$
and~$d_s([Z])=\mathfrak{v}_s$. Therefore the Sullivan obstructions
are elements of finite order. The best possible estimates for the
orders of differentials of the Atiyah-Hirzebruch spectral sequence
in smooth oriented cobordism were obtained by
V.~M.~Buchstaber~\cite{Buc69}. In~\cite{BuDe80} S.~Buoncristiano
and M.~Ded\`o obtained immediate geometric estimates for the
Sullivan obstructions. These estimates are much weaker then the
estimates obtained by V.~M.~Buchstaber. Notice that if all
obstructions to resolving singularities vanish the results
of~\cite{Sul71},~\cite{Kat73},~\cite{BuDe80} do not give an
explicit construction of a manifold~$M$ and a mapping~$g:M\to Z$
resolving singularities.

We shall work with a more general concept of resolving
singularities, which can be called resolving singularities with
multiplicities. By resolving singularities of a
pseudo-manifold~$Z$ with multiplicity~$r$ we mean a
piecewise-smooth mapping~$g:M\to Z$ such that $M$ is a
piecewise-linear manifold and the restriction
$$
g|_{g^{-1}(Z\setminus\Sigma)}:g^{-1}(Z\setminus\Sigma)\to
Z\setminus\Sigma
$$
is an $r$-fold covering. It can be deduced from the finiteness of
orders of Sullivan obstructions that resolving singularities with
some multiplicity is always possible. The main result of~\S 4 is
an explicit construction assigning to each oriented simple cell
pseudo-manifold a combinatorial manifold~$M$ and a mapping~$g:M\to
Z$ providing resolving singularities of~$Z$ with multiplicity~$r$.
For~$\Sigma$ we take the codimension~$2$ skeleton of~$Z$. (For a
definition of a simple cell decomposition see section~4.1. In
particular, every triangulation is a simple cell decomposition.)

Different procedures for resolving singularities of an oriented
$n$-dimensional pseudo-manifold can yield combinatorial manifolds
representing different oriented piecewise-linear cobordism
classes. The same phenomenon takes  place for Hironaka procedure
for resolving singularities of algebraic varieties. Nevertheless,
our procedure of~\S 4 turns out to give a certain well-defined
class
$$
\frac{[M]}{r}\in \Omega_n^{\SPL}\otimes\Bbb
Q=\Omega_n^{\SO}\otimes\Bbb Q,
$$
depending  on the simple cell decomposition of~$Z$ only (see
section~4.5). In particular if $Z$ is either simplicial or cubic
decomposition, the manifold~$M$ represents zero class in the
group~$\Omega_n^{\SPL}\otimes\Bbb Q$.

Now suppose that $R$ is a topological space and $x\in H_n(R;\Bbb
Z)$. The homology class~$x$ can always be realized by an image of
the fundamental class of a pseudo-manifold~$Z$. Applying to~$Z$
our explicit construction of resolving singularities with
multiplicity~$r$ we shall obtain an oriented piecewise-linear
manifold~$M$ and a mapping
$$
\varphi:M\stackrel{g}{\longrightarrow}Z\longrightarrow R,
$$
realizing the homology class~$rx$.

Thus we shall obtain a purely combinatorial constructive proof of
the fact that a multiple of every integral homology class $x\in
H_n(R;\Bbb Z)$ can be realized by an image of the fundamental
class of a piecewise-linear manifold. This proof uses no
transversality theorems and no algebraic-topological results.
Unfortunately, using this combinatorial approach we are not able
to obtain reasonable information about the number~$r$. In our
construction $r$ essentially depends on the combinatorics of the
cell decomposition~$Z$. Moreover, $r$ is not bounded for a
fixed~$n$.

It is interesting to pose a question on description of the bordism
class
$$
\frac{[\varphi]}r\in\Omega_n^{\SPL}(R)\otimes\Bbb
Q=\Omega_n^{\SO}(R)\otimes\Bbb Q.
$$
A sapid answer can be obtained for two cases.

1) $Z$ is a simplicial pseudo-manifold, that is, $Z\to R$ is a
singular simplicial cycle (see section~4.6);

2) $R$ is a combinatorial manifold and the mapping~$Z\to R$ yields
a cellular cycle in the dual decomposition~$R^*$ (see
section~4.7).

In either of this cases the class~$\frac{[\varphi]}r$ depends on
the homology class~$x$ only.

If the mapping~$Z\to R$ is a singular simplicial cycle, then the
rational Pontryagin classes of~$M$ vanish. Therefore the
class~$\frac{[\varphi]}r$ is the image of~$x$ under the mapping
$$
\begin{CD}
H_*(R;\Bbb Z)@>{\eta}>>H_*(R;\Omega^{\SPL}_*\otimes\Bbb
Q)@>{\left(\ch^{\SPL}\right)^{-1}}>>\Omega^{\SPL}_*(R)\otimes\Bbb
Q,
\end{CD}
$$
where $\eta$ is the homomorphism induced by the embedding~$\Bbb
Z=\Omega^{\SPL}_0\subset\Omega^{\SPL}_0\otimes\Bbb Q$ and
$$
\ch^{\SPL}:\Omega^{\SPL}_*(R)\to H_*(R;\Omega^{\SPL}_*\otimes\Bbb
Q)
$$
is the Chern-Dold character in oriented piecewise-linear bordism.

If the mapping~$Z\to R$ yields a cellular cycle in the
decomposition~$R^*$ dual to a combinatorial manifold~$R$, then the
rational Pontryagin classes of~$M$ coincide with the pullbacks of
the rational Pontryagin classes of~$R$ along the
mapping~$\varphi$. Hence the class~$\frac{[\varphi]}r$ is the
image of~$x$ under the mapping
$$
H_*(R;\Bbb Z)\xrightarrow{D} H^*(R;\Bbb Q) \xrightarrow{\eta}
H^*(R;\Omega_{\SPL}^*\otimes\Bbb Q) \xrightarrow{\ch_{\SPL}^{-1}}
\Omega_{\SPL}^*(R)\otimes\Bbb Q
\xrightarrow{D_{\SPL}^{-1}\otimes\,\Bbb Q}
\Omega^{\SPL}_*(R)\otimes\Bbb Q,
$$
where $\eta$ is the homomorphism induced by the embedding~$\Bbb
Z=\Omega_{\SPL}^0\subset\Omega_{\SPL}^0\otimes\Bbb Q$, $D$ and
$D_{\SPL}$ are the Poincar\'e duality operators in cohomology and
oriented piecewise-linear cobordism respectively, and
$$
\ch_{\SPL}:\Omega_{\SPL}^*(R)\to H^*(R;\Omega_{\SPL}^*\otimes\Bbb
Q)
$$
is the Chern-Dold character in oriented piecewise-linear
cobordism.

In section~4.2 we investigate the cobordism ring~$\PP_*$ of
oriented simple cells. A simple cell is a cell dual to a
combinatorial sphere (for a rigorous definition, see section~4.1).
In particular, all simple convex polytopes are simple cells. The
formal sum of facets of an oriented simple cell is called the
boundary of the simple cell. (The facets are endowed with the
induced orientations.) Thus cobordisms of $n$-dimensional
piecewise-linear simple cells are $(n+1)$-dimensional simple
cells. There is a canonical homomorphism
$\Omega^{\SPL}_*\to\PP_*$, where $\Omega^{\SPL}_*$ is the oriented
piecewise-linear cobordism ring. Properties of this homomorphism
are closely related with Question~1.2. In particular, Theorem~1.1
implies that this homomorphism is an isomorphism modulo the class
of torsion groups.

The results of~\S 4 were announced by the author in~\cite{Gai07}.

One more application of the main construction of~\S 2 is a
construction of explicit local combinatorial formulae for rational
Pontryagin classes of combinatorial manifolds (see~\S 5). We
consider the following problem. For a given combinatorial manifold
construct explicitly a simplicial cycle whose homology class is
the Poincar\'e dual of a given rational Pontryagin class (or a
polynomial in Pontryagin classes) of the given manifold.
``Explicit'' means that we must give an algorithm that computes
the required cycle starting from the combinatorial structure of
the manifold only. The first combinatorial formula for the first
rational Pontryagin class was obtained in~1975 by A.~M.~Gabrielov,
I.~M.~Gelfand, and M.~V.~Losik~\cite{GGL75}. Later their result
was improved by R.~MacPherson~\cite{MP77}. The easiest and most
effective formula for the first Pontryagin class was obtained by
the author~\cite{Gai04} in 2004. For the higher Pontryagin classes
there are two approaches to constructing combinatorial formulae.
The first approach is due to I.~M.~Gelfand and R.~MacPherson. The
second one is due to J.~Cheeger~\cite{Che83}. Unfortunately,
neither of this approaches gives a formula that is purely
combinatorial. (Under a purely combinatorial formula we mean a
formula that can be applied to an arbitrary combinatorial manifold
with no additional structures and gives an explicit algorithm
computing the required cycle.) There is one more approach to the
problem of combinatorial computation of the Pontryagin classes.
This approach develops M.~Gromov's ideas and is due to
A.~S.~Mischenko. In~\cite{Mis99} he constructed a local
combinatorial Hirzebruch formula that allowed him to give a local
definition of rational Pontryagin classes of a piecewise-linear
manifold. Unfortunately, until now there is no obtained in this
way explicit combinatorial formula that computes a characteristic
cycle from a triangulation of a manifold. Author's
paper~\cite{Gai05} is devoted to the comparison of different
formulae for the Pontryagin classes of triangulated manifolds.

In the present paper we construct combinatorial formulae for all
polynomials in rational Pontryagin classes. Unfortunately, these
formulae are very inefficient. This is caused by the complexity of
the main construction of~\S 2. Nevertheless, the obtained formulae
are the first formulae for the higher Pontryagin classes that can
be applied to an arbitrary combinatorial manifold with no
additional structures and give an algorithm for computations. The
approach is distinct from the approach used in~\cite{Gai04}.
Notice that for the first Pontryagin class the formula obtained
in~\cite{Gai04} is much easier than the formula obtained in the
present paper.

Following~\cite{LeRo78},~\cite{Gai04},~\cite{Gai05} we assume that
the required characteristic cycle of a combinatorial manifold~$K$
is given by a {\it universal local formula}
$$
f_{\sharp}(K)=\mathop{\sum\limits_{\sigma\,\,\text{\textnormal{a
simplex of}}\,K,}} \limits_{\codim\sigma=n} f(\Lk\sigma)\sigma,
$$
where $f$ is a chosen function on the set of isomorphism classes
of oriented $(n-1)$-dimensional combinatorial spheres such that
$f$ changes its sign whenever we reverse the orientation of a
combinatorial sphere. ``Universality'' means that the function~$f$
is independent of the combinatorial manifold~$K$ and the
chain~$f_{\sharp}(K)$ is a cycle for any combinatorial
manifold~$K$. The basic result is the fact that for every
polynomial in rational Pontryagin classes there is a formula of
the above form. This result was obtained by the author
in~\cite{Gai04}. It improves a result of N.~Levitt and
C.~Rourke~\cite{LeRo78}. Besides, in~\cite{Gai04} it is proved
that local formulae for polynomials in rational Pontryagin classes
are exactly cocycles of the complex~$\Cal T^*(\Bbb Q)$ and a local
formula for every polynomial in Pontryagin classes is unique up to
a coboundary of~$\Cal T^*(\Bbb Q)$. Thus, for each polynomial in
rational Pontryagin classes, our goal is to compute explicitly the
value of the corresponding function~$f$ on any given combinatorial
sphere~$Y$.

Rational Pontryagin classes of piecewise-linear manifolds were
defined in the late 1950s by V.~A.~Rokhlin and
A.~S.~Schwartz~\cite{RoSh57} and independently by
R.~Thom~\cite{Tho58b} (see also~\cite{MiSt79}). According to their
construction the rational Pontryagin classes of an oriented
piecewise-linear manifold~$K$ are completely determined by the
system of all equations of the form
$$
\left\langle\pi^*L_l(p_1(K),p_2(K),\ldots,p_l(K)),[M]\right\rangle=\Sign
M,
$$
where $M\subset K\times S^q$ is an oriented $4l$-dimensional
submanifold with a trivial normal bundle, $\pi:K\times S^q\to K$
is the projection onto the first multiple, $L_l$ is the $l$th
Hirzebruch polynomial, and $\Sign M$ is the signature of~$M$. This
system completely determines the Hirzebruch classes
$$
L_l(K)=L_l(p_1(K),p_2(K),\ldots,p_l(K))
$$
and, hence, the rational Pontryagin classes~$p_l(K)$ because of
the classical R.~Thom's theorem~\cite{Tho58a} claiming that a
multiple of every integral homology class~$x\in H_{4l}(K;\Bbb Z)$
can be represented as~$\pi_*[M]$, where $M\subset K\times S^q$ is
an oriented submanifold with a trivial normal bundle.
``Implicity'' of the Rokhlin-Schwartz-Thom construction consists
in the absence of an explicit combinatorial construction for such
submanifold~$M$. We notice that instead of the embedding~$M\subset
K\times S^q$ with a trivial normal bundle we suffice to construct
a mapping~$\varphi:M\to K$ such that the pullbacks of the rational
Pontryagin classes of~$K$ are the rational Pontryagin classes
of~$M$. Such manifold~$M$ and mapping~$\varphi$ can be obtained by
an explicit construction of section~4.7.

The main result on local formulae for the Hirzebruch $L$-classes
is as follows. A function~$f:\Cal T_{4l}\to\Bbb Q$ such that
$f(-Y)=-f(Y)$ is a local formula for~$L_l$ if and only if $f$
satisfies the system of linear equations
$$
f(Y_1)+f(Y_2)+\ldots+f(Y_k)=\frac{\Sign X}{r},\eqno(*)
$$
where $Y_1,Y_2,\ldots,Y_k$ is a balanced set of oriented
$(4l-1)$-dimensional combinatorial spheres and $X$ and $r$ are a
cubic cell combinatorial manifold and a positive integer obtained
by the main construction of~\S 2 applied to the
set~$Y_1,Y_2,\ldots,Y_k$. The signature of~$X$ can be computed
explicitly either by definition or by the Ranicki-Sullivan
formula~\cite{RaSu76} (see also section~5.1). Thus the system of
equations~$(*)$ provides an explicit combinatorial description for
all local formulae for the $l$th Hirzebruch polynomial. The choice
of a canonical local formula, that is, a canonical solution~$f_0$
of the system~$(*)$ is described in section~5.2.

Sections~5.3--5.8 are devoted to a combinatorial construction of a
multiplication of the cocycles of the complex~$\Cal T^*(\Bbb Q)$.
This multiplication enables us to obtain explicit local formulae
for all polynomials in rational Pontryagin classes, since the
Hirzebruch $L$-classes generate the ring~$\Bbb Q[p_1,p_2,\ldots]$.
Unfortunately, the constructed multiplication is neither bilinear,
nor associative, nor commutative. Besides, it does not satisfy the
Leibniz formula and most probably has no natural extension to the
whole complex~$\Cal T^*(\Bbb Q)$. The question of the existence of
a bilinear associative multiplication in~$\Cal T^*(\Bbb Q)$
satisfying the Leibniz formula is still open.

I wish to express my deep gratitude to V.~M.~Buchstaber for posing
the problems and permanent attention to my work.

\section{Main construction}

\subsection{Basic definitions} It is convenient to work with the
following definition of a simplicial complex.
\begin{definition}
A {\it finite simplicial complex} is the quotient of a disjoint
union of finitely many closed simplices
$\Delta_1,\Delta_2,\ldots,\Delta_q$ by an equivalence relation
$\sim$ such that

(1) $\sim$ does not identify any two distinct points of
each~$\Delta_i$;

(2) the restriction of $\sim$ to each disjoint union
$\Delta_i\sqcup\Delta_j$, $i\ne j$, either is empty or coincides
with the identification along a linear homeomorphism of a face
$F_1\subset\Delta_i$ onto a face $F_2\subset\Delta_j$.

The images of faces of simplices $\Delta_i$ under the quotient
mapping are called {\it cells} or {\it simplices} of the
simplicial complex.
\end{definition}

The intersection of two simplices of a simplicial complex either
is empty or is a face of each of them. If we allow two simplices
to have several common faces we arrive to a more general concept
of a {\it simplicial cell complex}. To obtain a rigorous
definition of a simplicial cell complex we replace condition (2)
of Definition~2.1 by the following condition.

($2'$) if $\sim$ identifies a point $x_1\in \Delta_i$ with a point
$x_2\in\Delta_j$, then $\sim$ identifies some face
$F_1\subset\Delta_i$ containing $x_1$ with some face
$F_2\subset\Delta_j$ containing $x_2$ along some linear
homeomorphism.

Suppose $X_1$ and $X_2$ are simplicial cell complexes. A mapping
$X_1\to X_2$ is said to be {\it simplicial} if it maps each
simplex of $X_1$ linearly onto some simplex of $X_2$. An {\it
isomorphism} of simplicial cell complexes is a simplicial mapping
with the simplicial inverse.

If we replace simplices with cubes in all above definitions we
shall obtain the definitions of a {\it cubic complex}, a {\it
cubic cell complex}, a {\it cubic mapping}, and an {\it
isomorphism} of cubic cell complexes. All considered complexes
are supposed to be finite.

\begin{remark}
The term ``simplicial cell complex'' was used for example in the
papers of V.\,M.\,Buchstaber and T.\,E.\,Panov
(see~\cite{BuPa04}). Other authors used different terms for the
same concept, namely, ``pseudo-complex'',
``pseudo-triangulation'', and ``pseudo-dissection''. These terms
seem to be less convenient. The term ``cubic cell complex'' is a
natural analogue of the term ``simplicial cell complex''. As far
as we know the term ``cubic cell complex'' has never been used
before.
\end{remark}

If $X$ is a simplicial cell complex, then the cone $\cone(X)$ and
the unreduced suspension $\Sigma X$ are simplicial cell complexes
too. The barycentric subdivision of a simplicial cell or cubic
cell complex $X$ is denoted by $X'$. Notice that $X'$ is always a
simplicial complex. By $\V(X)$ we denote the set of vertices of a
complex $X$. By $C_*(X;A)$ and $C^*(X;A)$ we denote the simplicial
(respectively, cubic) chain and cochain complexes of a simplicial
cell (respectively, cubic cell) complex $X$ with coefficients in
an Abelian group $A$. An isomorphism of simplicial cell or cubic
cell complexes is denoted by~$\cong$.

The {\it link} of a simplex~$\sigma$ of a simplicial complex~$X$
is usually defined to be the subcomplex of~$X$ consisting of all
simplices~$\tau$ such that $\sigma\cap\tau=\emptyset$ and there
exists a simplex of~$X$ containing both~$\sigma$ and~$\tau$.
Nevertheless, in this paper it is more convenient to use another
definition that works well for simplicial cell complexes and cubic
cell complexes too.

\begin{definition}
Suppose $X$ is a simplicial cell complex or a cubic cell complex,
$\sigma$ is its $k$-dimensional cell. For each $l$-dimensional
cell $\tau\supset\sigma$, $\tau\ne\sigma$, there are exactly
$l-k$ cells $\rho$ such that $\dim\rho=k+1$ and
$\sigma\subset\rho\subset\tau$. The convex hull of the
barycenters of all such cells $\rho$ is an $(l-k-1)$-dimensional
simplex, which will be denoted by $\Delta_{\sigma,\tau}$. The
union of the simplices $\Delta_{\sigma,\tau}$ over all cells
$\tau\supset\sigma$, $\tau\ne\sigma$, is a simplicial cell
complex, which will be called the {\it link} of the cell $\sigma$
in the complex $X$ and will be denoted by $\Lk\sigma$ or
$\Lk_X\sigma$.
\end{definition}

Notice that for a simplicial complex $X$ the two definitions of
the link yield canonically isomorphic simplicial complexes. The
partially ordered set of simplices of the complex $\Lk\sigma$
(including the empty simplex $\emptyset$) is canonically
isomorphic to the partially ordered set of cells of $X$
containing the cell $\sigma$.

Suppose $X$ is a simplicial cell complex, $\sigma$ is a simplex
of $X$. The {\it star} of $\sigma$ is the subcomplex
$\St\sigma\subset X$ consisting of all closed simplices
containing the simplex $\sigma$. If $X$ is a simplicial complex,
then $\St\sigma\cong\sigma*\Lk\sigma$.

A simplicial complex is called a {\it combinatorial sphere} if it
is piecewise linearly homeomorphic to the boundary of a simplex. A
simplicial complex is called an $n$-dimensional {\it combinatorial
manifold} if the link of each its vertex is an $(n-1)$-dimensional
combinatorial sphere. Similarly, a cubic (respectively, simplicial
cell or cubic cell) complex is called an $n$-dimensional {\it
cubic} (respectively, {\it simplicial cell} or {\it cubic cell})
{\it combinatorial manifold} if the link of each its vertex is
piecewise linearly homeomorphic to the boundary of an
$n$-dimensional simplex.

A simplicial (respectively, simplicial cell, cubic, or cubic cell)
complex is called an $n$-dimensional {\it simplicial}
(respectively, {\it simplicial cell}, {\it cubic}, or {\it cubic
cell}) {\it pseudo-manifold} if each cell of $X$ is contained in
some $n$-dimensional cell and each $(n-1)$-dimensional cell of
$X$ is contained in exactly two $n$-dimensional cells.

It is convenient to work with so-called {\it normal}
pseudo-manifolds (see~\cite{GoMP80}). A simplicial cell or cubic
cell $n$-dimensional pseudo-manifold is called normal if
$H_n(X,X\setminus x)\cong\Bbb Z$ for every point $x\in X$.
Equivalently, $X$ is normal if the link of each its cell of
dimension not greater than $n-2$ is connected. The connected
components of a normal pseudo-manifold $X$ are {\it strongly
connected}, that is, for each two $n$-dimensional cells $\tau_1$
and $\tau_2$ there is a sequence of $n$-dimensional cells
$\tau_1=\rho_1,\rho_2,\ldots,\rho_r=\tau_2$ such that for every
$i$ the cells $\rho_i$ and $\rho_{i+1}$ have a common
$(n-1)$-dimensional face. It is easy to check that for
$\dim\sigma\leqslant n-2$ the link of the cell $\sigma$ of a
normal pseudo-manifold $X$ is a connected normal pseudo-manifold.
In particular, the link of $\sigma$ is strongly connected.

In the sequel we shall always consider only normal
pseudo-manifolds and under a pseudo-manifold we shall always mean
a normal pseudo-manifold. The class of normal pseudo-manifolds
include all most interesting examples of pseudo-manifolds, namely,
combinatorial manifolds, homology manifolds, manifolds with conic
singularities. For an arbitrary $n$-dimensional pseudo-manifold
$X$ one can construct its {\it normalization}
$X^{\norm}$~\cite{GoMP80}. Let $\sigma_1,\sigma_2,\ldots,\sigma_q$
be all $n$-dimensional simplices (respectively, cubes) of a
pseudo-manifold $X$. We consider the disjoint union of
$\sigma_1,\sigma_2,\ldots,\sigma_q$ and make the following
identifications. If two cells $\sigma_i$ and $\sigma_j$ have a
common facet in $X$ we identify their corresponding facets along
the corresponding isomorphism. It is easy to check that the
obtained pseudo-manifold is normal. In addition there is a
simplicial (respectively, cubic) mapping~$X^{\norm}\to X$ whose
restriction to the complement of the $(n-2)$-skeleton of
$X^{\norm}$ is a homeomorphism with the complement of the
$(n-2)$-skeleton of $X$.

In the sequel we shall work mostly with oriented pseudo-manifolds.
Under an isomorphism of oriented pseudo-manifolds we always mean
an orientation-preserving isomorphism. An orientation-reversing
isomorphism is called an {\it anti-isomorphism}. An orientation of
a simplex is conveniently given by an ordering of its vertices.
Suppose $\sigma$ and $\tau$ are oriented simplices. Then the join
$\sigma*\tau$ is endowed with the orientation given by the
concatenation of a sequence of vertices yielding the orientation
of $\sigma$ with a sequence of vertices yielding the orientation
of $\tau$. In the same way the link of two oriented simplicial
cell pseudo-manifolds is endowed with the orientation. Suppose
$\sigma$ is an oriented cell of an oriented pseudo-manifold $X$.
Then the orientation of $\Lk\sigma$ will be chosen so that the
induced orientation of the complex $\sigma*\Lk\sigma\subset X$
coincides with the restriction of the given orientation of $X$.
The cone over an oriented simplicial cell pseudo-manifold $X$ is
always endowed with the orientation such that the canonical
isomorphism of the link of the cone vertex with~$X$ preserves the
orientation.

\subsection{Results} Let $Y_1,Y_2,\ldots,Y_k$ be a set (with repetitions)  of
connected oriented simplicial pseudo-manifolds of the same
dimension. The set $Y_1,Y_2,\ldots,Y_k$ is called {\it balanced}
if the vertices of the pseudo-manifold $Y=Y_1\sqcup
Y_2\sqcup\ldots\sqcup Y_k$ can be paired off so that the links of
the vertices of each pair are anti-isomorphic. (Sometimes, we
shall say that the pseudo-manifold $Y$ is balanced.) Let us
formulate the versions of Theorems~1.1 and~1.2 for
pseudo-manifolds. (Recall that under a pseudo-manifold we always
mean a normal pseudo-manifold.)
\begin{theorem}
Suppose $Y_1,Y_2,\ldots,Y_k$ is a balanced set of connected
oriented $(n-1)$-dimensional simplicial pseudo-manifolds. Then
there is an oriented $n$-dimensional simplicial pseudo-manifold
$K$ whose set of links of vertices coincides up to an isomorphism
with the set
$$\underbrace{Y_1,\ldots,Y_1}_{r},\underbrace{Y_2,\ldots,Y_2}_{r},
\ldots,\underbrace{Y_k,\ldots,Y_k}_{r},Z_1,Z_2,\ldots,Z_l,-Z_1,-Z_2,\ldots,-Z_l$$
for some positive integer $r$ and some connected oriented
$(n-1)$-dimensional simplicial pseudo-manifolds
$Z_1,Z_2,\ldots,Z_l$.
\end{theorem}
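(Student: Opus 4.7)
The plan is to follow the two-stage strategy suggested by the author's outline. First construct an oriented cubic cell pseudo-manifold $X$ realizing the cubic cell version of the statement, in which the vertex links are $r$ copies of each $Y_i'$; this is the pseudo-manifold analogue of Theorem~1.2 described in sections~2.1--2.6. Then, in a second step (sections~2.7--2.8), convert $X$ into a simplicial pseudo-manifold $K$ in such a way that the original vertices retain links combinatorially isomorphic to $Y_i$ (rather than merely $Y_i'$) while all new vertices introduced by the conversion come in anti-isomorphic pairs and hence contribute exactly the $(Z_j,-Z_j)$ portion of the required link list.

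For the cubic cell stage I would associate to each $Y_i$ a building block $B_i=\cone(Y_i')$, viewed as a cubic cell complex by placing one $n$-cube over every top-dimensional simplex of $Y_i'$, with all cubes sharing the cone apex. In $B_i$ the apex vertex has link canonically $Y_i'$; every other vertex corresponds to a proper face $\sigma$ of some simplex of $Y_i$ and sits on a unique boundary facet $F_{i,\sigma}$ whose combinatorics encodes $\Lk_{Y_i}\sigma$. I would then take $r$ disjoint copies $B_i^{(1)},\dots,B_i^{(r)}$ of each block, for an $r$ to be determined, and glue their boundary facets in matched pairs according to a gluing rule built out of the pairing of vertices of $\bigsqcup Y_i$ supplied by the balanced assumption, together with the orientation-reversing isomorphisms of the paired links. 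The copies and the matching must be organized (for instance via a transitive action of a finite group on the set of copies) so that the facet gluings can be performed simultaneously and consistently in every dimension.

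After gluing, each copy of an apex vertex survives in $X$ with link $Y_i'$, giving the required $rk$ apex links, while every other vertex of $X$ arises from identifying many boundary vertices of copies of blocks; its link is assembled from the corresponding pieces of the $Y_i$'s. The symmetry of the pairing, together with the fact that all gluings reverse orientation on facets, forces these new links to split into anti-isomorphic pairs $(Z_j,-Z_j)$, and the same property yields orientability of $X$. The main obstacle will be the construction of a consistent matching: one must ensure that $X$ is a normal pseudo-manifold, i.e.\ that the link of every cell of codimension $\geq 2$ is connected and PL-isomorphic to the boundary of a simplex. A naive gluing of just one copy $(r=1)$ of each $B_i$ typically fails this condition; satisfying it simultaneously for all links of all cells of all $Y_i$ is precisely what forces the introduction of a multiplicity $r>1$ chosen compatibly with the combinatorics of all the $Y_i$ at once. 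I expect this global consistency problem, rather than the local description of the blocks, to be the heart of the construction and to occupy the bulk of sections~2.3--2.6.
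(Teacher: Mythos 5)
Your two-stage outline matches the architecture of the paper, but neither stage is actually carried out, and the second stage as you describe it would fail. For stage~1 you correctly identify that the whole difficulty is the global consistency of the facet matchings across the $r$ copies, and then you defer exactly that point (``I expect this global consistency problem\dots to be the heart of the construction''). That is the content of sections~2.3--2.5: the paper encodes $Y'$ as an edge-coloured graph $\Upsilon$ (Pezzana--Ferri), and obtains the gluings as commuting fixed-point-free involutions $\Phi^1_j$ on $U\times\Cal B\times S$ with $S=P\times\Bbb Z_2^n$, where $P$ is the set of involutions $\Lambda$ of the simplex set of $\overline{Y}$ compatible with a bipartite auxiliary graph $G$; the commutativity and involutivity (Propositions~2.10--2.11, via the operators $\Lambda_c$ and $\Theta_c$) are precisely the ``simultaneous consistency in every dimension'' you postulate. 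Without some substitute for this mechanism, stage~1 is a statement of intent, not a proof; note also that the multiplicity $r=|S|$ arises from this bookkeeping, not from normality failures of an $r=1$ gluing.

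The concrete error is in stage~2. You propose to triangulate $X$ so that the apex vertices ``retain links combinatorially isomorphic to $Y_i$ (rather than merely $Y_i'$)''. A vertex link in a triangulation of $X$ is forced to be a subdivision of the link in the cubic cell structure: the natural triangulation $X'$ gives the apexes links $Y_i''$, not $Y_i$, and there is no in-place retriangulation producing $Y_i$ on the nose while keeping all other links paired. The paper does something different: it does not modify $X$ at all, but takes $K$ to be a \emph{disjoint union} $X'\sqcup X'\sqcup(-\widehat{Y}_i)^{\sqcup 2r}\sqcup(-\widehat{Y_i'})^{\sqcup 2r}$, where $\widehat{Y}$ is the triangulated cylinder from $Y$ to $Y'$ capped with two cones. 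The pieces $\widehat{Y}$ realize a chain homotopy in $\Cal T_*$ between the barycentric subdivision operator $\bd$ and the identity (Propositions~2.12--2.13), so their vertex links cancel the unwanted $Y_i''$ and $Y_i'$ against fresh copies of $Y_i$, modulo links possessing anti-automorphisms; doubling everything disposes of those order-$2$ terms as genuine pairs $(Z_j,-Z_j)$. Since the theorem only constrains the multiset of links, disjoint unions are admissible, and this correction-term device is what makes stage~2 work; your version, which insists on converting $X$ itself, has no mechanism for it.
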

\begin{theorem}
Suppose $Y_1,Y_2,\ldots,Y_k$ is a balanced set of connected
oriented $(n-1)$-dimensional simplicial pseudo-manifolds. Then
there is an oriented $n$-dimensional cubic cell pseudo-manifold
$X$ whose set of links of vertices coincides up to an isomorphism
with the set
$$
\underbrace{Y_1',\ldots,Y_1'}_{r},\underbrace{Y_2',\ldots,Y_2'}_{r},
\ldots,\underbrace{Y_k',\ldots,Y_k'}_{r}
$$
for some positive integer $r$.
\end{theorem}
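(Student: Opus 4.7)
The plan is to construct $X$ explicitly in three steps.

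\emph{Step 1: local cubical blocks.} For each vertex $v$ of $Y=Y_1\sqcup\cdots\sqcup Y_k$, I would build an $n$-dimensional cubic cell complex $B_v$ with a distinguished vertex $o_v$ whose link in $B_v$ is $Y'_{i(v)}$, where $Y_{i(v)}$ is the component containing $v$. Unwinding Definition~2.3 in the cube case (vertices of the link correspond to edges at $o_v$, and $d$-simplices correspond to $(d{+}1)$-cubes at $o_v$), edges at $o_v$ must be in bijection with simplices of $Y_{i(v)}$ and higher cubes with chains of simplices; one therefore takes $B_v$ to be a local piece of the real moment--angle style construction associated with $Y_{i(v)}$, whose cubification around one of its $\{0,1\}$-corners realizes precisely this pattern. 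The part of $\partial B_v$ not adjacent to $o_v$ comes equipped with a canonical identification with a subdivision of $\Lk v$, giving the surface along which neighbouring blocks will be glued.

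\emph{Step 2: assembly via the pairing.} Fix a pairing $v\leftrightarrow v^*$ of the vertices of $Y$ together with orientation-reversing isomorphisms $\phi_v\:\Lk v\to\Lk v^*$ witnessing balancedness. I would glue the matching boundary portion of $B_v$ to that of $B_{v^*}$ via (the subdivision of) $\phi_v$. Locally this realizes the cubic cell intersection pattern required by condition~$(2')$ of Definition~2.1, while the orientation-reversing property of each $\phi_v$ produces an oriented match across every codimension-one cube.

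\emph{Step 3: finite cover to resolve monodromy.} A single copy of the naive gluing may fail to close up consistently: travelling around a cycle of blocks yields a composition of $\phi_v$'s that need not be trivial. Following the small-cover construction of Davis--Januszkiewicz referred to in \S2.9, I would kill this monodromy by forming $r$ labelled copies of each $B_v$ (labels in some finite group such as $\Bbb Z_2^m$) and gluing the labelled copies via a coloring chosen so that the product of pairings around every cycle acts trivially on the labels. The resulting cubic cell complex $X$ then contains exactly $r$ vertices of each prescribed link type $Y'_j$, inherits a coherent orientation from the blocks, and is a normal pseudo-manifold by local checks at each cube.

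The main obstacle will be Step~3: one must identify the monodromy as a cocycle in an appropriate finite group and show that it can be trivialized by a finite cover of controlled degree $r$. The rest of the argument --- the construction of the local blocks, the explicit gluings, and verification of the pseudo-manifold, orientability, and link conditions --- reduces to local statements that follow from the balancing hypothesis and the combinatorics of the barycentric subdivision.
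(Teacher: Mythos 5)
Your overall architecture --- local blocks realizing the prescribed links at distinguished vertices, gluings governed by the balancing pairing, and a finite label set to restore consistency --- is indeed the strategy of the paper, read through the dual cell decomposition: the paper builds $X=\tbQ(\Gamma)$ from a regularly edge-coloured graph $\Gamma$ of degree $2n$, and the dual cells of the vertices of $X$ are exactly your blocks (simple cells dual to the $Y_i'$), with the label set $S$ playing the role of your covering. But there is a genuine gap precisely where the two hard steps of the paper live. First, the gluing pattern: a block whose distinguished vertex has link $Y_i'$ has one facet for \emph{every} simplex $\sigma$ of $Y_i$ (i.e.\ every vertex of $Y_i'$), not only for the vertices of $Y_i$. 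The balancing hypothesis provides anti-isomorphisms $\chi_y\colon\St y\to\St\tilde{y}$ only for vertices $y$; to close the complex up you must also pair off, for each $j\geqslant 2$, the $(j-1)$-dimensional simplices of $Y$ and glue the corresponding facets along anti-isomorphisms of their stars, compatibly with the $\chi_y$ and with one another. Your Step~2 glues only ``the matching boundary portion'' of $B_v$ to $B_{v^*}$ and leaves all the remaining facets unaccounted for. The paper's resolution of this is the core of \S 2.5: after labelling vertices (and reducing the general case via $\overline{Y}=Y\times\Cal B$), it proves the auxiliary graphs $G_j$ are bipartite (Proposition~2.8), introduces the set $P$ of part-swapping involutions $\Lambda$ of $W$, and derives from each $\Lambda$ canonical anti-isomorphisms $\Lambda_c$ between stars of higher-dimensional simplices. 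None of this is forced by the balancing hypothesis alone.

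Second, the consistency problem is not a monodromy problem around $1$-cycles of blocks that a finite cover can kill. For the glued object to carry a cubic cell structure at all, the gluing involutions $\Phi_i^e$ must satisfy the commutation relations of Proposition~2.6, which are codimension-two compatibilities at the corners of the cubes, not holonomy around loops. The paper achieves them with the operators $\Theta_c$ acting on the label set $S=P\times\Bbb Z_2^n$ and the nontrivial computation that $\Theta_{c_1}$ and $\Theta_{c_2}$ commute for $c_1\subset c_2$ (Propositions~2.10--2.11). Your plan to ``identify the monodromy as a cocycle in an appropriate finite group'' is not substantiated: the would-be monodromy does not act on a single fibre (different blocks have different link types), and finiteness of the order of each composite of the $\phi_v$'s would not by itself produce commuting involutions or the higher-corner compatibility. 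So the proposal correctly guesses the shape of the construction but omits the two steps that make it work.
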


The main goal of this section is to give explicit constructions of
such pseudo-manifolds $K$ and $X$. Theorem~1.2 is a
straightforward consequence of Theorem~2.2. Indeed, if
$Y_1,Y_2,\ldots,Y_k$ are combinatorial spheres, then  the
pseudo-manifold $X$ is necessarily a cubic cell combinatorial
manifold. An explicit construction of the pseudo-manifold $X$ will
be given in \S\S~2.3--2.5.

Theorem~1.1 is not a straightforward consequence of Theorem~2.1.
It is possible that $Y_1,Y_2,\ldots,Y_k$ are combinatorial spheres
and $K$ is not a combinatorial manifold if some of the
pseudo-manifolds $Z_1,Z_2,\ldots,Z_l$ is not a combinatorial
sphere. However, there is an explicit construction that works both
under the conditions of Theorem~1.1 and under the conditions of
Theorem~2.1. Notice that the most complicated part of this
construction is the construction of the complex $X$. Given the
complex $X$ it is not very hard to construct the complex $K$. To
simplify the description of the construction we consider the case
of combinatorial spheres $Y_1,Y_2,\ldots,Y_k$ (see~\S\S~2.7,~2.8).
The case of pseudo-manifolds is quite similar. The assertions
similar to Theorems~1.1 and~2.1 hold for certain intermediate
classes between the class of combinatorial manifolds the class of
all simplicial pseudo-manifolds, for example, for the class of all
simplicial homology manifolds (see~\S~4.3).

Suppose $X$ is a simplicial cell pseudo-manifold or a cubic cell
pseudo-manifold. We put,
$$
\Cal L(X)=\bigsqcup\limits_{x\in \V(X)}\Lk x.
$$
The vertices of the pseudo-manifolds $\Cal L(X)$ are in
one-to-one correspondence with the directed edges of the
pseudo-manifold $X$. Reversing the direction of an edge gives us
the involution $\lambda_X:z\mapsto \tilde{z}$ on the set of
vertices of the pseudo-manifolds $\Cal L(X)$ and the set of
anti-isomorphisms
$$
\chi_{X,z}:\St z\to\St \tilde{z},\qquad z\in \V(\Cal L(X))
$$
such that $\chi_{X,\tilde{z}}=\chi_{X,z}^{-1}$.

Now let us consider a balanced set $Y_1,Y_2,\ldots,Y_k$ of
$(n-1)$-dimensional simplicial pseudo-manifolds. The vertices of
the pseudo-manifold $Y=Y_1\sqcup Y_2\sqcup\ldots\sqcup Y_k$ can be
paired off so that the links of the vertices of each pair are
anti-isomorphic. Equivalently, the stars of the vertices of each
pair are anti-isomorphic. Hence there is an involution
$\lambda:y\mapsto \tilde{y}$ on the set $\V(Y)$ and the set of
anti-isomorphisms
$$
\chi_y:\St y\to\St \tilde{y},\qquad y\in \V(Y)
$$
such that $\chi_{\tilde{y}}=\chi_y^{-1}$. Obviously, it is
possible that the involution $\lambda$ and the anti-isomorphisms
$\chi_y$ can be chosen in several different ways. Let us fix an
arbitrary choice of them. The set of vertices of~$Y'$ can be
decomposed in the following way.
$$
\V(Y')=\bigsqcup\limits_{j=1}^{n}W_j(Y),
$$
where $W_j(Y)$ is the set of barycenters of $(j-1)$-dimensional
simplices of~$Y$. (In particular, $W_1(Y)=\V(Y)$.)

The construction described in \S\S~2.3--2.5 will give us a result
that is stronger than Theorem~2.2. Actually we shall construct a
pseudo-manifold $X$ so that the involution $\lambda_X$ and the
anti-isomorphisms $\chi_{X,z}$ will agree with the involution
$\lambda$ and the anti-isomorphisms $\chi_y$. The precise
formulation of this result is as follows.

\begin{theorem}
Suppose $Y$ is an oriented $(n-1)$-dimensional simplicial
pseudo-manifold, $\lambda:y\mapsto \tilde{y}$ is an involution on
the set $\V(Y)$, $\chi_y:\St y\to\St \tilde{y}$ are
anti-isomorphisms such that $\chi_{\tilde{y}}=\chi_y^{-1}$. Then
there are an oriented $n$-dimensional cubic cell pseudo-manifold
$X$ and an isomorphism $\Cal L(X)\cong Y'\times S$, where $S$ is
a finite set, such that

1\textnormal{)} the subsets $W_j(Y)\times S\subset \V(Y')\times
S=\V(\Cal L(X))$ are invariant under the involution $\lambda_X$;

2\textnormal{)} if $y\in \V(Y)$, $s\in S$, then
$\lambda_X(y,s)=(\lambda(y),s_1)$ for some element $s_1\in S$
depending on $y$;

3\textnormal{)} if $y\in \V(Y)$, $s\in S$, then the
anti-isomorphism
$$
\chi_{X,(y,s)}:\St_{Y'\times
S}(y,s)\to\St_{Y'\times S}(\lambda(y),s_1)
$$
is induced by the anti-isomorphism
$\chi_y:\St_{Y}y\to\St_{Y}\lambda(y)$.
\end{theorem}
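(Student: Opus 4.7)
The plan is to construct $X$ as a quotient of a finite disjoint union of oriented $n$-dimensional cubes, glued along codimension-$1$ faces according to a recipe driven by the pairing data $(\lambda, \chi_y)$. The combinatorial input that makes this possible is the observation that the top-dimensional simplices of $Y'$ are in bijection with the maximal flags $\sigma_0\subset\sigma_1\subset\cdots\subset\sigma_{n-1}$ of $Y$ (with $\dim\sigma_i=i$); each such flag has exactly $n$ vertices, arranged around its $(n-2)$-faces in the same combinatorial pattern as the $n$ facets of an $n$-cube meeting at one of its vertices. This suggests placing cubes of $X$ in bijection with (maximal flag of $Y$)$\times S$ for a finite indexing set $S$ to be determined.

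First I would describe the local structure around a vertex of $X$. For each vertex $y\in\V(Y)$ and each index $s\in S$, I assemble a \emph{patch} consisting of one cube for every maximal flag of $Y$ with $\sigma_0=y$, glued along their ``inner'' codimension-$1$ faces so that the link of the central vertex of the patch is canonically identified with the full subcomplex of $Y'$ around the barycenter of $y$. Each patch has ``outer'' codimension-$1$ faces corresponding to the cells of $Y$ strictly containing $y$, and these are to be identified across patches using the pairing data: an outer face of the $y$-patch is glued to the corresponding outer face of the $\tilde y=\lambda(y)$-patch via the cubic counterpart of $\chi_y\colon\St y\to\St\tilde y$, with the condition $\chi_{\tilde y}=\chi_y^{-1}$ guaranteeing that this identification is an involution on outer faces.

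The principal obstacle is the global consistency of these identifications. Composing gluings along a closed walk in the dual flag graph of $Y$ may produce non-trivial monodromy, so that a single copy ($|S|=1$) will in general fail to close up into a well-defined pseudo-manifold. The remedy is to take $S$ to be a sufficiently large finite set---most naturally, a finite group or coset space chosen so that the full monodromy group of the would-be identification acts on $S$ by permutations---so that the composed gluings along every loop are absorbed by a relabeling of copies. This is the combinatorial source of the multiplicity $r$ appearing in Theorems~1.1 and~1.2.

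It remains to verify that the resulting $X$ is an oriented normal cubic cell pseudo-manifold satisfying (1)--(3). Each codimension-$1$ face of $X$ lies in exactly two $n$-cubes by construction, so $X$ is a pseudo-manifold. One orients each cube compatibly with the orientation of its indexing flag of $Y$, using that every gluing prescription is orientation-reversing on the shared boundary and thus orientation-preserving on the ambient cubes. Normality follows from the strong connectedness of vertex links, which propagates from the connectedness of each component of $Y'$ through the patch construction. Finally the isomorphism $\Cal L(X)\cong Y'\times S$ together with the compatibility of $\lambda_X$ and $\chi_{X,z}$ with $\lambda$ and $\chi_y$ are built directly into the definition of the patches and the gluing recipe, yielding (1)--(3) by inspection.
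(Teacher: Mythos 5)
Your overall architecture is the paper's: the $n$-cubes of $X$ are indexed by (maximal flags of $Y$)${}\times S$, half of the facet identifications are forced by the combinatorics of $Y'$, the other half encode the pairing data, and the finite set $S$ is introduced to make everything consistent. But the two steps you compress into a sentence each are exactly where the proof lives, and both have genuine gaps. The first: the data $(\lambda,\chi_y)$ pairs only \emph{vertices}. A cube indexed by a flag $\sigma^0\subset\cdots\subset\sigma^{n-1}$ has, besides the facets handled by $\chi_{\sigma^0}$ and by adjacency in $Y'$, one facet for each $j\geqslant 2$ whose gluing requires pairing the simplex $\sigma^{j-1}$ with some other $(j-1)$-simplex and an anti-isomorphism of their stars. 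You never construct these pairings, and their existence is not free: the paper builds, for each $j$, a graph $G_j$ on the $(j-1)$-simplices generated by the moves $\sigma\mapsto\chi_y(\sigma)$, proves it has no odd cycles (Proposition~2.8, Corollary~2.1), and extracts the set $P$ of admissible global pairings from its bipartition. That argument needs stars to admit no anti-automorphisms and at most one anti-isomorphism onto one another, which fails for general $Y$ and forces the auxiliary labelling (the Hypothesis of \S 2.5) and the reduction to $\overline{Y}=Y\times\Cal B$. None of this appears in your proposal, so most of your ``outer'' facet gluings are simply undefined.

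The second gap is that ``take $S$ large enough to absorb the monodromy'' cannot deliver the conclusion. Enlarging the index set (i.e.\ passing to a cover of the set of cubes) repairs only \emph{global} inconsistencies; it does nothing to the \emph{local} requirement that the facet identifications commute around each codimension-$2$ face. That commutativity is precisely what makes the small cubes assemble into large cubes so that the link of each vertex of $X$ is a full $Y_i'$; if the identifications fail to close up around a codimension-$2$ face, relabelling copies only lengthens the cycle of cubes around it and the links come out wrong in every copy. A single global pairing $\Lambda$ does not make the identifications commute, no matter how many copies one takes. The paper's fix is of a different nature: the pairing currently in force is itself part of the label of a cube ($S$ contains the factor $P$), crossing a facet changes it by the operator $\Theta_c$, and Propositions~2.10--2.11 verify the resulting commutation relations, while the $\Bbb Z_2^n$ factor of $S$ supplies the grading required by Proposition~2.6. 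Without an argument of this kind your construction does not produce a pseudo-manifold with the links demanded by the theorem, and conditions 1)--3) cannot be checked ``by inspection.''
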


We notice that this theorem is non-trivial even if all
combinatorial spheres $Y_i$ are isomorphic to the boundary of an
$n$-dimensional simplex. This case will be used in~\S 4.6.

\subsection{Pseudo-manifolds constructed from graphs}
In this paper a graph is always a finite graph with indirect
edges. A graph may contain multiple edges but should not contain
loops. A {\it homogeneous} graph is a graph with all vertices of
the same degree. Let $\Gamma$ be a homogeneous graph of degree
$n$. Let $A$ be an $n$-element set. Suppose that to each edge of
$\Gamma$ is assigned an element of $A$ called a {\it colour} of
the edge. We shall say that the colouration is {\it regular} if
for any two adjacent edges their colours are distinct.
Equivalently, for each vertex $v$ the $n$ edges containing $v$ are
coloured in $n$ pairwise distinct colours.

Let $V$ be the vertex set of a homogeneous graph $\Gamma$ with
regularly coloured edges. To a colour $a\in A$ we assign the
mapping $\Phi_a:V\to V$ that takes each vertex $v\in V$ to the
vertex connected with $v$ by an edge of colour $a$. Then $\Phi_a$
is an involution without fixed points. On the other hand, from an
arbitrary finite set $V$ and a set of involutions $\Phi_a:V\to V,\
a\in A$, we can construct a homogeneous graph $\Gamma$ on the
vertex set $V$ with edges regularly coloured by elements of the
set $A$. To obtain the graph $\Gamma$ we connect the vertices $v$
and $\Phi_a(v)$ by an edge of colour $a$ for every vertex $v\in V$
and every element $a\in A$.

Suppose $Y$ is an $(n-1)$-dimensional simplicial cell
pseudo-manifold. A colouration of vertices of $Y$ by elements of
an $n$-element set $A$ is said to be {\it regular} if the vertices
of each $(n-1)$-dimensional simplex are coloured in $n$ pairwise
distinct colours. It is worth noting that vertices of the
pseudo-manifold $Y$ can be naturally regularly coloured in colours
from an $n$-element set if $Y$ is the barycentric subdivision of a
simplicial cell pseudo-manifold $Z$. To obtain the colouration we
colour the barycenter of every $j$-dimensional simplex of $Z$ in
colour $j+1$.

It turns out that $(n-1)$-dimensional simplicial cell
pseudo-manifolds with regularly coloured vertices are in
one-to-one correspondence with homogeneous graphs of degree $n$
with regularly coloured edges. The construction yielding this
correspondence is due to M.\,Pezzana~\cite{Pez75} in dimension~$3$
and to M.\,Ferri~\cite{Fer76} in general case (see
also~\cite{FGG86}). (M.\,Pezzana and M.\,Ferri did not consider
the condition of normality. Hence their correspondence was not
one-to-one.) We shall describe this construction in a convenient
to us form. Without loss of generality we may assume that $A=\{
1,2,\ldots,n\}$.

{\it The construction of a graph from a pseudo-manifold.} Suppose
$Y$ is an $(n-1)$-dimensional simplicial cell pseudo-manifold with
regularly coloured vertices. To each $(n-1)$-dimensional simplex
$\sigma$ of the complex $Y$ we assign a vertex $v_{\sigma}$. To
each $(n-2)$-dimensional simplex $\tau$ of $Y$ we assign an edge
$e_{\tau}$ connecting the vertices $v_{\sigma_1}$ and
$v_{\sigma_2}$, where $\sigma_1$ and $\sigma_2$ are the two
$(n-1)$-dimensional simplices containing $\tau$. Paint the edge
$e_{\tau}$ in colour $j$ if the simplex $\tau$ does not contain a
vertex of colour~$j$.

{\it The construction of a pseudo-manifold from a graph.} Suppose
$V$ is the vertex set of a graph~$\Gamma$,
$\Phi_1,\Phi_2,\ldots,\Phi_n$ are the involutions defined above.
Consider the standard $(n-1)$-dimensional simplex
$$
\Delta^{n-1}=\left\{\left.\left(t_1,t_2,\ldots,t_n\right)\in\Bbb
R^n\right| t_1+t_2+\cdots+t_n=1,\,t_j\geqslant
0,\,j=1,2,\ldots,n\right\}.
$$
Put,
$$
\bS(\Gamma)=\left.V\times\Delta^{n-1}\right/\sim,
$$
where the equivalence relation $\sim$ is generated by the
identifications
$$
(v,t_1,t_2,\ldots,t_n)\sim (\Phi_j(v),t_1,t_2,\ldots,t_n)\text{ if
}t_j=0.
$$
It can be immediately checked that $\bS(\Gamma)$ is a normal
pseudo-manifold. The colouration of vertices of the complex
$\bS(\Gamma)$ is induced by the standard colouration of vertices
of the simplex~$\Delta^{n-1}$. It is easy to check that the
described constructions are inverse to each other.

{\it The partially ordered set of simplices of the complex
$\bS(\Gamma)$.} For each subset $B\subset A$ by $\Gamma_B$ we
denote the graph obtained from $\Gamma$ after deleting all edges
whose colours do not belong to~$B$. Then $\Gamma_B$ is a
homogeneous graph of degree equal to the cardinality of the set
$B$. By $\Cal K(\Gamma)$ we denote the set of all pairs
$(B,\Upsilon)$ such that $B$ is a subset of $A$, $B\ne A$,
$\Upsilon$ is a connected component of~$\Gamma_B$. We introduce a
partial order on the set $\Cal K(\Gamma)$ by putting
$(B_1,\Upsilon_1)\leqslant(B_2,\Upsilon_2)$ if and only if
$B_2\subset B_1$ and $\Upsilon_2\subset \Upsilon_1$.
\begin{propos}
The set of simplices of the complex $\bS(\Gamma)$
\textnormal{(}not including the empty simplex\textnormal{)}
partially ordered by inclusion is isomorphic to the partially
ordered set $\Cal K(\Gamma)$. This isomorphism takes each simplex
$\sigma$ of $\bS(\Gamma)$ to a pair $(B,\Upsilon)$ such that
$B\subset A$ is the subset complement to the subset of colours of
vertices of the simplex~$\sigma$.
\end{propos}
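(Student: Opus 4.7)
My plan is to construct the isomorphism explicitly and then verify the three properties (well-definedness, bijectivity, and order preservation). A simplex $\sigma$ of $\bS(\Gamma)$ is the image, under the quotient map $V\times\Delta^{n-1}\to\bS(\Gamma)$, of some product $\{v\}\times F$ where $v\in V$ and $F$ is a nonempty face of $\Delta^{n-1}$. Let $C\subset A$ be the set of indices of vertices of $F$ (equivalently, the set of coordinates that do not vanish identically on $F$), and set $B=A\setminus C$. I will send $\sigma$ to the pair $(B,\Upsilon)$, where $\Upsilon$ is the connected component of $\Gamma_B$ containing $v$. Note $B\neq A$ since $F$ is nonempty, so $(B,\Upsilon)\in\Cal K(\Gamma)$. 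Under the standard colouration of vertices of $\bS(\Gamma)$ induced from $\Delta^{n-1}$, the colour set of $\sigma$ is exactly $C$, as required by the second sentence of the statement.

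The heart of the argument is an analysis of the equivalence relation $\sim$ on the open face $\{v\}\times\mathrm{int}(F)$. For an interior point $t$ of $F$, one has $t_j=0$ if and only if $j\notin C$, i.e.\ $j\in B$. Therefore the identifications generated by $\sim$ that act on $(v,t)$ are precisely those of the form $(v,t)\sim(\Phi_j(v),t)$ with $j\in B$, and these preserve $F$ (since $t_j=0$ on $F$ for each $j\in B$). Iterating, the $\sim$-orbit of $(v,t)$ inside $V\times\mathrm{int}(F)$ is $\{(v',t):v'\text{ reachable from }v\text{ via a sequence of }\Phi_j\text{'s with }j\in B\}$, which is exactly the set of pairs $(v',t)$ with $v'$ in the connected component of $\Gamma_B$ containing $v$. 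This immediately shows that the pair $(B,\Upsilon)$ is independent of the chosen representative $(v,F)$ of $\sigma$, giving well-definedness, and that different pairs correspond to different simplices (distinct colour sets $C$ give disjoint interiors, distinct components $\Upsilon$ with the same $B$ give disjoint orbits), which is injectivity. Surjectivity is also immediate: given any pair $(B,\Upsilon)\in\Cal K(\Gamma)$, take $F$ to be the face of $\Delta^{n-1}$ whose vertex set is $C=A\setminus B$ and any $v\in\Upsilon$; then the image of $\{v\}\times F$ in $\bS(\Gamma)$ is a simplex mapped to $(B,\Upsilon)$.

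Finally, I verify that the bijection is order-preserving in both directions. If $\sigma_1\subset\sigma_2$ are simplices with associated pairs $(B_i,\Upsilon_i)$ and faces $F_i$ of $\Delta^{n-1}$, then $F_1\subset F_2$ (at the level of representatives one may arrange $\{v\}\times F_1\subset\{v\}\times F_2$ for a common $v$), hence $C_1\subset C_2$ and $B_2\subset B_1$; since $\Gamma_{B_2}$ is a subgraph of $\Gamma_{B_1}$, the component $\Upsilon_2$ of $\Gamma_{B_2}$ containing $v$ lies inside the component $\Upsilon_1$ of $\Gamma_{B_1}$ containing $v$, so $(B_1,\Upsilon_1)\leqslant(B_2,\Upsilon_2)$ in $\Cal K(\Gamma)$. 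Conversely, given $(B_1,\Upsilon_1)\leqslant(B_2,\Upsilon_2)$, pick any $v\in\Upsilon_2\subset\Upsilon_1$; then $\{v\}\times F_1\subset\{v\}\times F_2$ with $F_i$ the face corresponding to $A\setminus B_i$ descends to the desired inclusion $\sigma_1\subset\sigma_2$.

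The main technical point, and the only one requiring care, is the orbit analysis in the second paragraph: one must check that no additional identifications occur when passing between faces of different dimensions, which is precisely why the interior-point argument above is phrased pointwise in $t\in\mathrm{int}(F)$. Everything else reduces to a direct comparison of the two partially ordered sets.
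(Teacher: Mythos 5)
Your proof is correct and follows essentially the same route as the paper's: the paper also parametrizes simplices as faces $\Delta_{B,v}$ of the top cells and shows $\Delta_{B,v_1}=\Delta_{B,v_2}$ exactly when $v_1,v_2$ lie in the same component of $\Gamma_B$. Your interior-point orbit analysis and the explicit check of order preservation merely spell out steps the paper dismisses as "easily checked."
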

\begin{proof}
By $\Delta_v$ we denote the $(n-1)$-dimensional simplex of
$\bS(\Gamma)$ corresponding to a vertex $v\in V$. For each subset
$B\subsetneq A$ by $\Delta_{B,v}$ we denote the face of $\Delta_v$
spanned by all vertices whose colours do not belong to $B$. Then
$\Delta_{B,v}$ is a $(n-k-1)$-dimensional simplex, where $k$ is
the cardinality of~$B$. Obviously, each simplex of the complex
$\bS(\Gamma)$ is $\Delta_{B,v}$ for certain $B$ and $v$. Looking
at the equivalence relation $\sim$ in the definition of the
complex $\bS(\Gamma)$ we can easily deduce that
$\Delta_{B,v_1}=\Delta_{B,v_2}$ whenever the vertices $v_1$ and
$v_2$ are connected by an edge whose colour belong to~$B$.
Therefore $\Delta_{B,v_1}=\Delta_{B,v_2}$ if the vertices $v_1$
and $v_2$ lie in the same connected component of the
graph~$\Gamma_B$. Furthermore, it can be easily checked that the
simplices $\Delta_{B,v_1}$ and $\Delta_{B,v_2}$ are distinct if
the vertices $v_1$ and $v_2$ lie in different connected components
of~$\Gamma_B$.
\end{proof}

The simplex of~$\bS(\Gamma)$ corresponding to a
pair~$(B,\Upsilon)$ will be denoted by~$\Delta_{B,\Upsilon}$. For
$(n-1)$-dimensional simplices we shall usually use notation
$\Delta_v$ instead of~$\Delta_{\emptyset,v}$.

\begin{propos}
The link of a simplex~$\Delta_{B,\Upsilon}$ in the simplicial cell
complex~$\bS(\Gamma)$ is isomorphic to~$\bS(\Upsilon)$.
\end{propos}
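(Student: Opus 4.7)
The plan is to reduce the claim to a purely combinatorial statement about the partially ordered set $\Cal K(\Gamma)$, using Proposition~2.1 twice. By the canonical description of the link recorded immediately after Definition~2.3, the simplicial cell complex $\Lk\Delta_{B,\Upsilon}$ is determined up to isomorphism by the poset of cells of $\bS(\Gamma)$ containing $\Delta_{B,\Upsilon}$, with $\Delta_{B,\Upsilon}$ itself playing the role of the empty simplex. By Proposition~2.1 this is the set $\{(C,\Theta)\in\Cal K(\Gamma):C\subseteq B,\ \Theta\subseteq\Upsilon\}$, equipped with the restriction of the order on $\Cal K(\Gamma)$. Analogously, applying Proposition~2.1 to $\bS(\Upsilon)$ (whose vertex colours form the set $B$), the poset of simplices of $\bS(\Upsilon)$ together with the empty simplex is $\Cal K(\Upsilon)$ augmented by a formal minimum which we label $(B,\Upsilon)$.

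The key step, which is essentially the only content of the argument, is the graph-theoretic observation that for every subset $C\subsetneq B$ the connected components of $\Gamma_C$ whose vertex sets lie in $\Upsilon$ coincide with the connected components of the restricted graph $\Upsilon_C$. This is immediate: any edge of $\Gamma_C$ incident to a vertex $v\in\Upsilon$ has colour in $C\subseteq B$, hence is an edge of $\Gamma_B$; since $\Upsilon$ is a connected component of $\Gamma_B$, the other endpoint of that edge also lies in $\Upsilon$. Therefore each component of $\Gamma_C$ is either entirely contained in $\Upsilon$ or entirely disjoint from it, and the components of the former type are precisely the components of $\Upsilon_C$.

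Given this observation, the assignment $(C,\Theta)\mapsto(C,\Theta)$, extended by sending $(B,\Upsilon)$ to the empty simplex, defines a bijection between the two posets above. The partial orders on $\Cal K(\Gamma)$ and on $\Cal K(\Upsilon)$ are given by the same formula $C_2\subseteq C_1$, $\Theta_2\subseteq\Theta_1$, so this bijection is automatically an order isomorphism, which by the Definition~2.3 framework yields the required isomorphism $\Lk\Delta_{B,\Upsilon}\cong\bS(\Upsilon)$ of simplicial cell complexes. The only point that requires care, but does not present any genuine obstacle, is to work with Definition~2.3 of the link throughout rather than the classical definition, so that the poset-level identification literally produces a simplicial cell isomorphism.
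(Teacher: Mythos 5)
Your argument is correct and is essentially the paper's own proof: both identify $\Lk\Delta_{B,\Upsilon}$ via Proposition~2.1 with the interval above $(B,\Upsilon)$ in $\Cal K(\Gamma)$ and then match that interval with $\Cal K(\Upsilon)$. The only difference is that you spell out the graph-theoretic step (components of $\Gamma_C$ meeting $\Upsilon$ are exactly components of $\Upsilon_C$) which the paper dismisses as ``easy to check.''
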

\begin{proof}
It is easy to check that the partially ordered set~$\Cal
K(\Upsilon)$ is isomorphic to the interval
$\left((B,\Upsilon),+\infty\right)$ of the partially ordered
set~$\Cal K(\Gamma)$. Hence it is isomorphic to the partially
ordered set of simplices of~$\bS(\Gamma)$ that
contain~$\Delta_{B,\Upsilon}$ and do not coincide
with~$\Delta_{B,\Upsilon}$. Consequently the partially ordered set
of simplices of the complex~$\bS(\Upsilon)$ is isomorphic to the
partially ordered set of simplices of the
complex~$\Lk\Delta_{B,\Upsilon}$.
\end{proof}

{\it Orientation.} It is easy to check that the
pseudo-manifold~$\bS(\Gamma)$ is orientable if and only if the
graph~$\Gamma$ does not contain a cycle of odd length, that is, is
bipartite. Suppose that we have a decomposition of the set~$V$
into two disjoint subsets $V_+$ and $V_-$ such that the
graph~$\Gamma$ is bipartite with respect to this decomposition. We
endow the simplex $\Delta_v$ with the orientation induced by the
canonical orientation of the standard simplex
$\Delta^{n-1}\subset\Bbb R^n$ if $v\in V_+$ and we endow the
simplex $\Delta_v$ with the orientation opposite to the
orientation induced by the canonical orientation
of~$\Delta^{n-1}\subset\Bbb R^n$ if $v\in V_-$. It can be easily
checked that the introduced orientations on the
simplices~$\Delta_v$ agree. Thus the decomposition $V=V_+\sqcup
V_-$ yields an orientation of~$\bS(\Gamma)$. Vice versa, each
orientation of~$\bS(\Gamma)$ provides a decomposition $V=V_+\sqcup
V_-$ with respect to which the graph~$\Gamma$ is bipartite.

We need the following version of the Pezzana-Ferri construction
for cubic cell complexes. Let~$\Gamma$ be a homogeneous graph of
degree~$2n$ with edges regularly coloured by the elements of the
set
$$
A=\left\{ 1^0,2^0,\ldots,n^0,1^1,2^1,\ldots,n^1\right\}.
$$
Let $V$ be the vertex set of $\Gamma$. By~$\Phi_j^{e}$ we denote
the involution corresponding to the colour~$j^{e}$. To the
graph~$\Gamma$ we assign a normal $n$-dimension cubic cell
pseudo-manifold
$$
\bQ(\Gamma)=\left.V\times I^n\right/\sim,
$$
where $I=[0,1]$ and the equivalence relation~$\sim$ is generated
by the identifications
\begin{gather*}
(v,t_1,t_2,\ldots,t_n)\sim (\Phi^0_j(v),t_1,t_2,\ldots,t_n)\text{
if }t_j=0;\\
(v,t_1,t_2,\ldots,t_n)\sim (\Phi^1_j(v),t_1,t_2,\ldots,t_n)\text{
if }t_j=1.
\end{gather*}

Let $x$ be a vertex of a cube~$v\times I^n$. The vector of
coordinates of the corresponding vertex of the standard cube~$I^n$
is called the {\it type} of the vertex $x$. We notice that the
equivalence relation~$\sim$ identifies the vertices of the cubes
only if they are of the same type. Therefore the type of a vertex
of the complex~$\bQ(\Gamma)$ is well defined. The type is an
$n$-dimensional vector with each coordinate equal either to $0$ or
to $1$. Let $\mathbf{e}=(e_1,e_2,\ldots,e_n)$ be a vector with
each coordinate equal either to $0$ or to $1$. By definition, we
put
$$
A_{\mathbf{e}}=\left\{1^{e_1},2^{e_2},\ldots,
n^{e_n}\right\};\qquad
\Gamma_{\mathbf{e}}=\Gamma_{A_{\mathbf{e}}}.
$$
We shall say that a subset $B\subset A$ is {\it good} if for each
$j$ the subset $B$ contains not more than one of the
elements~$j^0$ and~$j^1$. By~$\Cal Q(\Gamma)$ we denote the set of
all pairs~$(B,\Upsilon)$ such that $B$ is a good subset of~$A$ and
$\Upsilon$ is a connected component of the graph~$\Gamma_B$. Let
us introduce a partial order on the set~$\Cal Q(\Gamma)$ in the
same manner as on the set~$\Cal K(\Gamma)$. The following two
propositions are quite similar to Propositions~2.1 and~2.2.

\begin{propos}
The set of cubes of the complex $\bQ(\Gamma)$ \textnormal{(}not
including the empty cube\textnormal{)} partially ordered by
inclusion is isomorphic to the partially ordered set~$\Cal
Q(\Gamma)$. This isomorphism takes each cube~$\sigma$
of~$\bQ(\Gamma)$ to a pair $(B,\Upsilon)$, where $B\subset A$ is
the intersection of all subsets $A_{\mathbf{e}}$ such that the
cube~$\sigma$ contains a vertex of type~$\mathbf{e}$. In
particular, a pair $(A_{\mathbf{e}},\Upsilon)$, where $\Upsilon$
is a connected component of~$\Gamma_{\mathbf{e}}$, corresponds to
a vertex of type $\mathbf{e}$.
\end{propos}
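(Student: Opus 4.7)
The plan is to follow the template of Proposition~2.1. For each vertex $v\in V$ and each good subset $B\subset A$, let $F_{B,v}\subset v\times I^n$ denote the face on which the $j$-th coordinate equals~$e$ for every $j^e\in B$; its dimension is $n-|B|$. Good subsets parametrise the faces of the standard cube $I^n$ exactly (with $j^e\in B$ signifying that $t_j$ is fixed to~$e$, and goodness ensuring that no coordinate is fixed to two different values at once), so every cube of $\bQ(\Gamma)$ is the image under the quotient map of some~$F_{B,v}$.

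The central step is to determine when two such faces $F_{B_1,v_1}$ and $F_{B_2,v_2}$ have the same image in $\bQ(\Gamma)$. I would show that this happens iff $B_1=B_2$ and $v_1$, $v_2$ lie in the same connected component of~$\Gamma_{B_1}$. The ``if'' direction follows because a path $v_1=w_0,w_1,\ldots,w_k=v_2$ in $\Gamma_{B_1}$ uses only edges of colours $j^e\in B_1$, and the identification $(w_i,t)\sim(\Phi^e_j(w_i),t)$ then applies to every point of $F_{B_1,w_i}$, since the $j$-th coordinate already equals~$e$ there. The ``only if'' direction uses an interior point of~$F_{B_1,v_1}$: its coordinates satisfy $t_j\in\{0,1\}$ exactly for indices $j$ with $j^e\in B_1$, and because every generator $\Phi^e_j$ of~$\sim$ can act only where $t_j=e$, the equivalence class of such a point is controlled by the subgroup generated by $\{\Phi^e_j:j^e\in B_1\}$, whose orbits on $V$ are precisely the connected components of~$\Gamma_{B_1}$. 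Matching the coordinate profile required by $F_{B_2,v_2}$ against the one of this interior point then forces $B_2=B_1$.

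This immediately yields the asserted bijection between $\Cal Q(\Gamma)$ and the cubes of $\bQ(\Gamma)$. The description of $B$ as the intersection of the sets $A_{\mathbf{e}}$ over the vertex types $\mathbf{e}$ of the cube is a direct combinatorial check: for each fixed coordinate the corresponding $j^e$ lies in every such $A_{\mathbf{e}}$, while for each free coordinate both values $e_j=0$ and $e_j=1$ occur at some vertex, so neither $j^0$ nor $j^1$ survives the intersection. Order-preservation is then short: if $(B_1,\Upsilon_1)\leqslant(B_2,\Upsilon_2)$, i.e.\ $B_2\subset B_1$ and $\Upsilon_2\subset\Upsilon_1$, then for any $v\in\Upsilon_2\subset\Upsilon_1$ one has $F_{B_1,v}\subset F_{B_2,v}$, yielding the containment of cubes; conversely, if the cube indexed by $(B_1,\Upsilon_1)$ is contained in the one indexed by $(B_2,\Upsilon_2)$, the same coordinate-profile argument applied to an interior point of $F_{B_1,v_1}$ gives $B_2\subset B_1$, and the inclusion $\Gamma_{B_2}\subset\Gamma_{B_1}$ then forces the $\Gamma_{B_2}$-component of the connecting vertex to sit inside $\Upsilon_1$, so that $\Upsilon_2\subset\Upsilon_1$. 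No essential obstacle arises beyond keeping straight the reversal between the partial order on $\Cal Q(\Gamma)$ and subgraph inclusion among the $\Gamma_B$.
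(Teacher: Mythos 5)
Your proposal is correct and follows the same route as the paper: the paper gives no separate proof of this proposition, stating only that it is ``quite similar'' to Proposition~2.1, whose proof is exactly your template (enumerate the faces $F_{B,v}$ and show that two of them are identified precisely when $B_1=B_2$ and $v_1,v_2$ lie in the same component of $\Gamma_{B_1}$). Your write-up in fact supplies more detail than the paper does — the role of goodness, the coordinate-profile argument via interior points, the vertex-type description of $B$, and order preservation — all of which checks out.
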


The cube of~$\bQ(\Gamma)$ corresponding to a pair~$(B,\Upsilon)$
will be denote by~$\square_{B,\Upsilon}$. For $n$-dimensional
cubes we shall usually use notation~$\square_v$ instead
of~$\square_{\emptyset,v}$.

\begin{propos}
The link of a cube $\square_{B,\Upsilon}$ in the cubic cell
complex~$\bQ(\Gamma)$ is isomorphic to~$\bS(\Upsilon)$.
\end{propos}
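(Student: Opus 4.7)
The plan is to mimic the argument given for Proposition~2.2, now with the partially ordered set $\Cal Q$ in place of $\Cal K$. By Proposition~2.3, the poset of cubes of $\bQ(\Gamma)$ that strictly contain $\square_{B,\Upsilon}$ is isomorphic to the interval $((B,\Upsilon),+\infty)\subset \Cal Q(\Gamma)$. Since the face poset of $\Lk\sigma$ always agrees with the poset of cells of the ambient complex strictly containing $\sigma$ (as recorded immediately after Definition~2.3), and since a simplicial cell complex is determined up to isomorphism by its face poset, it suffices to exhibit an order-isomorphism between this interval and $\Cal K(\Upsilon)$. Then Proposition~2.1, applied to the degree-$|B|$ graph $\Upsilon$ coloured by elements of $B$, identifies $\Cal K(\Upsilon)$ with the face poset of $\bS(\Upsilon)$, and the result follows.

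I claim that the desired isomorphism is simply the identity assignment $(B',\Upsilon')\mapsto(B',\Upsilon')$. If $(B',\Upsilon')>(B,\Upsilon)$ in $\Cal Q(\Gamma)$, then necessarily $B'\subsetneq B$: were $B'=B$, then $\Upsilon,\Upsilon'$ would be connected components of $\Gamma_B$ with $\Upsilon'\subseteq\Upsilon$, forcing $\Upsilon'=\Upsilon$. Consequently $B'$ is automatically good, and what remains is to check that $\Upsilon'$ is a connected component of $\Upsilon_{B'}$, not merely of $\Gamma_{B'}$. Conversely, given $(C,\Xi)\in \Cal K(\Upsilon)$, one needs to verify that $\Xi$, originally defined as a connected component of $\Upsilon_C$, is in fact a connected component of $\Gamma_C$ whose vertex set lies inside $\Upsilon$.

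Both verifications rest on a single observation: any edge of $\Gamma$ whose colour lies in $B$ and which meets the vertex set of $\Upsilon$ must lie entirely in $\Upsilon$, for otherwise $\Upsilon$ would not be a connected component of $\Gamma_B$. Consequently, for every $B'\subseteq B$ the subgraphs $\Gamma_{B'}$ and $\Upsilon_{B'}$ share precisely the same edges incident to any vertex of $\Upsilon$, so the connected components of $\Upsilon_{B'}$ coincide with those connected components of $\Gamma_{B'}$ whose vertex sets are contained in the vertex set of $\Upsilon$. Since the order relations on $\Cal Q(\Gamma)$ and on $\Cal K(\Upsilon)$ are defined in identical fashion via reverse inclusion of the colour set and inclusion of the component, the identity map is manifestly order-preserving and order-reflecting. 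I do not anticipate any serious obstacle; the only mild subtlety is the colour-restriction observation above, which is an immediate consequence of the maximality of $\Upsilon$ inside $\Gamma_B$.
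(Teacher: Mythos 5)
Your argument is correct and is essentially the paper's own: the paper proves Proposition~2.4 (like Proposition~2.2) by identifying $\Cal K(\Upsilon)$ with the interval $\bigl((B,\Upsilon),+\infty\bigr)$ in the cell poset and invoking the poset description of the link. You have merely spelled out the ``easy to check'' step — that components of $\Upsilon_{B'}$ for $B'\subseteq B$ are exactly the components of $\Gamma_{B'}$ contained in $\Upsilon$ — which the paper leaves implicit.
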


The pseudo-manifold~$\bQ(\Gamma)$ is orientable if and only if
the graph~$\Gamma$ does not contain a cycle of odd length. A
decomposition of the set~$V$ into two disjoint subsets~$V_+$
and~$V_-$ with respect to which the graph~$\Gamma$ is bipartite
induces an orientation of the pseudo-manifold~$\bQ(\Gamma)$.

Let $x=\square_{A_{\mathbf{e}},\Upsilon}$ be an arbitrary vertex
of type~$\mathbf{e}$. It follows from Proposition~2.4 that the
link of~$x$ is isomorphic to~$\bS(\Upsilon)$. Now we assume that
the graph~$\Gamma$ is bipartite. Then the orientation of the
pseudo-manifold~$\bQ(\Gamma)$ induces the orientation of~$\Lk x$.
On the other hand, forgetting about the upper indices of colours
we may assume that edges of the graph~$\Gamma_{\mathbf{e}}$ are
coloured regularly in the colours~$1,2,\ldots,n$. The
decomposition of~$\Gamma$ into two parts provides the
decomposition of~$\Gamma_{\mathbf{e}}$ into two parts.
Consequently the graph~$\Gamma_{\mathbf{e}}$ and its connected
component~$\Upsilon$ are bipartite graphs with edges coloured
regularly in colours $1,2,\ldots,n$. Thus the simplicial cell
pseudo-manifold~$\bS(\Upsilon)$ obtains the orientation. Does the
isomorphism between the complexes~$\Lk x$ and~$\bS(\Upsilon)$
preserve the orientation? One can immediately check the following
proposition.
\begin{propos}
The isomorphism between the complexes $\Lk x$ and $\bS(\Upsilon)$
established in Proposition~2.4 preserves the orientation  if and
only if the sum~$e_1+e_2+\ldots+e_n$ is even.
\end{propos}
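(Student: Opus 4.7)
The plan is to reduce the statement to a local orientation calculation on the standard cube. By Proposition~2.4, the top-dimensional simplices of $\Lk x$ correspond bijectively to $n$-cubes of $\bQ(\Gamma)$ containing $x$, and these in turn correspond to vertices $v$ of the component $\Upsilon$ (a vertex of $\Upsilon$ viewed as a connected component of $\Gamma_\emptyset$ contained in $\Upsilon$). Under the isomorphism of Proposition~2.4, this top-dimensional simplex is identified with the simplex $\Delta_v$ of $\bS(\Upsilon)$. So it suffices to compare the two orientations simplex by simplex.

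First I would unwind the two orientation conventions. On the $\bS(\Upsilon)$ side, the simplex $\Delta_v$ is oriented as $(-1)^{\epsilon(v)}$ times the canonical orientation of $\Delta^{n-1}$, where $\epsilon(v)=0$ for $v\in V_+\cap\V(\Upsilon)$ and $\epsilon(v)=1$ for $v\in V_-\cap\V(\Upsilon)$, using the bipartition of $\Upsilon$ induced from the fixed bipartition $V=V_+\sqcup V_-$ of $\Gamma$. On the $\Lk x$ side, the top-dimensional simplex coming from the cube $\square_v$ is oriented as the link of the vertex of type $\mathbf{e}$ in $\square_v$, equipped with the orientation induced from the orientation of $\square_v$; and $\square_v$ is oriented as $(-1)^{\epsilon(v)}$ times the canonical orientation of $I^n$.

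The core step is then the following local calculation on $I^n$ with its canonical orientation: the orientation induced on the link of the vertex of type $\mathbf{e}$ (using the conical structure at that vertex and the cone-orientation convention fixed in Section~2.1) differs from the canonical orientation of $\Delta^{n-1}$ by the sign $(-1)^{e_1+\cdots+e_n}$. I would prove this by induction on the number of $e_i$ equal to $1$: the base case $\mathbf{e}=\mathbf{0}$ is immediate because $I^n$ is literally the cone over the link of $\mathbf{0}$ with the prescribed orientation convention; and passing from $\mathbf{e}$ to $\mathbf{e}$ with one coordinate $e_i$ toggled corresponds to the reflection $t_i\mapsto 1-t_i$ of $I^n$, which reverses the canonical orientation and hence flips the induced orientation on the link.

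Combining these two orientation formulas, the orientation of the simplex coming from $\square_v$ inside $\Lk x$ equals $(-1)^{\epsilon(v)}\cdot(-1)^{e_1+\cdots+e_n}$ times the canonical orientation of $\Delta^{n-1}$, while the orientation of the corresponding $\Delta_v$ in $\bS(\Upsilon)$ is $(-1)^{\epsilon(v)}$ times the canonical orientation. The two agree on every top-dimensional simplex precisely when $e_1+\cdots+e_n$ is even, which gives the claim. The only slightly delicate point is the sign tracking in the inductive step of the local cube calculation; this is where I expect the main bookkeeping, but it is a routine application of the cone and join orientation conventions set up in Section~2.1.
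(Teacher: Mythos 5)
Your proof is correct. The paper offers no argument here at all (it merely asserts that "one can immediately check" the proposition), and your reduction to the single local computation on $I^n$ — that the cone/join conventions of Section~2.1 give the link of the corner of type $\mathbf{e}$ the orientation $(-1)^{e_1+\cdots+e_n}$ times the canonical orientation of $\Delta^{n-1}$, verified either by the reflection induction you describe or directly from $\det\bigl(\tfrac{(-1)^{e_1}}{2}\varepsilon_1,\ldots,\tfrac{(-1)^{e_n}}{2}\varepsilon_n\bigr)$ — is exactly the check the author had in mind, with the sign $(-1)^{\epsilon(v)}$ from the bipartition cancelling on the two sides as you note.
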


\subsection{Large cubes}
Let us consider a cube~$[0,1]^n\subset\Bbb R^n$ and divide it
into~$2^n$ cubes by the hyperplanes~$t_j=\frac12$. This
decomposition will be called the {\it canonical subdivision} of
the standard cube. Now suppose that $X$ is an arbitrary cubic cell
complex. We subdivide canonically each cube of~$X$. The obtained
cubic cell complex will be called the {\it canonical subdivision}
of~$X$.

In this section we shall show that for a graph $\Gamma$ satisfying
certain special conditions the cubic cell complex~$\bQ(\Gamma)$ is
the canonical subdivision of a cubic cell complex, which will be
denoted by~$\tbQ(\Gamma)$.

\begin{propos}
Suppose that the graph~$\Gamma$ satisfies the following
conditions.

1\textnormal{)} $\Phi_i^1\circ\Phi_j^1=\Phi^1_j\circ\Phi^1_i$ for
all $i$ and $j$;

2\textnormal{)} $\Phi_i^0\circ\Phi_j^1=\Phi^1_j\circ\Phi^0_i$ for
$i\neq j$;

3\textnormal{)}  There is a mapping $h:V\to\Bbb Z_2^n$ such that

\ \ \ $h(\Phi_i^0(v))=h(v)$, $i=1,2,\ldots,n$;

\ \ \ $h(\Phi_i^1(v))=h(v)+\varepsilon_i$, $i=1,2,\ldots,n$, where
$(\varepsilon_1,\ldots,\varepsilon_n)$ is the basis of the group~$\Bbb Z_2^n$.\\
Then the pseudo-manifold~$\bQ(\Gamma)$ is the canonical
subdivision of a certain cubic cell
pseudo-manifold~$\tbQ(\Gamma)$.
\end{propos}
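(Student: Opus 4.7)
The plan is to realize the cubes of the would-be complex $\tbQ(\Gamma)$ as the orbits of a natural $\mathbb Z_2^n$-action on $V$, and to reinterpret $\bQ(\Gamma)$ as the canonical subdivision of this coarser cubic cell complex. By hypothesis~1) the involutions $\Phi^1_1,\ldots,\Phi^1_n$ pairwise commute, so for each $\alpha\subset\{1,\ldots,n\}$ the product $\Phi^1_\alpha:=\prod_{i\in\alpha}\Phi^1_i$ is well defined, and the $\Phi^1_\alpha$ form an action of $G=\mathbb Z_2^n$ on $V$. By hypothesis~3) one has $h(\Phi^1_\alpha(v))=h(v)+\sum_{i\in\alpha}\varepsilon_i$, so this action is free and each orbit meets each fibre $h^{-1}(\mathbf e)$ in exactly one point. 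Consequently $V_0:=h^{-1}(0)$ is a canonical system of orbit representatives.

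Next I would construct the coarser complex as $\tbQ(\Gamma):=\bQ(\widetilde\Gamma)$, where $\widetilde\Gamma$ is the homogeneous graph of degree $2n$ on the vertex set $V_0$ whose colour-$j^0$ and colour-$j^1$ involutions are, respectively, $\Psi_j^0:=\Phi_j^0|_{V_0}$ and $\Psi_j^1:=\Phi_j^1\Phi_j^0\Phi_j^1|_{V_0}$. Both preserve $V_0$: the first by hypothesis~3), and the second because the two outer $\Phi_j^1$'s together add and then cancel $\varepsilon_j$ on the $h$-level. Using only that $\Phi_j^0$ and $\Phi_j^1$ are involutions, $(\Psi_j^1)^2=\mathrm{id}$ is immediate; fixed-point freeness of both $\Psi_j^0$ and $\Psi_j^1$ on $V_0$ follows from fixed-point freeness of $\Phi_j^0$ on $V$. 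Hence $\widetilde\Gamma$ is a legitimate homogeneous graph with regularly coloured edges and $\tbQ(\Gamma)$ is a well-defined normal cubic cell pseudo-manifold.

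The geometric model for the subdivision is as follows. For each $v\in V_0$ and each $w=\Phi^1_\alpha(v)$ in its orbit, embed the small cube $\square_w=\{w\}\times I^n$ of $\bQ(\Gamma)$ into the big cube $\widetilde\square_v=\{v\}\times I^n$ of $\tbQ(\Gamma)$ by the affine map
$$\iota_w(t_1,\ldots,t_n)=\Bigl(\tfrac{h(w)_1+(-1)^{h(w)_1}t_1}{2},\ldots,\tfrac{h(w)_n+(-1)^{h(w)_n}t_n}{2}\Bigr),$$
whose image is the sub-cube of the canonical subdivision of $\widetilde\square_v$ occupying position $h(w)\in\{0,1\}^n$. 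A direct check shows that the faces $\{t_j=1\}$ of $\square_w$ and of $\square_{\Phi^1_j(w)}$ both land on the interior hyperplane $\{\tau_j=\tfrac12\}$ and coincide there, so the $\Phi^1_j$-identifications of $\bQ(\Gamma)$ realise exactly the interior identifications of the canonical subdivision. The exterior face $\{t_j=0\}$ of $\square_w$ lands on the face $\{\tau_j=h(w)_j\}$ of $\widetilde\square_v$, which is indeed an exterior face of the big cube.

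The crux of the argument, and the main computational obstacle, is to verify that the remaining $\Phi^0_j$-identifications of $\bQ(\Gamma)$ correspond precisely to the face gluings of $\tbQ(\Gamma)$. For $w=\Phi^1_\alpha(v)$ with $v\in V_0$, hypothesis~2) lets us commute $\Phi^0_j$ past the factors $\Phi^1_i$ with $i\ne j$, and hypothesis~1) lets us freely reorder the remaining $\Phi^1$'s; a short manipulation then yields
$$\Phi^0_j(w)=\Phi^1_\alpha\bigl(\Psi^{h(w)_j}_j(v)\bigr).$$
Hence $\Phi^0_j(w)$ lies in the orbit of $\Psi^0_j(v)$ when $j\notin\alpha$ and of $\Psi^1_j(v)$ when $j\in\alpha$, and in either case at the same position $h(w)$. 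Transporting through $\iota_w$ and $\iota_{\Phi^0_j(w)}$, the relation $(w,t_j{=}0,\cdot)\sim(\Phi^0_j(w),t_j{=}0,\cdot)$ of $\bQ(\Gamma)$ becomes the face identification of $\widetilde\square_v$ with $\widetilde\square_{\Psi^0_j(v)}$ along $\{\tau_j=0\}$ in the first case, and with $\widetilde\square_{\Psi^1_j(v)}$ along $\{\tau_j=1\}$ in the second. These are exactly the gluings dictated by $\widetilde\Gamma$, so the maps $\iota_w$ assemble into a cubic cell isomorphism between $\bQ(\Gamma)$ and the canonical subdivision of $\tbQ(\Gamma)$; the appearance of the conjugate $\Phi^1_j\Phi^0_j\Phi^1_j$ rather than the naive $\Phi^1_j\Phi^0_j$ in the definition of $\Psi^1_j$ is exactly what this bookkeeping forces.
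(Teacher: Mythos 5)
Your proposal is correct and follows essentially the same route as the paper: you quotient by the free $\Bbb Z_2^n$-action generated by the $\Phi^1_j$ (your section $V_0=h^{-1}(0)$ replaces the paper's bijection $p\times h:V\to\widetilde V\times\Bbb Z_2^n$, and your $\Psi^1_j=\Phi^1_j\Phi^0_j\Phi^1_j$ is exactly the paper's $\widetilde\Phi^1_j$), the only difference being that you spell out the identification of $\bQ(\Gamma)$ with the canonical subdivision of $\bQ(\widetilde\Gamma)$, which the paper dismisses as easy to check. Note only a harmless typo in $\iota_w$: the $j$th numerator should read $2h(w)_j+(-1)^{h(w)_j}t_j$, so that the image is the sub-cube at position $h(w)$ and the faces $\{t_j=1\}$ land on $\{\tau_j=\tfrac12\}$ as you claim.
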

\begin{proof}
It follows from condition~1) that the involutions~$\Phi^1_j:V\to
V$ yield an action of the group~$\Bbb Z_2^n$ on the set~$V$.
Besides, it follows from condition~3) that this action is free. We
put $\widetilde{V}=V/\Bbb Z_2^n$. Let $p:V\to \widetilde{V}$ be
the quotient mapping. It can be easily deduced from condition~3)
that the mapping~$p\times h:V\to \widetilde{V} \times\Bbb Z_2^n$
is a bijection. Therefore we may regard the
involutions~$\Phi_j^{e}$ as involutions on the
set~$\widetilde{V}\times\Bbb Z_2^n$. We define the
mappings~$\widetilde{\Phi}^{e}_j:\widetilde{V}\to \widetilde{V}$
by putting
$$
\widetilde{\Phi}^{e}_j(u)=p\left(\Phi^0_j(u,e
\varepsilon_j)\right).
$$
Then $\Phi^0_j(u,e
\varepsilon_j)=\left(\widetilde{\Phi}^{e}_j(u),e
\varepsilon_j\right)$. Hence,
$\widetilde{\Phi}^{e}_j\left(\widetilde{\Phi}^{e}_j(u)\right)=u$.
Besides, if $\widetilde{\Phi}^{e}_j(u)=u$, then $\Phi^0_j(u,e
\varepsilon_j)=(u,e \varepsilon_j)$, which cannot be true. Thus
the mappings $\widetilde{\Phi}^{e}_j$ are involutions without
fixed points. These involutions generate a homogeneous graph of
degree~$2n$ on the vertex set~$\widetilde{V}$ with edges regularly
coloured by elements of the set~$A$. We denote this graph
by~$\widetilde{\Gamma}$. We put
$\tbQ(\Gamma)=\bQ(\widetilde{\Gamma})$. It is easy to check that
the canonical subdivision of~$\tbQ(\Gamma)$ is isomorphic
to~$\bQ(\Gamma)$.
\end{proof}

We notice that the cubic cell pseudo-manifold~$\tbQ(\Gamma)$ can
be given by
$$
\tbQ(\Gamma)=\left.V\times I^n\right/\sim,
$$
where $\sim$ is the equivalence relation generated by the
identifications
\begin{gather*}
(v,t_1,t_2,\ldots,t_n)\sim
(\Phi^0_j(v),t_1,t_2,\ldots,t_n),\text{
if }t_j=0;\\
(v,t_1,\ldots,t_{j-1},t_j,t_{j+1}\ldots,t_n)\sim
(\Phi^1_j(v),t_1,\ldots,t_{j-1},1-t_j,t_{j+1}\ldots,t_n).
\end{gather*}
Vertices of the complex~$\bQ(\Gamma)$ are exactly the barycenters
of cubes of the complex~$\tbQ(\Gamma)$. Moreover, vertices
of~$\bQ(\Gamma)$ of type $\mathbf{0}=(0,\ldots,0)$ are exactly
vertices of~$\tbQ(\Gamma)$. Obviously, for each vertex $x$
of~$\tbQ(\Gamma)$ the link of~$x$ in the complex~$\tbQ(\Gamma)$ is
isomorphic to the link of~$x$ in the complex~$\bQ(\Gamma)$.

\subsection{Construction of the pseudo-manifold~$X$} In this section we shall
give an explicit construction of the cubic cell
pseudo-manifold~$X$ in Theorem~2.3. We colour the barycenter of
every $j$-dimensional simplex of~$Y$ in colour~$j+1$. Then $Y'$ is
a simplicial pseudo-manifold with vertices regularly coloured
in~$n$ colours. By $\Upsilon$ we denote the corresponding
homogeneous graph of degree~$n$. Our goal is to construct a
homogeneous graph~$\Gamma$ of degree~$2n$ with edges regularly
coloured in colours $1^0,2^0,\ldots,n^0,1^1,2^1,\ldots,n^1$ such
that $\tbQ(\Gamma)$ is the required pseudo-manifold~$X$.

Let $U$ be the set of vertices of the graph~$\Upsilon$. The
orientation of the pseudo-manifold~$Y$ yields the decomposition
$U=U_+\sqcup U_-$ with respect to which the graph~$\Upsilon$ is
bipartite. By $\Phi_1,\ldots,\Phi_n$ we denote the involutions
determining the graph~$\Upsilon$. Let $S$ be a finite set of $r$
elements. We put,
$$
V=U\times S,\qquad V_+=U_+\times S,\qquad V_-=U_-\times S.
$$
We define the involutions $\Phi^0_j:V\to V$ by
$\Phi^0_j(u,s)=\left(\Phi_j(u),s\right)$.

\begin{propos}
Suppose $\Phi_j^1:V\to V$, $j=1,2,\ldots,n$, are involutions
satisfying the conditions~$\Phi_j^1(V_+)=V_-$, $\Phi_j^1(V_-)=V_+$
and conditions~1\textnormal{)}--3\textnormal{)} of
Proposition~2.6. Let $\Gamma$ be the graph given by the
involutions~$\Phi_j^0$,~$\Phi_j^1$. Then $X=\tbQ(\Gamma)$ is an
oriented cubic cell pseudo-manifold with $rk$ vertices that can be
divided into $k$ groups each consisting of $r$ vertices so that
the link of each vertex of the $l$th groups is isomorphic
to~$Y'_l$.
\end{propos}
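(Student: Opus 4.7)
The plan is to verify the four assertions of the proposition---that $X=\tbQ(\Gamma)$ is well-defined, oriented, has $rk$ vertices, and carries the prescribed link data---by reading them off Propositions~2.4--2.6. First I would invoke Proposition~2.6: its hypotheses are precisely conditions 1)--3) on $\Phi_j^0,\Phi_j^1$, which are built into the statement, so $\tbQ(\Gamma)$ is well-defined and $\bQ(\Gamma)$ is its canonical subdivision. For orientability I would exhibit the bipartition $V=V_+\sqcup V_-$ of $\Gamma$ with $V_{\pm}=U_{\pm}\times S$. The involution $\Phi_j$ interchanges $U_+$ and $U_-$ because $\Upsilon$ is bipartite with respect to $U_+\sqcup U_-$; hence $\Phi_j^0(u,s)=(\Phi_j(u),s)$ swaps $V_+$ and $V_-$. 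Combined with the hypothesis $\Phi_j^1(V_{\pm})=V_{\mp}$, this shows every edge of $\Gamma$ connects $V_+$ to $V_-$, so $\Gamma$ is bipartite and $\tbQ(\Gamma)$ inherits an orientation.

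Next I would count and identify the vertices. By Proposition~2.6 the vertices of $\tbQ(\Gamma)$ are exactly the vertices of $\bQ(\Gamma)$ of type $\mathbf{0}=(0,\ldots,0)$, and by Proposition~2.4 these correspond bijectively to connected components of $\Gamma_{A_{\mathbf{0}}}$, the subgraph whose edges are coloured $1^0,\ldots,n^0$. Since the involutions $\Phi_j^0$ act trivially on the $S$-factor, $\Gamma_{A_{\mathbf{0}}}$ splits as a disjoint union of $r$ isomorphic copies of $\Upsilon$, one for each $s\in S$. Writing $\Upsilon=\Upsilon_1\sqcup\cdots\sqcup\Upsilon_k$ with $\Upsilon_l$ the graph attached to $Y_l'$ by the Pezzana--Ferri correspondence, I obtain $rk$ components, and grouping by the index $l$ yields the required decomposition of the vertex set of $X$ into $k$ groups of $r$ vertices each.

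Finally, for each vertex $x$ in the $l$th group Proposition~2.4 gives an isomorphism $\Lk x\cong\bS(\Upsilon_l)=Y_l'$; because the type of $x$ is $\mathbf{0}$ the sum $e_1+\cdots+e_n$ is even, so by Proposition~2.5 this isomorphism preserves the orientation induced from $\bQ(\Gamma)$. Since the link of $x$ inside $\tbQ(\Gamma)$ coincides with its link inside $\bQ(\Gamma)$, as noted at the end of section~2.4, the identification descends to $X=\tbQ(\Gamma)$. The one piece of bookkeeping to keep straight is that the orientation of $Y_l'$ induced from $Y$ agrees with the one determined on $\bS(\Upsilon_l)$ by the bipartition of $\Upsilon_l$; this is built into the Pezzana--Ferri correspondence and is the only point where care is needed, but it presents no real obstacle.
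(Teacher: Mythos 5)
Your proof is correct and follows essentially the same route as the paper's: invoke Proposition~2.6 for well-definedness, use the bipartition $V=V_+\sqcup V_-$ for orientability, identify vertices of $\tbQ(\Gamma)$ with the connected components of $\Gamma_{\mathbf{0}}\cong\Upsilon^{\sqcup r}$, and apply Propositions~2.4 and~2.5 for the links. Your treatment is slightly more explicit about why $\Phi_j^0$ swaps the parts and about the orientation check via the even type $\mathbf{0}$, but the argument is the same.
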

\begin{proof}
It follows from Proposition~2.6 that the
pseudo-manifold~$\tbQ(\Gamma)$ is well defined. The
graph~$\Gamma$ is bipartite with respect to the
decomposition~$V=V_+\sqcup V_-$. Hence the
pseudo-manifold~$\tbQ(\Gamma)$ is oriented.

By $\Upsilon_l$ we denote the graph corresponding to the
pseudo-manifold~$Y_l'$. The graphs~$\Upsilon_l$ are connected and
$$
\Upsilon=\Upsilon_1\sqcup\Upsilon_2\sqcup\ldots\sqcup\Upsilon_k.
$$

Vertices of~$\tbQ(\Gamma)$ are vertices of~$\bQ(\Gamma)$ of type
$\mathbf{0}=(0,\ldots,0)$. Therefore vertices of~$\tbQ(\Gamma)$
are in one-to-one correspondence with connected components
of~$\Gamma_{\mathbf{0}}$. The graph $\Gamma_{\mathbf{0}}$ is
obtained from the graph~$\Gamma$ by deleting all edges of colours
$1^1,2^1,\ldots,n^1$. Hence,
$$
\Gamma_{\mathbf{0}}\cong\Upsilon^{\sqcup
r}=\left(\Upsilon_1\sqcup\Upsilon_2\sqcup\ldots
\sqcup\Upsilon_k\right)^{\sqcup r}.
$$
From Propositions~2.4 and~2.5 it follows that the
pseudo-manifold~$\tbQ(\Gamma)$ has $rk$ vertices that can be
divided into $k$ groups each consisting of $r$ vertices so that
the link of each vertex of the $l$th groups is isomorphic
to~$\bS(\Upsilon_l)\cong Y'_l$.
\end{proof}

Thus our goal is to find a finite set $S$ and
involutions~$\Phi^1_j$ satisfying the conditions of
Proposition~2.7. First we shall describe the construction in the
following special case.

\begin{hypothesis} Assume that to each vertex of~$Y$ is assigned a
label from a finite set~$\Cal C$ such that

1\textnormal{)} for each vertex $y\in Y$ the vertices of the
subcomplex~$\St y\subset Y$ have pairwise distinct labels;

2\textnormal{)} for each vertex $y\in Y$ the anti-isomorphism
$\chi_y:\St y\to\St \tilde{y}$ preserve the labels of vertices.
\end{hypothesis}

By $W$ we denote the set of all nonempty simplices of~$Y$.
Vertices of the graph~$\Upsilon $ are in one-to-one correspondence
with sequences
$$\sigma^0\subset\sigma^1\subset\ldots\subset\sigma^{n-1},\ \sigma^j\in W,\ \dim\sigma^j=j.$$
We shall use the notation
$u(\sigma^0,\sigma^1,\ldots,\sigma^{n-1})$ for the vertex
corresponding to a sequence
$\sigma^0\subset\sigma^1\subset\ldots\subset\sigma^{n-1}$. In the
sequel under an isomorphism (respectively, an anti-isomorphism) of
simplicial complexes with labeled vertices we shall always mean an
isomorphism (respectively, an anti-isomorphism) preserving the
labels of vertices. For each simplex $\sigma\in W$ the vertices
of~$\St\sigma$ has pairwise distinct labels. Hence the star of a
simplex~$\sigma$ cannot possess an anti-automorphism. Besides, for
arbitrary simplices $\sigma_1,\sigma_2\in W$ there exists not
more than one anti-isomorphism~$\St\sigma_1\to\St\sigma_2$.

For each $j=1,2,\ldots,n$ we construct a finite graph~$G_j$. The
vertex set of the graph~$G_j$ coincides with the set
of~$(j-1)$-dimensional simplices of~$Y$. Suppose $\sigma$ is a
$(j-1)$-dimensional simplex of~$Y$, $y$ is a vertex of~$\sigma$;
then we connect the vertices $\sigma$ and $\chi_y(\sigma)$ by an
edge of the graph $G_j$. Thus for each vertex $\sigma$ of $G_j$
there are exactly $j$ edges of $G_j$ entering~$\sigma$. (The graph
$G_j$ may contain multiple edges.)

\begin{propos}
The graph $G_j$ does not contain a cycle of odd length.
\end{propos}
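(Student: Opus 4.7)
The plan is to argue by contradiction. Suppose $G_j$ contains a closed walk $\sigma_0,\sigma_1,\ldots,\sigma_m=\sigma_0$ of odd length~$m$, where each edge $\sigma_i\sigma_{i+1}$ is produced by some vertex $y_i\in\sigma_i$ with $\chi_{y_i}(\sigma_i)=\sigma_{i+1}$. From this data I would build an orientation-reversing, label-preserving automorphism of a single star, and then use the labeling hypothesis to force that automorphism to be the identity.

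Since $y_i\in\sigma_i$, every simplex containing $\sigma_i$ contains $y_i$, so $\St_Y\sigma_i\subset\St_Y y_i$. Hence $\chi_{y_i}$ restricts to a simplicial isomorphism $\St\sigma_i\to\St\sigma_{i+1}$, and this restriction reverses orientation because $\chi_{y_i}$ reverses the orientation of the $(n-1)$-dimensional pseudo-manifold $\St y_i$ while both $\St\sigma_i$ and $\St\sigma_{i+1}$ inherit their orientations from the restriction of the $Y$-orientation to their top simplices. Because $y_{i+1}\in\sigma_{i+1}$ we have $\St\sigma_{i+1}\subset\St y_{i+1}$, so the successive restrictions compose without obstruction and produce an automorphism
\[
f=\chi_{y_{m-1}}\circ\cdots\circ\chi_{y_1}\circ\chi_{y_0}\colon\St\sigma_0\longrightarrow\St\sigma_m=\St\sigma_0,
\]
which, as a composition of $m$ orientation-reversing isomorphisms, reverses orientation whenever $m$ is odd.

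Now I would use the labeling. Each $\chi_{y_i}$ preserves vertex labels, hence so does $f$. Pick any $y\in\sigma_0$; every vertex of $\St\sigma_0$ lies in $\St y$, and by part~1) of the hypothesis the vertices of $\St y$ have pairwise distinct labels. A label-preserving simplicial automorphism of a complex whose vertices carry pairwise distinct labels must fix every vertex, hence equals the identity map, which is orientation-preserving. This contradicts the previous paragraph and rules out odd~$m$; taking $m=1$ also excludes loops, so the result really is a statement about an honest graph in the sense of \S2.3.

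The only delicate point I would double-check is that the restriction $\chi_{y_i}|_{\St\sigma_i}$ reverses the orientation inherited from~$Y$ (rather than some other orientation of $\St\sigma_i$), but this is immediate because the inherited orientation is nothing other than the collection of $Y$-orientations on the top-dimensional simplices containing~$\sigma_i$, and $\chi_{y_i}$ flips the $Y$-orientation sign on every top simplex of~$\St y_i$.
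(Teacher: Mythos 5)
Your proof is correct and follows essentially the same route as the paper: the paper's argument is exactly that edges of $G_j$ yield (label-preserving) anti-isomorphisms of stars, so an odd cycle would compose to an anti-automorphism of a star, which is impossible because a label-preserving automorphism of a complex with pairwise distinct vertex labels is the identity. You merely spell out the orientation bookkeeping and the loop case that the paper leaves implicit.
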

\begin{proof}
If two $(j-1)$-dimensional simplices of~$Y$ are connected by an
edge, then their stars are anti-isomorphic. On the other hand, the
star of any simplex of~$Y$ does not possess an anti-automorphism.
Hence the graph $G_j$ cannot contain a cycle of odd length.
\end{proof}
\begin{corr}
Any connected component of $G_j$ is a bipartite graph with the
same number of vertices in both parts.
\end{corr}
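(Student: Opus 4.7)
The plan is to deduce the equality of the two parts from the $j$-regularity of $G_j$, reducing the corollary to a one-line double count once bipartiteness is in hand. By Proposition~2.8, $G_j$ contains no odd cycle, so each connected component is bipartite; let $\Xi$ be such a component with bipartition $\Xi = A \sqcup B$. My goal is to show $|A| = |B|$.

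The key input is the observation already made when $G_j$ was constructed: for each vertex $\sigma$ of $G_j$ (which is a $(j-1)$-simplex of $Y$) there are exactly $j$ edges entering $\sigma$, namely one edge $(\sigma, \chi_y(\sigma))$ for every vertex $y$ of $\sigma$, where we count edges with multiplicity (since $G_j$ may be a multigraph). Thus $G_j$, and hence $\Xi$, is $j$-regular as a multigraph. Since every edge of $\Xi$ has one endpoint in $A$ and one in $B$, summing degrees over each part counts $|E(\Xi)|$ twice, so
$$ j\,|A| \;=\; \sum_{\sigma \in A}\deg_{\Xi}(\sigma) \;=\; |E(\Xi)| \;=\; \sum_{\sigma \in B}\deg_{\Xi}(\sigma) \;=\; j\,|B|, $$
and the equality $|A| = |B|$ follows at once.

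There is essentially no obstacle; the real content sits in Proposition~2.8, which rules out odd cycles via the labeling hypothesis, and once bipartiteness is granted the equal-cardinality statement is a standard consequence of regularity. The only point worth watching is bookkeeping: the degree count must interpret multiple edges with their multiplicity, so that every vertex genuinely has degree exactly $j$, and then the double count above is rigorous as stated.
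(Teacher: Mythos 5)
Your proof is correct and follows exactly the route the paper intends: bipartiteness of each component comes from Proposition~2.8 (no odd cycles), and the equality of the two parts comes from the $j$-regularity of $G_j$ (noted in the paper just before the corollary) via the standard double count of edges, with multiple edges counted with multiplicity. The paper leaves the corollary without an explicit proof, and your argument supplies precisely the intended one.
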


By $G$ we denote the disjoint union of the
graphs~$G_1,G_2,\ldots,G_n$. The vertex set of the graph~$G$
coincides with the set $W$. By $P$ we denote the set of all
involutions $\Lambda:W\to W$ such that for each $\sigma\in W$ the
vertices~$\sigma$ and~$\Lambda(\sigma)$ lie in different parts of
the same connected component of the graph~$G$. Corollary~2.1
implies that the set~$P$ is nonempty. It follows immediately from
the definition of the graph~$G$ that the stars of the simplices
$\sigma$ and~$\Lambda(\sigma)$ are anti-isomorphic for
any~$\Lambda\in P$, $\sigma\in W$. Also we notice that
$\Lambda(y)=\tilde{y}$ for any involution~$\Lambda\in P$ and any
vertex~$y$ of the complex~$Y$. We put,
$$
S=P\times\Bbb Z_2^n,\qquad V=U\times S,\qquad
V_{\pm}=U_{\pm}\times S.
$$
To define the involutions~$\Phi_j^1$ we need the following
auxiliary constructions.

For each simplex $\sigma\in W$ we denote by~$c(\sigma)$ the set of
labels of all vertices of $\sigma$. We shall say that the
set~$c(\sigma)$ is the {\it label} of the simplex~$\sigma$. The
set $c(\sigma)$ consists of exactly $\dim\sigma+1$ elements
because the vertices of $\sigma$ have pairwise distinct labels.
The mapping $c$ can be interpreted as a simplicial
mapping~$Y\to\Delta^{|\Cal C|-1}$. Suppose $c\subset\Cal C$ is an
arbitrary subset. Let $W_c\subset W$ be the subset consisting of
all simplices~$\sigma$ such that the complex~$\St\sigma$ contains
(a unique) simplex $\rho$ with $c(\rho)=c$. Suppose that
$\Lambda\in P$. We define an involution $\Lambda_c:W_c\to W_c$ in
the following way. Let $\sigma\in W_c$ be an arbitrary simplex. We
consider a unique simplex $\rho$ such that $\rho$ is contained
in~$\St\sigma$ and $c(\rho)=c$. For $\Lambda_c(\sigma)$ we take
the image of the simplex $\sigma$ under the unique
anti-isomorphism~$\St\rho\to\St\Lambda(\rho)$.

For any subset $c\subset\Cal C$ we define a mapping $\Theta_c:P\to
P$ by
$$
\Theta_c(\Lambda)(\sigma)=\left\{
\begin{aligned}
(\Lambda_c\circ \Lambda&\circ\Lambda_c)(\sigma),&&\text{if
} c(\sigma)\supset c;\\
\Lambda&(\sigma),&&\text{if } c(\sigma)\not\supset c.
\end{aligned}
\right.
$$
It is easy to check that~$\Theta_c(\Lambda)\in P$.

Now we can define the mappings
$$
\Phi^1_j:U\times P\times\Bbb Z^n_2\to U\times P\times\Bbb Z^n_2.
$$
By definition, we put,
$$
\Phi^1_j\left(u\left(\sigma^0,\sigma^1,\ldots,\sigma^{n-1}\right),\Lambda,g\right)=
\left( u\left( \Lambda_c(\sigma^0), \Lambda_c(\sigma^1),\ldots,
\Lambda_c(\sigma^{n-1})\right),
\Theta_c(\Lambda),g+\varepsilon_j  \right),
$$
where $c=c(\sigma^{j-1})$.

\begin{propos}
The mappings $\Phi^1_j$ are well-defined involutions interchanging
the sets~$V_+$ and~$V_-$ and satisfying
conditions~~1\textnormal{)},~2\textnormal{)}, and~3{)} of
Proposition~2.6.
\end{propos}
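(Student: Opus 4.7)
I would proceed in three steps, with the bulk of the work in the third. First I would observe that for every chain $\sigma^0\subset\sigma^1\subset\ldots\subset\sigma^{n-1}$ of $Y$ and $c=c(\sigma^{j-1})$, each $\sigma^i$ lies in $W_c$ and the unique simplex $\rho\in\St\sigma^i$ of label $c$ is $\sigma^{j-1}$ itself (because $\sigma^{j-1}\supset\sigma^i$ for $i\le j-1$ and $\sigma^{j-1}$ is a face of $\sigma^i$ of label $c$ for $i\ge j-1$, uniqueness being forced by the distinctness of labels in the star). Consequently $\Lambda_c(\sigma^i)$ equals, for every $i$, the image of $\sigma^i$ under one and the same anti-isomorphism $\phi\:\St\sigma^{j-1}\to\St\Lambda(\sigma^{j-1})$; since $\phi$ preserves incidence and labels, the images form a flag of consecutive dimensions and hence define a vertex of $\Upsilon$. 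I would then deduce $\Lambda_c\circ\Lambda_c=\mathrm{id}$ on $W_c$ from the uniqueness of a label-preserving anti-isomorphism between two given stars: the anti-isomorphism used in a second application of $\Lambda_c$ must coincide with $\phi^{-1}$.

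\textbf{Involution, sign interchange, and the easy axioms.} Next I would perform a short case-split on whether $c(\sigma)\supset c$, using $\Lambda^2=\mathrm{id}$, $\Lambda_c^2=\mathrm{id}$ and the label-preservation of $\Lambda$ and $\Lambda_c$, to conclude simultaneously that $\Theta_c(\Lambda)$ is an involution of $W$ and that $\Theta_c\circ\Theta_c=\mathrm{id}$ as a self-map of $P$. Realising $\phi$ as a composition of restrictions of the $\chi_y$'s along a path in $G$ from $\sigma^{j-1}$ to $\Lambda(\sigma^{j-1})$ shows that $\Lambda_c$ carries each simplex to the opposite part of the same $G$-component, and together with the analogous property of $\Lambda$ this gives $\Theta_c(\Lambda)\in P$. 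It will then follow that $\Phi^1_j$ is an involution, since two applications restore the chain (by $\Lambda_c^2=\mathrm{id}$), the element $\Lambda$ (by $\Theta_c^2=\mathrm{id}$), and the vector $g$ (since $2\varepsilon_j=0$ in $\mathbb{Z}_2^n$); because $\phi$ is orientation-reversing, $\Phi^1_j$ will exchange $V_+$ and $V_-$. Condition~3) of Proposition~2.6 is then immediate with $h(u,\Lambda,g)=g$, and condition~2) follows because for $i\ne j$ the involution $\Phi_i$ on $U$ affects only the entry $\sigma^{i-1}$ of the chain, leaving $\sigma^{j-1}$ (hence $c$) untouched, so $\phi$ commutes with this combinatorial flip on $\St\sigma^{j-1}$.

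\textbf{Commutativity of the $\Phi^1_j$ --- the main obstacle.} The hardest part will be condition~1) of Proposition~2.6, $\Phi^1_i\Phi^1_j=\Phi^1_j\Phi^1_i$; the case $i=j$ is already handled. For $i\ne j$, assuming $i<j$ and setting $c=c(\sigma^{j-1})$, $c'=c(\sigma^{i-1})$ so $c'\subsetneq c$, I would expand both compositions and reduce the problem to two identities: $(\mathrm{i})$ $\Theta_c\circ\Theta_{c'}=\Theta_{c'}\circ\Theta_c$ on $P$, and $(\mathrm{ii})$ $\Theta_c(\Lambda)_{c'}\circ\Lambda_c=\Theta_{c'}(\Lambda)_c\circ\Lambda_{c'}$ on the simplices of the chain. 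Both are instances of the same uniqueness principle used above: any two label-preserving simplicial (anti-)isomorphisms between the same pair of stars must coincide. So it suffices to check, in a finite case analysis organised by whether $c(\sigma)$ contains $c$ and/or $c'$, that the two sides of each identity are label-preserving maps with matching source and target stars; in each case they factor through the two anti-isomorphisms $\phi_c\:\St\sigma^{j-1}\to\St\Lambda(\sigma^{j-1})$ and $\phi_{c'}\:\St\sigma^{i-1}\to\St\Lambda(\sigma^{i-1})$, possibly composed with $\Lambda$. Matching these compositions consistently is the genuine technical obstacle; I expect it to be the most intricate part of the proof.
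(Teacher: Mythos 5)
Your proposal is correct and follows essentially the same route as the paper: well-definedness and conditions 2)--3) from the fact that $\Lambda_c$ acts on the whole flag via the single label-preserving anti-isomorphism $\St\sigma^{j-1}\to\St\Lambda(\sigma^{j-1})$ (with $h$ the projection onto $\Bbb Z_2^n$), and the involution/commutativity properties reduced to exactly the identities the paper isolates as Propositions~2.10 and~2.11, namely $(\Theta_c(\Lambda))_c=\Lambda_c$, the mixed identity $(\Theta_{c}(\Lambda))_{c'}\circ\Lambda_{c}=(\Theta_{c'}(\Lambda))_{c}\circ\Lambda_{c'}$, and $\Theta_c\circ\Theta_{c'}=\Theta_{c'}\circ\Theta_c$ for $c'\subset c$, all resting on the uniqueness of a label-preserving anti-isomorphism between two stars. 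The only cosmetic difference is that the paper writes out the case analysis for these identities explicitly rather than appealing to the uniqueness principle in summary form.
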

\begin{proof}
Obviously, for any~$\Lambda\in P$, $c\subset \Cal C$ the
mapping~$\Lambda_c:W_c\to W_c$ preserves the dimensions and the
labels of simplices and the inclusion relation. (Notice that the
initial mapping~$\Lambda:W\to W$ does not preserve the inclusion
relation.) Therefore the mappings~$\Phi^1_j$ are well defined.
Condition~2) and the equalities $\Phi^1_j(V_+)=V_-$ and
$\Phi^1_j(V_-)=V_+$ follow immediately from the fact that the
simplex~$\Lambda_{c}(\sigma^l)$ is the image of~$\sigma^l$ under
the anti-isomorphism $\St\sigma^{j-1}\to\St\Lambda(\sigma^{j-1})$.
For a mapping  $h$ satisfying condition~3) we can take the
projection onto the last multiple $U\times P\times\Bbb
Z_2^n\to\Bbb Z_2^n$.

If $c_1\subset c_2\subset\Cal C$, then $W_{c_2}\subset W_{c_1}$.
Hence for any involution~$\Lambda\in P$ the mapping
$\Lambda_{c_1}\circ\Lambda_{c_2}\circ\Lambda_{c_1}:W_{c_2}\to
W_{c_2}$ is well defined. To prove that the mappings~$\Phi^1_j$
are involutions and commute we need the following auxiliary
propositions.

\begin{propos}
Suppose $c_1\subset c_2\subset\Cal C$, $\Lambda\in P$. Then
\begin{gather*}
\left(\Theta_{c_2}(\Lambda)\right)_{c_1}=\Lambda_{c_1};\\
\left(\Theta_{c_1}(\Lambda)\right)_{c_2}=\Lambda_{c_1}\circ
\Lambda_{c_2}\circ\Lambda_{c_1};\\
\left(\Theta_{c_1}(\Lambda)\right)_{c_2}\circ\Lambda_{c_1}=
\left(\Theta_{c_2}(\Lambda)\right)_{c_1}\circ\Lambda_{c_2}.
\end{gather*}
\end{propos}
\begin{proof}
The mapping $\Lambda_c$ preserves the labels of simplices and the
inclusion relation. Hence we suffice to prove the first equality
for simplices $\sigma$ such that $c(\sigma)=c_1$ and we suffice to
prove the second equality for simplices $\sigma$ such that
$c(\sigma)=c_2$.

If $c(\sigma)=c_1$, then
$$
\left(\Theta_{c_2}(\Lambda)\right)_{c_1}(\sigma)=
\Theta_{c_2}(\Lambda)(\sigma)=\Lambda(\sigma)=\Lambda_{c_1}(\sigma).
$$

If $c(\sigma)=c_2$, then
$$
\left(\Theta_{c_1}(\Lambda)\right)_{c_2}(\sigma)=
\Theta_{c_1}(\Lambda)(\sigma)=
\left(\Lambda_{c_1}\circ\Lambda\circ\Lambda_{c_1}\right)(\sigma)
=\left(\Lambda_{c_1}\circ\Lambda_{c_2}\circ\Lambda_{c_1}\right)(\sigma).
$$

The third equality is a straightforward consequence of the first
two.
\end{proof}

\begin{propos}
If $c_1\subset c_2$, then
$\Theta_{c_1}\circ\Theta_{c_2}=\Theta_{c_2}\circ\Theta_{c_1}$.
\end{propos}
\begin{proof}
Suppose that $\Lambda\in P$, $\sigma\in W$. If
$c(\sigma)\not\supset c_1$, then
$$
\Theta_{c_2}\left(\Theta_{c_1}(\Lambda)\right)(\sigma)=\Lambda(\sigma)
=\Theta_{c_1}\left(\Theta_{c_2}(\Lambda)\right)(\sigma).
$$
If $c(\sigma)\supset c_1$ and $c(\sigma)\not\supset c_2$, then
\begin{gather*}
\Theta_{c_1}\left(\Theta_{c_2}(\Lambda)\right)(\sigma)=
\left(\left(\Theta_{c_2}(\Lambda)\right)_{c_1}\circ
\Theta_{c_2}(\Lambda)\circ
\left(\Theta_{c_2}(\Lambda)\right)_{c_1}\right)(\sigma)=
\left(\Lambda_{c_1}\circ\Lambda\circ\Lambda_{c_1}\right)
(\sigma);\\
\Theta_{c_2}\left(\Theta_{c_1}(\Lambda)\right)(\sigma)=
\Theta_{c_1}(\Lambda)(\sigma)=
\left(\Lambda_{c_1}\circ\Lambda\circ\Lambda_{c_1}\right)(\sigma).
\end{gather*}
If $c(\sigma)\supset c_2$, then
\begin{multline*}
\Theta_{c_1}\left(\Theta_{c_2}(\Lambda)\right)(\sigma)=
\left(\left(\Theta_{c_2}(\Lambda)\right)_{c_1}\circ
\Theta_{c_2}(\Lambda)\circ
\left(\Theta_{c_2}(\Lambda)\right)_{c_1}\right)(\sigma)=\\
\shoveright{=\left(
\Lambda_{c_1}\circ\Lambda_{c_2}\circ\Lambda\circ\Lambda_{c_2}\circ
\Lambda_{c_1}\right)(\sigma);}\\
\shoveleft{
\Theta_{c_2}\left(\Theta_{c_1}(\Lambda)\right)(\sigma)=
\left(\left(\Theta_{c_1}(\Lambda)\right)_{c_2}\circ
\Theta_{c_1}(\Lambda)\circ\left(\Theta_{c_1}(\Lambda)\right)_{c_2}
\right)(\sigma)=}\\
=\left(\left(\Lambda_{c_1}\circ\Lambda_{c_2}\circ\Lambda_{c_1}\right)
\circ\left(\Lambda_{c_1}\circ\Lambda\circ\Lambda_{c_1}\right)\circ
\left(\Lambda_{c_1}\circ\Lambda_{c_2}\circ\Lambda_{c_1}\right)\right)
(\sigma)=\\
=\left(\Lambda_{c_1}\circ\Lambda_{c_2}\circ\Lambda
\circ\Lambda_{c_2}\circ\Lambda_{c_1}\right)(\sigma).
\end{multline*}
\end{proof}

The equality $\left(\Theta_c(\Lambda)\right)_c=\Lambda_c$ easily
implies that the mappings~$\Phi_j^1$ are involutions. The third
equality of Proposition~2.10 and Proposition~2.11 imply that this
involutions commute.
\end{proof}

Thus the set~$S$ and the involutions~$\Phi_j^1$ satisfy the
conditions of Proposition~2.7. Therefore $\tbQ(\Gamma)$ is the
required cubic cell pseudo-manifold. The agreement
conditions~1)--3) of Theorem~2.3 follow immediately from the
construction.

Now we consider the general case and reduce it to the special
case considered above. Suppose that the pseudo-manifold~$Y$ has
$q$ vertices. Put,~$\Cal C=\{ 1,2,\ldots,q\}$. By~$\Cal B$ we
denote the set of bijections $\V(Y)\to\Cal C$. For each
vertex~$y$ of the pseudo-manifold~$Y$ the
anti-isomorphism~$\chi_y$ induces the bijection~$\V(\St y)\to
\V(\St \tilde{y})$. Extend arbitrarily these bijections to
bijections $\varkappa_y:\V(Y)\to \V(Y)$ such that
$\varkappa_{\tilde{y}}=\varkappa_y^{-1}$. Put,
$\overline{Y}=Y\times\Cal B$. To each vertex $(y,\nu)\in
\V(Y)\times\Cal B$ we assign the label $\nu(y)\in\Cal C$. We
define the involution
$$
\overline{\lambda}:\V(\overline{Y})\to \V(\overline{Y})
$$
by
$$
\overline{\lambda}(y,\nu)=(\tilde{y},\nu\circ\varkappa_y^{-1}).
$$
Let $\overline{\chi}_{(y,\nu)}:\St(y,\nu)\to
\St\overline{\lambda}(y,\nu)$ be the anti-isomorphism induced by
the anti-isomorphism~$\chi_y$. Then the
pseudo-manifold~$\overline{Y}$ satisfy conditions~1) and~2) of
Hypothesis. To obtain the required pseudo-manifold~$X$ we should
apply the construction described above to the
pseudo-manifold~$\overline{Y}$.

\subsection{Example}
Let $Y_1=Y_2$ be the boundary of a triangle. We put~$\Cal
C=\{1,2,3\}$. We label the vertices of~$Y_1$ by $1,2,3$ in the
clockwise order and we label the vertices of~$Y_2$ by $1,2,3$ in
the counterclockwise order. The pseudo-manifold~$Y=Y_1\sqcup Y_2$
satisfy Hypothesis in~\S~2.5. The construction described in \S~2.5
yields the disjoint union of two surfaces of genus two each with
cubic cell decomposition shown in Fig.~1. In this figure segments
marked by identical numbers should be glued so as to obtain an
orientable surface.

\begin{center}
{\unitlength=1mm
\begin{picture}(48,60)
\put(0,9){
\begin{picture}(48,48)

\put(0,24){\line(1,0){48}}

\put(12,4){\line(3,5){24}}

\put(36,4){\line(-3,5){24}}

\put(0,24){\line(1,3){4}}

\put(4,36){\line(1,1){8}}

\put(12,44){\line(3,1){12}}

\put(24,48){\line(3,-1){12}}

\put(36,44){\line(1,-1){8}}

\put(44,36){\line(1,-3){4}}

\put(0,24){\line(1,-3){4}}

\put(4,12){\line(1,-1){8}}

\put(12,4){\line(3,-1){12}}

\put(24,0){\line(3,1){12}}

\put(36,4){\line(1,1){8}}

\put(44,12){\line(1,3){4}}

\put(-0.5,30){1}

\put(6,40.5){6}

\put(17,47){2}

\put(47,30){2}

\put(40.5,40.5){3}

\put(30,47){1}

\put(-0.5,16){5}

\put(6,5){6}

\put(17,-1.5){4}

\put(47,16){4}

\put(40.5,5){3}

\put(30,-1.5){5}

\end{picture}
}

\put(16,0){Figure 1.}
\end{picture}
}

\end{center}

\subsection{Operator of barycentric subdivision}
In this section we shall obtain several results on the chain
complex~$\Cal T_*$. In \S~2.8 these results will be used to
construct the combinatorial manifold~$K$ in Theorem~1.1. We recall
that the definition of the complex~$\Cal T_*$ was given in~\S 1.

We define the linear {\it operator of barycentric subdivision}
$\bd:\Cal T_*\to\Cal T_*$ by putting $\bd Y=Y'$ for each
combinatorial sphere $Y$.

\begin{propos}
The mapping $\bd$ is a chain mapping modulo elements of order~$2$,
that is, $2(\diff\bd-\bd\diff)\xi=0$ for any $\xi\in\Cal T_*$.
\end{propos}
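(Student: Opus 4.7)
The plan is to compute $\diff\bd Y$ directly for a combinatorial sphere $Y$ and compare it with $\bd\diff Y$, then show the difference is a sum of terms each of which equals its own additive inverse in $\Cal T_*$. By linearity, verifying $2(\diff\bd-\bd\diff)Y=0$ for every oriented combinatorial sphere $Y$ suffices.

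First I would identify the links of vertices in $Y'$. The vertices of $Y'$ are precisely the barycenters $\hat\sigma$ of the nonempty simplices $\sigma$ of $Y$. A poset-level argument identifies the link of $\hat\sigma$: a simplex of $Y'$ containing $\hat\sigma$ corresponds to a chain of simplices of $Y$ passing through $\sigma$, and deleting $\sigma$ splits it into a chain of proper faces of $\sigma$ (a simplex of $(\partial\sigma)'$) and a chain of simplices strictly containing $\sigma$ (a simplex of $(\Lk_Y\sigma)'$ via the canonical identification of simplices of $Y$ above $\sigma$ with simplices of $\Lk_Y\sigma$). Thus
$$\Lk_{Y'}\hat\sigma\cong(\partial\sigma)'*(\Lk_Y\sigma)',$$
and when $\dim\sigma=0$ the first factor is empty so this reduces to $(\Lk_Y\sigma)'$. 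Summing over the vertices of $Y'$ yields
$$\diff\bd Y=\sum_{y\in\V(Y)}(\Lk_Y y)'+\sum_{\sigma\in Y,\,\dim\sigma\geq 1}(\partial\sigma)'*(\Lk_Y\sigma)'=\bd\diff Y+\sum_{\dim\sigma\geq 1}(\partial\sigma)'*(\Lk_Y\sigma)'.$$

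The key step is to show that each summand $(\partial\sigma)'*(\Lk_Y\sigma)'$ with $\dim\sigma\geq1$ is $2$-torsion in $\Cal T_*$. Since $\sigma$ has at least two vertices, the linear involution swapping any two of them is an orientation-reversing simplicial automorphism of $\sigma$ (corresponding to a single transposition of the vertex ordering), hence restricts to an orientation-reversing automorphism of $\partial\sigma$ and induces an orientation-reversing simplicial automorphism of $(\partial\sigma)'$. Taking the join of this automorphism with the identity on $(\Lk_Y\sigma)'$ gives an orientation-reversing automorphism of the whole complex, because the orientation on a join is defined (as in \S2.1) by concatenating the vertex orderings of the factors, so reversing the orientation of one factor reverses that of the join. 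Thus in $\Cal T_*$ each such term equals its own negative; twice it vanishes, and summing gives $2(\diff\bd-\bd\diff)Y=0$.

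The main obstacle I anticipate is orientation bookkeeping: verifying that the poset-level isomorphism $\Lk_{Y'}\hat\sigma\cong(\partial\sigma)'*(\Lk_Y\sigma)'$ is compatible with the induced orientation conventions of \S2.1 (both on links and on joins), and that an orientation-reversing simplicial automorphism of $\partial\sigma$ genuinely induces one on its barycentric subdivision. Both are routine once the conventions are unwound, and the conclusion is in any case robust to a global sign in the isomorphism, because the $2$-torsion argument applies equally to $\pm(\partial\sigma)'*(\Lk_Y\sigma)'$.
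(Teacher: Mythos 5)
Your argument is correct and is essentially the paper's own proof: both identify $\Lk_{Y'}\hat\sigma$ as $(\Lk_Y\sigma)'$ for vertices and as a complex admitting an anti-automorphism for barycenters of positive-dimensional simplices, so those terms are $2$-torsion in $\Cal T_*$. You merely spell out the anti-automorphism explicitly via the join decomposition $(\partial\sigma)'*(\Lk_Y\sigma)'$ and a vertex transposition, which the paper leaves implicit.
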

\begin{proof}
Let $Y$ be an oriented combinatorial sphere. If $x$ is the
barycenter of a positive-dimensional simplex of~$Y$, then the link
of~$x$ in the complex~$Y'$ possesses an anti-automorphism and
hence is an element of order $2$ in~$\Cal T_*$. If $x$ is a vertex
of $Y$ then the link of~$x$ in~$Y'$ is isomorphic to the
barycentric subdivision of the link of~$x$ in~$Y$. Consequently,
$$
\diff (Y')=\sum_{x\in \V(Y)}\left(\Lk_Yx\right)'+\text{elements
of order 2}.
$$
\end{proof}

There is a standard construction assigning to each simplicial
complex~$Y$ a simplicial decomposition~$Z$ of the
cylinder~$Y\times [0,1]$ such that the restriction of~$Z$ to the
lower base of the cylinder is $Y$ and the restriction of~$Z$ to
the upper base of the cylinder~$Y\times 1$ is $Y'$. For each
simplex $\sigma$ of~$Y$ we denote its barycenter by~$b(\sigma)$.
Then $Z$ is the simplicial complex consisting of the simplices
spanned by all sets
$$
(x_1,0),\ldots,(x_k,0),(b(\sigma_1),1),\ldots,(b(\sigma_l),1),
$$
where $x_1,\ldots,x_k$ are pairwise distinct vertices of~$Y$
spanning a~$(k-1)$-dimensional simplex $\tau$ and
$\sigma_1,\ldots,\sigma_l$ are pairwise distinct simplices of~$Y$
such that $\tau\subset\sigma_1\subset\ldots\subset\sigma_l$.

Now let $Y$ be an $(n-1)$-dimensional combinatorial sphere. Then
$Z$ is a piecewise linear triangulation of the cylinder
$S^{n-1}\times [0,1]$ with the boundary consisting of two
connected components one of which is isomorphic to~$Y$ and the
other is isomorphic to~$Y'$. We attach to these components the
cones~$\cone(Y)$ and~$\cone(Y')$ respectively. As a result we
obtain an $n$-dimensional combinatorial sphere, which will be
denoted by~$\hY$.

The combinatorial sphere $\hY$ has vertices of four types

1) the cone vertex $u_0$ of $\cone(Y)$;

2) vertices $(x,0)$, where $x$ is a vertex of $Y$;

3) vertices $(b(\sigma),1)$, where $\sigma$ is a simplex of~$Y$;

4) the cone vertex $u_1$ of $\cone(Y')$.

If the combinatorial sphere~$Y$ is oriented, we endow the
combinatorial sphere~$\hY$ with the orientation such that $\Lk
u_1\cong Y'$ and $\Lk u_0\cong -Y$.

\begin{propos}
The mapping $\bd$ is chain homotopic to the identity mapping
modulo elements of order~$2$, that is, there is a linear mapping
$D:\Cal T_*\to \Cal T_*$ of degree~$1$ such that
$$
\bd\xi-\xi=\diff D\xi+D\diff\xi+\text{\textnormal{elements of
order 2}}
$$
for any $\xi\in\Cal T_*$.
\end{propos}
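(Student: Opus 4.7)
The natural candidate is to define the homotopy on generators by $D(Y)=\hY$ and extend linearly. Since $\hY$ is $(n+1)$-dimensional when $Y$ is $n$-dimensional, $D$ has degree~$+1$ as required, and geometrically $\hY$ provides a PL bordism between $Y$ and $Y'$ whose two ``endpoints'' are precisely the cone vertices $u_0$ and $u_1$. My strategy is to compute $\diff D(Y)+D\diff(Y)$ by enumerating the vertices of $\hY$ in the four types described just before the statement, to use the orientation convention $\Lk u_0\cong -Y$, $\Lk u_1\cong Y'$ to obtain the desired $\bd Y-Y$ from the two cone vertices, and to dispose of all remaining contributions modulo $2$-torsion.

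The first key step (Lemma~A) is to show that for each $x\in\V(Y)$, $\Lk_{\hY}(x,0)\cong -\widehat{\Lk_Y x}$. I would verify this by decomposing the link as $\Lk_{\cone(Y)}x\cup\Lk_Z(x,0)$. The first piece is the cone $u_0*\Lk_Y x$. Inspecting the definition of $Z$, the simplices of $\Lk_Z(x,0)$ split according to whether $\sigma_1=x$ or $\sigma_1\supsetneq x$: the latter assemble into the prism triangulation of $\Lk_Y x\times[0,1]$, while the former give a cone on $(\Lk_Y x)'$ with apex $(x,1)$. Gluing these three pieces (a cone on $\Lk_Yx$ with apex $u_0$, a prism triangulation of $\Lk_Y x\times[0,1]$, and a cone on $(\Lk_Yx)'$ with apex $(x,1)$) reproduces the very construction of $\widehat{\Lk_Yx}$, under the identifications $u_0\leftrightarrow u_0'$ and $(x,1)\leftrightarrow u_1'$. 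An orientation comparison then supplies the sign, so that $\Lk_{\hY}(x,0)+\widehat{\Lk_Yx}=0$ in $\Cal T_n$, cancelling the contribution of $\sum_{x\in\V(Y)}\widehat{\Lk_Yx}=D\diff(Y)$.

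The second key step (Lemma~B) is to show that $\Lk_{\hY}(b(\sigma),1)$ is of order~$2$ for every non-empty simplex $\sigma$ of~$Y$. When $\dim\sigma\geqslant 1$, the argument of Proposition~2.12 applies verbatim: a transposition of two vertices of~$\sigma$ induces an orientation-reversing automorphism of $\Lk_{Y'}b(\sigma)$, and this extends to an anti-automorphism of $\Lk_{\hY}(b(\sigma),1)=\Lk_Z(b(\sigma),1)\cup u_1*\Lk_{Y'}b(\sigma)$, because the local structure of $Z$ at $(b(\sigma),1)$ depends on $\sigma$ only through its face poset. When $\sigma=x$ is a vertex, a direct examination of $\Lk_Z(x,1)$ (as above, splitting on whether $\tau=x$ or $\tau=\emptyset$) identifies $\Lk_{\hY}(x,1)$ with the unreduced suspension $\Sigma(\Lk_Yx)'$ with suspension vertices $(x,0)$ and $u_1$; swapping these two vertices reverses the orientation of $\Sigma(\Lk_Yx)'$, so the class is $2$-torsion.

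Putting Lemmas~A and~B together yields $\diff D(Y)+D\diff(Y)\equiv \bd Y-Y\pmod 2$. The main obstacle will be the orientation bookkeeping in Lemma~A: the combinatorial identification $\Lk_{\hY}(x,0)\cong\widehat{\Lk_Yx}$ is transparent once the three pieces are described explicitly, but producing the minus sign requires pushing the orientation of $\hY$ through the link construction and matching it against the convention $\Lk_{\widehat{\Lk_Yx}}u_0'\cong -\Lk_Yx$. Once this sign is pinned down, the rest of the proof is formal.
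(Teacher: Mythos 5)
Your proposal is correct and follows essentially the same route as the paper's proof: define $D(Y)=\hY$, observe that the two cone vertices contribute $Y'-Y$, that $\Lk_{\hY}(x,0)$ is anti-isomorphic to $\widehat{\Lk_Yx}$, and that the links of the vertices $(b(\sigma),1)$ admit anti-automorphisms and are therefore $2$-torsion. Your Lemmas~A and~B merely spell out the verifications (the three-piece decomposition of $\Lk_{\hY}(x,0)$ and the case split on $\dim\sigma$) that the paper asserts without detail.
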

\begin{proof}
For each combinatorial sphere $Y$ we put~$DY=\widehat{Y}$. The
link of each vertex~$(x,0)$ of a combinatorial
sphere~$\widehat{Y}$ is anti-isomorphic to $\widehat{\Lk_Yx}$. The
link of each vertex~$(b(\sigma),1)$ of~$\widehat{Y}$ possesses an
anti-automorphism and hence is an element of order $2$ in~$\Cal
T_*$. Therefore,
\begin{multline*}
\diff DY=Y'-Y-\sum_{x\in \V(Y)}\widehat{\Lk_Yx}+\text{elements of
order 2}=\\
=\bd Y-Y-D\diff Y+\text{elements of order 2}.
\end{multline*}
\end{proof}
\begin{corr}
The mapping $(2\bd)_*:H_*(\Cal T_*)\to H_*(\Cal T_*)$ coincides
with the multiplication by~$2$.
\end{corr}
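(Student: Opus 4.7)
The plan is to deduce this corollary directly from the chain homotopy established in Proposition~2.13. The key observation is that multiplication by~$2$ annihilates every element of order~$2$ in~$\Cal T_*$, so the error terms in Proposition~2.13 vanish after doubling.

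First I would take an arbitrary cycle $\xi\in\Cal T_n$, so that $\diff\xi=0$. Applying Proposition~2.13 to~$\xi$ gives
$$
\bd\xi-\xi=\diff D\xi+D\diff\xi+\eta=\diff D\xi+\eta,
$$
where $\eta\in\Cal T_n$ is a sum of elements of order~$2$. Multiplying this identity by~$2$ one obtains
$$
2\bd\xi-2\xi=2\diff D\xi+2\eta=\diff(2D\xi),
$$
since $2\eta=0$. Hence $2\bd\xi$ and $2\xi$ differ by a boundary, so they represent the same class in $H_*(\Cal T_*)$, which says exactly that $(2\bd)_*[\xi]=2[\xi]$.

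There is really no obstacle here; the content of the corollary is entirely packaged inside Propositions~2.12 and~2.13. Proposition~2.12 ensures that $\bd$ descends to a well-defined map on homology after doubling (the factor of~$2$ is needed precisely because $\bd$ is only a chain map modulo $2$-torsion), while Proposition~2.13 ensures that this induced map agrees with the identity up to a factor of~$2$. The only thing one needs to check is the bookkeeping of the factor~$2$, which is the trivial manipulation above.
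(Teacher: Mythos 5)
Your proof is correct and is exactly the argument the paper intends: the corollary is stated without proof as an immediate consequence of Propositions~2.12 and~2.13, and your computation (using that $\diff\xi=0$ kills the $D\diff\xi$ term and that doubling kills the order-$2$ error terms) is the standard way to extract it. Nothing to add.
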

\subsection{Construction of the combinatorial manifold $K$}
In this section we use the construction of the cubic cell
complex~$X$ given in \S~2.5 to obtain an explicit construction of
the combinatorial manifold $K$ in Theorem~1.1.

We consider a balanced set $Y_1,Y_2,\ldots,Y_k$ of oriented
combinatorial spheres and apply to it the construction described
in~\S 2.5. The obtained pseudo-manifold~$X$ is a cubic cell
combinatorial manifold because the links of all its vertices are
combinatorial spheres. The set of links of vertices of~$X'$
consists of the $r$-fold multiple of the set
$Y_1'',Y_2'',\ldots,Y_k''$ and several combinatorial spheres each
of which possesses an anti-automorphism. Therefore,
$$
\diff(X')=r\sum_{i=1}^kY_i''+\text{elements of order 2}.
$$
The chain $Y_1+Y_2+\ldots+Y_k$ is a cycle of the complex~$\Cal
T_*$. Hence Propositions~2.12 and~2.13 imply that
\begin{gather*}
\diff\sum_{i=1}^k\hY_i=\sum_{i=1}^kY_i'-\sum_{i=1}^kY_i+\text{elements of order 2};\\
\diff\sum_{i=1}^k\widehat{Y_i'}=\sum_{i=1}^kY_i''-\sum_{i=1}^kY_i'+\text{elements
of order 2}.
\end{gather*}
We put,
$$
K=X'\sqcup X'\sqcup
\left(\left(-\widehat{Y}_1\right)\sqcup\ldots\sqcup
\left(-\widehat{Y}_k\right)\right)^{\sqcup\, 2r}\sqcup
\left(\left(-\widehat{Y_1'}\right)\sqcup
\ldots\sqcup\left(-\widehat{Y_k'}\right)\right)^{\sqcup\, 2r}.
$$
Then
$$
\diff K=2r\sum_{i=1}^kY_i.
$$
Therefore $K$ is a required combinatorial manifold.

\subsection{Small covers}
Small covers are the $\Bbb Z_2$-analogue of quasi-toric manifolds
(see.~\cite{DaJa91},~\cite{BuPa04}). A {\it small cover} is a
smooth manifold $M^n$ with a locally standard action of the group
$\Bbb Z_2^n$ such that $M^n/\Bbb Z_2^n=P^n$ is a simple convex
polytope. (An action is {\it locally standard} if locally it can
be modeled by the standard action of~$\Bbb Z_2^n$ by reflections
on~$\Bbb R^n$.) In 1991 M.\,Davis and
T.\,Januszkiewicz~\cite{DaJa91} proved that any small cover can be
obtain by a certain standard construction. In this section we
describe the Davis-Januszkiewicz construction and point out the
connection of this construction with our construction described
in~\S 2.5.

Suppose $P$ is an $n$-dimensional simple convex polytope with $m$
facets $F_1,F_2,\ldots,F_m$, $Y$ is the boundary of the dual
simplicial polytope. Let $(a_1,a_2,\ldots,a_m)$ be the basis of
the group~$\Bbb Z_2^m$. A {\it characteristic function} is an
arbitrary homomorphism $\lambda:\Bbb Z_2^m\to\Bbb Z_2^n$ such that
the elements
$\lambda(a_{i_1}),\lambda(a_{i_2}),\ldots,\lambda(a_{i_k})$ are
linearly independent whenever the intersection $F_{i_1}\cap
F_{i_2}\cap\ldots\cap F_{i_k}$ is nonempty. Let $F=F_{i_1}\cap
F_{i_2}\cap\ldots\cap F_{i_k}$ be a face of~$P$.
By~$G(F)\subset\Bbb Z_2^n$ we denote the subgroup generated by the
elements
$\lambda(a_{i_1}),\lambda(a_{i_2}),\ldots,\lambda(a_{i_k})$. For
any point~$x\in P$ by $F(x)$ we denote a unique face of $P$ whose
relative interior contains~$x$. We put
$$
M_{P,\lambda}=P\times\Bbb Z_2^n/\sim,
$$
where $(x_1,g_1)\sim(x_2,g_2)$ if and only if $x_1=x_2$ and
$g_1^{-1}g_2\in G(F(x))$. Then $M_{P,\lambda}$ is a manifold
decomposed into cells each of which is isomorphic to the
polytope~$P$. Let us consider the cell decomposition
$X_{P,\lambda}$ dual to the obtained decomposition
of~$M_{P,\lambda}$. It is easy to check that $X_{P,\lambda}$ is a
cubic cell combinatorial manifold and the link of each vertex
of~$X_{P,\lambda}$ is isomorphic (or anti-isomorphic) to~$Y$.
Unfortunately, the construction of the manifold~$X_{P,\lambda}$
does not answer Question~1.2. The matter is that the complex
$X_{P,\lambda}$ has~$2^n$ vertices among which there are
$2^{n-1}$ vertices with links isomorphic to $Y$ and $2^{n-1}$
vertices with links anti-isomorphic to $Y$.

Now let us show that in a certain special case the cubic
decomposition~$X_{P,\lambda}$ can be obtained by a certain
analogue of Proposition~2.7. We shall consider the case of
so-called {\it manifolds induced from linear models}
(see~\cite{DaJa91}). A small cover~$M_{P,\lambda}$ is called a
manifold induced from a linear model if the function~$\lambda$
takes each vertex~$a_i$ to an element of a chosen basis of~$\Bbb
Z_2^n$. Such characteristic function yields a regular colouration
of vertices of~$Y$ in $n$ colours. Suppose $\Upsilon$ is the
corresponding homogeneous graph of degree $n$, $U$ is its vertex
set, $\Phi_j$ are the involutions giving the graph~$\Upsilon$. Let
us proceed as in Proposition~2.7. For a certain finite set $S$ we
put
$$
V=U\times S,\qquad \Phi_j^0(u,s)=(\Phi_j(u),s).
$$
We shall look for involutions~$\Phi_j^1:V\to V$ satisfying
conditions~1)--3) of Proposition~2.6. The only difference with
Proposition~2.7 is that we do not want to carry about the
orientations of links of vertices of the complex obtained. Hence
the involutions $\Phi_j^1$ need not satisfy the conditions
$\Phi^1_j(V_+)=V_-$  and $\Phi^1_j(V_-)=V_+$. In this case all
difficulties appearing in \S 2.5 can be easily got around. Put
$S=\Bbb Z_2^n$ and
$$
\Phi_j^1(u,s)=(u,s+\varepsilon_j),
$$
where $(\varepsilon_1,\varepsilon_2,\ldots,\varepsilon_n)$ is a
basis of the group~$\Bbb Z_2^n$. Denote by~$\Gamma$ the obtained
homogeneous graph of degree~$2n$. It is easy to check
that~$\tbQ(\Gamma)\cong X_{P,\lambda}$.

\section{Cobordisms of manifolds with singularities}
\subsection{Classes of pseudo-manifolds}
We recall that under a pseudo-manifold we always mean a normal
pseudo-manifold (see~\S 2.1). In this section all pseudo-manifolds
are supposed to be simplicial pseudo-manifolds if otherwise is not
stated. Suppose $\CC$ is a class of oriented pseudo-manifolds
satisfying the following axioms.

\begin{ax1} A zero-dimensional pseudo-manifold belongs to the
class~$\CC$ if and only if it is a zero-dimensional sphere. All
positive-dimensional pseudo-manifolds belonging to~$\CC$ are
connected.
\end{ax1}

\begin{ax2} Suppose that a pseudo-manifold $Y_1$ belongs to $\CC$
and a pseudo-manifold~$Y_2$ is piecewise linearly homeomorphic
to~$Y_2$ with the homeomorphism either preserving or reversing the
orientation. Then $Y_2$ also belongs to $\CC$.
\end{ax2}

\begin{ax3}
Suppose that an $n$-dimensional pseudo-manifold~$Y$ belongs
to~$\CC$, $\sigma$ is a simplex of~$Y$ of dimension less than~$n$.
Then the pseudo-manifold~$\Lk\sigma$ belongs to~$\CC$.
\end{ax3}

\begin{ax4}
If a pseudo-manifold~$Y$ belongs to~$\CC$, then the unreduced
suspension~$\Sigma Y$ belongs to~$\CC$.
\end{ax4}

By~$\PM$ we denote the class consisting of all zero-dimensional
spheres and all connected oriented pseudo-manifolds of positive
dimension. $\PM$ is the maximal class satisfying Axioms~I--IV. On
the other hand, the axioms immediately imply that every
combinatorial sphere belongs to~$\CC$. Therefore the class $\CS$
of all oriented combinatorial spheres is the minimal class
satisfying Axioms~I--IV.

Let us give several examples of classes satisfying Axioms~I--IV.

1) Suppose $P_1,P_2,\ldots$ is a either finite or infinite
sequence of combinatorial manifolds. We define the class
$\CC(P_1,P_2,\ldots)$ to be the minimal class generated by the
manifolds~$P_1,P_2,\ldots$, that is, the minimal class
containing~$P_1,P_2,\ldots$ and satisfying axioms~I--IV. The class
$\CC(P_1,P_2,\ldots)$ consists of all combinatorial spheres and
all pseudo-manifolds $X$ that are piecewise linearly homeomorphic
to an iterated unreduced suspension over a join
$P_{i_1}*P_{i_2}*\ldots*P_{i_k}$, $i_1<i_2<\ldots<i_k$, with the
homeomorphism either preserving or reversing the orientation.

2) A simplicial complex is called an {\it $n$-dimensional
simplicial homology manifold} if the link of every its
$k$-dimensional simplex has the homology of
an~$(n-k-1)$-dimensional sphere. A simplicial homology manifold
with the homology of a sphere is called a {\it simplicial homology
sphere}. The class~$\HS$ of all oriented simplicial homology
spheres satisfies Axioms~I--IV. Considering the homology with
coefficients in an Abelian group~$A$ instead of the integral
homology, we shall similarly obtain the class~$\HS(A)$ of all
oriented simplicial $A$-homology spheres. The class~$\HS(A)$ also
satisfies Axioms~I--IV.

3) In~\cite{Che80},~\cite{Che83} J.\,Cheeger built the $L_2$ Hodge
theory for the so-called {\it pseudo-manifolds with negligible
boundary}. An $n$-dimensional pseudo-manifold~$X$ can be endowed
with a locally flat metrics whose restriction to each simplex
coincides with the Euclidean metrics on a regular simplex with
edge~$1$. Then $X\setminus X^{n-2}$ is an incomplete Riemannian
manifold, where~$X^{n-2}$ is the $(n-2)$-skeleton of~$X$.  The
ring of $L_2$-cohomology of $X\setminus X^{n-2}$ is denoted by
$H_{(2)}^*(X\setminus X^{n-2})$ (see~\cite{GMPC83}). By
definition, put $H_{(2)}^*(X)=H_{(2)}^*(X\setminus X^{n-2})$. The
defined ring~$H_{(2)}^*(X)$ is invariant under piecewise linear
homeomorphisms~\cite{Che80}. A pseudo-manifold $X$ is said to be a
{\it pseudo-manifold with negligible boundary} if for any simplex
$\sigma$ of~$X$ we have $H^k_{(2)}(\Lk\sigma)=0$ whenever
$\dim\Lk\sigma=2k>0$. This condition is important because it
ensures that the strong closures of the operators~$d$ and~$\delta$
on the space of $L_2$-forms on $X\setminus X^{n-2}$ are conjugate.
(see~\cite{Che80},~\cite{GMPC83}). By $\Ch$ we denote the class
consisting of all zero-dimensional spheres, all connected
odd-dimensional pseudo-manifolds with negligible boundary, and all
connected even dimensional pseudo-manifolds $X$ with negligible
boundary such that $H^m_{(2)}(X)=0$, where $\dim X=2m$. It can be
immediately checked that the class~$\Ch$ satisfies Axioms~I--IV.

In~\cite{Gai04} the author constructed a chain complex~$\Cal T_*$
of oriented combinatorial spheres (for a definition see~\S 1 of
the present paper). Now we shall describe a generalization of
this construction for the case of an arbitrary class~$\CC$.

For the sake of simplicity we shall denote the isomorphism class
of a pseudo-manifold by the same letter as the pseudo-manifold
itself. Usually we shall not make difference between a
pseudo-manifold and its isomorphism class.

Let us fix a positive integer~$n$. We consider the free Abelian
group generated by all isomorphism classes of oriented
$(n-1)$-dimensional pseudo-manifolds of the class~$\CC$. We take
the quotient of this group by the relations $Y+(-Y)=0$, where $-Y$
is a pseudo-manifold~$Y$ with the reversed orientation. The group
obtained is denoted by~$\Cal T_n^{\CC}$. The group~$\Cal
T_n^{\CC}$ can be decomposed into the direct sum of groups each
isomorphic either to~$\Bbb Z$ or to~$\Bbb Z_2$. Summands~$\Bbb Z$
correspond to pseudo-manifolds that do not possess
anti-automorphisms and summands $\Bbb Z_2$ correspond to
pseudo-manifolds that possesses anti-automorphisms. It is
convenient to suppose that $\Cal T_0^{\CC}\cong\Bbb Z$ is the free
cyclic group generated by~$\emptyset$.

We introduce a differential
$$
\diff:\Cal T_n^{\CC}\to\Cal T_{n-1}^{\CC}
$$
by putting
$$
\diff Y=\sum_{y\in \V(Y)} \Lk y,
$$
where each pseudo-manifold $\Lk y$ is endowed with the
orientation induced by the orientation of~$Y$. (The differential
$\diff:\Cal T_1^{\CC}\to\Cal T_{0}^{\CC}$ is trivial.) It is easy
to check that~$\diff^2=0$. Thus $\Cal T^{\CC}_*$ is a chain
complex.

For an arbitrary Abelian group~$A$ we put,
$$
\Cal T^*_{\CC}(A)=\Hom(\Cal T^{\CC}_*,A).
$$
Elements of~$\Cal T^n_{\CC}(A)$ are $A$-valued functions $f$ on
the set of isomorphism classes of $(n-1)$-dimensional
pseudo-manifolds belonging to~$\CC$ such that $f(-Y)=-f(Y)$. The
differential
$$
\delta:\Cal T_{\CC}^n(A)\to\Cal T_{\CC}^{n+1}(A)
$$
is given by
$$
(\delta f)(Y)=(-1)^n\sum_{y\in \V(Y)}f(\Lk y).
$$

Now we assume that a class $\CC$ satisfy the following
multiplicative axiom.
\begin{ax5}
If pseudo-manifolds~$Y_1$ and~$Y_2$ belong to~$\CC$, then the
join~$Y_1*Y_2$ belongs to~$\CC$.
\end{ax5}

Then the join operation yields the bilinear multiplication
$$
*:\Cal T_m^{\CC}\times\Cal T_n^{\CC}\to \Cal T_{m+n}^{\CC},
$$
which endows the group~$\Cal T_*^{\CC}$ with the structure of a
commutative graded ring. We have the Leibniz formula
$$
\diff (Y_1*Y_2)=\diff Y_1*Y_2+(-1)^mY_1*\diff Y_2.
$$
Hence $H_*(\Cal T_*^{\CC})$ is a commutative graded ring.

Notice that the classes $\PM$, $\CS$, $\HS(A)$, $\Ch$ satisfy
Axiom~V, while the classes~$\CC(P_1,P_2,\ldots)$ need not
necessarily satisfy Axiom~V.

\subsection{$\CC$-cobordism groups}
Suppose $\CC$ is a class of pseudo-manifolds satisfying
Axioms~I--IV. A {\it manifold with singularities of class $\CC$}
(or a {\it $\CC$-manifold}) is a pseudo-manifold~$K$ such that the
links of all vertices of~$K$ belong to~$\CC$. In particular,
manifolds with singularities of class~$\PM$ are normal
pseudo-manifolds, manifolds with singularities of class $\CS$ are
combinatorial manifolds, manifolds with singularities of class
$\HS$ are simplicial homology manifolds, manifolds with
singularities of class $\Ch$ are pseudo-manifolds with negligible
boundary. By~$\tCC$ we denote the class of all oriented (not
necessarily connected) manifolds with singularities of
class~$\CC$. Axiom~III implies that~$\CC\subset\tCC$.

Suppose $K=K_1\sqcup\ldots\sqcup K_k$ and $L=L_1\sqcup\ldots\sqcup
L_l$ are $n$-dimensional manifolds with singularities of
class~$\CC$, where the complexes $K_i$ and $L_i$ are connected. We
shall say that the $\CC$-manifolds $K$ and~$L$ are $\CC$-cobordant
if there is a closed oriented simplicial pseudo-manifold~$Z$ with
vertices~$x_1,\ldots,x_k,y_1,\ldots,y_l,z_1,\ldots,z_m$ such that
$\Lk x_i\cong K_i$ for $i=1,\ldots,k$, $\Lk y_i\cong -L_i$ for
$i=1,\ldots,l$, and $\Lk z_i\in\CC$ for $i=1,\ldots,m$.
By~$\Omega_n^{\CC}$ we denote the corresponding cobordism
semigroup. Since the class~$\CC$ satisfies Axiom~IV, we see that
the pseudo-manifold $\Sigma K$ yields a $\CC$-cobordism between
the pseudo-manifold $K\sqcup (-K)$ and the empty pseudo-manifold.
Hence the semigroup~$\Omega_n^{\CC}$ is a group. The
group~$\Omega_0^{\CC}$ is the free cyclic group generated by the
cobordism class of a point.

\begin{remark}
The definition of $\CC$-cobordism given above has a simple
geometric interpretation. If we delete the regular neighborhoods
of the vertices $x_1,\ldots,x_k,y_1,\ldots,y_l$ from the
pseudo-manifold~$Z$ we shall obtain a manifold with singularities
of class~$\CC$ and the boundary isomorphic to~$K\sqcup (-L)$.
\end{remark}

If we want the notion of $\CC$-cobordism to be geometrically sapid
we need to prove the following proposition.

\begin{propos}
Suppose that $K_1$ and $K_2$ are manifolds with singularities of
class~$\CC$ piecewise linearly homeomorphic to each other with the
homeomorphism preserving the orientation. Then $K_1$ and $K_2$ are
$\CC$-cobordant.
\end{propos}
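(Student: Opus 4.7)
The plan is to reduce to showing $K \sim_{\CC} K^*$ whenever $K^*$ is a simplicial subdivision of a $\CC$-manifold $K$, and then to construct an explicit cobordism using a cone-plus-cylinder construction modelled on the suspension. Since any two PL homeomorphic simplicial complexes admit a common simplicial subdivision, and $\CC$-cobordism is easily seen to be an equivalence relation (reflexivity via $\Sigma K$; symmetry via orientation reversal; transitivity by excising the open stars of the ``$-L$'' vertex and the ``$L$'' vertex of two cobordisms and gluing along the resulting common boundary), the general case of Proposition~3.1 follows from the subdivision case by transitivity.

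Before constructing the cobordism itself, I would verify that any simplicial subdivision $K^*$ of a $\CC$-manifold $K$ is itself a $\CC$-manifold. For any vertex $v$ of $K^*$ lying in the relative interior of a simplex $\sigma \in K$ of dimension $j$, the link $\Lk_{K^*} v$ is PL homeomorphic to the iterated unreduced suspension $\Sigma^j(\Lk_K \sigma)$, since $|K|$ is locally modelled at $v$ on $\mathbb{R}^j \times \cone(|\Lk_K \sigma|)$. Writing $\sigma = v_0 * \tau$ with $v_0 \in \V(\sigma)$ gives $\Lk_K \sigma = \Lk_{\Lk_K v_0} \tau$; since $\Lk_K v_0 \in \CC$ by hypothesis, Axiom~III yields $\Lk_K \sigma \in \CC$, iterating Axiom~IV $j$ times gives $\Sigma^j(\Lk_K \sigma) \in \CC$, and Axiom~II transfers this to $\Lk_{K^*} v \in \CC$.

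Next I would construct the required cobordism as the closed oriented simplicial pseudo-manifold
\[
Z = \cone(-K) \,\cup_{|K| \times \{0\}}\, W \,\cup_{|K| \times \{1\}}\, \cone(K^*),
\]
where $W$ is a simplicial triangulation of $|K| \times [0,1]$ that restricts to $K$ on the bottom face and to $K^*$ on the top, and $x$, $y$ are the two cone vertices. With the orientations induced from the product orientation on the cylinder, $\Lk_Z x \cong K$ and $\Lk_Z y \cong -K^*$. Every other vertex $v$ of $Z$ is a non-cone point of $|Z| \cong |\Sigma K|$, and $|Z|$ is locally modelled at $v$ on $\mathbb{R}^{j+1} \times \cone(|\Lk_K \sigma|)$ for the simplex $\sigma \in K$ whose relative interior contains the projection of $v$ to $|K|$, with $j = \dim \sigma$; hence $\Lk_Z v$ is PL homeomorphic to $\Sigma^{j+1}(\Lk_K \sigma) \in \CC$ by the same chain of axioms, so $Z$ is a $\CC$-cobordism between $K$ and $K^*$.

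The hard part will be producing the intermediate triangulation $W$ of the cylinder with the prescribed boundary triangulations; this is a standard but nontrivial PL fact, which can be made fully explicit in the case $K^* = K'$ by using the prismatic triangulation underlying the complex $\widehat{K}$ from \S~2.7, and then extended to an arbitrary subdivision via common refinements. The local analysis of the previous paragraph ensures that the interior vertex links of any such $W$ are automatically in $\CC$, so once $W$ exists the remainder of the argument is purely formal manipulation of the axioms on $\CC$.
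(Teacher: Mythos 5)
This is essentially the paper's proof: the paper takes a PL triangulation $Z$ of $\Sigma K_1$ whose suspension-vertex links are $K_1$ and $-K_2$ (your cone--cylinder--cone complex, with $K_2$ pulled back to the top face in place of $K^*$) and verifies, exactly as you do, that every other vertex link is PL homeomorphic to $\Sigma(\partial\sigma*\Lk_{K_1}\sigma)\cong\Sigma^{\dim\sigma+1}(\Lk_{K_1}\sigma)$ and hence lies in $\CC$ by Axioms~II--IV. Your preliminary reduction to the subdivision case is an avoidable detour that moreover leans on transitivity of $\CC$-cobordism, whose excision-and-gluing justification would itself require checking that the links of vertices along the gluing locus remain in $\CC$; the paper sidesteps this entirely by building the single cobordism directly between $K_1$ and $K_2$.
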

\begin{proof}
Without loss of generality we may assume that the
pseudo-manifolds~$K_1$ and~$K_2$ are connected. We consider a
piecewise linear triangulation~$Z$ of the polyhedron $\Sigma K_1$
such that the links of the suspension vertices $x$ and $y$ are
isomorphic to the pseudo-manifolds~$K_1$ and $-K_2$ respectively.
Let $z$ be an arbitrary vertex of~$Z$ distinct from~$x$ and~$y$.
We consider the simplex~$\sigma$ of $K_1$ such that the image
of~$z$ under the natural projection $\Sigma K_1\setminus\{x,y\}\to
K_1$ belongs to the relative interior of~$\sigma$. (The relative
interior of a vertex is supposed to be the vertex itself.) Then
the link of~$z$ in~$Z$ is piecewise linearly homeomorphic to the
polyhedron~$\Sigma(\partial\sigma*\Lk_{K_1}\sigma)$. Therefore,
$\Lk z\in\CC$. Thus the pseudo-manifold~$Z$ yields a
$\CC$-cobordism between~$K_1$ and~$K_2$.
\end{proof}

If the class~$\CC$ satisfies Axiom~V, the graded group
$\Omega_*^{\CC}$ can be endowed with a multiplicative structure.
The direct product of two simplicial pseudo-manifolds
$K_1,K_2\in\tCC$ is not a simplicial pseudo-manifold. However, we
can consider an arbitrary piecewise linear triangulation~$K$ of
the polyhedron~$K_1\times K_2$. Suppose $y$ is an arbitrary vertex
of~$K$, $y_i$, $i=1,2$, are the images of~$y$ under the
projections $K_1\times K_2\to K_i$. Let $\sigma_i$ be the simplex
of~$K_i$ whose relative interior contains the point~$y_i$. It is
easy to prove that the link of~$y$ in~$K$ is piecewise linearly
homeomorphic to the complex
$$
\partial\sigma_1*\Lk_{K_1}\sigma_1*\partial\sigma_2*\Lk_{K_2}\sigma_2.
$$
Hence $\Lk y$ belongs to~$\CC$. Therefore $K$ is a manifold with
singularities of class~$\CC$. By Proposition~3.1, the
$\CC$-cobordism class~$[K]$ does not depend on the choice of the
triangulation~$K$. We put~$[K_1][K_2]=[K]$. Thus we obtain a
well-defined multiplication in the graded group~$\Omega_*^{\CC}$
which endows~$\Omega_*^{\CC}$ with the structure of a commutative
associative graded ring.

It follows immediately from the definition that any
pseudo-manifold belonging to~$\CC$ represents a zero cobordism
class in~$\Omega_*^{\CC}$. In particular, we have the following
proposition.
\begin{propos}
We have $\Omega_n^{\PM}=0$ if $n>0$.
\end{propos}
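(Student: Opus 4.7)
The plan is to realize every class $[K]\in\Omega_n^{\PM}$ with $n>0$ as zero by an explicit null-cobordism, exploiting the fact that $\PM$ is so large that essentially no restriction is placed on the auxiliary vertex links of a cobordism. Since $\Omega_n^{\PM}$ is a group and disjoint unions of cobordisms are cobordisms, I may assume $K$ is connected, so that $K$ itself belongs to $\PM$.

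I would take $Z=\Sigma K$, the unreduced suspension of $K$. By Axiom~IV the class $\PM$ is closed under $\Sigma$, and $Z$ is a closed oriented normal simplicial pseudo-manifold of dimension $n+1$ (normality follows from normality of $K$ together with the fact that suspensions preserve connectedness, so that the links of all cells of codimension $\geqslant 2$ in $\Sigma K$---which are either $\pm K$, suspensions of links in $K$, or links in $K$ itself---are connected in the required range). The vertex set of $Z$ is $\{x_+,x_-\}\sqcup\V(K)$, where $x_\pm$ are the suspension vertices, and the vertex links are
$$
\Lk_Z x_+\cong K,\qquad \Lk_Z x_-\cong -K,\qquad \Lk_Z v\cong \Sigma(\Lk_K v)\ \text{ for }\ v\in\V(K).
$$
I would designate $x_+$ as the single distinguished vertex of the $\CC$-cobordism (so $\Lk x_+\cong K$), take no vertices of ``$y$-type'' (since the target pseudo-manifold is $\emptyset$), and place every remaining vertex of $Z$ into the list of auxiliary vertices $z_1,\ldots,z_m$.

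The remaining verification is that $\Lk z_j\in\PM$ for every auxiliary vertex, and this is immediate: for $z_j=x_-$ the link is $-K\in\PM$; for $z_j=v\in\V(K)$ the link $\Sigma(\Lk_K v)$ is an unreduced suspension, hence connected, and inherits the structure of an oriented normal pseudo-manifold of positive dimension $n$, so it too lies in $\PM$. Thus $Z$ exhibits $K$ as $\PM$-cobordant to $\emptyset$, yielding $[K]=0$ in $\Omega_n^{\PM}$. There is no serious obstacle; the only real content is the observation that $\PM$ is by construction the maximal class satisfying Axioms~I--IV, so the auxiliary-link condition in the definition of cobordism is automatically fulfilled by every non-distinguished vertex of $\Sigma K$.
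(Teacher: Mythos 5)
Your proof is correct and follows the paper's own route: the paper disposes of this proposition with the remark that any pseudo-manifold belonging to $\CC$ is null-cobordant (via the suspension $\Sigma K$, as noted in the preceding discussion of why $\Omega_n^{\CC}$ is a group), combined with the fact that every connected positive-dimensional $\PM$-manifold already lies in $\PM$. You have simply written out the verification of the auxiliary-link condition that the paper leaves implicit.
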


The ring $\Omega^{\CS}_*$ coincides with the oriented piecewise
linear cobordism ring~$\Omega_*^{\SPL}$. Recall that the
ring~$\Omega_*^{\SPL}\otimes\Bbb Q\cong\Omega_*^{\SO}\otimes\Bbb
Q$ is the polynomial ring with generators in each dimension
divisible by~$4$ and
$$
\Hom(\Omega_*^{\SPL},\Bbb
Q)\cong\Hom(\Omega_*^{\SO},\Bbb Q)\cong H^*(\BO;\Bbb Q)\cong\Bbb
Q[p_1,p_2,\ldots],\ \ \deg p_i=4i.
$$

\subsection{Homomorphism $d_*:\Omega_*^{\CC}\to H_*(\Cal T_*^{\CC})$}
We have the following analogue of Theorems~1.1 and~2.1.
\begin{theorem}
Suppose $Y_1,Y_2,\ldots,Y_k\in\CC$ is a balanced set of oriented
$(n-1)$-dimensional pseudo-manifolds. Then there is an
$n$-dimensional \textnormal{(}simplicial\textnormal{)}
manifold~$K$ with singularities of class~$\CC$ whose set of links
of vertices coincides up to an isomorphism with the set
$$\underbrace{Y_1,\ldots,Y_1}_{r},\underbrace{Y_2,\ldots,Y_2}_{r},
\ldots,\underbrace{Y_k,\ldots,Y_k}_{r},Z_1,Z_2,\ldots,Z_l,-Z_1,-Z_2,\ldots,-Z_l$$
for some positive integer~$r$ and some oriented
$(n-1)$-dimensional pseudo-manifolds $Z_1,Z_2,\ldots,Z_l\in\CC$.
\end{theorem}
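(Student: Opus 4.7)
The plan is to apply the explicit construction of $\S\S$~2.3--2.8 that underlies Theorem~2.1 to the input set $Y_1,\ldots,Y_k$, and then to verify that every vertex link appearing in the resulting pseudo-manifold $K$ belongs to $\CC$; this simultaneously shows that $K$ is a $\CC$-manifold and that the auxiliary pseudo-manifolds $Z_j$ lie in $\CC$.

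A preliminary remark is that $\CS\subset\CC$: Axiom~I gives $S^0\in\CC$, iteration of Axiom~IV yields the standard simplicial spheres, and Axiom~II spreads membership to every PL sphere. Since the join of a pseudo-manifold $W$ with an $(r-1)$-sphere is PL equivalent to $\Sigma^r W$, such joins preserve membership in $\CC$ via Axioms~II and~IV.

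I would first apply the construction of $\S$~2.5 to obtain the cubic cell pseudo-manifold $X$, whose vertex links are $r$-fold copies of $Y_1',\ldots,Y_k'$ and so lie in $\CC$ by Axiom~II. The key technical step is to show $\Lk_X\sigma\in\CC$ for \emph{every} cell $\sigma$ of $X$: choosing a vertex $v\in\sigma$ and using the canonical order-isomorphism between cells of $X$ containing $v$ and simplices of $\Lk_X v$ (stated at the beginning of $\S$~2.1 and specialized to cubic complexes in Propositions~2.4 and~2.5), the cell $\sigma$ corresponds to a simplex $\sigma^{\ast}$ of $\Lk_X v\cong Y_j'$, and $\Lk_X\sigma\cong\Lk_{Y_j'}\sigma^{\ast}$, which belongs to $\CC$ by Axioms~II and~III applied to $Y_j'$. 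The standard formula $\Lk_{X'}b(\sigma)\cong(\partial\sigma)'\ast(\Lk_X\sigma)'$ (a $\dim\sigma$-fold iterated suspension of $(\Lk_X\sigma)'$ up to PL homeomorphism) then places every vertex link of $X'$ inside $\CC$. A parallel but more elementary analysis of the complexes $\hat Y_i$ and $\widehat{Y_i'}$ from $\S$~2.7 shows that their vertex links are also in $\CC$: the cone-apex vertices have links $\pm Y_i$ or $\pm Y_i'$, and the remaining cylinder vertices have links PL homeomorphic to suspensions of either $\Lk_{Y_i}y$ or of $(\partial\sigma)'\ast(\Lk_{Y_i}\sigma)'$, all lying in $\CC$ by Axioms~II--IV.

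Finally I would set
\[
K=X'\sqcup X'\sqcup\bigsqcup_{i=1}^{k}(-\hat Y_i)^{\sqcup 2r}\sqcup\bigsqcup_{i=1}^{k}(-\widehat{Y_i'})^{\sqcup 2r}
\]
exactly as in $\S$~2.8. The cancellation computation from that section, formally stated as $\diff K=2r\sum_i Y_i$ in $\Cal T_n^{\CC}$, then shows that the set of vertex links of $K$ consists of $2r$ copies of each $Y_i$ together with pairs $(Z_j,-Z_j)$ in which each $Z_j$ occurs as a vertex link of one of the four components above, hence lies in $\CC$. The main obstacle is establishing $\Lk_X\sigma\in\CC$ for all cells $\sigma$ of $X$; once this is in hand, all remaining verifications reduce to routine applications of Axioms~II--IV.
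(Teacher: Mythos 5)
Your proposal is correct and follows essentially the same route as the paper, which proves Theorem~3.1 simply by running the construction of \S\S~2.5--2.8 verbatim and remarking that ``the axioms easily imply'' that the resulting pseudo-manifold is a $\CC$-manifold. Your verification of that remark --- placing $\Lk_X\sigma$ in $\CC$ via the link-of-a-link identity together with Axioms~II--III, and then handling the vertices of $X'$, $\hat Y_i$, $\widehat{Y_i'}$ as joins with sphere boundaries via Axioms~II and~IV --- is exactly the intended, if unwritten, argument.
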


The construction of the pseudo-manifold~$K$ is quite identical
with the construction of the combinatorial manifold~$K$ in~\S 2.8.
Indeed, all results of \S\S 2.7,~2.8 would hold if we everywhere
replace the words ``combinatorial sphere'' with the words
``pseudo-manifold of the class~$\CC$'', the words ``combinatorial
manifold'' with the words ``manifold with singularities of
class~$\CC$'', and the words ``chain complex~$\Cal T_*$'' with the
words ``chain complex~$\Cal T_*^{\CC}$''. In particular there is
an operator of barycentric subdivision $\bd:\Cal T_*^{\CC}\to \Cal
T_*^{\CC}$. The operator~$\bd$ is a chain mapping modulo elements
of order~$2$ chain homotopic to the identity mapping modulo
elements of order~$2$. The axioms easily imply that the
pseudo-manifold obtained is a manifold with singularities of
class~$\CC$.

Suppose $K$ is an oriented $n$-dimensional manifold with
singularities of class~$\CC$. By~$\diff K$ we denote the element
of the group~$\Cal T_*^{\CC}$ given by
$$
\diff K=\sum_{y\in \V(K)}\Lk y.
$$
\begin{propos}
The correspondence $\diff$ induces a well-defined additive
homomorphism
$$
\diff_*:\Omega_*^{\CC}\to H_*(\Cal T^{\CC}_*).
$$
\end{propos}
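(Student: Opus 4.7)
The plan is to verify, in order, three things: (1) $\diff K$ is a well-defined element of $\Cal T_n^{\CC}$; (2) it is a cycle; (3) its homology class depends only on the $\CC$-cobordism class of $K$. Well-definedness is built into the axiomatics: because $K$ is a $\CC$-manifold, each vertex link $\Lk y$ belongs to $\CC$ by definition, so $\diff K$ is an honest integer combination of generators of $\Cal T_n^{\CC}$. Additivity with respect to disjoint union is then immediate from the defining formula, so only the cycle and cobordism properties require work.

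For the cycle property I would expand
\[
\diff(\diff K) \;=\; \sum_{y\in\V(K)}\sum_{z\in\V(\Lk y)} \Lk_{\Lk y}z
\]
and note that pairs $(y,z)$ are in bijection with directed edges $(y,w)$ of $K$. By the orientation convention of \S~2.1 (that $\sigma*\Lk\sigma$ carries the ambient orientation), the summand $\Lk_{\Lk y}z$ is canonically identified with $\Lk_K(\{y,w\})$ endowed with the orientation for which $y*w*\Lk_K(\{y,w\})$ agrees with $K$. Transposing the roles of $y$ and $w$ in this triple product reverses that orientation, so the contributions of $(y,w)$ and $(w,y)$ are opposite in $\Cal T_{n-1}^{\CC}$ modulo the relation $Y+(-Y)=0$. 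Summation over all edges therefore gives $\diff(\diff K)=0$.

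For cobordism invariance, take a cobordism $Z$ of $K=K_1\sqcup\cdots\sqcup K_k$ and $L=L_1\sqcup\cdots\sqcup L_l$ with vertices $x_1,\dots,x_k,y_1,\dots,y_l,z_1,\dots,z_m$ satisfying $\Lk_Z x_i\cong K_i$, $\Lk_Z y_j\cong -L_j$, and $\Lk_Z z_r\in\CC$. The natural candidate for the primitive of $\diff K-\diff L$ is
\[
\xi \;=\; \sum_{r=1}^m \Lk_Z z_r \;\in\; \Cal T_{n+1}^{\CC},
\]
which is well defined since each $\Lk z_r$ is a connected oriented $n$-dimensional pseudo-manifold of class $\CC$. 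I would then apply exactly the same edge-pairing cancellation, but this time to $Z$ itself. Axiom~III and the $\CC$-manifold property of the $K_i$ and $L_j$ ensure that every edge-link in $Z$ lies in $\CC$, so the identity $\sum_{v\in\V(Z)}\diff(\Lk_Z v)=0$ is valid inside $\Cal T_n^{\CC}$. Splitting this sum by the three types of vertices of $Z$ yields
\[
0 \;=\; \sum_{i=1}^k\diff K_i \;+\; \sum_{j=1}^l\diff(-L_j) \;+\; \diff\xi \;=\; \diff K - \diff L + \diff\xi,
\]
so $\diff K$ and $\diff L$ are homologous in $\Cal T_*^{\CC}$.

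The only real obstacle is the sign bookkeeping: both arguments hinge on the single geometric fact that interchanging the two endpoints of an edge $\{a,b\}$ reverses the induced orientation on its link, and in the cobordism step on the minus sign packaged into the convention $\Lk_Z y_j\cong -L_j$. Given the orientation convention fixed in \S~2.1 both facts are automatic, but they must be applied carefully since every other step of the proof is a routine reindexing of sums.
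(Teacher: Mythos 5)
Your proposal is correct and takes essentially the same route as the paper: the paper likewise disposes of $\diff\diff K=0$ (stated as obvious, where you spell out the edge-pairing cancellation) and then derives cobordism invariance from the identity $\sum_{v\in\V(Z)}\diff(\Lk_Z v)=0$ split by vertex type, concluding that $\diff K-\diff L=-\sum_{r}\diff\Lk z_r$ is a boundary because each $\Lk z_r\in\CC$. No substantive difference.
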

\begin{proof}
Obviously, $\diff\diff K=0$, that is, $\diff K$ is a cycle of the
chain complex~$\Cal T_*^{\CC}$. If the pseudo-manifold $K$ belongs
to~$\CC$, then $\diff K$ is a boundary of the complex~$\Cal
T_*^{\CC}$. Let~$Z$ be a $\CC$-cobordism
between~$K=K_1\sqcup\ldots\sqcup K_k$ and~$L=L_1\sqcup\ldots\sqcup
L_l$. We shall use the notation for the vertices of~$Z$ introduced
above. Then
$$
\sum_{i=1}^k\diff\Lk x_i+\sum_{i=1}^l\diff\Lk
y_i+\sum_{i=1}^m\diff\Lk z_i=0.
$$
Therefore,
$$
\diff K-\diff L=\sum_{i=1}^k\diff K_i-\sum_{i=1}^l\diff
L_i=-\sum_{i=1}^m\diff\Lk z_i,
$$
Now $\diff K-\diff L$ is a boundary of the complex~$\Cal
T^{\CC}_*$, since $\Lk z_i\in\CC$ for $i=1,\ldots,m$.
\end{proof}

\begin{theorem}
If the class~$\CC$ satisfies Axioms~I--IV, then the kernel and the
cokernel of the homomorphism $\diff_*:\Omega_*^{\CC}\to H_*(\Cal
T_*^{\CC})$ are torsion groups. If the class~$\CC$ also satisfies
Axiom~V, then $\diff_*$ is a multiplicative homomorphism modulo
elements of order~$2$.
\end{theorem}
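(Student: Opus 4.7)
The proof splits naturally into three parts matching the three claims of the theorem, with Theorem~3.1 doing most of the work for the first, a delicate adaptation of Theorem~2.1 for the second, and a direct link computation for the third.

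For surjectivity modulo torsion (cokernel is torsion) I would take a cycle $\xi \in \Cal T_n^{\CC}$ and write it as a $\Bbb Z$-linear combination of oriented $(n-1)$-dimensional pseudo-manifolds in $\CC$. After doubling if needed to absorb the 2-torsion from pseudo-manifolds possessing anti-automorphisms, $\xi$ (or $2\xi$) is represented by a genuinely balanced multiset $Y_1,\ldots,Y_k \in \CC$. Theorem~3.1 then produces a $\CC$-manifold $K$ whose set of vertex links equals the $r$-fold copy of the $Y_i$ together with anti-pairs $(Z_j,-Z_j)$; the anti-pairs vanish in $\Cal T_n^{\CC}$, so $\diff K = r\xi$ there and $\diff_*[K] = r[\xi]$.

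For injectivity modulo torsion (kernel is torsion), suppose $\diff_*[K] = 0$, i.e.\ $\diff K = \diff\eta$ for some $\eta = \sum_s \epsilon_s W_s \in \Cal T_{n+1}^{\CC}$ with $W_s \in \CC$. Setting $L = K \sqcup \bigsqcup_s(-\epsilon_s W_s)$, each $W_s$ is null-cobordant in $\Omega_n^{\CC}$ (being in $\CC$), so $[L] = [K]$ while $\diff L = 0$ in $\Cal T_n^{\CC}$. It therefore suffices to show that a $\CC$-manifold $L$ with $\diff L = 0$ is torsion in $\Omega_n^{\CC}$. The plan is to apply Theorem~2.1 (in its general form, without the $\CC$-restriction on inputs) at dimension $n$ to the set of connected components of $L$: the identity $\diff L = 0$ provides, modulo 2-torsion, a pairing of vertices of $\bigsqcup L_i$ with anti-isomorphic stars, which is exactly the balancedness input required. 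Theorem~2.1 then produces an $(n+1)$-dimensional normal pseudo-manifold $N$ whose vertex links are $r$-fold copies of the components $L_i$ together with anti-pairs $(Z_j,-Z_j)$.

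The main obstacle lies in showing that $N$ is a genuine $\CC$-cobordism: one must verify, using Axioms I--IV (in particular that $\widehat Z \cong \Sigma Z \in \CC$ whenever $Z \in \CC$, which follows from Axiom~IV combined with Axiom~II), that the $Z_j$ are $\CC$-manifolds and that all remaining vertex links of $N$ lie in $\CC$. This requires reexamining the Section~2 construction one dimension higher with input in $\tCC$ rather than in $\CC$ itself, which is why the argument is nontrivial. Granted this verification, $N$ is a $\CC$-cobordism between $rL$ and $\bigsqcup_j(Z_j \sqcup (-Z_j))$, and the latter class is zero in $\Omega_n^{\CC}$ (the disjoint union of $\Sigma Z_j$ is a $\CC$-cobordism from $Z_j\sqcup(-Z_j)$ to $\emptyset$ by the remark following Proposition~3.1). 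Hence $r[L]=0$.

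For multiplicativity modulo 2-torsion under Axiom~V, represent $[K_1][K_2]$ by any PL-triangulation $K$ of $K_1\times K_2$. For a vertex $y\in K$ lying in the relative interior of a product of simplices $\sigma_1\times\sigma_2$, its link in $K$ is PL-homeomorphic to $\partial\sigma_1*\Lk_{K_1}\sigma_1*\partial\sigma_2*\Lk_{K_2}\sigma_2$, as recorded in the proof of Proposition~3.1. Whenever either $\sigma_i$ has positive dimension, $\partial\sigma_i$ is the boundary of a simplex and admits an orientation-reversing simplicial involution (any transposition of its vertices), so the entire join admits an anti-automorphism and represents a 2-torsion element of $\Cal T_n^{\CC}$. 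Only vertices $y=(v_1,v_2)$ with both coordinates vertices contribute modulo 2, giving $\Lk_{K_1}v_1*\Lk_{K_2}v_2$. Summing yields $\diff K \equiv (\diff K_1)*(\diff K_2) \pmod{2}$ in $\Cal T_n^{\CC}$, and passing to homology gives $\diff_*([K_1][K_2]) \equiv \diff_*[K_1]\cdot\diff_*[K_2]$ modulo 2-torsion, as required.
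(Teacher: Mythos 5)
Your architecture is right and your cokernel argument is exactly the paper's (Theorem~3.1 applied to a balanced representative of a cycle), but there are two genuine gaps. In the kernel argument you explicitly defer the ``main obstacle'' --- checking that the auxiliary vertex links of $N$ are all connected $\CC$-manifolds --- and the route you choose makes that check harder than necessary. Running the simplicial construction of \S 2.8 one dimension up produces, besides the $r$ copies of each $L_i$, vertex links of the form $L_i'$, $L_i''$, $\widehat{\Lk_{L_i}x}$, and the order-$2$ links coming from the two copies of $X'$ and from the cylinder vertices; each must be verified to lie in $\tCC$, and the resulting pairs $Z_j\sqcup(-Z_j)$ must then still be killed by suspensions. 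That verification is the actual content of the step, and writing ``granted this verification'' leaves the proof incomplete. The paper sidesteps all of it: it applies the cubic Theorem~2.2 to the balanced set of components of $L$ and takes the barycentric subdivision $X'$ of the resulting cubic cell pseudo-manifold $X$. Every vertex of $X'$ is then either a vertex of $X$, with link $L_i''$, or the barycenter of a positive-dimensional cube, whose link is PL-homeomorphic to an iterated suspension of a link of a simplex of some $L_i'$ and hence lies in $\CC$ by Axioms~II--IV. So $X'$ is already a $\CC$-cobordism from $(L'')^{\sqcup r}$ to $\emptyset$, and $L''$ is $\CC$-cobordant to $L$ by Proposition~3.1 --- no $\widehat{\phantom{Y}}$-gadgets, no residual anti-pairs.

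The multiplicativity argument also has a concrete gap. For an \emph{arbitrary} PL triangulation $K$ of $K_1\times K_2$ the link of a vertex is only PL-\emph{homeomorphic} to the join you write down, and being an element of order~$2$ in $\Cal T_n^{\CC}$ requires a simplicial anti-automorphism, which a PL homeomorphism does not transport. One must take the specific triangulation $K=(K_1\times K_2)'$, whose vertex $(b(\sigma_1),b(\sigma_2))$ has link literally isomorphic to $\pm(\partial(\sigma_1\times\sigma_2))'*(\Lk_{K_1}\sigma_1*\Lk_{K_2}\sigma_2)'$. But then the surviving terms are $(\Lk_{K_1}y_1*\Lk_{K_2}y_2)'$, so what you get is $\diff K=\bd((\diff K_1)*(\diff K_2))+\text{elements of order }2$, not $(\diff K_1)*(\diff K_2)$ itself; to conclude $2\diff_*([K_1][K_2])=2(\diff_*[K_1])*(\diff_*[K_2])$ you still need Proposition~2.13/Corollary~2.2, i.e.\ that the subdivision operator $\bd$ is chain homotopic to the identity modulo $2$-torsion. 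That step is missing from your write-up.
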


The author do not know whether the mapping~$\diff_*$ is
multiplicative.

\begin{corr} If the class~$\CC$ satisfies Axioms~I--V, then the
homomorphism
$$
\diff_*\otimes\Bbb Q:\Omega_*^{\CC}\otimes\Bbb Q\to H_*(\Cal
T_*^{\CC})\otimes\Bbb Q
$$
is a multiplicative isomorphism. In  particular, the homomorphisms
\begin{gather*}
\diff_*\otimes\Bbb Q: \Omega_*^{\SPL}\otimes\Bbb Q\to H_*(\Cal
T^{\CS}_*)\otimes\Bbb
Q;\\
\diff_*\otimes\Bbb Q: \Omega_*^{\HS}\otimes\Bbb Q\to H_*(\Cal
T^{\HS}_*)\otimes\Bbb Q
\end{gather*}
are multiplicative isomorphisms.
\end{corr}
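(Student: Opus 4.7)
The plan is to deduce this corollary directly from Theorem~3.2 by using that tensoring with $\Bbb Q$ is an exact functor that annihilates torsion. For the additive part, I would split $\diff_*$ via the two short exact sequences
\[
0 \to \ker\diff_* \to \Omega_*^{\CC} \to \mathrm{im}\,\diff_* \to 0, \qquad 0 \to \mathrm{im}\,\diff_* \to H_*(\Cal T_*^{\CC}) \to \mathrm{coker}\,\diff_* \to 0.
\]
Theorem~3.2 asserts that the two outer terms $\ker\diff_*$ and $\mathrm{coker}\,\diff_*$ are torsion, so after applying $-\otimes\Bbb Q$ they vanish. Exactness of this functor then forces $\diff_*\otimes\Bbb Q$ to be both injective and surjective, so it is an isomorphism of graded $\Bbb Q$-vector spaces.

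For the multiplicative part, Theorem~3.2 under Axiom~V tells us that for any two classes $\alpha,\beta\in\Omega_*^{\CC}$ the discrepancy $\diff_*(\alpha\beta)-\diff_*(\alpha)\diff_*(\beta)$ is an element of order at most $2$ in $H_*(\Cal T_*^{\CC})$. Since $2$ is a unit in $\Bbb Q$, every such $2$-torsion element dies after tensoring with $\Bbb Q$, and $\diff_*\otimes\Bbb Q$ becomes an honest ring homomorphism. The ring structure on the target $H_*(\Cal T_*^{\CC})\otimes\Bbb Q$ exists because the join induces a Leibniz-compatible product on $\Cal T_*^{\CC}$, as noted in section~3.1, and extension of scalars preserves multiplication.

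For the two displayed specializations, I need to verify Axioms~I--V. The class $\CS$ of oriented combinatorial spheres satisfies all five axioms immediately, since the join of the boundaries of two simplices is again the boundary of a simplex. For $\HS$, Axioms~I--IV were already noted in section~3.1; Axiom~V follows from the Künneth-type computation showing that the join of two homology spheres is again a homology sphere (the join is the suspension of the product, so its reduced homology is obtained by shifting the homology of the product). The main case $\CC=\CS$ then recovers the statement about $\Omega_*^{\SPL}\otimes\Bbb Q$ since $\Omega_*^{\CS}=\Omega_*^{\SPL}$.

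I do not expect any serious obstacle here: the substantive work is carried out in Theorem~3.2, whose proof rests on the main construction of~\S 2 (specifically on Theorem~3.1, which provides a $\CC$-manifold realizing any balanced cycle of $\Cal T_*^{\CC}$ up to multiplicity). Once Theorem~3.2 is in hand, the corollary is essentially a formal consequence of rationalization, with the only bookkeeping being the verification of Axiom~V for the classes of interest.
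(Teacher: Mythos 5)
Your proof is correct and follows essentially the same route as the paper, which treats this corollary as an immediate formal consequence of Theorem~3.2: the torsion kernel and cokernel die under $-\otimes\Bbb Q$, and the order-$2$ multiplicative discrepancy vanishes because $2$ is invertible in $\Bbb Q$. The only blemish is in your optional re-verification of Axiom~V for $\HS$: the join $X*Y$ is homotopy equivalent to the suspension of the \emph{smash} product $X\wedge Y$, not of the product (and one must also check the links of simplices of the join, which are themselves joins of links); but the needed fact that $\CS$ and $\HS$ satisfy Axiom~V is already recorded in section~3.1, so nothing in your argument is affected.
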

\begin{corr}
The groups $H_n(\Cal T_*^{\PM})$ are torsion groups for~$n>0$.
Thus $H_n(\Cal T_*^{\PM}\otimes\Bbb Q)=0$ and $H^n(\Cal
T^*_{\PM}(\Bbb Q))=0$ for $n>0$.
\end{corr}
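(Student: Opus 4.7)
The plan is to read the corollary off from the two main facts already established: Proposition~3.2 (which says $\Omega_n^{\PM}=0$ for $n>0$) and the first half of Theorem~3.2 (which says that the kernel and cokernel of $\diff_*\colon\Omega_*^{\CC}\to H_*(\Cal T_*^{\CC})$ are torsion groups). Applied with $\CC=\PM$, Theorem~3.2 gives us in particular that the cokernel of
\[
\diff_*\colon\Omega_n^{\PM}\longrightarrow H_n(\Cal T_*^{\PM})
\]
is a torsion group for every $n>0$. But by Proposition~3.2 the source of this homomorphism vanishes for $n>0$, because every connected oriented pseudo-manifold lies in $\PM$ (so $\Sigma K$ realises a $\PM$-cobordism from any $\PM$-manifold $K$ to the empty pseudo-manifold; more directly, any $K\in\tPM$ is itself a sum of pseudo-manifolds from $\PM$). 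Hence the cokernel coincides with $H_n(\Cal T_*^{\PM})$, and this proves the first assertion of the corollary.

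From this first assertion the remaining two statements are routine homological algebra. First, since $\Bbb Q$ is a flat $\Bbb Z$-module, tensoring with $\Bbb Q$ commutes with taking homology, so
\[
H_n(\Cal T_*^{\PM}\otimes\Bbb Q)\;\cong\;H_n(\Cal T_*^{\PM})\otimes\Bbb Q.
\]
A torsion abelian group becomes zero after tensoring with $\Bbb Q$, so the right-hand side vanishes for $n>0$. Second, for the cochain statement, note that $\Cal T^*_{\PM}(\Bbb Q)=\Hom(\Cal T^{\PM}_*,\Bbb Q)$; since $\Bbb Q$ is a divisible (hence injective) abelian group, the functor $\Hom(-,\Bbb Q)$ is exact, so
\[
H^n\bigl(\Cal T^*_{\PM}(\Bbb Q)\bigr)\;\cong\;\Hom\bigl(H_n(\Cal T_*^{\PM}),\Bbb Q\bigr),
\]
and this vanishes because $H_n(\Cal T_*^{\PM})$ has no nonzero homomorphisms into a torsion-free group when it is itself a torsion group.

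There is essentially no obstacle here: all of the substantive work sits in Theorem~3.2 (whose construction of cobordisms comes from the main construction of \S2, via Theorem~3.1) and Proposition~3.2 (which is immediate from the definition of $\CC$-cobordism and the fact that $\PM$ contains every connected oriented pseudo-manifold). The only thing to double-check while writing is that the quoted part of Theorem~3.2 does not require Axiom~V (it does not, as the statement explicitly says ``if the class $\CC$ satisfies Axioms I--IV''), so it applies to $\PM$ even though we are not invoking any multiplicative structure here.
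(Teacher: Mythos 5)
Your proposal is correct and follows exactly the route the paper intends: the corollary is stated as an immediate consequence of Proposition~3.2 ($\Omega_n^{\PM}=0$ for $n>0$) together with the torsion-cokernel part of Theorem~3.2, and the passage to $\otimes\,\Bbb Q$ and $\Hom(-,\Bbb Q)$ is the standard flatness/injectivity argument you give. Nothing is missing.
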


\begin{proof}[Proof of Theorem~3.2]
Theorem~3.1 immediately implies that the cokernel of~$\diff_*$ is
a torsion group. The claim that the kernel of~$\diff_*$ is a
torsion group is equivalent to the following proposition.

\begin{propos}
Suppose that a set $K_1,K_2,\ldots,K_k$ of manifolds with
singularities of class~$\CC$ is balanced. Then there is a
positive integer $r$ such that the disjoint union of $r$ copies
of the pseudo-manifold $K=K_1\sqcup\ldots\sqcup K_k$ is
$\CC$-cobordant to zero.
\end{propos}
\begin{proof}
We apply Theorem~2.2 to the balanced set of
pseudo-manifolds~$K_1,K_2,\ldots,K_k$. Let $X$ be the cubic cell
pseudo-manifold obtained. The link of every simplex of every
complex~$K_i'$ belongs to~$\CC$. Hence the link of a vertex $x$ of
$X'$ belongs to~$\CC$ whenever $x$ is the barycenter of a
positive-dimension simplex of~$X$. If $x$ is a vertex of~$X$, then
the link of~$x$ in~$X'$ is isomorphic to~$K_i''$ for some~$i$, and
for each $i$ there are exactly $r$ vertices with the links
isomorphic to~$K_i''$. Hence the pseudo-manifold~$X'$ yields a
$\CC$-cobordism between the pseudo-manifold~$(K'')^{\sqcup r}$ and
the empty pseudo-manifold. To conclude the proof we notice that,
by Proposition~3.1, the pseudo-manifold~$K''$ is $\CC$-cobordant
to~$K$.
\end{proof}

Let us now prove that $\diff_*$ is a multiplicative homomorphism
modulo elements of order~$2$. Suppose $K_1$, $K_2$ are manifolds
with singularities of class~$\CC$. We consider the direct
product~$K_1\times K_2$. It is a complex glued from cells each of
which is the product of two simplices. Then~$K=(K_1\times K_2)'$
is a simplicial manifold with singularities of class~$\CC$ and
$[K]=[K_1][K_2]$ in~$\Omega_*^{\CC}$. Any vertex of~$K$
is~$(b(\sigma_1),b(\sigma_2))$, where $\sigma_1$ is a simplex
of~$K_1$ and~$\sigma_2$ is a simplex of~$K_2$. We have
$$
\Lk_K(b(\sigma_1),b(\sigma_2))\cong
(-1)^{\codim\sigma_1\dim\sigma_2}
(\partial(\sigma_1\times\sigma_2))'*
(\Lk_{K_1}\sigma_1*\Lk_{K_2}\sigma_2)'.
$$
If either $\dim\sigma_1\ne 0$ or $\dim\sigma_2\ne 0$, then the
complex~$(\partial(\sigma_1\times\sigma_2))'$ possesses an
anti-automorphism. Therefore the
pseudo-manifold~$\Lk_K(b(\sigma_1),b(\sigma_2))$ possesses an
anti-automorphism and hence is an element of order $2$. Therefore,
\begin{multline*}
\diff K=\mathop{\sum_{y_1\in \V(K_1)}}\limits_{y_2\in \V(K_2)}
(\Lk_{K_1}y_1*\Lk_{K_2}y_2)'+\text{elements of order 2}=\\
=\bd((\diff K_1)*(\diff K_2))+\text{elements of order 2}.
\end{multline*}
Consequently,
$$
2\diff_*([K_1][K_2])=(2\bd)_*((\diff_* [K_1])*(\diff_*
[K_2]))=2(\diff_* [K_1])*(\diff_* [K_2]).
$$
\end{proof}

\subsection{Local formulae for $\CC$-cobordism invariants}

An {\it additive $n$-dimensional $\CC$-cobordism invariant} is an
additive homomorphism~$q:\Omega_n^{\CC}\to A$, where $A$ is an
Abelian group. The value of the invariant~$q$ on the
$\CC$-cobordism class of a $\CC$-manifold~$K$ will be denoted
by~$q(K)$.

\begin{definition} A {\it local formula} for the invariant~$q$ is
an additive homomorphism~$f:\Cal T_n^{\CC}\to A$ such that
$$
q(K)=\sum_{y\in \V(K)}f(\Lk y)
$$
for any oriented $n$-dimensional manifold $K$ with singularities
of class~$\CC$.
\end{definition}

Proposition~3.3 easily implies the following assertion.

\begin{corr}
An additive homomorphism~$f:\Cal T_n^{\CC}\to A$ is a local
formula for an additive $\CC$-cobordism invariant if and only if
$f$ is a cocycle of the complex~$\Cal T^*_{\CC}(A)=\Hom(\Cal
T_*^{\CC},A)$. Cohomological cocycles yield the same
$\CC$-cobordism invariant.
\end{corr}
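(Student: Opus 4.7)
The plan is to observe that the local-formula condition $q(K)=\sum_{y\in\V(K)}f(\Lk y)$ is just $q(K)=f(\diff K)$, where $\diff K\in\Cal T_n^{\CC}$ is the boundary of $K$ viewed as a generator of $\Cal T_{n+1}^{\CC}$. Since Proposition~3.3 already supplies a well-defined homomorphism $\diff_*\colon\Omega_n^{\CC}\to H_n(\Cal T_*^{\CC})$, the corollary is a formal dualization: local formulae correspond to cohomology classes $[f]\in H^n(\Cal T^*_{\CC}(A))$ paired against $\diff_*$.

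For the implication ``cocycle $\Rightarrow$ local formula'', given $f\in\Cal T^n_{\CC}(A)$ with $\delta f=0$, I would define $q_f(K):=f(\diff K)$. By Proposition~3.3, for cobordant $K$ and $L$ the difference $\diff K-\diff L$ is a boundary in $\Cal T_*^{\CC}$; and since $\delta f=0$ is equivalent to $f\circ\diff=0$ on $\Cal T_{n+1}^{\CC}$, we get $f(\diff K)=f(\diff L)$. Thus $q_f$ descends to $\Omega_n^{\CC}$, factoring as the composition $\Omega_n^{\CC}\xrightarrow{\diff_*}H_n(\Cal T_*^{\CC})\xrightarrow{[f]}A$. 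By construction $f$ is a local formula for $q_f$.

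For the converse, suppose $f$ is a local formula for some additive invariant $q$. For any $n$-dimensional $Y\in\CC$, the suspension $\Sigma Y$ gives a $\CC$-cobordism from $Y$ to the empty pseudo-manifold: one apex has link $Y$, the opposite apex has link $-Y$ which lies in $\CC$ by Axiom~II, and every vertex $y\in \V(Y)$ has link $\Sigma\Lk_Y y$ in $\Sigma Y$ which lies in $\CC$ by Axioms~III and~IV. Therefore $[Y]=0$ in $\Omega_n^{\CC}$, so $f(\diff Y)=q(Y)=0$. Since $Y$ runs over all generators of $\Cal T_{n+1}^{\CC}$, we obtain $(\delta f)(Y)=(-1)^n f(\diff Y)=0$, i.e., $\delta f=0$.

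For the assertion that cohomologous cocycles yield the same invariant, if $f-f'=\delta g$ for some $g\in\Cal T^{n-1}_{\CC}(A)$, then for any $\CC$-manifold $K$ of dimension $n$,
\[
(f-f')(\diff K)=(\delta g)(\diff K)=(-1)^{n-1}g(\diff\diff K)=0
\]
because $\diff^2=0$, so $q_f=q_{f'}$. The only non-formal ingredient is the nullcobordism $[Y]=0$ for $Y\in\CC$ via $\Sigma Y$, and this is where all four axioms of the class $\CC$ are genuinely used; everything else is a direct consequence of Proposition~3.3 and the definitions.
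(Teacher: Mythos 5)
Your proof is correct and follows exactly the route the paper intends when it states that Corollary~3.3 ``easily'' follows from Proposition~3.3: dualizing $\diff_*$, using the suspension null-cobordism $\Sigma Y$ of an element $Y\in\CC$ for the converse implication, and $\diff\diff=0$ for the statement about cohomologous cocycles. (One cosmetic slip: a general $\CC$-manifold $K$ lies in $\tCC$ but need not lie in $\CC$, so $\diff K=\sum_{y\in\V(K)}\Lk y$ is defined directly rather than as the differential of a generator of $\Cal T_{n+1}^{\CC}$; this changes nothing in the argument.)
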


Thus we obtain an additive homomorphism
$$
\delta^*:H^*(\Cal T^*_{\CC}(A))\to \Hom(\Omega_*^{\CC},A).
$$
The homomorphism $\delta^*$ is conjugate to the
homomorphism~$\diff_*$. Corollary~3.1 implies the following
assertion.
\begin{corr}
Suppose that a class~$\CC$ satisfies Axioms~I--IV; then the
homomorphism $\delta^*:H^*(\Cal T^*_{\CC}(\Bbb Q))\to
\Hom(\Omega_*^{\CC},\Bbb Q )$ is an isomorphism. Thus any
rational additive $\CC$-cobordism invariant possesses a local
formula, which is unique up to a coboundary of the complex~$\Cal
T^*_{\CC}(\Bbb Q)$.
\end{corr}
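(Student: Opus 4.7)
The plan is to deduce Corollary~3.3 from Theorem~3.2 by a routine duality argument, using that~$\Bbb Q$ is an injective Abelian group.

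The first step is to pass from Theorem~3.2 to an additive statement on the rational homology level. Since the kernel and cokernel of $\diff_*:\Omega_*^{\CC}\to H_*(\Cal T_*^{\CC})$ are torsion groups, tensoring with~$\Bbb Q$ kills them, so
$$
\diff_*\otimes\Bbb Q:\Omega_*^{\CC}\otimes\Bbb Q\longrightarrow H_*(\Cal T_*^{\CC})\otimes\Bbb Q
$$
is an isomorphism of $\Bbb Q$-vector spaces. This is the additive content of Corollary~3.1, and Axiom~V is not required for it.

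The second step is to dualize. For any Abelian group~$G$ one has $\Hom(G,\Bbb Q)\cong\Hom(G\otimes\Bbb Q,\Bbb Q)$, since $\Bbb Q$ is torsion-free and divisible. Applying the exact functor $\Hom(-,\Bbb Q)$ to the isomorphism above yields an isomorphism of graded $\Bbb Q$-vector spaces
$$
\Hom(H_*(\Cal T_*^{\CC}),\Bbb Q)\stackrel{\cong}{\longrightarrow}\Hom(\Omega_*^{\CC},\Bbb Q).
$$

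The third step is to identify the left-hand side with $H^*(\Cal T^*_{\CC}(\Bbb Q))$. Because $\Bbb Q$ is injective as a $\Bbb Z$-module, the functor $\Hom(-,\Bbb Q)$ is exact on chain complexes, so the universal coefficient formula collapses and provides the natural isomorphism
$$
H^n(\Cal T^*_{\CC}(\Bbb Q))=H^n(\Hom(\Cal T_*^{\CC},\Bbb Q))\cong\Hom(H_n(\Cal T_*^{\CC}),\Bbb Q).
$$
Unwinding definitions, the composite of these two identifications is precisely the map~$\delta^*$: by construction $\delta^*$ sends the cohomology class of a cocycle~$f$ to the invariant $[K]\mapsto f(\diff K)$, which is exactly the image of $[f]$ under the conjugate of~$\diff_*$. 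There is no real obstacle here; the only nontrivial input is Theorem~3.2, and the rest is formal exactness over the field~$\Bbb Q$. The second assertion of the corollary, that every rational additive $\CC$-cobordism invariant admits a local formula unique up to a coboundary, is then just a restatement of the bijectivity of $\delta^*$ combined with Corollary~3.2.
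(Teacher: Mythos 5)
Your argument is correct and is essentially the paper's own route: the paper likewise observes that $\delta^*$ is conjugate to $\diff_*$ and deduces the statement from the rational isomorphism $\diff_*\otimes\Bbb Q$ (which, as you note, needs only the torsion statement of Theorem~3.2 and hence only Axioms~I--IV), the dualization being formal since $\Bbb Q$ is injective and uniquely divisible. One small slip: the identification of local formulae with cocycles of $\Cal T^*_{\CC}(\Bbb Q)$ that you invoke at the end is Corollary~3.3, not Corollary~3.2.
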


Now let us consider in more details the case~$\CC=\CS$. Rational
additive invariants of oriented piecewise linear cobordism are
exactly linear combinations of the Pontryagin numbers. By
Corollary~3.4, we have
$$
H^*(\Cal T^*_{\CS}(\Bbb Q))\cong\Hom(\Omega_*^{\SPL},\Bbb
Q)\cong\Bbb Q[p_1,p_2,\ldots],\ \deg p_i=4i.
$$
This is one of the main results of the author's
paper~\cite{Gai04}. The proof given in~\cite{Gai04} is based on
the theory of block bundles and is rather complicated. In this
paper we have obtained a more direct purely combinatorial proof.

For an arbitrary Abelian group $A$ and even for $A=\Bbb Z$ the
problem of characterization of $\CC$-cobordism invariants
admitting local formulae is still open even in the case~$\CC=\CS$.
For example, in~\cite{Gai04} the author proved that no multiple of
the first Pontryagin class admits an integral local formula.

Let us consider two more examples.

1) The ring $\Omega_*^{\HS}$ is the cobordism ring of simplicial
homology manifolds. It is well known that the rational Pontryagin
classes of simplicial homology manifolds are well defined
(see~\cite{MiSt79}). Hence the Pontryagin numbers of oriented
simplicial homology manifolds are well defined.
C.\,R.\,F.\,Maunder~\cite{Mau71} proved that the Pontryagin
numbers of oriented simplicial homology manifolds are cobordism
invariants. Therefore the Pontryagin numbers admit local
formulae. Thus the cohomology group~$H^*(\Cal T_{\HS}^*(\Bbb Q))$
contains a subgroup additively isomorphic to the polynomial ring
in Pontryagin classes with rational coefficients.
N.\,Martin~\cite{Mar73} proved that $\Omega_4^{\HS}\cong\Bbb
Z\oplus\Theta_3^H$, where $\Theta_3^H$ is the $H$-cobordism
(homology cobordism) group of three-dimensional homology spheres.
M.\,Furuta~\cite{Fur90} proved that the group~$\Theta_3^H$
contains an infinitely generated free Abelian group. Thus the
polynomials in Pontryagin classes do not exhaust the
group~$H^*(\Cal T_{\HS}^*(\Bbb Q))$. We have
$$
H^4(\Cal T_{\HS}^*(\Bbb Q))\cong\Hom(\Omega_4^{\HS},\Bbb
Q)\cong\Bbb Q\oplus\Hom(\Theta_3^H,\Bbb Q).
$$
The direct summand~$\Bbb Q$ is generated by the first Pontryagin
number. Local formulae for the first Pontryagin class and local
formulae for the cobordism invariants corresponding to elements
of~$\Hom(\Theta_3^H,\Bbb Q)$ are quite different in nature.
Suppose $q$ is the $\HS$-cobordism invariant corresponding to a
homomorphism~$h:\Theta_3^H\to\Bbb Q$. Let $\iota$ be the mapping
taking each simplicial homology sphere to its $\HS$-cobordism
class. Then the function
$$
\Cal T_4^{\HS}\xrightarrow{\iota}\Theta_3^H\xrightarrow{h}\Bbb Q,
$$
is a local formula for~$q$. We stress that the value of this local
formula on a three-dimensional simplicial homology sphere~$Y$
depends only on the $H$-cobordism class of~$Y$ and does not depend
on the combinatorial structure of the triangulation~$Y$. In
particular, this local formula vanishes on every combinatorial
sphere. On the other hand, if $f$ is a local formula for the first
Pontryagin class, then the value $f(Y)$ necessarily depends on the
combinatorial structure of the triangulation~$Y$. In particular,
there always must be a three-dimensional combinatorial sphere~$Y$
such that~$f(Y)\ne 0$.
\begin{remark}
Local formulae for cobordism invariants corresponding to elements
of the group~$\Hom(\Theta_3^H,A)$ are closely related with the
following construction due to D.\,Sullivan~\cite{Sul71}. To any
oriented simplicial homology manifold~$Z$ D.\,Sullivan assigned
the simplicial cycle
$$
\mathop{\sum\limits_{\sigma\text{\textnormal{ a simplex of
}}Z,}}\limits_{\codim\sigma=4}
\sigma\otimes\left[\Lk\sigma\right]\in C_{\dim Z-4}(Z;\Theta_3^H).
$$
The homology class of this cycle is a complete obstruction to the
existence of a resolution of singularities $g:M\to Z$ such that
$M$ is a combinatorial manifold and the set~$g^{-1}(z)$ is
acyclic for any point~$z\in Z$.
\end{remark}

2) In~\cite{Che83} J.\,Cheeger constructed real local formulae for
the homology Hierzebruch $L$-classes of pseudo-manifolds with
negligible boundary. In particular, for each positive integer~$k$
he obtained a real local formula for the signature of a
$4k$-dimensional pseudo-manifold with negligible boundary. Thus,
the $\Bbb R$-module $H^{4k}(\Cal T^*_{\Ch}(\Bbb R))$ contains a
one-dimensional submodule generated by the cohomology class of
local formulae for the signature. The same result holds under the
field~$\Bbb Q$. Indeed, rational local formulae for the signature
can be obtained from real local formulae for the signature by any
$\Bbb Q$-linear projection~$\Bbb R\to\Bbb Q$. Unfortunately,
nobody knows how to obtain a rational local formula for the
signature of a pseudo-manifold with negligible boundary in a more
natural way.

\subsection{Relation with Sullivan-Baas cobordisms}

Suppose, $\CC=\CC(P_1,P_2,\ldots)$. Then the cobordism
ring~$\Omega^{\CC}_*$ can be regarded as a cobordism ring of
manifolds with ``join-like'' singularities
$P_{i_1}*\ldots*P_{i_k}$, $i_1<i_2<\ldots<i_k$. The idea of such
cobordisms and their first applications are due to D.~Sullivan
(see~\cite{Sul67},~\cite{Sul99},~\cite{Sul71}). The first rigorous
construction of the theory of cobordism with join-like
singularities was given by N.\,Baas~\cite{Baa73}. (He worked in
the smooth category. Below we shall refer to a piecewise-linear
version of his construction.) The cobordism groups defined by
N.\,Baas do not coincide with the groups~$\Omega^{\CC}_*$. To
illustrate this difference we shall consider the case of a single
singularity type~$P$.

A manifold with singularities of type~$P$ in sense of
Sullivan-Baas is a polyhedron
$$
V\cup_{W\times P} (W\times\cone(P)),
$$
where $W$ is a piecewise linear manifold without boundary and $V$
is a piecewise linear manifold with boundary~$W\times P$. The
class of manifolds with singularities of class~$\CC(P)$ in sense
of the present paper turns out to be wider than the class of
manifolds with singularities of type~$P$ in sense of
Sullivan-Baas. Indeed, suppose $B$ is a closed piecewise-linear
manifold, $E\to B$ is a piecewise linear fiber bundle with
fiber~$P$, $Z\to B$ is the associated fiber bundle with
fiber~$\cone(P)$, and $V$ is a piecewise linear manifold with
boundary~$E$. We consider the polyhedron~$X=V\cup_EZ$. It is easy
to check that according to our definition in~\S 3.2 the
polyhedron~$X$ is a manifold with singularities of class~$\CC(P)$.
Nevertheless, if the fiber bundle~$E$ is nontrivial the
polyhedron~$X$ need not be a manifold with singularities of
type~$P$ in sense of Sullivan-Baas.

\section{Resolving singularities and realization of cycles.}

In the following we shall omit index $\CS$ in notation for the
groups $\Cal T_*^{\CS}$ and $\Cal T^*_{\CS}(A)$.

\subsection{Simple cells}
Let $K$ be a simplicial cell combinatorial manifold. For each
vertex $y$ of~$K$ we define the cell $Dy$ dual to $y$ by
$Dy=\St_{K'}y$. Thus $Dy$ is the union of all closed simplices of
$K'$ containing the vertex $v$. Now let $\sigma$ be a simplex of
$K$. A cell $D\sigma$ dual to the simplex $\sigma$ is the
intersection of all cells dual to the vertices of~$\sigma$.
Suppose that the combinatorial manifold~$K$ and the
simplex~$\sigma$ are oriented. Then the cell $D\sigma$ will also
be endowed with the orientation such that the product of the
orientation of~$\sigma$ and the orientation of~$D\sigma$ yields
the orientation of~$K$. The cell decomposition consisting of all
cells $D\sigma$ will be denoted by~$K^*$ and will be called the
{\it cell decomposition dual to the triangulation~$K$}. A
definition of the cell decomposition dual to a cubic cell
combinatorial manifold repeats word for word the definition of the
cell decomposition dual to a simplicial cell combinatorial
manifold replacing everywhere simplices by cubes.

Now let $Y$ be a combinatorial sphere. We consider the simplicial
complex~$\cone(Y')$, which is PL homeomorphic to the standard
disk. The decomposition $Y^*$ of the boundary of $\cone(Y')$
endows $\cone(Y')$ with the structure of a manifold with corners.
We extend the decomposition~$Y^*$ to a decomposition of the whole
complex $\cone(Y')$ by putting $D\emptyset=\cone(Y')$. The
simplicial complex $\cone(Y')$ with such cell decomposition is
called a {\it simple cell} dual to the combinatorial sphere~$Y$.
If the combinatorial sphere~$Y$ is realized as the boundary of a
simplicial convex polytope, then the simple cell dual to~$Y$ can
be realized as the dual simple convex polytope.

The cell $D\sigma$ dual to a simplex~$\sigma$ of a combinatorial
manifold $K$ possesses a canonical structure of a $(\dim
K-\dim\sigma)$-dimensional simple cell dual to the combinatorial
sphere $\Lk\sigma$. (A similar assertion holds for cubic cell
combinatorial manifolds.) In particular, every face of a simple
cell has a canonical structure of a simple cell.

Dealing with simple cells we shall always mind the
triangulation~$\cone(Y')$. This means that a simple cell is always
considered as a polyhedron with two cell decompositions. The
larger decomposition is the decomposition into faces described
above. The smaller decomposition is the triangulation $\cone(Y')$
itself. In particular, suppose that $P_1$ and~$P_2$ are the simple
cells dual to combinatorial spheres $Y_1$ and $Y_2$ respectively.
Then an isomorphism of simple cells~$P_1$ and~$P_2$ is a
simplicial isomorphism $\cone(Y_1')\to\cone(Y_2')$ preserving the
decompositions into faces.

A {\it \textnormal{(}finite\textnormal{)} simple cell complex} is
the quotient of the disjoint union of finitely many simple cells
$P_1,P_2,\ldots,P_q$ by an equivalence relation~$\sim$ such that

(1) $\sim$ identifies no pair of distinct points of the same
cell~$P_i$;

(2) if $x_1\sim x_2$, $x_1\in P_i$, $x_2\in P_j$, then $\sim$
identifies some face $F_1\subset P_i$ such that $F_1\ni x_1$ with
a face $F_2\subset P_j$ such that $F_2\ni x_2$ along an
isomorphism.

{\it Cells} of this complex are images of faces of the simple
cells~$P_i$ under the quotient mapping. If all simple cells~$P_i$
are simplices (respectively, cubes), then we arrive to a
definition of a simplicial cell (respectively, cubic cell)
complex. The cell decomposition~$K^*$ dual to a combinatorial
manifold~$K$ can serve as a basic example of a simple cell
complex.

Let~$P$ be a simple cell dual to a combinatorial sphere~$Y$. Then
the simplicial complex~$\cone(Y')$ is said to be the barycentric
subdivision of~$P$. Now let~$Z$ be a simple cell complex. To
obtain a {\it barycentric subdivision} of the complex~$Z$ one
needs to take the barycentric subdivision of every cell of $Z$.
The barycentric subdivision of $Z$ will be denoted by~$Z'$. For
example,~$(K^*)'=K'$.

A simple cell complex is said to be an $n$-dimensional {\it simple
cell pseudo-manifold} if each its cell is contained in an
$n$-dimensional cell and each $(n-1)$-dimensional cell is
contained in exactly two $n$-dimensional cells. A simple cell
pseudo-manifold is said to be {\it normal} if its $n$-th local
homology group at every point is isomorphic to~$\Bbb Z$. (This
definition is quite similar to the definitions of a normal
simplicial cell and cubic cell pseudo-manifolds.) In the sequel
all pseudo-manifolds are supposed to be normal.

\subsection{Cobordism of simple cell}
Let $P$ be an oriented simple cell. The formal sum of facets of
$P$ taken with the induced orientations will be called the {\it
boundary} of~$P$ and will be denoted by~$\partial P$. Now let
$P_1,P_2,\ldots,P_k,Q_1,Q_2,\ldots,Q_l$ be oriented
$n$-dimensional simple cells. We shall say that the sum
$P_1+P_2+\ldots+P_k$ is {\it cobordant} to the sum
$Q_1+Q_2+\ldots+Q_l$ if there are oriented $n$-dimensional simple
cells $R_1,R_2,\ldots,R_m$ and oriented $(n+1)$-dimensional simple
cells $T_1,T_2,\ldots,T_q$ such that we have the following
isomorphism
\begin{multline*}
\partial (T_1+T_2+\ldots+T_q)\cong
P_1+P_2+\ldots+P_k+R_1+R_2+\ldots+R_m+\\
+(-Q_1)+(-Q_2)+\ldots+(-Q_l) +(-R_1)+(-R_2)+\ldots+(-R_m).
\end{multline*}
The corresponding cobordism group will be called the cobordism
group of $n$-dimensional simple cells and will be denoted
by~$\PP_n$. The graded group~$\PP_n$ is a graded ring with respect
to the direct product of simple cells. The boundary of a simple
cell is dual to the sum of links of vertices of a combinatorial
sphere. The direct product of simple cells is dual to the join of
combinatorial spheres. Therefore the ring~$\PP_*$ is canonically
isomorphic to the ring~$H_*(\Cal T_*)$. Thus the results of
sections~3.2,~3.3 can be reformulated in the following way.
\begin{theorem}
The functor taking each oriented $n$-dimensional combinatorial
manifold~$K$ to a formal sum of $n$-dimensional cells of the dual
decomposition~$K^*$ yields a well-defined additive homomorphism
$\psi:\Omega^{\SPL}_n\to\PP_n$. The homomorphism~$\psi$ is
multiplicative modulo elements of order~$2$. The kernel and the
cokernel of~$\psi$ are torsion groups. Thus we have a
multiplicative isomorphism
$$
\PP_*\otimes\Bbb Q\cong\Omega^{\SPL}_*\otimes\Bbb
Q\cong\Omega^{\SO}_*\otimes\Bbb Q.
$$
\end{theorem}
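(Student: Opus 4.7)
The plan is to deduce Theorem~4.1 from Theorem~3.2 applied to the class $\CC=\CS$, via the duality between simple cells and combinatorial spheres. The work is essentially a bookkeeping translation; the substance has already been established in section~3.

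First I would make explicit the canonical isomorphism $\PP_*\cong H_*(\Cal T_*)$ asserted in the text. An oriented $n$-dimensional simple cell $P$ is by definition dual to an oriented $(n-1)$-dimensional combinatorial sphere $Y$, and the facets of $P$ are canonically the simple cells dual to the links of the vertices of $Y$, with matching induced orientations. Hence the boundary map $P\mapsto\partial P$ on formal sums of simple cells is intertwined with the differential $\diff Y=\sum_{y\in \V(Y)}\Lk y$ of $\Cal T_*$. The additional relations $Z+(-Z)=0$ and $P+R\sim P+R+Q+(-Q)$ (coming from adding auxiliary $R_i,(-R_i)$ in the cobordism definition) match the defining relations of $\Cal T_*$ and the passage to homology respectively. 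I would write this out as a chain isomorphism between the chain complex of formal $\Bbb Z$-sums of simple cells (with the boundary above) and $\Cal T_*$, noting that Axiom~V (valid for $\CS$) provides compatibility of the ring structures: the direct product of simple cells is dual to the join of combinatorial spheres.

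Next I would identify $\psi$ with the homomorphism $\diff_*:\Omega_n^{\SPL}\to H_n(\Cal T_*^{\CS})$ of Proposition~3.3. Given an oriented combinatorial manifold $K$, the $n$-dimensional cells of $K^*$ are exactly the duals $Dy$ of the vertices $y\in \V(K)$, endowed with the orientations inherited from $K$. Under the identification of the previous paragraph $Dy$ corresponds to $\Lk_Ky$, so
$$
\psi(K)=\sum_{y\in \V(K)}Dy\ \longleftrightarrow\ \sum_{y\in \V(K)}\Lk_Ky=\diff K.
$$
Well-definedness of $\psi$ on PL-cobordism classes therefore reduces to well-definedness of $\diff_*$: if $W$ is an oriented combinatorial cobordism between $K$ and $L$, the interior vertices of $W$ have links in $\CS$ and witness the equality $\diff K=\diff L$ in $H_*(\Cal T_*)$, exactly as in the proof of Proposition~3.3. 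Conversely a relation in $\PP_n$ unravels, via duality, to an element of $\Cal T_n^{\CS}$ bounded by pseudo-manifolds of class $\CS$, i.e.\ by genuine combinatorial manifolds.

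All three remaining statements now follow at once from Theorem~3.2 with $\CC=\CS$: the homomorphism $\diff_*$ is multiplicative modulo elements of order~$2$ (this part of Theorem~3.2 uses Axiom~V, which holds for $\CS$), and both its kernel and cokernel are torsion, the latter by Theorem~3.1 and the former by Proposition~3.5. Tensoring with $\Bbb Q$ kills the $2$-torsion ambiguity in multiplicativity and gives the multiplicative isomorphism $\PP_*\otimes\Bbb Q\cong\Omega^{\SPL}_*\otimes\Bbb Q$; the further identification with $\Omega^{\SO}_*\otimes\Bbb Q$ is the classical fact recalled in section~3.2. The only genuine content to verify is the compatibility of orientations and product structures under duality in the first paragraph; the main obstacle, if one can call it that, is purely notational—one must check that the sign conventions adopted for $\partial P$, for the induced orientation of $Dy\subset K^*$, and for $\Lk y$ in $\Cal T_*$ are globally consistent, after which Theorem~4.1 is a direct corollary of Theorem~3.2.
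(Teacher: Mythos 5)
Your proposal is correct and follows the same route the paper itself takes: identify $\PP_*$ with $H_*(\Cal T_*)$ via the duality between simple cells and combinatorial spheres (boundary $\leftrightarrow$ $\diff$, product $\leftrightarrow$ join), identify $\psi$ with $\diff_*$, and then read off the statement from Proposition~3.3 and Theorem~3.2 for $\CC=\CS$. The paper gives no further argument beyond this translation, so your filling in of the bookkeeping (matching of relations and orientations) is exactly what is needed.
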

\begin{remark}
Suppose~$X$ is a topological space. Considering singular simple
cells of~$X$, that is, continuous mappings $P\to X$ one may in a
standard way define an extraordinary homology theory~$\PP_*(X)$.
There is a natural homomorphism
$\psi:\Omega^{\SPL}_*(X)\to\PP_*(X)$ that induces an isomorphism
$$
\PP_*(X)\otimes\Bbb Q\cong\Omega^{\SPL}_*(X)\otimes\Bbb Q\cong
H_*(X;\Omega^{\SPL}_*\otimes\Bbb Q)\cong
H_*(X;\Omega^{\SO}_*\otimes\Bbb Q).
$$
\end{remark}

\subsection{Resolving singularities of a pseudo-manifold}
In this section for each oriented $n$-dimensional simple cell
pseudo-manifold~$Z$ we construct explicitly a cubic cell
combinatorial manifold~$M$ and a piecewise-smooth mapping~$g:M\to
Z$ such that

1) the restriction of~$g$ to~$g^{-1}(Z\setminus\Sigma)$ is a
finite-fold covering
$$
g^{-1}(Z\setminus\Sigma)\to Z\setminus\Sigma,
$$
where $\Sigma$ is the codimension~$2$ skeleton of~$Z$;

2) $\dim g^{-1}(\Sigma)=n-1$.

Let $P_1,P_2,\ldots,P_k$ be all $n$-dimensional cells of a
pseudo-manifold~$Z$. We consider the oriented $(n-1)$-dimensional
combinatorial spheres $Y_1,Y_2,\ldots,Y_k$ such that
$P_i=\cone(Y_i')$ is the simple cell dual to~$Y_i$. By~$f_i$ we
denote the embedding
$$
\cone(Y_i')=P_i\subset Z.
$$
For each simplex $\sigma$ of~$Y_i$ we put
$z(\sigma)=f_i(b(\sigma))$, where $b(\sigma)$ is the barycenter
of~$\sigma$. We put,
$$
Y=Y_1\sqcup Y_2\sqcup\ldots\sqcup Y_k.
$$
As in \S 2, by $U$ we denote the set of $(n-1)$-dimensional
simplices of~$Y'$. Simplices $u\in U$ are in one-to-one
correspondence with sequences
$\sigma^0\subset\sigma^1\subset\ldots\subset\sigma^{n-1}$ of
simplices of~$Y$. Suppose $u$ is the simplex of~$Y_i'$
corresponding to a sequence
$\sigma^0\subset\sigma^1\subset\ldots\subset\sigma^{n-1}$. Let us
introduce the following notation.
$$
b_j(u)=b(\sigma^{j-1}),\qquad
z_j(u)=z(\sigma^{j-1})=f_i(b_j(u)),\qquad j=1,2,\ldots,n.
$$
Then $b_1(u),b_2(u),\ldots,b_n(u)$ are the vertices of the
simplex~$u$. By~$z_0(u)$ we denote the barycenter of the simple
cell~$P_i\subset Z$. It is easy to check that each point~$z_j(u)$
is the barycenter of an~$(n-j)$-dimensional cell $F_j(u)$ of the
decomposition~$Z$. Besides,
$$
F_n(u)\subset F_{n-1}(u)\subset\ldots\subset F_0(u)=P_i.
$$

Let us glue a pseudo-manifold~$Z$ from the simple cells~$P_i$. In
the process of gluing we pair off facets of the cells
$P_1,P_2,\ldots,P_k$ and glue the facets of each pair together
along a certain anti-isomorphism. Passing to the dual objects we
obtain the involution~$\lambda:y \mapsto \tilde{y}$ on the
set~$\V(Y)$ and the set of anti-isomorphisms~$\chi_y:\St y\to \St
\tilde{y}$ such that~$\chi_{\tilde{y}}=\chi_y^{-1}$. We apply a
construction in section~2.5 to a combinatorial manifold~$Y$. This
construction yields a homogeneous graph~$\Gamma$ of degree~$2n$
such that $X=\tbQ(\Gamma)$ is a cubic cell combinatorial manifold
satisfying the requirements of Theorem~2.3. (The
pseudo-manifold~$X$ is a combinatorial manifold, since the links
of all its vertices are combinatorial spheres.) Now we shall work
with the cubic cell combinatorial manifold $M=\bQ(\Gamma)$ without
passing to large cubes. We recall that
$$
M=(U\times\Cal B\times S\times [0,1]^n)/\sim,
$$
where the equivalence relation~$\sim$ is generated by the
identifications
\begin{gather*}
(u,\nu,s,\bt)\sim
(\Phi_j^0(u,\nu,s),\bt)\text{ if }t_j=0;\\
(u,\nu,s,\bt)\sim (\Phi_j^1(u,\nu,s),\bt)\text{ if }t_j=1.
\end{gather*}
Here $\bt=(t_1,t_2,\ldots,t_n)$ is a point of the cube~$[0,1]^n$.
The finite sets~$\Cal B$ and~$S$ and the involutions
$$
\Phi^{e}_j:U\times \Cal B\times S\to U\times\Cal B\times S
$$
were described in section~2.5. The involutions $\Phi_j^0$ are of
the following form
$$
\Phi_j^0(u,\nu,s)=(\Phi_j(u),\nu,s).
$$
The definition of the involutions~$\Phi_j$ immediately implies
that $b_l(\Phi_j(u))=b_l(u)$ for~$l\ne j$. Besides, the
simplices~$u$ and~$\Phi_j(u)$ lie in the same connected component
of the combinatorial manifold~$Y'$. Hence,
$$
z_l(\Phi_j(u))=z_l(u)\ \ \text{for}\ \ 0\leqslant l\leqslant n,\
l\ne j.
$$
We need the following property of the involutions~$\Phi_j^1$. Its
proof is postponed to section~4.4.
\begin{propos}
If $\Phi_j^1(u_1,\nu_1,s_1)=(u_2,\nu_2,s_2)$, then
$z_l(u_1)=z_l(u_2)$ for $l\geqslant j$.
\end{propos}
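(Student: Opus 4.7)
My plan is to unpack the definition of $\Phi_j^1$ from Section~2.5 and trace how each of its ingredients acts on the points $z_l$. Working first in the Hypothesis case (the general case will be reduced to it at the end), write $u_1=u(\sigma^0,\sigma^1,\ldots,\sigma^{n-1})$, $s_1=(\Lambda,g)$, and set $c=c(\sigma^{j-1})$. By the formula defining $\Phi_j^1$, the image $u_2$ is $u(\Lambda_c(\sigma^0),\ldots,\Lambda_c(\sigma^{n-1}))$, and since $z_l(u)=z(\sigma^{l-1})$ the assertion $z_l(u_1)=z_l(u_2)$ is equivalent to the identity
$$
z(\sigma^{l-1})=z(\Lambda_c(\sigma^{l-1}))\qquad\text{for every }l\geqslant j.
$$

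Next I would establish the following local observation. For $l\geqslant j$ the simplex $\sigma^{l-1}$ contains $\sigma^{j-1}$, so $\sigma^{l-1}\in\St\sigma^{j-1}$; moreover $\sigma^{j-1}$ is the unique simplex of $\St\sigma^{l-1}$ with label $c$, because labels of simplices within any star are pairwise distinct. Therefore the $\rho$ appearing in the definition of $\Lambda_c(\sigma^{l-1})$ is precisely $\sigma^{j-1}$, and $\Lambda_c(\sigma^{l-1})$ equals the image of $\sigma^{l-1}$ under the unique label-preserving anti-isomorphism $\alpha\:\St\sigma^{j-1}\to\St\Lambda(\sigma^{j-1})$.

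The heart of the argument is to realize $\alpha$ as a composition of elementary gluing maps. Since $\Lambda\in P$, the vertices $\sigma^{j-1}$ and $\Lambda(\sigma^{j-1})$ of the graph $G_j$ lie in opposite parts of the same connected component, so they are joined by a path of odd length $r$, giving
$$
\Lambda(\sigma^{j-1})=\chi_{y_r}\circ\cdots\circ\chi_{y_1}(\sigma^{j-1})
$$
with each $y_k$ a vertex of the intermediate simplex $\tau_{k-1}$. Each $\chi_{y_k}$ restricts to a label-preserving anti-isomorphism of the relevant substars, so the composition is a label-preserving anti-isomorphism $\St\sigma^{j-1}\to\St\Lambda(\sigma^{j-1})$ and, by uniqueness, coincides with $\alpha$. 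Now each $\chi_y$ is by construction the dual of the anti-isomorphism used to glue the facets $Dy\subset P_i$ and $D\tilde y\subset P_{i'}$: under this gluing, the face $D\tau$ of $Dy$ is identified with the face $D\chi_y(\tau)$ of $D\tilde y$, so in particular their barycenters coincide, i.e.\ $z(\tau)=z(\chi_y(\tau))$ in $Z$ for every $\tau\in\St y$. Chaining these identifications along the path yields $z(\sigma^{l-1})=z(\alpha(\sigma^{l-1}))=z(\Lambda_c(\sigma^{l-1}))$, which is what is required.

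The main obstacle is passing from the Hypothesis case to the general case, where the construction is applied to the auxiliary pseudo-manifold $\overline{Y}=Y\times\Cal{B}$ rather than to $Y$. There the elementary anti-isomorphisms $\overline{\chi}_{(y,\nu)}$ shift the $\Cal{B}$-coordinate uniformly on any one star, so after forgetting the $\Cal{B}$-component the composition still projects to the analogous composition of $\chi_y$'s on $Y$. Because $z$ depends only on the underlying simplex of $Y$, the chain of identifications from the Hypothesis case survives the projection and the equality $z_l(u_1)=z_l(u_2)$ follows for all $l\geqslant j$, as asserted.
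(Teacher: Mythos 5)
Your proof is correct and follows essentially the same route as the paper's: reduce the claim to $z(\Lambda_c(\sigma^{l-1}))=z(\sigma^{l-1})$ for $l\geqslant j$, observe that $\Lambda_c$ on simplices containing $\sigma^{j-1}$ is the unique label-preserving anti-isomorphism $\St\sigma^{j-1}\to\St\Lambda(\sigma^{j-1})$, factor it through elementary anti-isomorphisms $\chi_y$ along a path in the graph $G$, note that each $\chi_y$ preserves $z$ because it is dual to the facet-gluing of the simple cells, and pass to $\overline{Y}=Y\times\Cal B$ via the projection. The paper phrases the same chain of identifications directly in terms of $\bar z=z\circ\pi$ on $\overline{Y}$, but the argument is identical in substance.
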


Suppose that $z_0,z_1,\ldots,z_n$ are the vertices of a
simplex~$\rho$ of~$Z'$ and $\alpha_0,\alpha_1,\ldots,\alpha_n$ are
nonnegative real numbers such that
$\alpha_0+\alpha_1+\cdots+\alpha_n=1$. Then we shall denote by
$$
\alpha_0z_0+\alpha_1z_1+\cdots+\alpha_nz_n
$$
the point in the simplex~$\rho$ with barycentric
coordinates~$\alpha_0,\alpha_1,\ldots,\alpha_n$.

We consider the functions
\begin{align*}
 \alpha_0(\bt)&=(1-t_1)(1-t_2)(1-t_3)\ldots(1-t_n);\\
 \alpha_1(\bt)&=t_1(1-t_2)(1-t_3)\ldots(1-t_n);\\
 \alpha_2(\bt)&=t_2(1-t_3)\ldots(1-t_n);\\
 & \ldots\\
 \alpha_{n-1}(\bt)&=t_{n-1}(1-t_n);\\
 \alpha_n(\bt)&=t_n.
\end{align*}
Obviously,
$$
\alpha_0(\bt)+\alpha_1(\bt)+\cdots+\alpha_n(\bt)=1.
$$
We define a mapping
$$
\underline{g}:U\times\Cal B\times S\times [0,1]^n\to Z
$$
by
$$
\underline{g}(u,\nu,s,\bt)=\alpha_0(\bt)z_0(u)+\alpha_1(\bt)z_1(u)+
\cdots+\alpha_n(\bt)z_n(u).
$$
\begin{propos}
We have
\begin{gather*}
\underline{g}(u,\nu,s,\bt)=\underline{g}(\Phi_j^0(u,\nu,s),\bt)\text{
if }t_j=0;\\
\underline{g}(u,\nu,s,\bt)=\underline{g}(\Phi_j^1(u,\nu,s),\bt)\text{
if }t_j=1.
\end{gather*}
Thus the mapping~$\underline{g}$ induces a well-defined
mapping~$g:M\to Z$.
\end{propos}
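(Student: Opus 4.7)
The plan is to verify the two stated identities by directly computing how each side behaves under the coordinate-replacement imposed by $\Phi^0_j$ or $\Phi^1_j$, after first confirming that $\underline g$ makes sense as a map into $Z$. To see well-definedness on a single cube, note that for any $u\in U$ the points $z_0(u),z_1(u),\ldots,z_n(u)$ are the barycenters of the nested cells $F_0(u)\supset F_1(u)\supset\cdots\supset F_n(u)$ of $Z$, so by definition of the barycentric subdivision they span an $n$-simplex of $Z'$; the functions $\alpha_l(\bt)$ are manifestly nonnegative on $I^n$ and, by a telescoping computation, sum to $1$. Hence the formula produces a genuine point of that simplex.

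For the first identity, recall that $\Phi^0_j(u,\nu,s)=(\Phi_j(u),\nu,s)$ and that, as noted just above the proposition, $z_l(\Phi_j(u))=z_l(u)$ for every $l\in\{0,1,\ldots,n\}\setminus\{j\}$. Inspecting the explicit formula one sees that $t_j$ appears as a factor of $\alpha_j$: indeed $\alpha_1$ contains $t_1$, $\alpha_2$ contains $t_2$, and in general $\alpha_j=t_j(1-t_{j+1})\cdots(1-t_n)$. Therefore $\alpha_j(\bt)=0$ when $t_j=0$, so the two expressions for $\underline g$ agree termwise in every index $l\neq j$ and the only index where the points $z_l$ can differ carries coefficient zero.

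For the second identity, Proposition~4.1 tells us that if $\Phi^1_j(u_1,\nu_1,s_1)=(u_2,\nu_2,s_2)$ then $z_l(u_1)=z_l(u_2)$ for all $l\geq j$. It therefore suffices to show that $\alpha_l(\bt)=0$ for every $l<j$ whenever $t_j=1$. But $\alpha_0(\bt)=(1-t_1)\cdots(1-t_n)$ contains the factor $(1-t_j)$, and for $1\leq l<j$ one has $\alpha_l(\bt)=t_l(1-t_{l+1})\cdots(1-t_n)$, which again contains $(1-t_j)$ because $j>l$; all of these vanish at $t_j=1$. Hence the sums for $\underline g(u_1,\nu_1,s_1,\bt)$ and $\underline g(u_2,\nu_2,s_2,\bt)$ both reduce to $\sum_{l\geq j}\alpha_l(\bt)z_l(u_1)$, proving the claim. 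The only substantive ingredient is Proposition~4.1, whose verification is the genuine content and is postponed to section~4.4; everything else is coefficient-and-index bookkeeping on the barycentric coordinates $\alpha_l(\bt)$.
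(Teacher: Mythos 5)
Your proof is correct and follows exactly the paper's argument: the first identity because $z_l(\Phi_j(u))=z_l(u)$ for $l\ne j$ while $\alpha_j(\bt)$ vanishes at $t_j=0$, and the second because Proposition~4.1 handles the indices $l\geqslant j$ while $\alpha_l(\bt)$ vanishes at $t_j=1$ for $l<j$. The extra remarks on the coefficients summing to $1$ and on $z_0(u),\ldots,z_n(u)$ spanning a simplex of $Z'$ are consistent with what the paper records just before the proposition.
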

\begin{proof}
The first equality holds since $z_l(\Phi_j(u))=z_l(u)$ for $l\ne
j$ and $\alpha_j(\bt)=0$ for $t_j=0$. The second equality follows
from Proposition~4.1, since $\alpha_l(\bt)=0$ if $t_j=1$
and~$l<j$.
\end{proof}

By~$H$ we denote the union of the facets $\{t_j=1\}$,
$j=2,3,\ldots,n$, of the cube $[0,1]^n$. By $\Xi\subset M$ we
denote the image of the set $U\times\Cal B\times S\times H$ under
the quotient mapping
$$
U\times\Cal B\times S\times [0,1]^n\to M.
$$
Obviously, $\Xi=g^{-1}(\Sigma)$ and $\dim\Xi=n-1$. The mapping
$$
g|_{M\setminus \Xi}:M\setminus \Xi\to Z\setminus\Sigma
$$
is a covering since the system of equations
$$
\alpha_j(\bt)=\beta_j,\qquad j=0,1,\ldots,n,
$$
has a unique solution for any nonnegative real numbers
$\beta_0,\beta_1,\ldots,\beta_n$ such that
$\beta_0+\beta_1+\cdots+\beta_n=1$ and either $\beta_0\ne 0$ or
$\beta_1\ne 0$.

\begin{remark}
The complex~$M$ is the canonical subdivision of the cubic cell
combinatorial manifold~$X$ (see section~2.4). Consider the dual
decomposition~$X^*$. Let~$Q$ be the cell of~$X^*$ dual to a
vertex~$x$ of~$X$. Then $Q$ is the union of all closed cubes
of~$M$ containing the vertex~$x$. If $\Lk x\cong Y_i'$, then $Q$
is a simple cell dual to the combinatorial sphere~$Y_i'$. The
mapping~$g$ maps the simple cell~$Q$ onto the simple cell~$P_i$.
Further, $g$ is injective on the interior of the cell~$Q$ and on
the interiors of the facets of~$Q$ dual to those vertices $y\in
Y_i'$ that are vertices of~$Y_i$. Other facets collapse onto faces
of~$P_i$ of smaller dimensions. Suppose that~$F$ is the face
of~$Q$ dual to a simplex~$\tau$ of~$Y_i'$. There are two
possibilities.

1) There is no simplex~$\sigma$ of~$Y_i$ such that
$\tau\subset\sigma$ and $\dim\sigma=\dim\tau$. Then the link
of~$\tau$ possesses an anti-automorphism. Hence the face~$F$
possesses an anti-automorphism. It is not hard to check that in
this case $g$ maps~$F$ onto a face of a smaller dimension.

2) There is a simplex~$\sigma$ of~$Y_i$ such that
$\tau\subset\sigma$ and $\dim\sigma=\dim\tau$. Let~$E$ be the face
of~$P_i$ dual to~$\sigma$. Then $g$ maps the cell~$F$ onto the
cell~$E$ so that the interior of~$F$ is mapped homeomorphically
onto the interior of~$E$. Notice that the combinatorial
sphere~$\Lk_{Y_i'}\tau$ dual to the cell~$F$ is isomorphic to the
barycentric subdivision of the combinatorial
sphere~$\Lk_{Y_i}\sigma$ dual to the cell~$E$.

For example, let $Y_i$ be the boundary of a quadrangle. Then $P_i$
is a quadrangle and $Q$ is an octagon. The mapping~$g|_Q$ is shown
in Fig.~2. The inclined sides of the octagon~$Q$ are mapped to the
vertices of the quadrangle~$P_i$.

Another example is shown in Fig.~3. Here $Y_i$ is the boundary of
a tetrahedron. Then~$P_i$ is a tetrahedron and $Q$ is a polytope
shown in the left part of Fig.~3. (This polytope is called a {\it
permutohedron}.) The shaded hexagon faces of~$Q$ are mapped to the
corresponding vertices of the tetrahedron. The quadrangle faces
of~$Q$ are mapped onto the edges of the tetrahedron and every
segment parallel to the shading collapses to a point. The
interiors of the unshaded hexagon faces of~$Q$ are mapped
homeomorphically onto the interiors of the corresponding faces of
the tetrahedron.
\end{remark}

\begin{center}
{\unitlength=2.5mm
\begin{picture}(42,22)
\put(9,13){\circle*{0.3}}

\put(1,9){\circle*{0.3}}

\put(1,17){\circle*{0.3}}

\put(5,21){\circle*{0.3}}

\put(13,21){\circle*{0.3}}

\put(17,17){\circle*{0.3}}

\put(17,9){\circle*{0.3}}

\put(13,5){\circle*{0.3}}

\put(5,5){\circle*{0.3}}

\put(29,7){\circle*{0.3}}

\put(29,19){\circle*{0.3}}

\put(41,7){\circle*{0.3}}

\put(41,19){\circle*{0.3}}

\put(35,13){\circle*{0.3}}

\thicklines

\put(1,9){\line(0,1){8}}

\put(17,9){\line(0,1){8}}

\put(5,5){\line(1,0){8}}

\put(5,21){\line(1,0){8}}

\put(1,9){\line(1,-1){4}}

\put(1,17){\line(1,1){4}}

\put(13,5){\line(1,1){4}}

\put(13,21){\line(1,-1){4}}

\put(29,7){\line(1,0){12}}

\put(29,7){\line(0,1){12}}

\put(41,7){\line(0,1){12}}

\put(29,19){\line(1,0){12}}

\thinlines

\put(1,13){\line(1,0){16}}

\put(9,5){\line(0,1){16}}

\put(3,7){\line(1,1){12}}

\put(3,19){\line(1,-1){12}}

\put(1,9){\line(2,1){16}}

\put(5,5){\line(1,2){8}}

\put(1,17){\line(2,-1){16}}

\put(5,21){\line(1,-2){8}}

\put(29,7){\line(1,1){12}}

\put(35,7){\line(0,1){12}}

\put(29,13){\line(1,0){12}}

\put(29,19){\line(1,-1){12}}

\renewcommand{\qbeziermax}{1000}

\qbezier[1000](35,13)(34,8)(29,7)

\qbezier[1000](35,13)(30,12)(29,7)

\qbezier[1000](35,13)(34,18)(29,19)

\qbezier[1000](35,13)(30,14)(29,19)

\qbezier[1000](35,13)(36,8)(41,7)

\qbezier[1000](35,13)(40,12)(41,7)

\qbezier[1000](35,13)(36,18)(41,19)

\qbezier[1000](35,13)(40,14)(41,19)

\put(20,13){\vector(1,0){6}}

\put(19,0){Figure 2.}

\end{picture}

}

\end{center}
\begin{center}
{\unitlength=1.5mm
\begin{picture}(70,40)

\put(1.5,15){\circle*{0.5}}

\put(1.5,19){\circle*{0.5}}

\put(3,12){\circle*{0.5}}

\put(9,34){\circle*{0.5}}

\put(10.5,12){\circle*{0.5}}

\put(12,9){\circle*{0.5}}

\put(12,15){\circle*{0.5}}

\put(12,33){\circle*{0.5}}

\put(12,35){\circle*{0.5}}

\put(28.5,15){\circle*{0.5}}

\put(28.5,19){\circle*{0.5}}

\put(27,12){\circle*{0.5}}

\put(21,34){\circle*{0.5}}

\put(19.5,12){\circle*{0.5}}

\put(18,9){\circle*{0.5}}

\put(18,15){\circle*{0.5}}

\put(18,33){\circle*{0.5}}

\put(18,35){\circle*{0.5}}

\put(51,15){\circle*{0.5}}

\put(60,12){\circle*{0.5}}

\put(60,33){\circle*{0.5}}

\put(69,15){\circle*{0.5}}

\thicklines

\put(1.5,15){\line(1,-2){1.5}}

\put(1.5,15){\line(3,-1){9}}

\put(1.5,15){\line(0,1){4}}

\put(1.5,19){\line(1,2){7.5}}

\put(3,12){\line(3,-1){9}}

\put(9,34){\line(3,-1){3}}

\put(9,34){\line(3,1){3}}

\put(10.5,12){\line(1,-2){1.5}}

\put(10.5,12){\line(1,2){1.5}}

\put(12,15){\line(0,1){18}}

\put(28.5,15){\line(-1,-2){1.5}}

\put(28.5,15){\line(-3,-1){9}}

\put(28.5,15){\line(0,1){4}}

\put(28.5,19){\line(-1,2){7.5}}

\put(27,12){\line(-3,-1){9}}

\put(21,34){\line(-3,-1){3}}

\put(21,34){\line(-3,1){3}}

\put(19.5,12){\line(-1,-2){1.5}}

\put(19.5,12){\line(-1,2){1.5}}

\put(18,15){\line(0,1){18}}

\put(12,35){\line(1,0){6}}

\put(12,33){\line(1,0){6}}

\put(12,15){\line(1,0){6}}

\put(12,9){\line(1,0){6}}

\put(51,15){\line(3,-1){9}}

\put(51,15){\line(1,2){9}}

\put(60,12){\line(3,1){9}}

\put(60,12){\line(0,1){21}}

\put(60,33){\line(1,-2){9}}

\thinlines


\put(10,33.67){\line(0,1){0.67}}

\put(11,33.33){\line(0,1){1.33}}

\put(9.5,33.83){\line(0,1){0.33}}

\put(10.5,33.5){\line(0,1){1}}

\put(11.5,33.17){\line(0,1){1.67}}

\put(12,33){\line(0,1){2}}

\put(13,33){\line(0,1){2}}

\put(14,33){\line(0,1){2}}

\put(15,33){\line(0,1){2}}

\put(16,33){\line(0,1){2}}

\put(17,33){\line(0,1){2}}

\put(18,33){\line(0,1){2}}

\put(12.5,33){\line(0,1){2}}

\put(13.5,33){\line(0,1){2}}

\put(14.5,33){\line(0,1){2}}

\put(15.5,33){\line(0,1){2}}

\put(16.5,33){\line(0,1){2}}

\put(17.5,33){\line(0,1){2}}

\put(19,33.33){\line(0,1){1.33}}

\put(20,33.67){\line(0,1){0.67}}

\put(20.5,33.83){\line(0,1){0.33}}

\put(19.5,33.5){\line(0,1){1}}

\put(18.5,33.17){\line(0,1){1.67}}

\put(10.5,33.5){\line(1,0){9}}

\put(10.5,34.5){\line(1,0){9}}

\put(9,34){\line(1,0){12}}


\put(11,11){\line(0,1){2}}

\put(11.5,10){\line(0,1){4}}

\put(12,9){\line(0,1){6}}

\put(19,11){\line(0,1){2}}

\put(18.5,10){\line(0,1){4}}

\put(18,9){\line(0,1){6}}

\put(13,9){\line(0,1){6}}

\put(14,9){\line(0,1){6}}

\put(15,9){\line(0,1){6}}

\put(16,9){\line(0,1){6}}

\put(17,9){\line(0,1){6}}

\put(12.5,9){\line(0,1){6}}

\put(13.5,9){\line(0,1){6}}

\put(14.5,9){\line(0,1){6}}

\put(15.5,9){\line(0,1){6}}

\put(16.5,9){\line(0,1){6}}

\put(17.5,9){\line(0,1){6}}

\put(11.75,9.5){\line(1,0){6.5}}

\put(11.5,10){\line(1,0){7}}

\put(11.25,10.5){\line(1,0){7.5}}

\put(11,11){\line(1,0){8}}

\put(10.75,11.5){\line(1,0){8.5}}

\put(10.5,12){\line(1,0){9}}

\put(10.75,12.5){\line(1,0){8.5}}

\put(11,13){\line(1,0){8}}

\put(11.75,14.5){\line(1,0){6.5}}

\put(11.5,14){\line(1,0){7}}

\put(11.25,13.5){\line(1,0){7.5}}


\put(12,16){\line(1,0){6}}

\put(12,17){\line(1,0){6}}

\put(12,18){\line(1,0){6}}

\put(12,19){\line(1,0){6}}

\put(12,20){\line(1,0){6}}

\put(12,21){\line(1,0){6}}

\put(12,22){\line(1,0){6}}

\put(12,23){\line(1,0){6}}

\put(12,24){\line(1,0){6}}

\put(12,25){\line(1,0){6}}

\put(12,26){\line(1,0){6}}

\put(12,27){\line(1,0){6}}

\put(12,28){\line(1,0){6}}

\put(12,29){\line(1,0){6}}

\put(12,30){\line(1,0){6}}

\put(12,31){\line(1,0){6}}

\put(12,32){\line(1,0){6}}


\put(19.5,9.5){\line(1,2){1.5}}

\put(21,10){\line(1,2){1.5}}

\put(22.5,10.5){\line(1,2){1.5}}

\put(24,11){\line(1,2){1.5}}

\put(25.5,11.5){\line(1,2){1.5}}


\put(10.5,9.5){\line(-1,2){1.5}}

\put(9,10){\line(-1,2){1.5}}

\put(7.5,10.5){\line(-1,2){1.5}}

\put(6,11){\line(-1,2){1.5}}

\put(4.5,11.5){\line(-1,2){1.5}}

\put(51,15){\line(1,0){3}}

\put(56,15){\line(1,0){3}}

\put(61,15){\line(1,0){3}}

\put(66,15){\line(1,0){3}}

\put(33.33,21.67){\vector(1,0){10}}

\put(31.67,0){Figure 3.}

\end{picture}

}
\end{center}

\subsection{Proof of Proposition 4.1}
We shall use the notation of section~2.5. The only difference is
that the set~$S$ and the involutions~$\Phi^{e}_j$ are now
constructed from the combinatorial
manifold~$\overline{Y}=Y\times\Cal B$ rather than from the
combinatorial manifold~$Y$. In particular, the vertex set of the
graph~$G$ is the set $\overline{W}=W\times\Cal B$ rather then~$W$.
We denote by~$\pi$ the projection~$W\times \Cal B\to W$. We put,
$\bar{z}=z\circ\pi$. We have noticed in section~2.5 that for two
simplices $\rho_1$ and $\rho_2$ of the complex~$\overline{Y}$ a
labeling-preserving  anti-isomorphism~$\St\rho_1\to\St\rho_2$ is
unique (if any exists). If such an anti-isomorphism exists we
shall denote it by~$\omega_{\rho_1,\rho_2}$. In particular, if
$\bar{y}=(y,\nu)$ is a vertex of~$\overline{Y}$, then
$\omega_{\bar{y},\Tilde{\bar{y}}}=\overline{\chi}_{\bar{y}}$ is
the anti-isomorphism induced by the anti-isomorphism~$\chi_y$.

The definition of the anti-isomorphisms~$\chi_y$ implies
immediately that $z(\chi_y(\sigma))=z(\sigma)$ for any
simplex~$\sigma\in W$ containing the vertex~$y$. Therefore,
$\bar{z}(\overline{\chi}_{\bar{y}}(\sigma))=\bar{z}(\sigma)$ for
any simplex~$\sigma\in \overline{W}$ containing the
vertex~$\bar{y}$. Suppose $\rho_1,\rho_2\in\overline{W}$ are two
simplices connected by an edge in the graph~$G$. Then there is a
vertex~$\bar{y}\in\rho_1$ such that
$\overline{\chi}_{\bar{y}}(\rho_1)=\rho_2$. The
anti-isomorphism~$\omega_{\rho_1,\rho_2}$ is the restriction of
the anti-isomorphism~$\overline{\chi}_{\bar{y}}$ to the
subcomplex~$\St\rho_1\subset\St\bar{y}$.
Hence~$\bar{z}(\omega_{\rho_1,\rho_2}(\sigma))=\bar{z}(\sigma)$
for every simplex~$\sigma\supset\rho_1$. Consequently the same
equality $\bar{z}(\omega_{\rho_1,\rho_2}(\sigma))=\bar{z}(\sigma)$
holds for any simplices~$\rho_1,\rho_2$ belonging to the distinct
parts of the same connected component of the graph~$G$ and any
simplex $\sigma\supset\rho_1$. Therefore,
$\bar{z}(\Lambda_c(\sigma))=\bar{z}(\sigma)$ for any $\Lambda\in
P$, $c\subset\Cal C$ such that $c(\sigma)\supset c$.

Let
$\sigma^0_i\subset\sigma^1_i\subset\ldots\subset\sigma^{n-1}_i$,
$i=1,2$, be the sequences of simplices of~$\overline{Y}$
corresponding to the simplices~$(u_i,\nu_i)$ of~$\overline{Y}'$.
We put, $c=c(\sigma^{j-1}_1)$. Then
$\sigma^{l-1}_2=\Lambda_c(\sigma^{l-1}_1)$, $l=1,2,\ldots,n$.
Hence for $l\geqslant j$ we obtain
$\bar{z}(\sigma_2^{l-1})=\bar{z}(\sigma_1^{l-1})$, which implies
Proposition~4.1.

\subsection{The cobordism class of the manifold $M$}
Resolving singularities can yield manifolds representing different
cobordism classes. We deal with resolving singularities ``with
multiplicities'', that is, with mappings~$M\to Z$ that are
finite-fold coverings on an open everywhere dense subset. Hence it
makes sense to consider the class~$\frac{[M]}{r}\in
\Omega_n^{\SPL}\otimes\Bbb Q=\Omega_n^{\SO}\otimes\Bbb Q$, where
$r$ is the number of sheets of the covering. It turns out that our
construction in section~4.3 yields the manifold~$M$ with the
class~$\frac{[M]}{r}$ completely determined by the set of simple
cells~$P_1,P_2,\ldots,P_k$.

\begin{theorem}
Suppose $Z$ is an oriented $n$-dimensional simple cell
pseudo-manifold, $P_1,P_2,\ldots,P_k$ are all its $n$-dimensional
cells, $M$ is the manifold constructed in section~4.3, $r$ is the
number of sheets of the covering~$M\setminus\Xi\to
Z\setminus\Sigma$. Then
$$
\frac{[M]}r=\psi^{-1}([P_1+P_2+\cdots+P_k]),
$$
where $\psi:\Omega_n^{\SPL}\otimes\Bbb Q\to\PP_n\otimes\Bbb Q$ is
the isomorphism in Theorem~4.1.
\end{theorem}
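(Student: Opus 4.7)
The strategy is to compute $\psi([M])$ directly and match it with $r[P_1+\cdots+P_k]$ modulo torsion. Since $M=\bQ(\Gamma)$ is the canonical subdivision of $X=\tbQ(\Gamma)$, the underlying PL manifolds of $M$ and $X$ coincide, so $[M]=[X']$ in $\Omega_n^{\SPL}$, where $X'$ is the simplicial barycentric subdivision of $X$. Applying the definition of $\psi$ from Theorem~4.1,
$$
\psi([M])=\sum_{v\in\V(X')}[D_{X'}v]\in\PP_n.
$$

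Next, I classify the vertices of $X'$ by which cube of $X$ they are the barycenter of. For a cube $\sigma$ of $X$ of positive dimension $d$, the link is a join
$$
\Lk_{X'}b(\sigma)\cong(\partial\sigma)'*\Lk_X\sigma;
$$
a reflection of the cube $\sigma$ along any coordinate axis is an orientation-reversing cubic self-isomorphism of $\sigma$ whose restriction induces an anti-automorphism of $(\partial\sigma)'$, and hence of the whole join. Consequently the simple cell dual to $b(\sigma)$ represents an element of order at most~$2$ in $\PP_n$. For an honest vertex $v$ of $X$ one has $\Lk_{X'}v=(\Lk_X v)'$, and Proposition~2.7 partitions these vertices into $k$ classes of size $r$ with $\Lk_X v\cong Y_i'$ on the $i$th class. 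Writing $R_i$ for the simple cell dual to $Y_i''$, this gives
$$
\psi([M])\equiv r\sum_{i=1}^{k}[R_i]\pmod{\text{$2$-torsion in }\PP_n}.
$$

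Under the canonical isomorphism $\PP_*\cong H_*(\Cal T_*)$ recalled in section~4.2, $[R_i]$ corresponds to $[Y_i'']=[\bd^2Y_i]$ and $[P_i]$ to $[Y_i]$. Corollary~2.2 states that $(2\bd)_*=2$ on $H_*(\Cal T_*)$, so $\bd_*$ acts as the identity on $H_*(\Cal T_*)\otimes\Bbb Q$, and hence $[R_i]=[P_i]$ in $\PP_n\otimes\Bbb Q$. Combining the two identities,
$$
\psi([M])=r[P_1+P_2+\cdots+P_k]\quad\text{in }\PP_n\otimes\Bbb Q,
$$
which is equivalent to the asserted $[M]/r=\psi^{-1}([P_1+\cdots+P_k])$.

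To close the argument one must verify that the integer $r$ produced by Proposition~2.7 agrees with the number of sheets of $g|_{M\setminus\Xi}$. This is supplied essentially for free by Remark~4.1: for every vertex $x$ of $X$ with $\Lk_X x\cong Y_i'$ the dual simple cell of $X^*$ at $x$ is mapped by $g$ onto $P_i$ and is injective on its interior, so a generic interior point of $P_i$ has exactly as many preimages as there are such vertices, namely $r$ (independent of $i$). The principal obstacle in the plan is the $2$-torsion analysis for the barycenters of positive-dimensional cubes, which reduces to exhibiting a canonical orientation-reversing cubic self-symmetry of $\sigma$; all other steps are bookkeeping against Proposition~2.7, Corollary~2.2, and the identification $\PP_*\cong H_*(\Cal T_*)$.
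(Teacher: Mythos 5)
Your argument is correct and follows essentially the same route as the paper: the paper computes $2\diff(M')$ (rather than working with $X'$), observes that all links except the $r$-fold copies of the $Y_i''$ possess anti-automorphisms and are therefore $2$-torsion, and then invokes Proposition~2.13 (your Corollary~2.2) to replace $\sum Y_i''$ by $\sum Y_i$ rationally, using that $\psi$ corresponds to $\diff_*$ under $\PP_n\cong H_n(\Cal T_*)$. Your explicit check that the $r$ of Proposition~2.7 equals the number of sheets of $g|_{M\setminus\Xi}$ is a point the paper leaves implicit, but it is a correct and minor supplement rather than a different method.
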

\begin{proof}
Consider the combinatorial manifold~$M'$. The set of links of
vertices of~$M'$ consists of the $r$-fold multiple of the set
$Y_1'',Y_2'',\ldots,Y_k''$ and several combinatorial spheres each
of which possesses an anti-automorphism. Hence,
$$
2\diff(M')=2r\sum_{i=1}^kY_i''
$$
in the group $\Cal T_n$. It follows from Proposition~2.13 that the
cycles
$$
2\sum_{i=1}^kY_i''\qquad\text{and}\qquad 2\sum_{i=1}^kY_i
$$
represent equal homology classes in the group~$H_n(\Cal T_*)$. The
homomorphism~$\psi$ corresponds to the homomorphism~$\diff_*$
under the canonical isomorphism~$\PP_n\cong H_n(\Cal T_*)$.
Therefore,
$$
2\psi([M])=2r[P_1+P_2+\ldots+P_k]
$$
in the group~$\PP_n$.
\end{proof}

\subsection{Realization of simplicial cycles}
Suppose $R$ is a topological space, $C_*^{\sing}(R)$ is its
singular simplicial chain complex. Suppose $\xi\in C_n^{\sing}(R)$
is a cycle. The cycle $\xi$ can easily be realized as the image of
the fundamental class of an oriented simplicial
pseudo-manifold~$Z$ under a continuous mapping~$h:Z\to R$. A
simplex can be considered as a simple cell dual to the boundary of
a simplex. Thus a simplicial pseudo-manifold is always a simple
cell pseudo-manifold. We resolve the singularities of~$Z$ using
the construction described in section~4.3. The image of the
fundamental class of the manifold~$M$ under the composite mapping
$$
\begin{CD}
\varphi:M @>g>> Z@>h>>R
\end{CD}
$$
is equal to~$r[\xi]$, where $r$ is the number of sheets of the
covering~$M\setminus\Xi\to Z\setminus\Sigma$.

The combinatorial spheres~$Y_i$ dual to $n$-dimensional cells of
the pseudo-manifold~$Z$ are isomorphic to the boundary of a
simplex. Hence for $j<n$ the link of each $j$-dimensional simplex
of~$M'$ possesses an anti-automorphism. By a result of N.~Levitt
and C.~Rourke~\cite{LeRo78} this implies that the rational
Pontryagin classes of~$M$ vanish. Consider the Pontryagin numbers
of the mapping~$\varphi$ (see~~\cite{CoFl64}). Suppose,
$\omega=(i_1,i_2,\ldots,i_m)$, $|\omega|=\sum i_j$, $b\in
H^{n-|\omega|}(R)$; then
$$
\left\langle p_{\omega}(M)\smile \varphi^*b,[M]\right\rangle=
\left\{
\begin{aligned}
&0, &\text{ if }|\omega|>0;\\
r\langle b&,[\xi]\rangle, &\text{ if }|\omega|=0.
\end{aligned}
\right.
$$
In~\cite{CoFl64} it is proved that the class
$[\varphi]\in\Omega_n^{\SPL}(R)\otimes\Bbb
Q\cong\Omega_n^{\SO}(R)\otimes\Bbb Q$ represented by the
mapping~$\varphi$ is completely determined by its Pontryagin
numbers. Therefore the element
$\frac{[\varphi]}r\in\Omega^{\SPL}_n(R)\otimes\Bbb Q$ is
independent of the arbitrariness in the construction of the
manifold~$M$ and does not change if we replace the cycle~$\xi$
with a homologous one. Thus for each topological space~$R$ our
construction yields a mapping
$$\theta_{R}:H_*(R;\Bbb Z)\to\Omega^{\SPL}_*(R)\otimes\Bbb Q,$$
which is a right inverse to the augmentation
mapping~$\Omega^{\SPL}_*(R)\otimes\Bbb Q\to H_*(R;\Bbb Q)$.
Obviously $\theta_{R}$ is a natural transformation of homology
theories~$H_*(\cdot\,;\Bbb
Z)\to\Omega_*^{\SPL}(\cdot\,)\otimes\Bbb Q$ that for a one-point
space $\theta_{R}$ coincides with the standard embedding~$\Bbb
Z\subset\Omega_0^{\SPL}\otimes\Bbb Q$. Therefore the
mapping~$\theta_{R}$ coincides with the composition
$$
\begin{CD}
H_*(R;\Bbb Z)@>{\eta}>>H_*(R;\Omega^{\SPL}_*\otimes\Bbb
Q)@>{\left(\ch^{\SPL}\right)^{-1}}>>\Omega^{\SPL}_*(R)\otimes\Bbb
Q,
\end{CD}
$$
where $\eta$ is the homomorphism induced by the standard
embedding~$\Bbb Z\subset\Omega^{\SPL}_0\otimes\Bbb Q$ and
$$
\ch^{\SPL}:\Omega^{\SPL}_*(R)\to H_*(R;\Omega^{\SPL}_*\otimes\Bbb
Q)
$$
is the Chern-Dold character in oriented piecewise-linear bordism.

\subsection{Realization of cycles dual to simplicial cocycles}

Suppose $R$ is an oriented $m$-dimensional combinatorial manifold,
$C^*(R;\Bbb Z)$ is its simplicial cochain complex. Let $c\in
C^{m-n}(R;\Bbb Z)$ be a cocycle. The cycle~$\xi$ dual to~$c$ lies
in the cellular chain group of the decomposition~$R^*$. (This
group will be denoted by~$C_n(R^*;\Bbb Z)$.) Assume that we wish
to realize a multiple of the homology class~$[\xi]$ by an image of
the fundamental class of a manifold. Obviously, we can easily
replace the cycle~$\xi$ by a homologous simplicial cycle belonging
to the group~$C_n(R';\Bbb Z)$. Thus we shall reduce our problem to
the problem considered in the previous section. However, a more
interesting results can be obtained if we consider the cycle~$\xi$
as a cycle consisting of the simple cells of the
decomposition~$R^*$.

The cycle~$\xi$ can be easily realized as an image of the
fundamental class of a simple cell pseudo-manifold~$Z$ under a
mapping~$h$ that maps each cell of~$Z$ isomorphically onto a cell
of~$R^*$. We resolve the singularities of the pseudo-manifold~$Z$
using the construction described in section~4.3. The image of the
fundamental class of the manifold~$M$ under the composite mapping
$$
\begin{CD}
\varphi:M @>g>> Z@>h>>R
\end{CD}
$$
is equal to~$r[\xi]$, where $r$ is the number of sheets of the
covering~$M\setminus\Xi\to Z\setminus\Sigma$.

\begin{propos}
The mapping~$\varphi^*$ takes the rational Pontryagin classes of
the manifold~$R$ to the rational Pontryagin classes of the
manifold~$M$.
\end{propos}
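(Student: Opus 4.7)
The plan is to establish that $\varphi$ induces a stable PL microbundle isomorphism between the PL tangent microbundle of $M$ and the pullback under $\varphi$ of the PL tangent microbundle of $R$, rationally; naturality of Pontryagin classes under PL microbundle maps then yields $\varphi^*p_i(R)=p_i(M)$ for every~$i$.

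First I would work on the open dense complement $M\setminus\Xi$ of the codimension-one set $\Xi=g^{-1}(\Sigma)$ introduced in section~4.3. On this set $\varphi=h\circ g$ is a local PL homeomorphism: by construction $g|_{M\setminus\Xi}$ is an $r$-fold PL covering of $Z\setminus\Sigma$, and $h$ maps every open cell of $Z$ isomorphically onto an open cell of the dual decomposition $R^*$. Consequently the PL tangent microbundle of $M$ and the $\varphi$-pullback of the PL tangent microbundle of $R$ are PL-isomorphic over $M\setminus\Xi$, so $\varphi^*p_i(R)$ and $p_i(M)$ agree after restriction to $M\setminus\Xi$.

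The core step is to extend this agreement across $\Xi$. For each vertex $x\in M$ with $\Lk_Mx\cong Y_i'=(\Lk_R\sigma_i)'$, the Remark after section~4.3 describes the star of $x$ in $M$ as a simple cell $Q$ dual to $Y_i'$, and exhibits $\varphi|_Q$ as the face-collapse $Q\to P_i=\cone(Y_i')$ followed by the isomorphism $h|_{P_i}$ onto $D\sigma_i=\cone(Y_i)$. Since $\cone(Y_i')$ is canonically PL homeomorphic to $\cone(Y_i)$ (barycentric subdivision does not change the underlying polyhedron), a PL regular neighbourhood of $x$ in $M$ is a PL $n$-ball, as is a PL regular neighbourhood of $\varphi(x)$ in $R$. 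I would use this to construct a local stable PL microbundle isomorphism $TM|_{\St x}\cong\varphi^*TR|_{\St x}$ that agrees on the overlap with the isomorphism from the previous step, and then patch these local isomorphisms over the vertices of $M$ into a global stable rational PL microbundle isomorphism.

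The main obstacle is verifying that the local isomorphisms patch compatibly across $\Xi$, where $\varphi$ genuinely collapses faces and is not a local PL homeomorphism. A cleaner alternative, which I would also pursue, is to bypass microbundle arguments and compare Poincar\'e-dual cycles directly using the Levitt--Rourke--Gaifullin universal local formula for~$p_i$: the cycle $\sum_\rho f(\Lk_R\rho)\rho$ representing $p_i(R)\cap[R]$ in $R$ has an $r$-fold preimage under $\varphi$ that represents $\varphi^*p_i(R)\cap[M]$ and, by the combinatorial structure of $\varphi$ produced by Theorem~2.3 together with the fact that $f$ vanishes on combinatorial spheres admitting anti-automorphisms, coincides modulo boundaries with the Levitt--Rourke cycle $\sum_\tau f(\Lk_{M'}\tau)\tau$ representing $p_i(M)\cap[M]$. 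Either route reduces the proposition to the same combinatorial bookkeeping about how $\varphi$ interacts with the barycentric subdivision structure of the links $Y_i=\Lk_R\sigma_i$.
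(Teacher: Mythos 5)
Your first route has a genuine gap. Knowing that $\varphi^*p_i(R)$ and $p_i(M)$ agree after restriction to $M\setminus\Xi$ proves nothing, because $\Xi$ has codimension~$1$ in~$M$, so the restriction $H^{4i}(M;\Bbb Q)\to H^{4i}(M\setminus\Xi;\Bbb Q)$ is very far from injective; all of the content of the proposition lies in what happens over~$\Xi$. The patching of local microbundle isomorphisms across~$\Xi$, which you acknowledge as ``the main obstacle,'' is exactly the missing step, and it is not clear it can be carried out: $\varphi$ collapses entire faces of the dual cells~$Q$ of positive dimension, so there is no evident candidate for a microbundle map there, and the proposition as stated only concerns rational Pontryagin classes, not bundle data.

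Your second route is in substance the argument the paper gives, but you stop precisely where the work begins. The paper avoids the ill-defined notion of an ``$r$-fold preimage of a cycle'' under the non-covering map~$\varphi$ by working dually: it takes the local formula~$f$, forms the cellular cochain $c^{(j)}$ on a dual decomposition by $c^{(j)}(P)=f(Y^{(j)})$ for $P$ dual to~$Y$ (using that evaluating $f$ on iterated barycentric subdivisions of links still represents~$p_l$, Corollaries~4.2--4.4), and then checks on each $4l$-cell $Q$ of~$X^*$ the dichotomy of Remark~4.2: either $\dim g(Q)<4l$, in which case $Q$ possesses an anti-automorphism and $c^{(0)}(X^*)(Q)=0$, or $g$ maps $Q$ onto a cell $P$ of $R^*$ dual to~$Y$ with $Q$ dual to~$Y'$, in which case $c^{(0)}(X^*)(Q)=c^{(1)}(R^*)(P)$. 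This gives the cochain-level identity $\varphi^*\bigl(c^{(1)}(R^*)\bigr)=c^{(0)}(X^*)$ and hence the claim. That case analysis, together with the subdivision-invariance of the local formula, is the ``combinatorial bookkeeping'' you defer; it is the entire proof, so as written your proposal establishes the strategy but not the result.
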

The proof is postponed to the next section.

The Pontryagin numbers of the mapping~$\varphi$ are
$$
\left\langle p_{\omega}(M)\smile \varphi^*b,[M]\right\rangle=
r\left\langle p_{\omega}(R)\smile b,[\xi]\right\rangle.
$$
Therefore the element
$\frac{[\varphi]}{r}\in\Omega_n^{\SPL}(R)\otimes\Bbb Q$ is
independent of the arbitrariness in the construction of the
manifold~$M$ and does not change if we replace the cycle~$\xi$
with a homologous one. Thus for each combinatorial manifold~$R$
our construction yields a mapping
$$\theta_{R^*}:H_*(R;\Bbb Z)\to\Omega^{\SPL}_*(R)\otimes\Bbb Q,$$
which is a right inverse to the augmentation
mapping~$\Omega^{\SPL}_*(R)\otimes\Bbb Q\to H_*(R;\Bbb Q)$. Unlike
the mapping~$\theta_{R}$, the mapping~$\theta_{R^*}$ is not a
natural transformation of homology theories. Indeed, the mapping
dual to~$\theta_{R^*}$ is a natural transformation of cohomology
theories.

\begin{propos}
The mapping~$\theta_{R^*}$ coincides with the composition
$$
H_*(R;\Bbb Z)\xrightarrow{D} H^*(R;\Bbb Z) \xrightarrow{\eta}
H^*(R;\Omega_{\SPL}^*\otimes\Bbb Q) \xrightarrow{\ch_{\SPL}^{-1}}
\Omega_{\SPL}^*(R)\otimes\Bbb Q
\xrightarrow{D_{\SPL}^{-1}\otimes\,\Bbb Q}
\Omega^{\SPL}_*(R)\otimes\Bbb Q,
$$
where $\eta$ is the homomorphism induced by the standard embedding
$\Bbb Z\subset\Omega_{\SPL}^*\otimes\Bbb Q$, $D$ and $D_{\SPL}$
are the Poincar\'e duality operators in cohomology and oriented
piecewise linear cobordism respectively, and
$$
\ch_{\SPL}:\Omega_{\SPL}^*(R)\to H^*(R;\Omega_{\SPL}^*\otimes\Bbb
Q)
$$
is the Chern-Dold character in oriented piecewise linear
cobordism.
\end{propos}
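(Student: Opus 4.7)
The plan is to verify the identity by comparing the Chern--Dold characters of both sides, exploiting the fact that $\ch^{\SPL}\colon \Omega^{\SPL}_*(R)\otimes\mathbb{Q}\xrightarrow{\sim} H_*(R;\Omega^{\SPL}_*\otimes\mathbb{Q})$ is an isomorphism. I will use the standard universal expansion of the Chern--Dold character: for any oriented piecewise-linear manifold $N$ and map $\psi\colon N\to R$,
$$
\ch^{\SPL}([\psi])=\sum_{k\geqslant 0}\psi_*\bigl(s_k(N)\cap[N]\bigr)\otimes g_k,
$$
where $\{g_k\}$ is a fixed rational basis of $\Omega^{\SPL}_{4k}\otimes\mathbb{Q}$ and $s_k\in H^{4k}(N;\mathbb{Q})$ are the dual polynomials in the rational Pontryagin classes, with $s_0=1$.

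For the left-hand side, I apply the expansion to $\varphi\colon M\to R$. Proposition~4.3 yields $s_k(M)=\varphi^*s_k(R)$ rationally, and the projection formula together with $\varphi_*[M]=r[\xi]=rx$ gives
$$
\ch^{\SPL}\bigl(\theta_{R^*}(x)\bigr)=\tfrac{1}{r}\ch^{\SPL}([\varphi])=\sum_{k\geqslant 0}\bigl(s_k(R)\cap x\bigr)\otimes g_k.
$$
For the right-hand side, I write $D_{\SPL}^{-1}(\alpha)=[R]_{\SPL}\cap\alpha$, where $\alpha=\ch_{\SPL}^{-1}(\eta(D(x)))$ has Chern--Dold character $\eta(D(x))=D(x)\otimes 1$. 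Applying the universal expansion to $\mathrm{id}\colon R\to R$ gives $\ch^{\SPL}([R]_{\SPL})=\sum_k(s_k(R)\cap[R])\otimes g_k$. Combining this with the module compatibility of $\ch^{\SPL}$ with cap products,
$$
\ch^{\SPL}(\Theta(x))=\ch^{\SPL}([R]_{\SPL})\cap\ch_{\SPL}(\alpha)
=\sum_{k\geqslant 0}\bigl((s_k(R)\cap[R])\cap D(x)\bigr)\otimes g_k,
$$
and associativity of the cap product together with $[R]\cap D(x)=x$ reduces each summand to $s_k(R)\cap x$. Hence the two characters coincide, and injectivity of $\ch^{\SPL}$ gives the claim.

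The main obstacle is the bookkeeping for the Chern--Dold expansion: one must verify that the same polynomials $s_k$ in the Pontryagin classes appear both in the universal formula for an arbitrary $[\psi\colon N\to R]$ and for the fundamental class $[R]_{\SPL}$. This is a standard consequence of the Thom isomorphism and the rational computation $H^*(\mathrm{BSPL};\mathbb{Q})\cong\mathbb{Q}[p_1,p_2,\dotsc]$, but it requires a consistent normalization of $g_k$ and $s_k$. Once this is fixed, the final computation is purely an exercise in the formal properties of cap product and Poincar\'e duality.
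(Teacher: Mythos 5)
Your proof is correct, but it reaches the conclusion by a route genuinely different from the paper's. Both arguments end the same way, by the Conner--Floyd principle that an element of $\Omega_n^{\SPL}(R)\otimes\Bbb Q$ is determined by its Pontryagin numbers (equivalently, by injectivity of $\ch^{\SPL}$), and both handle the left-hand side via Proposition~4.3 and the projection formula. The difference is in the right-hand side. The paper computes $z=\bigl((D_{\SPL}^{-1}\otimes\Bbb Q)\circ\ch_{\SPL}^{-1}\circ\eta\circ D\bigr)([\xi])$ geometrically: it invokes Buchstaber's description of $\ch_{\SPL}^{-1}$ as a composition through the Hurewicz homomorphism $H:\pi^*(R)\otimes\Omega_{\SPL}^*\to\Omega_{\SPL}^*(R)$ from stable cohomotopy (via Serre's theorem $H^*(R;\Bbb Q)\cong\pi^*(R)\otimes\Bbb Q$), and then represents a multiple of $z$ by a transversal preimage, i.e.\ by a map $\varkappa:N\to R$ with $\varkappa^*p_i(R)=p_i(N)$, from which the Pontryagin numbers of $z$ are read off. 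You instead stay entirely on the formal side: you use the universal characteristic-number expansion of $\ch^{\SPL}$, its compatibility with cap products, and the identity $D_{\SPL}^{-1}(\alpha)=[R]_{\SPL}\cap\alpha$, so that both sides have Chern--Dold character $\sum_{\omega}\bigl(s_{\omega}(R)\cap x\bigr)\otimes g_{\omega}$. Your route avoids stable cohomotopy and transversality at the price of assuming the expansion formula with a correctly normalized dual pair $\{s_{\omega}\},\{g_{\omega}\}$ --- a standard fact, which you rightly flag as the point needing care; note only that the basis must be indexed by partitions rather than by a single integer $k$, since $\dim_{\Bbb Q}\Omega_{4k}^{\SPL}\otimes\Bbb Q>1$ for $k\geqslant 2$. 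Both proofs are of comparable length; the paper's is more geometric, yours trades the geometry for formal properties of the Chern--Dold character.
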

\begin{proof}
Let $\pi^j(R)=\lim\limits_{\longrightarrow}[\Sigma^qR^+,S^{q+j}]$
be the stable cohomotopy group of the space~$R$. By a theorem of
J.-P.~Serre there is a natural isomorphism~$H^*(R;\Bbb Q)\cong
\pi^*(R)\otimes\Bbb Q$. By a construction of
V.~M.~Buchstaber~\cite{Buc78} the mapping~$\ch_{\SPL}^{-1}$
inverse to the Chern-Dold character in oriented piecewise-linear
cobordism can be represented as the composition
$$
H^*(R;\Omega_{\SPL}^*\otimes\Bbb
Q)\stackrel{\cong}{\longrightarrow}
\pi^*(R)\otimes\Omega_{\SPL}^*\otimes\Bbb Q \stackrel{H\otimes
\Bbb Q} {\longrightarrow} \Omega_{\SPL}^*(R)\otimes\Bbb Q,
$$
where $H:\pi^*(R)\otimes\Omega_{\SPL}^*\to\Omega_{\SPL}^*(R)$ is
the Hurewicz homomorphism taking an element~$(a,\alpha)$,
$a\in\pi^j(R)$, $\alpha\in\Omega_{\SPL}^{-l}$, to the image
of~$\alpha$ under the homomorphism
$$
\Omega_{\SPL}^{-l}\cong\widetilde{\Omega}_{\SPL}^{q+j-l}(S^{q+j})
\stackrel{a^*}{\longrightarrow}\widetilde{\Omega}_{\SPL}^{q+j-l}
(\Sigma^qR^+)\cong\Omega_{\SPL}^{j-l}(R).
$$

Suppose, $\alpha=1\in\Omega^0_{\SPL}$, $R$ is an oriented
$m$-dimensional combinatorial manifold. We put,
$x=(D_{\SPL}^{-1}\circ H)(a,1)\in \Omega_{m-j}^{\SPL}(R)$. Let
$$
\gamma\in
\widetilde{\Omega}_{\SPL}^{q+j}(\Sigma^qR^+)=\Omega_{\SPL}^{q+j}(R\times
D^q,R\times S^{q-1}).
$$
be the image of the fundamental class of the sphere~$S^{q+j}$
under the mapping~$a^*$. Let $y\in\Omega_{m-j}^{\SPL}(R\times
D^q)$ be the Poincar\'e-Lefschetz dual of the class~$\gamma$.
Obviously the class~$y$ goes to the class~$x$ under the canonical
isomorphism~$\Omega_{m-j}^{\SPL}(R\times
D^q)\to\Omega^{\SPL}_{m-j}(R)$. The class~$y$ can be represented
by a transversal preimage of a point under a mapping of the
homotopy class~$a$. This preimage is a submanifold with a trivial
normal bundle. Hence the class~$x$ can be represented by a
mapping~$\varkappa:N\to R$ such that $\varkappa^*(p_i(R))=p_i(N)$,
where $p_i$ are the rational Pontryagin classes.

Suppose,
$$z=\left((D_{\SPL}^{-1}\otimes\Bbb Q)\circ \ch_{\SPL}^{-1}\circ\,\eta\circ D
\right)([\xi])\in \Omega_n^{\SPL}(R)\otimes\Bbb Q.$$ Then $z\in
(D_{\SPL}^{-1}\circ H)(\pi^*(R))\otimes\Bbb Q$. Hence a certain
multiple of the class~$z$ can be represented by an image of a
manifold whose rational Pontryagin classes are the pullbacks of
the rational Pontryagin classes of~$R$. On the other hand, by
Proposition~4.3, a certain multiple of the
class~$\theta_{R^*}([\xi])$ can also be represented by an image of
a manifold whose rational Pontryagin classes are the pullbacks of
the rational Pontryagin classes of~$R$. Hence, the Pontryagin
numbers of~$\theta_{R^*}([\xi])$ coincide with the Pontryagin
numbers of~$z$. Therefore,~$\theta_{R^*}([\xi])=z$.
\end{proof}

\subsection{Proof of Proposition~4.3}

In~\cite{Gai04} the author proved the following proposition (see
also section~5.1 of the present paper).
\begin{propos}
For each positive integer~$l$ there is a $\Bbb Q$-valued
function~$f$ on the set of isomorphism classes of oriented
$(4l-1)$-dimensional combinatorial spheres such that~$f(-Y)=-f(Y)$
and for every combinatorial manifold~$K$ the chain
$$
f_{\sharp}(K)=\mathop{\sum\limits_{\sigma\,\,\text{\textnormal{a
simplex of}}\,K,}}\limits_{\codim\sigma=4l} f(\Lk\sigma)\sigma,
$$
is a cycle whose homology class is the Poincar\'e dual of the
$l$th rational Pontryagin class of~$K$.
\end{propos}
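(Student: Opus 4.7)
The plan is to obtain $f$ as a rational cocycle of the complex $\Cal T^*(\Bbb Q)$ that serves as a local formula for the $l$-th Hirzebruch class $L_l$; a local formula for $p_l$ then follows because $p_l$ is a polynomial in $L_1,\dots,L_l$, and the cocycle-product of \S 5 realises such polynomials on the chain level. To begin I verify the standard equivalence: for an antisymmetric $f\colon\Cal T_{4l}\to\Bbb Q$, the chain $f_\sharp(K)$ is a simplicial cycle in every combinatorial manifold $K$ if and only if $\delta f=0$ in $\Cal T^*(\Bbb Q)$. Indeed, the coefficient of a codimension-$(4l{+}1)$ simplex $\tau\subset K$ in $\partial f_\sharp(K)$ is a signed sum of $f(\Lk_K\sigma)$ over the codimension-$4l$ cofaces $\sigma\supset\tau$, and such $\sigma$ are in bijection with the vertices of the combinatorial sphere $\Lk_K\tau$; with the correct sign convention this coefficient equals $\pm(\delta f)(\Lk_K\tau)$.

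Next I invoke Corollary 3.4 in the case $\CC=\CS$, which provides the isomorphism $\delta^*\colon H^{4l}(\Cal T^*(\Bbb Q))\xrightarrow{\cong}\Hom(\Omega_{4l}^{\SPL},\Bbb Q)$. Since the signature is a rational additive PL-cobordism invariant on $\Omega_{4l}^{\SPL}$, this isomorphism produces an antisymmetric cocycle $f\in\Cal T^{4l}(\Bbb Q)$ such that $\sum_{y\in\V(N)}f(\Lk y)=\Sign N$ for every closed oriented $4l$-dimensional combinatorial manifold $N$.

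The principal step is then to promote this $4l$-dimensional formula to the Poincar\'e-dual property for arbitrary $K$: for $m=\dim K\geqslant 4l$ and any cycle $\xi\in C_{m-4l}(K)$, one needs $\langle f_\sharp(K),\xi\rangle=\langle L_l(K),\mathrm{PD}(\xi)\rangle$. By the Rokhlin-Schwartz-Thom realisation theorem some positive multiple $r\cdot\mathrm{PD}(\xi)$ is represented as $\pi_*[N]^\vee$, where $N\subset K\times S^q$ is an oriented $4l$-dimensional PL submanifold with trivial normal bundle and $\pi\colon K\times S^q\to K$ the projection. Then $\Sign N=\langle L_l(N),[N]\rangle=r\langle L_l(K),\mathrm{PD}(\xi)\rangle$, while a simplicial-transversality argument, choosing a triangulation of $N$ compatible with a triangulation of $K\times S^q$ that refines the product triangulation, identifies the vertices of $N$ with the codimension-$4l$ simplices of $K$ meeting $\xi$ (counted with multiplicity), and the vertex link of $N$ at such a vertex with the $K$-link of the corresponding simplex. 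Summing gives $\sum_{y\in\V(N)}f(\Lk y)=r\langle f_\sharp(K),\xi\rangle$, whence $f_\sharp(K)$ represents $\mathrm{PD}(L_l(K))$, and a suitable polynomial combination of local formulae for $L_1,\dots,L_l$ yields one for $p_l$. The main obstacle is the transversality identification in this last step: matching vertex links of the transverse submanifold $N$ with the codimension-$4l$ simplicial links of $K$ requires a careful choice of PL structure on $N$, and it is the point at which the Rokhlin-Schwartz-Thom construction must be combined with simplicial data.
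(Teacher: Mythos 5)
Your first two steps are sound and coincide with the paper's own machinery: the equivalence ``$f_{\sharp}(K)$ is a cycle for all $K$ if and only if $\delta f=0$'' is the first part of Proposition~5.1, and Corollary~3.4 applied to $\CC=\CS$ does produce a cocycle $f\in\Cal T^{4l}(\Bbb Q)$ with $\sum_{y\in\V(N)}f(\Lk y)=\Sign N$ for every \emph{closed oriented $4l$-dimensional} combinatorial manifold $N$. Be aware, though, that the paper does not actually prove the proposition you are asked about: it is quoted from~\cite{Gai04}, whose proof the author explicitly describes as ``based on the theory of block bundles and\ldots rather complicated''; what the present paper reproves combinatorially is only the top-dimensional statement about Pontryagin \emph{numbers}. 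So the entire weight of the proposition rests on your third step.

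That third step has a genuine gap. First, a submanifold $N\subset K\times S^q$ supplied by the Rokhlin--Schwartz--Thom theorem carries no triangulation whose vertices lie over the barycenters of the codimension-$4l$ simplices of $K$; even if one puts $N$ into block-transverse position with respect to the dual cell decomposition, the piece of $N$ over a dual cell $D\sigma$ is a cone over a sphere projecting to $\partial D\sigma\cong(\Lk_K\sigma)'$, so the natural candidate for $\Lk_N y$ is at best the \emph{barycentric subdivision} of $\Lk_K\sigma$, not $\Lk_K\sigma$ itself. Second, and more fundamentally, $f$ is a function on \emph{isomorphism} classes of combinatorial spheres, not on PL homeomorphism classes --- that is the whole point of a local formula --- so arranging $\Lk_N y$ to be PL homeomorphic to $\Lk_K\sigma$ gives no control whatsoever over $f(\Lk_N y)-f(\Lk_K\sigma)$. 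What one must actually show is that the difference between $\sum_{y\in\V(N)}\Lk_N y$ and $r$ times the appropriate weighted sum of the $\Lk_K\sigma$ is a boundary in $\Cal T_{4l}$ (compare Propositions~2.12--2.13, which establish such a comparison for barycentric subdivision only modulo coboundaries and elements of order~$2$); this comparison is precisely the hard content of~\cite{Gai04} and~\cite{LeRo78} and cannot be dispatched as ``a careful choice of PL structure''. A smaller point: your pairings are dimensionally inconsistent ($\xi$ and $f_{\sharp}(K)$ both lie in $C_{m-4l}$, and $\langle L_l(K),\mathrm{PD}(\xi)\rangle$ pairs two cohomology classes); what you mean is the evaluation of the $4l$-dimensional cocycle Poincar\'e dual to $f_{\sharp}(K)$ on $4l$-dimensional homology classes.
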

\begin{corr}
For each combinatorial manifold~$K$ and each positive integer~$j$
the chain
$$
f_{\sharp}^{(j)}(K)=\mathop{\sum\limits_{\sigma\,\,\text{\textnormal{a
simplex of}}\,K,}}\limits_{\codim\sigma=4l}
f((\Lk\sigma)^{(j)})\sigma,
$$
is a cycle whose homology class is the Poincar\'e dual
of~$p_l(K)$. \textnormal{(}By $Y^{(j)}$ we denote the $j$th
barycentric subdivision of the complex~$Y$.\textnormal{)}
\end{corr}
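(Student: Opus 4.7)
The plan is to deduce the Corollary from Proposition~4.4 by showing that, for each $j\geqslant 1$, the function $f^{(j)}:\Cal T_{4l}\to\Bbb Q$ given by $f^{(j)}(Y)=f(Y^{(j)})$ is itself a universal local formula for the $l$th rational Pontryagin class. Once this is established, the cycle $f^{(j)}_{\sharp}(K)$ automatically represents the Poincar\'e dual of $p_l(K)$, which is the assertion of the Corollary.

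The key input is the barycentric subdivision operator $\bd:\Cal T_*\to\Cal T_*$ of \S2.7. By Propositions~2.12 and~2.13, $\bd$ is a chain map modulo elements of order~$2$ and is chain homotopic to the identity modulo elements of order~$2$. Tensoring with $\Bbb Q$ annihilates all $2$-torsion, so on $\Cal T_*\otimes\Bbb Q$ the operator $\bd$ becomes an honest chain map, honestly chain homotopic to the identity. Passing to duals, the pullback operator $g\mapsto g\circ\bd$ on $\Cal T^*(\Bbb Q)$ is a cochain map cochain homotopic to the identity, and the same holds for its $j$th iterate $g\mapsto g\circ\bd^j$, i.e.\ $g\mapsto\bigl(Y\mapsto g(Y^{(j)})\bigr)$. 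Applied to our $f$, which is a cocycle of $\Cal T^*(\Bbb Q)$ since $f_{\sharp}(K)$ is a cycle for every combinatorial manifold~$K$, this shows that $f^{(j)}$ is a cocycle of $\Cal T^*(\Bbb Q)$ cohomologous to $f$.

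To conclude, I invoke the fact that underlies Proposition~4.4 (see~\cite{Gai04} and Corollary~3.4): the construction $g\mapsto g_{\sharp}(K)$ is chain-level, sending cocycles to cycles and coboundaries to boundaries in $C_*(K;\Bbb Q)$. Consequently, the cohomologous cocycles $f$ and $f^{(j)}$ give rise to homologous cycles $f_{\sharp}(K)$ and $f^{(j)}_{\sharp}(K)$; since the former is Poincar\'e dual to $p_l(K)$ by Proposition~4.4, so is the latter. The only mildly delicate step is the reduction from the ``modulo order~$2$'' statements of Propositions~2.12 and~2.13 to exact cochain-level identities in $\Cal T^*(\Bbb Q)$, which is immediate after tensoring with $\Bbb Q$; the rest of the argument is formal.
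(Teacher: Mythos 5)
Your argument is correct, and it is in fact the first of the two routes the paper itself flags: the text remarks that the corollary ``can be deduced from Proposition~2.13'' before opting for a shortcut. You carry out that deduction in full and correctly: after tensoring with~$\Bbb Q$ the ``modulo elements of order~$2$'' statements of Propositions~2.12 and~2.13 become exact identities, so dualizing shows that $f^{(j)}=f\circ\bd^{\,j}$ is a cocycle of~$\Cal T^{4l}(\Bbb Q)$ cohomologous to~$f$, and the fact that $g\mapsto g_{\sharp}(K)$ sends cocycles to cycles and cohomologous cocycles to homologous cycles (Proposition~5.1, quoted from~\cite{Gai04}) finishes the proof. The paper's stated proof takes the other route: it observes directly that $f^{(j)}_{\sharp}(K)$ is a cycle homologous to $f_{\sharp}(K^{(j)})$ and then applies Proposition~4.4 to the subdivision~$K^{(j)}$ together with the topological invariance $p_l(K^{(j)})=p_l(K)$. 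That version avoids any appeal to the chain homotopy~$D$ and to the invariance of $[g_{\sharp}(K)]$ under coboundaries, at the cost of a small geometric verification comparing links of codimension-$4l$ simplices of~$K$ with links of codimension-$4l$ simplices of~$K^{(j)}$. Both arguments are sound; yours is the more formal, purely algebraic one, while the paper's is shorter once the comparison of the two cycles is granted.
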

This corollary can be deduced from Proposition~2.13. Nevertheless,
it is easier to notice that the chain~$f_{\sharp}^{(j)}(K)$ is a
cycle homologous to the cycle~$f_{\sharp}(K^{(j)})$. In the same
way we obtain the following assertion.
\begin{corr}
For each cubic cell combinatorial manifold~$K$ and each
nonnegative integer~$j$ the cubic chain
$$
f_{\sharp}^{(j)}(K)=\mathop{\sum\limits_{\sigma\,\,\text{\textnormal{a
cube of}}\,K,}}\limits_{\codim\sigma=4l}
f((\Lk\sigma)^{(j)})\sigma,
$$
is a cycle whose homology class is the Poincar\'e dual
of~$p_l(K)$.
\end{corr}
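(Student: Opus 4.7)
The plan is to reduce the assertion to the simplicial case (Proposition~4.4) via the first barycentric subdivision $K'=K^{(1)}$, and then use the chain-homotopy identity of Proposition~2.13 to interpolate between different values of $j$. First, for each $j\ge 0$ the chain $f_{\sharp}^{(j)}(K)$ is a cubic cycle: the standard boundary computation gives
$$
\partial f_{\sharp}^{(j)}(K)=\pm\mathop{\sum\limits_{\rho\,\text{a cube of}\,K,}}\limits_{\codim\rho=4l+1}\delta(f\circ\bd^{j})(\Lk\rho)\,\rho,
$$
and since $f$ is a cocycle (Proposition~4.4) and $\bd$ is a chain map modulo $2$-torsion (Proposition~2.12), the composition $f\circ\bd^{j}$ is rationally a cocycle, so the right-hand side vanishes.

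Next, by Proposition~2.13 we have $f-f\circ\bd=\delta g$ rationally for some $g\in\mathcal{T}^{4l-1}(\Bbb Q)$; iterating, $f-f\circ\bd^{j}=\delta h_{j}$ for every $j\ge 0$. Applying the sharp construction yields a cubic chain $h_{j,\sharp}(K)=\sum_{\rho,\,\codim\rho=4l-1}h_{j}(\Lk\rho)\,\rho$ satisfying $f_{\sharp}^{(0)}(K)-f_{\sharp}^{(j)}(K)=\pm\partial\bigl(h_{j,\sharp}(K)\bigr)$; hence all the cycles $f_{\sharp}^{(j)}(K)$ represent a common homology class in $K$, and it suffices to identify this class for one specific~$j$.

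Take $j=1$. The complex $K'$ is a simplicial combinatorial manifold, and by Proposition~4.4 the simplicial cycle $f_{\sharp}(K')$ is Poincar\'e dual to $p_{l}(K')=p_{l}(K)$. I claim the canonical subdivision chain map, which sends each cube of $K$ to the signed sum of top-dimensional simplices of $K'$ triangulating it, carries $f_{\sharp}^{(1)}(K)$ precisely to $f_{\sharp}(K')$. Given a $(\dim K-4l)$-simplex $\tau$ of $K'$, let $\sigma$ be the smallest cube of $K$ containing~$\tau$. If $\dim\tau=\dim\sigma$, the poset description in Definition~2.3 identifies $\Lk_{K'}\tau$ with $(\Lk_{K}\sigma)'=(\Lk_{K}\sigma)^{(1)}$, matching the coefficient $f((\Lk_{K}\sigma)^{(1)})$ contributed by $\sigma$ in the cubic formula. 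If $\dim\tau<\dim\sigma$, the link $\Lk_{K'}\tau$ decomposes as a join whose factors are the barycentric subdivisions of the boundaries of the cubes and simplices encoding the ``gaps'' in the poset chain underlying~$\tau$; at least one such factor has positive dimension and admits an orientation-reversing automorphism (an odd permutation of vertices of a simplex boundary, or a coordinate reflection on a cube boundary), which extends to an anti-automorphism of $\Lk_{K'}\tau$, forcing $f(\Lk_{K'}\tau)=0$. The main obstacle is precisely this anti-automorphism argument: one must unpack $\Lk_{K'}\tau$ as the join of order complexes over the intervals of the poset chain defining~$\tau$ and exhibit a positive-dimensional factor with an explicit orientation-reversing symmetry.
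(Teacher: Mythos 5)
Your proof is correct and follows essentially the same route as the paper, whose entire argument here is the one-line remark that $f_{\sharp}^{(j)}(K)$ corresponds under the subdivision chain map to the simplicial cycle of Proposition~4.4, up to terms killed by anti-automorphisms (with Proposition~2.13 absorbing the shift in~$j$, exactly as you use it). The step you flag as the main obstacle is the routine verification the paper makes repeatedly (cf.\ the proofs of Proposition~2.12 and Theorem~3.2): by Definition~2.3 the link in $K'$ of a non-full flag $\sigma_0\subsetneq\cdots\subsetneq\sigma_p$ is the join $(\partial\sigma_0)'*(\partial\Delta^{k_1-1})'*\cdots*(\partial\Delta^{k_p-1})'*(\Lk_K\sigma_p)'$ with $k_i=\dim\sigma_i-\dim\sigma_{i-1}$, and any nondegenerate factor (i.e.\ $\dim\sigma_0>0$ or some $k_i\geqslant 2$) admits an orientation-reversing automorphism --- a coordinate reflection of the cube, respectively a transposition of vertices of the simplex --- which extends by the identity on the remaining factors to an anti-automorphism of the whole link, forcing $f$ to vanish there.
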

Suppose~$Z$ is a simple cell pseudo-manifold. Let $c^{(j)}(Z)\in
C^{4l}(Z;\Bbb Q)$ be the cellular cochain such that
$c^{(j)}(Z)(P)=f(Y^{(j)})$ for every simple cell~$P$ dual to a
combinatorial sphere~$Y$.
\begin{corr}
If $K$ is a simplicial cell or cubic cell combinatorial manifold,
then~$\left[c^{(j)}(K^*)\right]=p_l(K)$.
\end{corr}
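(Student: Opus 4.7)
The plan is to read off this corollary as a direct consequence of the two preceding corollaries via the standard Poincar\'e duality between the chain complex of $K$ and the cellular cochain complex of the dual decomposition $K^*$. Concretely, the Poincar\'e duality isomorphism at the chain level sends a cellular cochain $c\in C^{4l}(K^*;\Bbb Q)$ to the chain $\sum_{\codim\sigma=4l} c(D\sigma)\,\sigma\in C_{n-4l}(K;\Bbb Q)$, where $D\sigma$ denotes the cell of $K^*$ dual to the simplex (or cube) $\sigma$ of $K$.

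First I would unwind the definition of $c^{(j)}(K^*)$. Each $4l$-dimensional cell of $K^*$ is of the form $D\sigma$ for a unique codimension-$4l$ cell $\sigma$ of $K$, and by the construction in section~4.1, $D\sigma$ is a simple cell dual to the combinatorial sphere $\Lk\sigma$. Therefore $c^{(j)}(K^*)(D\sigma)=f\bigl((\Lk\sigma)^{(j)}\bigr)$, and applying the Poincar\'e duality formula above yields
$$
D\bigl(c^{(j)}(K^*)\bigr)
=\mathop{\sum\limits_{\sigma\,\,\text{a cell of}\,K,}}\limits_{\codim\sigma=4l}
f\bigl((\Lk\sigma)^{(j)}\bigr)\,\sigma
=f_{\sharp}^{(j)}(K).
$$

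By the two preceding corollaries (in the simplicial cell case and in the cubic cell case), the chain $f_{\sharp}^{(j)}(K)$ is a cycle whose homology class is the Poincar\'e dual of $p_l(K)$. Since Poincar\'e duality is an isomorphism of (co)chain complexes up to homotopy, the chain $c^{(j)}(K^*)$ is automatically a cocycle and its cohomology class is exactly $p_l(K)$.

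There is no real obstacle here; the only point requiring a moment of care is that the combinatorial Poincar\'e duality one uses does pair a cellular cochain on $K^*$ with a chain on $K$ by the formula $c\mapsto\sum c(D\sigma)\sigma$, with the sign conventions fixed by the orientation convention on $D\sigma$ adopted in section~4.1, so that the formula applies verbatim in both the simplicial cell and cubic cell settings. Once this identification is in place the corollary follows immediately from Corollaries~4.2 and~4.3.
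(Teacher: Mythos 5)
Your proposal is correct and is exactly the argument the paper intends: the paper states this corollary without proof precisely because the identification $c^{(j)}(K^*)(D\sigma)=f\bigl((\Lk\sigma)^{(j)}\bigr)$ together with the combinatorial Poincar\'e duality between chains of $K$ and cellular cochains of $K^*$ reduces it immediately to the two preceding corollaries. Nothing is missing.
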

Let~$X$ be the cubic complex obtained from~$M$ by passing to large
cubes (see section~2.4). Consider a $4l$-dimensional cell~$Q$ of
the complex~$X^*$. There are two possibilities (see Remark~4.2).

1)  $\dim g(Q)<4l$. Then the cell~$Q$ possesses an
anti-automorphism. Hence,~$$c^{(0)}(X^*)(Q)=0.$$

2) The mapping~$g$ maps the cell~$Q$ onto a~$4l$-dimensional
cell~$P$ of the decomposition~$R^*$. Besides, the interior of~$Q$
is mapped homeomorphically onto the interior of~$P$. If~$P$ is a
simple cell dual to a combinatorial sphere~$Y$, then $Q$ is a
simple cell dual to the combinatorial sphere~$Y'$. Therefore,
$$c^{(0)}(X^*)(Q)=c^{(1)}(R^*)(P).$$

Thus, $g^*\left(c^{(1)}(R^*)\right)=c^{(0)}(X^*)$. Consequently,
$g^*(p_l(R))=p_l(M)$.

\section{Local formulae for rational Pontryagin classes}

In this section we work with combinatorial spheres and
combinatorial manifolds and we always omit the index~$\CS$ in the
notation for the groups~$\Cal T_*^{\CS}$ and~$\Cal T^*_{\CS}(A)$.
However, all results of sections~5.1--5.8 still hold if we replace
the class~$\CS$ by the class~$\HS$, that is, if we replace
simplicial spheres by simplicial homology spheres and
combinatorial manifolds by simplicial homology manifolds.

Recall that by Proposition~3.4 the homomorphism
$$
\delta^*:H^*(\Cal T^*(\Bbb Q))\to\Hom(\Omega_*^{\SPL},\Bbb
Q)\cong\Bbb Q[p_1,p_2,\ldots]
$$
is an isomorphism.

For simplicity we shall assume that all combinatorial manifolds
considered in this section are oriented. All results still hold
for unorientable combinatorial manifolds if we replace ordinary
simplicial chains by {\it cooriented simplicial chains}
(see~\cite{GGL75}, see also~\cite{Gai04}). To each
function~$f\in\Cal T^n(\Bbb Q)$ and each $m$-dimensional
combinatorial manifold~$K$ we assign a simplicial chain
$$
f_{\sharp}(K)=\mathop{\sum\limits_{\sigma\,\,\text{\textnormal{a
simplex of}}\,K,}}\limits_{\codim\sigma=n} f(\Lk\sigma)\sigma.
$$
Recall that the summand~$f(\Lk\sigma)\sigma$ is independent of the
orientation of the simplex~$\sigma$. In~\cite{Gai04} the author
proved the following proposition.
\begin{propos}
The chain~$f_{\sharp}(K)$ is a cycle for every combinatorial
manifold~$K$ if and only if the function~$f$ is a cocycle of the
complex~$\Cal T^*(\Bbb Q)$. The homology class~$f_{\sharp}(K)$
depends only on the cohomology class of~$f$. Suppose that
$\delta^*[f]=F(p_1,p_2,\ldots)$, where $F$ is a polynomial; then
for every combinatorial manifold~$K$ the homology class~$\left[
f_{\sharp}(K)\right]$ is the Poincar\'e dual of the cohomology
class~$F(p_1(K),p_2(K),\ldots)$, where $p_i(K)$ are the rational
Pontryagin classes of~$K$.
\end{propos}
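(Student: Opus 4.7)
For the first assertion, I would compute $\partial f_{\sharp}(K)$ directly. For each codimension-$(n+1)$ simplex $\tau$ of $K$, the codimension-$n$ simplices $\sigma$ having $\tau$ as a facet correspond bijectively to the vertices $y$ of $\Lk_K\tau$ via $\sigma=\tau*y$, and one has $\Lk_K\sigma=\Lk_{\Lk_K\tau}y$ with the induced orientations. A short sign computation yields
$$
\partial f_{\sharp}(K)=\pm\sum_{\codim\tau=n+1}(\delta f)(\Lk_K\tau)\,\tau,
$$
giving the ``if'' direction at once. For the ``only if'' direction, if $Y$ is any $n$-dimensional combinatorial sphere with $(\delta f)(Y)\ne 0$, I take $K=\Sigma Y$: each of the two suspension vertices is a codimension-$(n+1)$ simplex with link $Y$ and contributes the nonzero coefficient $\pm(\delta f)(Y)$ to $\partial f_{\sharp}(K)$, with no cancellation possible. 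Assertion~(2) follows from the same identity applied to $g\in\Cal T^{n-1}(\Bbb Q)$: since $\partial g_{\sharp}(K)=\pm(\delta g)_{\sharp}(K)$, modifying $f$ by a coboundary changes $f_{\sharp}(K)$ by a boundary, so $[f_{\sharp}(K)]$ depends only on $[f]$.

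For assertion~(3), denote by $\alpha_f(K)\in H^n(K;\Bbb Q)$ the Poincar\'e dual of $[f_{\sharp}(K)]$; by (1)--(2) it is a well-defined linear function of $[f]\in H^n(\Cal T^*(\Bbb Q))$. In the top-dimensional case $\dim K=n$, $\alpha_f(K)$ is simply the scalar $\sum_{y\in\V(K)}f(\Lk_K y)$, which by the hypothesis $\delta^*[f]=F$ coincides with the Pontryagin number $\langle F(p(K)),[K]\rangle$; so the statement holds there. To bootstrap to $\dim K=m\geqslant n$, I would invoke the Rokhlin-Schwartz-Thom characterization of rational Pontryagin classes: a class in $H^n(K;\Bbb Q)$ equals $F(p(K))$ if and only if for every closed oriented $n$-dimensional PL submanifold $\iota\colon N\hookrightarrow K\times S^q$ with trivial normal bundle, its pullback along $\pi\circ\iota$ (with $\pi\colon K\times S^q\to K$ the projection) pairs with $[N]$ to give the number $\langle F(p(N)),[N]\rangle=\sum_{y\in\V(N)}f(\Lk_N y)$, where the second equality is the already-established top-dimensional case applied to $N$. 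Hence it suffices to establish the naturality identity $\iota^*\pi^*\alpha_f(K)=\alpha_f(N)$ in $H^n(N;\Bbb Q)$.

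The main obstacle is precisely this naturality, since neither $\iota$ nor $\pi$ is simplicial in general and there is no direct chain-level map between $f_{\sharp}(K)$ and $f_{\sharp}(N)$. I would handle it by choosing a joint PL triangulation of $K\times S^q$ in which $N$ sits as a subcomplex and comparing the dual cellular cocycles representing $\alpha_f(K\times S^q)$ and $\alpha_f(N)$ cell by cell: their difference on each cell involves joins of the intrinsic links with the sphere that is the link of the normal direction, and the sign antisymmetry $f(-Y)=-f(Y)$ combined with an iterated barycentric subdivision argument should force the join-with-normal-sphere contributions to represent elements of order~$2$ in $\Cal T_*$, hence to vanish after passing to $\Bbb Q$-coefficients. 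Combined with the evident $\pi^*\alpha_f(K)=\alpha_f(K\times S^q)$ (provable by the same mechanism applied to the product with $S^q$), this closes the loop with the top-dimensional case. This is the combinatorial counterpart of the PL block-bundle argument of~\cite{Gai04}, in which $\alpha_f$ is exhibited directly as a rational characteristic class of the tangent PL block bundle and naturality then follows from the general bundle-theoretic formalism.
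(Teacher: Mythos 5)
The paper does not actually prove this proposition here: it is quoted verbatim from~\cite{Gai04} (``In~\cite{Gai04} the author proved the following proposition''), and the cited proof rests on the theory of PL block bundles. Your parts (1) and (2) are correct and complete: the identity $\partial f_{\sharp}(K)=\pm\sum_{\codim\tau=n+1}(\delta f)(\Lk\tau)\,\tau$ is exactly the right computation, the suspension $\Sigma Y$ gives the converse, and the coboundary statement follows formally. Your reduction of part (3) is also the right strategy: the top-dimensional case is essentially the definition of the isomorphism $\delta^*$ (and is what the present paper's \S 3 actually establishes), and the Rokhlin--Schwartz--Thom characterization correctly reduces the general case to the naturality statement $\iota^*\pi^*\alpha_f(K)=\alpha_f(N)$ for submanifolds $N\subset K\times S^q$ with trivial normal bundle.

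The genuine gap is that this naturality is not proved; it is the entire technical content of the cited block-bundle argument, and your sketch of it does not go through as stated. First, for an arbitrary joint PL triangulation of $(K\times S^q,N)$ there is no combinatorial relation of the form $\Lk_{K\times S^q}\sigma\cong\Lk_N\sigma * S^{q-1}$ for simplices $\sigma$ of $N$; producing a triangulation in which the normal directions are organized this way is precisely what the normal block-bundle structure (and its triviality) provides, and that existence/uniqueness theory is nontrivial. Second, the objects being compared do not both live in $\Cal T_*$: $\alpha_f(N)$ is a number (top cohomology of an $n$-manifold), while $\langle\iota^*\pi^*\alpha_f(K),[N]\rangle$ is an intersection number of the codimension-$n$ cycle $f_{\sharp}(K\times S^q)$ with $\iota_*[N]$, so the claim that the discrepancy ``represents elements of order~$2$ in $\Cal T_*$'' does not typecheck; what one actually needs is a chain-level comparison of $f$ evaluated on links taken in $N$ with $f$ evaluated on links taken in the ambient manifold, mediated by the block-bundle product structure. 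The phrase ``should force'' marks exactly the point where the proof is missing. As written, your argument establishes (1), (2), and the case $\dim K=n$ of (3), and correctly identifies, but does not supply, the lemma needed for the rest.
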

Thus the function~$f$ such that~$\delta^*[f]=F(p_1,p_2,\ldots)$
yields a {\it universal local formula} for the polynomial~$F$ in
rational Pontryagin classes. ``Universality'' means that the
function~$f$ does not depend on a combinatorial manifold~$K$ and
the coefficient of a simplex in the cycle~$f_{\sharp}(K)$ is
determined solely by the combinatorial type of the link of this
simplex. We shall say that such function~$f$ is a {\it local
formula} for a polynomial~$F(p_1,p_2,\ldots)$. By Proposition~5.1,
for each polynomial in rational Pontryagin classes there is a
local formula unique up to a coboundary of the complex~$\Cal
T^*(\Bbb Q)$.

\subsection{Local formulae for the Hirzebruch $L$-classes}
In this section we describe explicitly all local combinatorial
formulae for the Hirzebruch $L$-polynomials in Pontryagin classes.
Recall that~$L_l(p_1,p_2,\ldots,p_l)$ is the homogeneous
polynomial of degree~$4l$ (the degree of a variable~$p_i$ is~$4i$)
given by the formula
$$
1+\sum_{l=1}^{\infty}L_l(p_1,p_2,\ldots,p_l)=\prod_{j=1}^{\infty}\frac{\sqrt{t_j}}{\tanh(\sqrt{t_j})},
$$
where $p_i$ is the $i$th elementary symmetric polynomial in
variables~$t_j$. For each $4l$-dimensional oriented closed
manifold~$M$ there is a classical Hirzebruch formula
$$
\Sign M=\left\langle
L_l(p_1(M),p_2(M),\ldots,p_l(M)),[M]\right\rangle.
$$
\begin{propos}
Suppose $f\in\Cal T^{4l}(\Bbb Q)$ is a local formula for the~$l$th
Hirzebruch polynomial. Then for any balanced
set~$Y_1,Y_2,\ldots,Y_k$ of oriented $(4l-1)$-dimensional
combinatorial spheres the function~$f$ satisfies the equation
$$
f(Y_1)+f(Y_2)+\ldots+f(Y_k)=\frac{\Sign X}{r},\eqno(*)
$$
where $X$ and $r$ are the oriented cubic cell combinatorial
manifold and the positive integer in Theorem~1.2. Vice versa, each
function~$f\in\Cal T^{4l}(\Bbb Q)$ satisfying the system of
equations~$(*)$ is a local formula for the polynomial~$L_l$.
\end{propos}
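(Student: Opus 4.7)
The plan is to prove both directions using Proposition~5.1, the cubic cell manifold $X$ from Theorem~1.2, and the barycentric-subdivision chain homotopy of Propositions~2.12 and~2.13.

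For the direct implication, let $f$ be a local formula for $L_l$. The first barycentric subdivision $X'$ is a simplicial combinatorial $4l$-manifold, and by Proposition~5.1 the augmentation of the $0$-cycle $f_\sharp(X')$ equals $\langle L_l(X'),[X']\rangle = \Sign X' = \Sign X$. The vertices of $X'$ split into two groups: barycenters of positive-dimensional cubes of $X$, whose links in $X'$ admit anti-automorphisms and so are $2$-torsion in $\Cal T_{4l-1}$ (and are therefore annihilated by the $\Bbb Q$-valued $f$); and vertices of $X$ itself, whose links in $X'$ are the second barycentric subdivisions $Y_i''$, appearing $r$ times each by Theorem~1.2. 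Hence
$$\Sign X = r\sum_{i=1}^k f(Y_i'').$$
Since $\sum Y_i$ is a cycle in $\Cal T_{4l-1}$ by balancedness, iterating Propositions~2.12 and~2.13 gives $\bd^2(\sum Y_i) - \sum Y_i = \diff\eta + \tau$ with $\tau$ of order~$2$; the cocycle $f$ kills both summands, so $f(\sum Y_i'') = f(\sum Y_i)$, which yields~$(*)$.

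For the converse, fix any local formula $f_0$ for $L_l$ (existence is ensured by the isomorphism $\delta^*\colon H^*(\Cal T^*(\Bbb Q))\xrightarrow{\cong}\Bbb Q[p_1,p_2,\ldots]$ cited at the start of this section). By the direct implication $f_0$ satisfies~$(*)$, so $g := f - f_0$ vanishes on every balanced set and hence on every cycle of $\Cal T_{4l}(\Bbb Q)$, because any rational cycle is a rational combination of balanced sets after absorbing signs into orientation reversals. A rational linear functional on $\Cal T_{4l}(\Bbb Q)$ that vanishes on $\ker\diff$ factors through $\diff(\Cal T_{4l}(\Bbb Q))\subset\Cal T_{4l-1}(\Bbb Q)$, so it extends to an $h\in\Cal T^{4l-1}(\Bbb Q)$ with $\delta h = g$. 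Then $f = f_0 + \delta h$ is cohomologous to $f_0$, and Proposition~5.1 shows $f$ is a local formula for $L_l$.

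The main delicacy is the $2$-torsion bookkeeping in the forward direction: one must check that the iterated barycentric-subdivision chain homotopy really exchanges $f(\sum Y_i'')$ with $f(\sum Y_i)$, which ultimately rests on the facts that $D$ sends $2$-torsion to $2$-torsion, that $\diff$ commutes with $\bd$ modulo $2$-torsion, and that $f$ is $\Bbb Q$-valued. The converse is then formal linear algebra.
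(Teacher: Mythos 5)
Your proof is correct and takes essentially the same route as the paper: the forward direction is the computation $\Sign X=\langle L_l(X'),[X']\rangle=\varepsilon(f_{\sharp}(X'))=r\sum f(Y_i'')=r\sum f(Y_i)$ using Proposition~5.1, the structure of links of vertices of $X'$, and the subdivision chain homotopy of Propositions~2.12--2.13; the converse is the paper's uniqueness-up-to-coboundary argument via the identification of cycles of $\Cal T_*$ with balanced sets. You merely make explicit two points the paper leaves implicit (the $2$-torsion bookkeeping in passing from $Y_i''$ to $Y_i$, and the divisibility of $\Bbb Q$ needed to realize a functional vanishing on cycles as a coboundary), both of which are handled correctly.
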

\begin{proof} Suppose $f$ is a local formula for the
polynomial~$L_l$; then
\begin{multline*}
\Sign X=\left\langle
L_{l}(p_1(X),p_2(X),\ldots,p_l(X),[X]\right\rangle=
\varepsilon(f_{\sharp}(X'))=\\
=r\left(f(Y_1')+f(Y_2')+\ldots+f(Y_k')\right)
=r\left(f(Y_1)+f(Y_2)+\ldots+f(Y_k)\right),
\end{multline*}
where $\varepsilon:C_0(X';\Bbb Z)\to\Bbb Z$ is the augmentation.
The last equality follows from Proposition~2.13, since the
function~$f$ is a cocycle and the sum~$Y_1+Y_2+\ldots+Y_k$ is a
cycle.

A local formula for the polynomial~$L_l(p_1,p_2,\ldots,p_l)$ is
unique up to a coboundary of the complex~$\Cal T^{*}(\Bbb Q)$. On
the other hand, cycles of the complex~$\Cal T_*$ are exactly
balanced sets of combinatorial spheres. Therefore a solution
$f\in\Cal T^{4l}(\Bbb Q)$ of the system of equations~$(*)$ is also
unique up to a coboundary of the complex~$\Cal T^{*}(\Bbb Q)$.
Thus solutions of the system~$(*)$ are exactly local formulae for
the polynomial~$L_l$.
\end{proof}

Since the right hand sides of the equations~$(*)$ admit a
straightforward combinatorial computation, the system of
equations~$(*)$ provides an explicit combinatorial description for
all local formulae for the $l$th Hirzebruch polynomial. Indeed,
the cubic cell combinatorial manifolds~$X$ is defined by the
explicit combinatorial construction described in section~2. The
signature of~$X$ can be computed combinatorially either by
definition or by an explicit (nonlocal) combinatorial formula
obtained by A.~Ranicki and D.~Sullivan in 1976~\cite{RaSu76}.
Since~$X$ is a cubic cell combinatorial manifold, we should use
the following modification of the Ranicki-Sullivan formula.
\begin{propos}
Suppose $X$ is an oriented $4l$-dimensional cubic cell
combinatorial manifold. By $C_i$ we denote the $i$th cellular
chain group of~$X$ with a fixed basis consisting of
$i$-dimensional cubes of~$X$. Then
$$
\Sign X=\Sign \left(\begin{array}{lc}
A&B\\
B^t &0
\end{array}\right),
$$
where $B$ is the matrix of the boundary
operator~$\partial:C_{2l+1}\to C_{2l}$ and $A$ is the matrix of
the symmetric bilinear form~$\alpha$ on~$C_{2l}$ such that
$$
\alpha(\sigma,\tau)=\sum\gamma(\sigma,\tau,\eta).
$$
Here the sum is taken over all $4l$-dimensional cubes~$\eta$
of~$X$, $\gamma(\sigma,\tau,\eta)=\pm 1$ if
$\dim(\sigma\cap\tau)=0$ and $\sigma,\tau\subset\eta$, and
$\gamma(\sigma,\tau,\eta)=0$ otherwise. In the first case
$\gamma(\sigma,\tau,\eta)=+1$ if and only if the product of the
orientation of~$\sigma$ with the orientation of~$\tau$ yields the
given orientation of~$X$.
\end{propos}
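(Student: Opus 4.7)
The plan is to interpret the block matrix $M = \left(\begin{smallmatrix} A & B \\ B^t & 0 \end{smallmatrix}\right)$ as a presentation of the cup-product pairing on $H^{2l}(X;\Bbb Q)$, whose signature is $\Sign X$ by definition. The argument has three steps: an algebraic reduction of $\Sign M$ to the restriction of $\alpha$ to cubical cocycles; a local identification of $\alpha$ with the cubical cup product; and a Poincar\'e-duality step to pass to cohomology.

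First, because the lower-right block of $M$ vanishes, the subspace $L = \{0\} \oplus C_{2l+1}$ is totally isotropic for $M$, so by the standard fact about symmetric $\Bbb Q$-valued forms with an isotropic subspace, $\Sign M = \Sign\!\left(M|_{L^{\perp}/L}\right)$. Unwinding the definition, $L^{\perp} = \ker(B^t) \oplus C_{2l+1}$, and the induced form on $L^{\perp}/L$ is simply the restriction of $\alpha$ to $\ker(B^t) \subset C_{2l}$. Using the basis of $2l$-cubes to identify $C_{2l}$ with the cubical cochain group $C^{2l}(X;\Bbb Q)$, the transpose $B^t$ becomes the cellular coboundary $\delta:C^{2l} \to C^{2l+1}$, so $\ker(B^t) = Z^{2l}(X;\Bbb Q)$ and $\Sign M = \Sign\bigl(\alpha|_{Z^{2l}}\bigr)$.

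Second, the key identity is $\alpha(c_1, c_2) = \langle c_1 \smile c_2,\,[X]\rangle$ for all $c_1, c_2 \in C^{2l}(X;\Bbb Q)$, where $\smile$ denotes the standard cubical cup product. This is a local claim to be checked one top cube at a time. On a standard cube $\eta \cong I^{4l}$ the middle-dimensional faces correspond to pairs $(S,\varepsilon)$ with $S \subset \{1,\ldots,4l\}$ of size $2l$ and $\varepsilon:S \to \{0,1\}$; the cubical cup product of two such dual cochains is non-zero precisely when $S_1 \sqcup S_2 = \{1,\ldots,4l\}$, which is equivalent to $\dim(\sigma \cap \tau) = 0$, and it equals the fundamental cochain of $\eta$ up to a shuffle sign times the product of the orientations of $\sigma$ and $\tau$. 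A direct check shows this sign coincides with $\gamma(\sigma,\tau,\eta)$ as defined in the statement, so that summing over all top cubes of $X$ yields the asserted equality.

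Third, the classical argument passes to cohomology. By the Leibniz rule for the cup product and $\partial[X] = 0$, for any $b \in C^{2l-1}$ and any cocycle $c \in Z^{2l}$ we have $\alpha(\delta b, c) = \langle \delta(b \smile c),\,[X]\rangle = \langle b \smile c,\,\partial[X]\rangle = 0$, so the radical of $\alpha|_{Z^{2l}}$ contains $\delta C^{2l-1}$. Poincar\'e duality for the closed oriented combinatorial manifold $X$ says the induced pairing on $H^{2l}(X;\Bbb Q) = Z^{2l}/\delta C^{2l-1}$ is non-degenerate; hence the radical is exactly $\delta C^{2l-1}$, and $\Sign\bigl(\alpha|_{Z^{2l}}\bigr) = \Sign\bigl(\alpha|_{H^{2l}}\bigr) = \Sign X$.

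The only real obstacle is the sign verification in the second step: one must match the convention $\gamma(\sigma,\tau,\eta) = +1$ iff the concatenated orientations of $\sigma$ and $\tau$ through their common vertex recover the orientation of $\eta$ with the shuffle sign produced by the standard cubical Alexander--Whitney diagonal on $I^{4l}$. This is an unambiguous combinatorial computation on the oriented cube; once it is in place, the algebraic reduction and Poincar\'e duality furnish the signature identity.
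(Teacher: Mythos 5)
Your first step is sound: $L=\{0\}\oplus C_{2l+1}$ is totally isotropic for $M$, the reduction $\Sign M=\Sign\bigl(\alpha|_{\ker B^t}\bigr)$ is the standard fact about isotropic subspaces, and $\ker B^t=Z^{2l}(X;\Bbb Q)$ under the basis identification. The overall architecture (reduce to cocycles, identify the form with the intersection pairing, invoke Poincar\'e duality) is also the right shape. The gap is in your second step: the identity $\alpha(c_1,c_2)=\langle c_1\smile c_2,[X]\rangle$ is false at the cochain level, and it already fails on a single $4l$-cube $\eta$. Writing a middle-dimensional face of $\eta\cong I^{4l}$ as a pair $(S,\varepsilon)$ with $S$ the set of free coordinates and $\varepsilon\colon S^c\to\{0,1\}$ the fixed values, the standard cubical (Serre) cup product of the dual cochains of $\sigma=(S_1,\varepsilon_1)$ and $\tau=(S_2,\varepsilon_2)$ is nonzero on $\eta$ only when $S_1\sqcup S_2=\{1,\ldots,4l\}$ \emph{and} $\sigma$ is a front face ($\varepsilon_1\equiv 0$) and $\tau$ a back face ($\varepsilon_2\equiv 1$); only $\binom{4l}{2l}$ ordered pairs contribute, one for each splitting. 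By contrast $\gamma(\sigma,\tau,\eta)=\pm1$ for \emph{every} ordered pair of complementary faces meeting in a vertex, i.e. for all $\binom{4l}{2l}\cdot 2^{4l}$ such pairs, with no restriction on $\varepsilon_1,\varepsilon_2$. The two pairings therefore have different supports, and no cube-by-cube sign verification can reconcile them; the ``only real obstacle'' you identify (the shuffle sign) is not the actual obstacle.

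Since your third step (that $\delta C^{2l-1}$ lies in the radical of $\alpha|_{Z^{2l}}$ and that the descended form on $H^{2l}$ is the cup-product form) is derived entirely from that identity, it is also unsupported. What a correct proof needs is a chain-level statement: one must either exhibit $\alpha$ as arising from an (equivariant) diagonal approximation and produce the chain homotopy relating it to the Serre diagonal, or directly verify an identity of the shape $\alpha\circ\delta=\partial\circ\beta$ on $C^{2l-1}$ for a suitable $\beta\colon C^{2l-1}\to C_{2l+1}$, which forces coboundaries into the radical and identifies the induced form on $H^{2l}$ with the intersection form (a positive multiple would also suffice for the signature). Supplying this homotopy is exactly the nontrivial content of the Ranicki--Sullivan argument that the paper invokes; Proposition~5.3 is stated as the cubical transcription of their simplicial formula, and a self-contained proof must reproduce that homotopy-theoretic input rather than a sign computation on one cube.
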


Unfortunately, the obtained explicit description of all local
formulae for Hirzebruch polynomials is very inefficient. The
construction of the manifold~$X$ is rather complicated. Hence we
need to compute the signatures of matrices of a very large order.
Therefore the described formulae can hardly be used for concrete
computations.

\subsection{Choice of a canonical formula}
Besides the description of all local formulae for a polynomial in
Pontryagin classes one always wants to construct explicitly a
single canonical local formula for this polynomial. To choose a
canonical local formula for the
polynomial~$L_l(p_1,p_2,\ldots,p_l)$ we need to choose a canonical
solution~$f_0$ of the system~$(*)$. This can be done using the
following standard trick.

In 1989 U.~Pachner~\cite{Pac87},~\cite{Pac91} proved that every
combinatorial sphere can be obtained from the boundary of a
simplex by a finite sequence of {\it bistellar moves} (see
also~\cite{BuPa04}). By~$T_{4l}^{(q)}$ we denote the set of
oriented $(4l-1)$-dimensional combinatorial spheres that can be
obtained from the boundary of a $4l$-dimensional simplex by a
sequence of not more than $q$ bistellar moves. Notice that the
set~$T_{4l}^{(q)}$ admits an algorithmic description. Now let us
consequently determine the restrictions of~$f_0$ to the
sets~$T_{4l}^{(q)}$. Assume that we have already determined the
restriction of~$f_0$ to the set~$T_{4l}^{(q-1)}$. Consider all
functions $f:T_{4l}^{(q)}\to \Bbb Q$ coinciding with $f_0$
on~$T_{4l}^{(q-1)}$ and satisfying the equations~$f(-Y)=-f(Y)$ and
equations~$(*)$ for all balanced sets of combinatorial spheres
$Y_1,Y_2,\ldots,Y_k\in T_{4l}^{(q)}$. Among them for the
restriction of $f_0$ we choose the function with the smallest
value
$$
\sum_{Y\in T_{4l}^{(q)}}\left(f(Y)\right)^2
$$
Such function exists, is unique, and can be found by solving a
system of linear equations. Thus this function can be computed by
an algorithm.
\begin{remark}
One cannot use the sets of combinatorial spheres with not more
than $q$ vertices instead of the sets~$T_{4l}^{(q)}$, since the
set of combinatorial spheres with not more than $q$ vertices
cannot be described by a finite algorithm for~$l$ and~$q$ large
enough. However, if we work with simplicial homology manifolds one
can replace the sets~$T_{4l}^{(q)}$ with the sets of simplicial
homology spheres with not more than $q$ vertices.
\end{remark}
\begin{remark}
We do not need the described procedure for the choice of a
canonical solution of the system~$(*)$ to obtain a simplicial
cycle whose homology class is the Poincar\'e dual of the
Hirzebruch $L$-class of a concrete combinatorial manifold~$K$.
Indeed, to obtain such cycle we need to know the values~$f(Y)$
only for those $(4l-1)$-dimensional combinatorial spheres~$Y$ that
appear as the links of simplices of~$K$. Hence we need to consider
only those equations~$(*)$ that correspond to balanced sets
$Y_1,Y_2,\ldots,Y_k$ such that each combinatorial sphere~$Y_i$ is
isomorphic to the link of some simplex of~$K$. Among these
equations there are only finitely many linearly independent
equations. Thus we obtain a finite system of linear equations and
we may choose an arbitrary solution of it.
\end{remark}
\subsection{Multiplication of local formulae}
By Corollary~3.4, we have an additive isomorphism
$$
\delta^*:H^*(\Cal T^*(\Bbb Q))\to\Hom(\Omega_*^{\SPL},\Bbb
Q)\cong\Bbb Q[p_1,p_2,\ldots].
$$
We naturally face a question whether it is possible to describe
combinatorially the multiplication obtained in cohomology of the
complex~$\Cal T^*(\Bbb Q)$ (see~\cite{Gai04}). In this section we
define combinatorially a multiplication of cocycles of~$\Cal
T^*(\Bbb Q)$ that induces the required multiplication in
cohomology. Unfortunately, this multiplication is neither
bilinear, nor associative, nor commutative. It does not satisfy
the Leibniz formula and most probably has no natural extension to
the whole complex~$\Cal T^*(\Bbb Q)$. Thus the following question
is still open.
\begin{question}
Does the complex~$\Cal T^*(\Bbb Q)$ admit a bilinear associative
multiplication satisfying the Leibniz formula and inducing those
multiplication in cohomology with respect to which the
isomorphism~$\delta^*$ is multiplicative?
\end{question}
Even if a multiplication of cocycles of~$\Cal T^*(\Bbb Q)$ is
neither bilinear nor associative its combinatorial definition
immediately allows us to construct explicitly a local formula for
the product of two polynomials in rational Pontryagin classes if
we are given local formulae for those two polynomials. Since we
already know explicit local formulae for the Hirzebruch
$L$-polynomials (see sections~5.1 and~5.2), we can now construct
explicitly local formulae for all polynomials in rational
Pontryagin classes. (Recall that the Hirzebruch polynomials
generate the ring~$\Bbb Q[p_1,p_2,\ldots]$.)

Cocycles of the complex~$\Cal T^*(\Bbb Q)$ provide local formulae
for cycles whose homology classes are dual to polynomials in
Pontryagin classes. However, if we wish to construct a local
formula for the product of polynomials from given local formulae
for the multipliers we are more convenient to work with cocycles
representing polynomials in Pontryagin classes. The first who
considered local formulae for cocycles representing characteristic
classes were N.~Levitt and C.~Rourke~\cite{LeRo78}. They
considered simplicial cocycles in the first barycentric
subdivision of a given combinatorial manifold such that the value
of a cocycle on every simplex is solely determined by the
combinatorial structure of the star of the minimal vertex of this
simplex. (Here the {\it minimal vertex} is the vertex that is the
barycenter of a simplex of a minimal dimension.) We are more
convenient to work with the canonical cubic subdivision of a
combinatorial manifold (see section~5.5) rather than with the
barycentric subdivision. In section~5.4 we define a cochain
complex~$\Cal W^*(A)$, which is an analogue of the  cochain
complex~$\Cal T^*(A)$ in our context. For each ring~$\Lambda$ we
endow the complex~$\Cal W^*(\Lambda)$ with an associative
multiplication satisfying the Leibniz formula. Our main result is
Theorem~5.1 providing an isomorphism~$H^*(\Cal T^*(\Bbb Q))\cong
H^*(\Cal W^*(\Bbb Q))$. This theorem allows us to use the
multiplication in~$\Cal W^*(\Bbb Q)$ to construct the required
multiplication of cocycles of the complex~$\Cal T^*(\Bbb Q)$ (see
section~5.7).

\subsection{Cochain complex $\mathcal{W}^*(A)$}
In section~2.1 we have noticed that there are two equivalent
definitions for the links of simplices of a simplicial complex. In
the rest of this papers, that is, in sections~5.4--5.8 we are
convenient to use the definition distinct from one used in the
previous sections of this paper. Thus in the sequel the link of a
simplex~$\sigma$ of a simplicial complex~$K$ is defined to be the
subcomplex of~$K$ consisting of all simplices~$\tau$ such that
$\sigma\cap\tau=\emptyset$ and $K$ contains a simplex including
both~$\sigma$ and~$\tau$.

For each Abelian group~$A$ we define a cochain complex~$\Cal
W^*(A)$ in the following way. Elements of the group~$\Cal W^n(A)$
are functions~$h$ that assign to every combinatorial sphere $Y$
(of an arbitrary dimension and without distinguished orientation)
a simplicial cochain~$h(Y)\in C^{n-1}(Y;A)$ such that for any
isomorphism~$i:Y_1\to Y_2$ the pullback of~$h(Y_2)$ coincides
with~$h(Y_1)$. In particular, the cochain~$h(Y)$ must be invariant
both under orientation-preserving and under orientation-reversing
automorphisms of~$Y$. The value of the cochain~$h(Y)$ on a
chain~$\xi\in C_{n-1}(Y;\Bbb Z)$ will be denoted by~$h(Y,\xi)$. As
before we often do not distinguish between a combinatorial sphere
and its isomorphism class. We define the mapping
$$
\delta:\Cal W^n(A)\to\Cal W^{n+1}(A)
$$
by
$$
(\delta h)(Y,\xi)=(-1)^nh(Y,\partial\xi)+(-1)^{n-1}\sum_{y\in
\V(Y) }h(\Lk y,\xi_y),
$$
where $\xi_y$ is the chain that contains each simplex~$\sigma$
of~$\Lk y$ with a coefficient equal to the coefficient of the
simplex~$y*\sigma$ in the chain~$\xi$.
\begin{propos}
$\delta^2=0$.
\end{propos}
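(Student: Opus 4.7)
The plan is to expand $(\delta^2h)(Y,\xi)$ directly from the definition, where $\xi$ is an $(n+1)$-chain on a combinatorial sphere $Y$ and $h\in\Cal W^n(A)$. Writing out $\delta$ twice and using $\partial^2=0$ to kill the $h(Y,\partial^2\xi)$ term, the signs $(-1)^{n+1}(-1)^{n-1}$, $(-1)^n(-1)^n$, $(-1)^n(-1)^{n-1}$ collapse to
\[
(\delta^2h)(Y,\xi)=\sum_{y\in\V(Y)}\!\bigl[h(\Lk y,(\partial\xi)_y)+h(\Lk y,\partial\xi_y)\bigr]\;-\;\sum_{y\in\V(Y)}\sum_{z\in\V(\Lk y)}h\bigl(\Lk_{\Lk y}z,(\xi_y)_z\bigr).
\]
Everything thus reduces to two combinatorial identities: the bracketed sum vanishes termwise, and the double sum vanishes by symmetry.

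For the first identity I would prove $(\partial\xi)_y=-\partial\xi_y$ by splitting $\xi$ according to whether a simplex contains $y$. A simplex $\rho\not\ni y$ has $\partial\rho$ containing no simplex through $y$, so contributes nothing to $(\partial\xi)_y$. For $\rho=y*\tau$ with $\tau\in\Lk y$, the formula $\partial(y*\tau)=\tau-y*\partial\tau$ shows that the simplices of $\partial\rho$ through $y$ are exactly the $y*\sigma$ appearing in $-y*\partial\tau$, so the coefficient of $\sigma$ in $(\partial\xi)_y$ is $-[\text{coeff. of }\sigma\text{ in }\partial\tau]$, summed over $\tau$; comparing with $\partial\xi_y=\partial(\sum c_{y*\tau}\tau)$ gives exactly the claimed sign. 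Linearity of cochain evaluation then kills the first two terms.

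For the second identity I would use a symmetry under swapping $(y,z)\leftrightarrow(z,y)$ in the double sum. For any edge $\{y,z\}$ of $Y$, the two subcomplexes $\Lk_{\Lk y}z$ and $\Lk_{\Lk z}y$ coincide as subcomplexes of $Y$ (both being $\Lk_Y\{y,z\}$), so naturality of $h$ under isomorphisms gives the same cochain on both. Unwinding definitions, the coefficient of $\omega$ in $(\xi_y)_z$ equals the coefficient of the oriented simplex $y*z*\omega$ in $\xi$, whereas in $(\xi_z)_y$ it equals the coefficient of $z*y*\omega=-y*z*\omega$; hence $(\xi_z)_y=-(\xi_y)_z$. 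Pairing the $(y,z)$ and $(z,y)$ contributions then gives $h(L,\alpha)+h(L,-\alpha)=0$ with $L=\Lk_Y\{y,z\}$ and $\alpha=(\xi_y)_z$, so each unordered edge contributes zero and the double sum vanishes. The only real obstacle is careful bookkeeping of orientation conventions in the definition of $\xi_y$ and in the simplicial boundary; once those are pinned down, both identities are immediate and $\delta^2=0$ follows.
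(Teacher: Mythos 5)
Your proof is correct and follows essentially the same route as the paper: the paper splits $\delta=\delta_1+\delta_2$ and verifies $\delta_1^2=0$, $\delta_2^2=0$, and $\delta_1\delta_2+\delta_2\delta_1=0$, which reduces to exactly your two identities $(\partial\xi)_y=-\partial(\xi_y)$ and $(\xi_z)_y=-(\xi_y)_z$. You merely expand $\delta^2$ in one step rather than three, and you supply the (correct) verifications of the two identities that the paper only asserts.
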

\begin{proof}
We put,
$$
(\delta_1 h)(Y,\xi)=(-1)^nh(Y,\partial\xi),\qquad (\delta_2
h)(Y,\xi)=(-1)^{n-1}\sum_{y\in \V(Y)}h(\Lk y,\xi_y).
$$
Evidently, $\delta_1^2=0$. To show that~$\delta_2^2=0$ we notice
that $\left(\xi_x\right)_y=-\left(\xi_y\right)_x$ for every
chain~$\xi\in C_{n-1}(Y;\Bbb Z)$ and every two vertices~$x,y\in Y$
connected by an edge. To show that
$\delta_1\delta_2+\delta_2\delta_1=0$ we notice
that~$\partial(\xi_y)=-\left(\partial\xi\right)_y$ for every
chain~$\xi\in C_{n-1}(Y;\Bbb Z)$ and every vertex~$y\in Y$.
\end{proof}

Thus $\Cal W^*(A)$ is a cochain complex with
differential~$\delta$.

We define a homomorphism~$\alpha:\Cal W^n(A)\to \Cal T^n(A)$ by
$$
\alpha(h)(Y)=h(Y,[Y]),
$$
where $[Y]$ is the fundamental cycle of the combinatorial
sphere~$Y$. Since the cochain~$h(Y)$ is independent of the
orientation of~$Y$, we see that the number~$\alpha(h)(Y)$ reverses
its sign whenever we reverse the orientation of~$Y$. It is easy to
check that $\alpha$ is a chain homomorphism. Hence $\alpha$
induces a homomorphism
$$
\alpha^*:H^*(\Cal W^*(A))\to H^*(\Cal T^*(A)).
$$
\begin{theorem}
For $A=\Bbb Q$ the homomorphism $\alpha^*$ is an isomorphism.
\end{theorem}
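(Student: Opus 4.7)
The plan is to reduce the statement to the isomorphism $\delta^*\colon H^*(\Cal T^*(\Bbb Q))\xrightarrow{\sim}\Hom(\Omega_*^{\SPL},\Bbb Q)\cong\Bbb Q[p_1,p_2,\ldots]$ of Corollary~3.4. I would construct a companion homomorphism $\mu^*\colon H^*(\Cal W^*(\Bbb Q))\to\Hom(\Omega_*^{\SPL},\Bbb Q)$, verify the triangle $\mu^* = \delta^*\circ\alpha^*$, and then show that $\mu^*$ is itself an isomorphism; since $\delta^*$ already is, this will force $\alpha^*$ to be one.

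To construct $\mu^*$, I would assign to each cocycle $h\in\Cal W^n(\Bbb Q)$ and each combinatorial manifold $K$ a natural simplicial $n$-cocycle $h_{\sharp}(K)$ on the barycentric subdivision $K'$, in the Levitt--Rourke style: the value of $h_{\sharp}(K)$ on a simplex of $K'$ corresponding to a flag $\sigma_0\subsetneq\sigma_1\subsetneq\cdots\subsetneq\sigma_n$ of $K$ is computed from the star of its minimum vertex $b(\sigma_0)$, by pairing $h(\Lk\sigma_0)$ against the simplex of $\Lk\sigma_0$ read off from $\sigma_1,\ldots,\sigma_n$ (with the natural sign). The identity $\delta h = 0$ unpacks term by term into the cocycle condition for $h_{\sharp}(K)$, and coboundaries in $\Cal W^*$ yield coboundaries of $h_{\sharp}(K)$. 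When $\dim K = n$, grouping $\langle h_{\sharp}(K),[K]\rangle$ by the minimum vertex reduces it to $\sum_{v\in\V(K)} h(\Lk v,[\Lk v]) = \sum_{v\in\V(K)}\alpha(h)(\Lk v)$, which is exactly the additive cobordism invariant attached to $\alpha(h)$ via Corollary~3.3. Hence $\mu^* = \delta^*\circ\alpha^*$.

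Surjectivity of $\alpha^*$ follows by explicit lifting. Given a cocycle $f\in\Cal T^n(\Bbb Q)$, define $h\in\Cal W^n(\Bbb Q)$ on an $(n-1)$-sphere $Y$ to be any top-degree cochain with $\langle h(Y),[Y]\rangle = f(Y)$, rendered $\mathrm{Aut}(Y)$-invariant by averaging (legitimate over $\Bbb Q$). Extend inductively to spheres of higher dimension, exploiting that the rational simplicial cochain complex of a sphere is acyclic below the top degree, so the successive obstructions to a compatible lift vanish.

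Injectivity is the main obstacle: one must show $H^*(\ker\alpha\otimes\Bbb Q) = 0$. A cocycle $h\in\ker\alpha\cap\Cal W^n(\Bbb Q)$ assigns, on each $(n-1)$-sphere $Y$, a top-degree cochain of vanishing integral, hence a coboundary $d\psi(Y)$; naturality is restored by $\mathrm{Aut}(Y)$-averaging. Subtracting $\delta\psi$ from $h$ produces a cohomologous cocycle that vanishes on $(n-1)$-spheres, after which the argument propagates to spheres of dimension $\geq n$ using $H^{n-1}(Y;\Bbb Q) = 0$. The delicate step is the simultaneous coherence of the primitives $\psi(Y)$ across all $Y$ with respect to the sum over links appearing in $\delta$; it is here that rationality is essential, since averaging divides by orders of automorphism groups and the inductive cancellation must be organized compatibly across the entire family of spheres.
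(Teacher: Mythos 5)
Your overall strategy --- lift/kill cochains by induction on the dimension of the sphere $Y$, using $\mathrm{Aut}(Y)$-averaging over $\Bbb Q$ and the rational acyclicity of spheres --- is the strategy of the paper, and the $\mu^*$ scaffolding at the start is not actually used by your own argument (you end up proving surjectivity and injectivity of $\alpha^*$ directly). The surjectivity half is essentially the paper's construction of the section $\gamma$, with one imprecision worth flagging: the obstruction at the \emph{first} inductive stage (extending from $(n-1)$-spheres to $n$-spheres) is not killed by acyclicity of the sphere but precisely by the hypothesis $\delta f=0$; for an $n$-dimensional $Y$ and $\xi=l[Y]$ the compatibility condition of the linear system reads $l\sum_{y}f(\Lk y)=0$, i.e.\ it \emph{is} the cocycle condition on $f$. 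Acyclicity only takes over at the later stages, where $\xi=\partial\eta$ and one instead needs the identities $\partial(\xi_y)=-(\partial\xi)_y$ and $(\eta_x)_y=-(\eta_y)_x$.

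The genuine gap is in the injectivity half, and you name it yourself without resolving it: the ``simultaneous coherence of the primitives $\psi(Y)$ with respect to the sum over links appearing in $\delta$'' is exactly where the content of the proof lies, and saying that ``the inductive cancellation must be organized compatibly'' is not an argument. Concretely, for an $(n-1)$-sphere $Y$ one has $(\delta\psi)(Y,\xi)=(-1)^{n-1}\psi(Y,\partial\xi)+(-1)^{n-2}\sum_{y}\psi(\Lk y,\xi_y)$, so choosing $\psi(Y)$ with $d\psi(Y)=\pm h(Y)$ does \emph{not} by itself make $h-\delta\psi$ vanish on $(n-1)$-spheres: the link term contributes the values of $\psi$ on the $(n-2)$-dimensional spheres $\Lk y$. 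The fix is to organize the primitive so that it is supported in one dimension at a time --- equivalently, as the paper does, to set $g(Y)=0$ for all $(n-2)$-dimensional $Y$ and then solve, for each $m$-dimensional $Y$ in increasing order of $m$, the inhomogeneous linear system $g(Y,\partial\xi)=\sum_{y}g(\Lk y,\xi_y)+(-1)^{n-1}h(Y,\xi)$ for the unknowns $g(Y,\sigma_i)$. One must then \emph{prove} this system is compatible at every stage (the paper's Proposition~5.11): for $m=n-1$ and $\xi=l[Y]$ the right-hand side vanishes because $g$ is zero one dimension down and because $\alpha(h)=0$; for $m>n-1$ and $\xi=\partial\eta$ it vanishes by the antisymmetry identities above together with $\delta h=0$. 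An automorphism-invariant rational solution then exists (e.g.\ by least squares or by averaging). Without this compatibility verification your induction does not get off the ground, so as written the proof is incomplete at its central step.
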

The proof is postponed to section~5.8.

Suppose $Y$ is a combinatorial sphere, $\xi\in C_l(Y;\Bbb Z)$ is a
simplicial chain, $\tau$ is an oriented $(n-1)$-dimensional
simplex of~$Y$. By $\xi_{\tau}\in C_{l-n}(\Lk\tau;\Bbb Z)$ we
denote the chain containing every simplex~$\rho$ of the
complex~$\Lk\tau$ with a coefficient equal to the coefficient of
the simplex~$\tau*\rho$ in the chain~$\xi$.

Let $\Lambda$ be an associative ring. We define a multiplication
$$
\Cal W^n(\Lambda)\otimes\Cal W^k(\Lambda)\to\Cal W^{n+k}(\Lambda)
$$
by
$$
(h_1h_2)(Y,\xi)=(-1)^{nk}
\mathop{\sum\limits_{\tau\,\,\text{\textnormal{a simplex
of}}\,Y,}}\limits_{\dim\tau=n-1}
h_1(Y,\tau)h_2(\Lk\tau,\xi_{\tau}),
$$
where $\xi\in C_{n+k-1}(Y;\Bbb Z)$. (The
summand~$h_1(Y,\tau)h_2(\Lk\tau,\xi_{\tau})$ is independent of the
orientation of~$\tau$.) It can be immediately checked that this
multiplication is associative and
$$
\delta(h_1h_2)=(\delta h_1)h_2+(-1)^nh_1\delta  h_2
$$
for every $h_1\in\Cal W^n(\Lambda)$, $h_2\in\Cal W^k(\Lambda)$.
Hence the constructed multiplication induces an associative
multiplication in cohomology of the complex~$\Cal W^*(\Lambda)$.

\subsection{The canonical cubic subdivision of a simplicial
complex}

For each simplicial complex~$K$ one can define its canonical cubic
subdivision~$\cub (K)$ (see, for example,~\cite{BuPa04}). Let us
describe this construction. Assume that the complex~$K$ has $q$
vertices and identify the vertex set of~$K$ with the set~$\{
1,2,\ldots,q\}$. Let $I^q=[0,1]^q$ be the standard cube. For any
nonempty simplices~$\tau\subset\sigma$ of the complex~$K$ we put
$$
C_{\tau\subset\sigma}=\left\{ (y_1,y_2,\ldots,y_q)\in
I^q:y_j=0\text{ for }j\in\tau,y_j=1\text{ for
}j\notin\sigma\right\}.
$$
Then $C_{\tau\subset\sigma}$ is a closed
$(\dim\sigma-\dim\tau)$-dimensional face of the cube~$I^q$. Let
$i:K\to I^q$ be the mapping such that $i$ takes the barycenter of
each simplex~$\sigma$ of~$K$ to the
vertex~$C_{\sigma\subset\sigma}$ and the restriction of~$i$ to
every simplex of the first barycentric subdivision of~$K$ is
linear. Then $i$ is an embedding whose image coincides with the
union of all faces~$C_{\tau\subset\sigma}$, where
$\tau\subset\sigma$ are simplices of~$K$. The preimages of
faces~$C_{\tau\subset\sigma}$ under the mapping~$i$ form the cubic
subdivision~$\cub(K)$ of~$K$. (This preimages will also be denoted
by~$C_{\tau\subset\sigma}$.)

Whenever the simplices~$\tau$ and~$\sigma$ are oriented, the cell~
$C_{\tau\subset\sigma}$ is endowed with the orientation such that
the product of the orientation of~$\tau$ and the orientation
of~$C_{\tau\subset\sigma}$ yields the orientation of~$\sigma$. We
define a multiplication in the cellular cochain
complex~$C^*(\cub(K);\Lambda)$ of the decomposition~$\cub(K)$ by
$$
(ab)\left(C_{\tau\subset\sigma}\right)=(-1)^{nk}
\mathop{\sum\limits_{\rho\,\,\text{\textnormal{a simplex
of}}\,K,}}\limits_{\dim\rho=\dim\tau+n}
a\left(C_{\tau\subset\rho}\right)
b\left(C_{\rho\subset\sigma}\right),
$$
where $a\in C^n(\cub(K);\Lambda)$, $b\in C^k(\cub(K);\Lambda)$,
and $\sigma$ and $\tau$ are oriented simplices of~$K$ such that
$\tau\subset\sigma$ and $\dim\sigma-\dim\tau=n+k$.

\begin{propos}
This multiplication is associative, satisfies the Leibniz formula,
and induces the standard multiplication in the cohomology of~$K$.
\end{propos}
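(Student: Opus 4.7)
\emph{Overview and associativity.} The plan is to verify the three assertions in turn: associativity and the Leibniz identity by direct cochain-level calculations, and the cohomological statement by comparison with a known multiplicative model. For associativity, I would expand both $((ab)c)(C_{\tau\subset\sigma})$ and $(a(bc))(C_{\tau\subset\sigma})$ for $a\in C^n(\cub(K);\Lambda)$, $b\in C^k(\cub(K);\Lambda)$, $c\in C^l(\cub(K);\Lambda)$ and a cube $C_{\tau\subset\sigma}$ of dimension $n+k+l$. Both reduce to a sum over nested triples $\tau\subset\rho_1\subset\rho_2\subset\sigma$ of simplices of $K$ with $\dim\rho_1-\dim\tau=n$ and $\dim\rho_2-\dim\tau=n+k$, having common summand $a(C_{\tau\subset\rho_1})\,b(C_{\rho_1\subset\rho_2})\,c(C_{\rho_2\subset\sigma})$; the two overall sign prefactors $(-1)^{(n+k)l+nk}$ and $(-1)^{n(k+l)+kl}$ coincide modulo~$2$, since both equal $(-1)^{nk+nl+kl}$.

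\emph{Leibniz identity.} For the Leibniz identity, I would first record the cellular boundary formula on $\cub(K)$: the codimension-one faces of $C_{\tau\subset\sigma}$ are $C_{\tau\cup\{v\}\subset\sigma}$ and $C_{\tau\subset\sigma\setminus\{v\}}$ for $v\in\sigma\setminus\tau$, obtained by fixing a previously free coordinate of $C_{\tau\subset\sigma}\subset I^q$ to $0$ or to $1$ respectively, each carrying an incidence sign determined by the orientation convention that the orientation of $\tau$ multiplied by that of $C_{\tau\subset\sigma}$ yields the orientation of $\sigma$. Writing $(\delta(ab))(C_{\tau\subset\sigma})=(ab)(\partial C_{\tau\subset\sigma})$, I would sort the resulting summands according to whether the modified vertex $v$ lies in the interval $[\tau,\rho]$ or $[\rho,\sigma]$ relative to the intermediate simplex $\rho$ indexing the product. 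The first family assembles into $((\delta a)b)(C_{\tau\subset\sigma})$, the second into $(-1)^n(a\,\delta b)(C_{\tau\subset\sigma})$, and the remaining ``diagonal'' terms, where the modified vertex coincides with $\rho\setminus\tau$ or $\sigma\setminus\rho$, cancel in pairs between the two families once the incidence signs are matched. This is the classical derivation of the graded Leibniz rule for the Alexander--Whitney cup product, transcribed to the present indexing by pairs of nested simplices.

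\emph{Cohomological identification and main obstacle.} For the cohomological statement, I would exploit the fact that $\cub(K)$ is a cellular refinement of $K$ (each simplex of $K$ is subdivided into its constituent cubes $C_{\tau\subset\sigma}$), so the identity map $\cub(K)\to K$ induces a canonical cochain equivalence. It then suffices to compare the product defined above with any multiplicative chain approximation computing the cup product. This can be done by introducing the poset $\mathcal{P}(K)$ of nonempty simplices of $K$ ordered by inclusion: the geometric realization of the nerve $N\mathcal{P}(K)$ is the barycentric subdivision $K'$, its normalized cochain complex equipped with the Alexander--Whitney product computes $H^*(K;\Lambda)$ with the standard cup product, and the natural subdivision of each cube $C_{\tau\subset\sigma}$ into the simplices of $K'$ lying between $\tau$ and $\sigma$ produces, after summing over maximal chains, a cochain comparison that matches the two multiplications term by term on the level of the sum-over-intermediate-simplices formulas. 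An acyclic-carrier argument then shows that this comparison is a quasi-isomorphism, completing the proof. The main obstacle throughout is the sign bookkeeping, particularly in the Leibniz identity, where the orientation convention on the cubes $C_{\tau\subset\sigma}$ must be applied consistently across the cellular boundary and the multiplication formulas; the same care is required in the cohomological comparison to match the $(-1)^{nk}$ factor in the cubic product formula with the sign conventions of the Alexander--Whitney product.
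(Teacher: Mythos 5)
The paper states this proposition without proof, treating it as a routine verification, so there is nothing to compare your argument against line by line; judged on its own, your plan is the correct and standard one. Your associativity check is right and complete: both iterated products reduce to the sum over nested triples $\tau\subset\rho_1\subset\rho_2\subset\sigma$ with the common sign $(-1)^{nk+nl+kl}$ (indeed the two exponents agree as integers, not merely mod $2$). Your Leibniz argument is the correct front-face/back-face decomposition: the codimension-one faces of $C_{\tau\subset\sigma}$ are exactly the cubes $C_{\tau\cup\{v\}\subset\sigma}$ and $C_{\tau\subset\sigma\setminus\{v\}}$, and the interval $[\tau,\sigma]$ splits at the intermediate simplex $\rho$ just as a simplex splits at its middle vertex in the classical cup-product computation, with the diagonal terms cancelling. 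Your cohomological identification via the subdivision chain map $C_*(\cub(K))\to C_*(K')$ (sending a cube to the signed sum over maximal chains in $[\tau,\sigma]$) and the Alexander--Whitney product on the nerve of the face poset is sound, since AW on $[b(\rho_0),\ldots,b(\rho_{n+k})]$ splits precisely at the barycenter of the simplex of dimension $\dim\tau+n$, matching the sum over $\rho$ in the cubic formula. The one caveat is that you defer rather than execute the sign bookkeeping: the incidence numbers in $\partial C_{\tau\subset\sigma}$ must actually be computed from the convention that the orientation of $\tau$ times that of $C_{\tau\subset\sigma}$ gives that of $\sigma$ (and one should note, as is implicit in your setup, that each summand $a(C_{\tau\subset\rho})b(C_{\rho\subset\sigma})$ is independent of the auxiliary orientation of $\rho$). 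These computations are mechanical and present no obstruction, so I regard the proposal as correct.
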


\subsection{Local formulae for cocycles}
Suppose $K$ is a combinatorial manifold, $h$ is an arbitrary
element of~$\Cal W^n(A)$. We define a cochain
$$
h^{\sharp}(K)\in C^n(\cub(K);A)
$$
by
$$
h^{\sharp}(K)(C_{\tau\subset\sigma})=h(\Lk\tau,\sigma_{\tau}).
$$
The following propositions can be checked immediately.

\begin{propos}
$\delta\left(h^{\sharp}(K)\right)=\left(\delta
h\right)^{\sharp}(K)$.
\end{propos}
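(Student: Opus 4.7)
The proof is a direct unpacking of definitions evaluated on each cube $C_{\tau\subset\sigma}$ of $\cub(K)$ with $\dim\sigma-\dim\tau=n+1$. First I would record the cellular boundary: the codimension-one faces of $C_{\tau\subset\sigma}$ fall into two families, namely $C_{\tau'\subset\sigma}$ with $\tau\subsetneq\tau'\subset\sigma$ and $\dim\tau'=\dim\tau+1$, and $C_{\tau\subset\sigma'}$ with $\tau\subset\sigma'\subsetneq\sigma$ and $\dim\sigma'=\dim\sigma-1$, each appearing with an explicit incidence coefficient dictated by the orientation convention from section~5.5 (``orientation of $\tau$ times orientation of $C_{\tau\subset\sigma}$ equals orientation of $\sigma$'').

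Next I would evaluate the right-hand side. Setting $\xi=\sigma_\tau\in C_n(\Lk\tau;\Bbb Z)$, the formula for the differential of $\Cal W^*$ gives
$$
(\delta h)(\Lk\tau,\sigma_\tau)=(-1)^n h(\Lk\tau,\partial\sigma_\tau)+(-1)^{n-1}\sum_{y\in\V(\Lk\tau)}h(\Lk_{\Lk\tau}y,(\sigma_\tau)_y).
$$
I would match these two terms with the two families of faces. For the first term, expanding $\partial\sigma_\tau$ as a signed sum of simplices $(\sigma')_\tau$ over codimension-one faces $\sigma'\subset\sigma$ containing $\tau$ produces exactly the contribution of the faces $C_{\tau\subset\sigma'}$ to $\delta(h^{\sharp}(K))(C_{\tau\subset\sigma})$. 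For the second term, I would use the two canonical identifications $\Lk_{\Lk\tau}y=\Lk_K(\tau*y)$ and $(\sigma_\tau)_y=\sigma_{\tau*y}$ (immediate from the definition of the subscript chain, since $\tau*y*\rho=(\tau*y)*\rho$); the sum over $y\in\V(\Lk\tau)$ then becomes the sum over simplices $\tau'=\tau*y$ with $\tau\subsetneq\tau'\subset\sigma$ and $\dim\tau'=\dim\tau+1$, and each summand is $h^{\sharp}(K)(C_{\tau'\subset\sigma})$ by definition of $h^{\sharp}$.

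The only real work lies in verifying that the signs on the two sides coincide. The anticipated obstacle is reconciling three independent sign conventions at once: the incidence signs of cubical faces in $\cub(K)$, the signs $(-1)^n$ and $(-1)^{n-1}$ in the definition of $\delta$ on $\Cal W^*$, and the signs $(-1)^{\dim\tau+1}$ coming from the join identity
$$
\partial(\tau*\rho)=(\partial\tau)*\rho+(-1)^{\dim\tau+1}\tau*(\partial\rho),
$$
which governs how $\partial\sigma_\tau$ and $(\sigma_\tau)_y$ relate to the simplicial boundary of $\sigma$ in $K$. I would fix signs once by checking a low-dimensional model case (for instance $\sigma$ a $2$-simplex, $\tau$ a vertex, $n=0$ or $n=1$), and then read off the general sign by induction on $\dim\tau$. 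Once the sign conventions have been aligned, the two summands on the right match the two families of cubical faces on the left term by term, completing the proof.
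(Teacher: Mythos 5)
The paper gives no proof of this proposition at all (it is dismissed with ``can be checked immediately''), and your direct verification --- expanding $\delta\bigl(h^{\sharp}(K)\bigr)$ on a cube $C_{\tau\subset\sigma}$ over its two families of codimension-one faces and matching them with the two terms of $(\delta h)(\Lk\tau,\sigma_{\tau})$ via the identifications $\Lk_{\Lk\tau}y=\Lk_K(\tau*y)$ and $(\sigma_{\tau})_y=\sigma_{\tau*y}$ --- is exactly the intended argument and is structurally correct. The only part left informal is the sign bookkeeping, which you correctly isolate as the sole point requiring care and for which your plan (fix the conventions on a low-dimensional model, then propagate) is adequate.
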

\begin{corr}
The cochain~$h^{\sharp}(K)$ is a cocycle for every combinatorial
manifold~$K$ if and only if~$h$ is a cocycle of the complex~$\Cal
W^*(A)$. If $h$ is a coboundary of~$\Cal W^*(A)$, then the
cochain~$h^{\sharp}(K)$ is a coboundary for every combinatorial
manifold~$K$.
\end{corr}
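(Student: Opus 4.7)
The plan is to reduce both assertions to Proposition~5.5, which supplies the key identity $\delta(h^{\sharp}(K)) = (\delta h)^{\sharp}(K)$. Granting this identity, the ``if'' direction of the first statement and the whole second statement are formal: if $\delta h = 0$, then $\delta(h^{\sharp}(K)) = (\delta h)^{\sharp}(K) = 0$ for every combinatorial manifold $K$, hence $h^{\sharp}(K)$ is a cocycle; and if $h = \delta g$ for some $g \in \Cal W^{n-1}(A)$, then $h^{\sharp}(K) = (\delta g)^{\sharp}(K) = \delta(g^{\sharp}(K))$ is a coboundary.

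For the ``only if'' direction I would argue that every value of the cochain $\delta h \in \Cal W^{n+1}(A)$ is realized as the value of $(\delta h)^{\sharp}(K)$ on some cube of some combinatorial manifold $K$, so the vanishing of all such $(\delta h)^{\sharp}(K)$ forces $\delta h = 0$. Since $(\delta h)(Y,\cdot)$ is a cochain and therefore linear in its chain argument, it suffices to verify $(\delta h)(Y,\rho)=0$ for each combinatorial sphere $Y$ and each single $n$-simplex $\rho$ of $Y$. Given such a pair $(Y,\rho)$, I would take $K = \Sigma Y$, the unreduced suspension, which is again a combinatorial sphere and hence a combinatorial manifold. Letting $\tau = v$ be either suspension vertex, one has $\Lk_{K}\tau = Y$, while $\sigma := v\ast\rho$ is an $(n+1)$-simplex of $K$ with $\dim\sigma - \dim\tau = n+1$; the definition in section~5.4 then gives $\sigma_{\tau}=\rho$. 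Applying Proposition~5.5 to the cube $C_{\tau\subset\sigma}$ yields
\[
0 \;=\; \delta(h^{\sharp}(K))(C_{\tau\subset\sigma}) \;=\; (\delta h)^{\sharp}(K)(C_{\tau\subset\sigma}) \;=\; (\delta h)(Y,\rho),
\]
and varying $\rho$ and $Y$ concludes that $\delta h = 0$.

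The only substantive step is the choice of a model manifold $K$ realizing an arbitrary pair $(\Lk\tau,\sigma_{\tau})=(Y,\rho)$, and the single construction $K=\Sigma Y$ suffices uniformly. Hence no real obstacle arises beyond invoking Proposition~5.5 correctly and observing linearity of $\delta h$ in its chain argument; the whole verification is short, in keeping with the author's remark that these propositions ``can be checked immediately.''
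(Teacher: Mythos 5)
Your proof is correct. The paper offers no argument for this corollary (it is listed among the statements that ``can be checked immediately'' after Proposition~5.5), and your filling-in is exactly the intended one: the ``if'' direction and the coboundary statement are formal consequences of the identity $\delta(h^{\sharp}(K))=(\delta h)^{\sharp}(K)$, while the only substantive point --- realizing an arbitrary pair $(Y,\rho)$ as $(\Lk_K\tau,\sigma_\tau)$ for some combinatorial manifold $K$ --- is handled properly by taking $K=\Sigma Y$ with $\tau$ a suspension vertex and $\sigma=\tau*\rho$, together with linearity of $(\delta h)(Y,\cdot)$ in the chain argument.
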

\begin{propos}
Suppose $\Lambda$ is a ring, $h_1,h_2\in\Cal W^*(\Lambda)$. Then
$$
(h_1h_2)^{\sharp}(K)=h_1^{\sharp}(K)h_2^{\sharp}(K)
$$
for every combinatorial manifold~$K$.
\end{propos}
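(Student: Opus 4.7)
The plan is to verify the identity by evaluating both sides on an arbitrary generator $C_{\tau\subset\sigma}$ of the cochain complex $C^{n+k}(\cub(K);\Lambda)$, and showing that after unwinding the definitions the two expressions agree term by term, with signs matching because both multiplications carry the same sign $(-1)^{nk}$.

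First, I would compute the left-hand side. By definition of $h^{\sharp}$ and of the multiplication in $\Cal W^*(\Lambda)$,
$$
(h_1h_2)^{\sharp}(K)(C_{\tau\subset\sigma})=(h_1h_2)(\Lk\tau,\sigma_{\tau})
=(-1)^{nk}\sum_{\tau'}h_1(\Lk\tau,\tau')\,h_2(\Lk_{\Lk\tau}\tau',(\sigma_{\tau})_{\tau'}),
$$
where $\tau'$ ranges over the oriented $(n-1)$-dimensional simplices of the combinatorial sphere $\Lk_K\tau$ (a bona fide combinatorial sphere because $K$ is a combinatorial manifold and $\sigma_{\tau}$ has dimension $n+k-1$).

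Next comes the heart of the argument: the bijection between the indexing sets. The $(n-1)$-simplices $\tau'$ of $\Lk_K\tau$ that appear with a nonzero coefficient in $\sigma_{\tau}$ correspond canonically to simplices $\rho$ of $K$ with $\tau\subsetneq\rho\subset\sigma$ and $\dim\rho=\dim\tau+n$, via the rule $\tau'=\rho_{\tau}$. Under this correspondence one has the canonical isomorphism $\Lk_{\Lk_K\tau}\rho_{\tau}\cong\Lk_K\rho$ (both identify with the partially ordered set of simplices strictly containing $\rho$), and, because $h_2$ is isomorphism-invariant, $h_2(\Lk_{\Lk\tau}\tau',(\sigma_{\tau})_{\tau'})=h_2(\Lk_K\rho,\sigma_{\rho})$. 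Therefore
$$
(h_1h_2)^{\sharp}(K)(C_{\tau\subset\sigma})=(-1)^{nk}\sum_{\rho}h_1(\Lk_K\tau,\rho_{\tau})\,h_2(\Lk_K\rho,\sigma_{\rho})
=(-1)^{nk}\sum_{\rho}h_1^{\sharp}(K)(C_{\tau\subset\rho})\,h_2^{\sharp}(K)(C_{\rho\subset\sigma}),
$$
with $\rho$ running over simplices of $K$ with $\tau\subset\rho\subset\sigma$ and $\dim\rho=\dim\tau+n$. This is exactly the definition of $\bigl(h_1^{\sharp}(K)h_2^{\sharp}(K)\bigr)(C_{\tau\subset\sigma})$ recalled in Section~5.5.

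The only delicate point, and thus the main obstacle, is the consistency of orientations: the induced orientation on the simplex $\rho_{\tau}\subset\Lk_K\tau$ (defined by the convention that $\tau*\rho_{\tau}$ carries the orientation of $\rho$), together with the induced orientation on $\sigma_{\rho}\subset\Lk_K\rho$, must assemble to the orientation of $(\sigma_{\tau})_{\tau'}$ coming from the two-step restriction inside $\Lk_K\tau$. This is the associativity of the ``residue'' operation $\sigma\mapsto\sigma_{\tau}$ with respect to nested simplices $\tau\subset\rho\subset\sigma$, and follows from the compatibility of the orientations in the formula $\tau*\rho_{\tau}*\sigma_{\rho}=\sigma$. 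With this sign check in place the identity of Proposition~5.6 follows, and the $(-1)^{nk}$ prefactors on the two sides coincide automatically, so no further sign has to be tracked.
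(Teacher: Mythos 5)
Your proof is correct: the paper offers no argument for this proposition (it appears among the statements that ``can be checked immediately''), and your direct term-by-term verification --- using the bijection $\tau'\leftrightarrow\rho=\tau*\tau'$, the identification $\Lk_{\Lk_K\tau}\tau'=\Lk_K\rho$, the join-associativity $(\sigma_\tau)_{\tau'}=\sigma_{\tau*\tau'}$, and the observation that both products carry the same prefactor $(-1)^{nk}$ and have orientation-independent summands --- is exactly the intended check. The only slip is the phrase ``$(n-1)$-simplices $\tau'$ that appear with a nonzero coefficient in $\sigma_\tau$'': the chain $\sigma_\tau$ is supported in dimension $n+k-1$, so what you mean (and what your argument actually uses) is the set of $(n-1)$-dimensional faces of its supporting simplex, i.e.\ those $\tau'$ with $(\sigma_\tau)_{\tau'}\ne 0$.
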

\begin{propos}
Suppose $h\in\Cal W^n(A)$ is a cocycle, $f=\alpha(h)$. Then for
any oriented combinatorial manifold~$K$ the homology class
represented by the cycle~$f_{\sharp}(K)$ is the Poincar\'e dual of
the cohomology class represented by the cocycle~$h^{\sharp}(K)$.
\end{propos}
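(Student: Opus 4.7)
The plan is to compare $[f_\sharp(K)]$ with the Poincar\'e dual of $[h^\sharp(K)]$ by exploiting the fact that $\cub(K)$ refines the dual cell decomposition $K^*$ of the combinatorial manifold~$K$. For each $(m-n)$-simplex $\sigma$ of $K$, with $m=\dim K$, the $n$-dimensional dual cell $D\sigma$ is tiled in $\cub(K)$ by the top-dimensional cubes $C_{\sigma\subset\eta}$, one for each $m$-simplex $\eta$ of $K$ that contains $\sigma$.

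Accordingly, the oriented cell $D\sigma$ is represented by the cubic chain
\[
[D\sigma]=\sum_{\eta\supset\sigma,\,\dim\eta=m}\varepsilon(\eta,\sigma)\,C_{\sigma\subset\eta}\in C_n(\cub(K);\Bbb Z),
\]
for signs $\varepsilon(\eta,\sigma)=\pm 1$ determined by orientations. A direct computation using the decomposition of facets of the cubes $C_{\sigma\subset\eta}$ shows that the assignment $D\sigma\mapsto[D\sigma]$ extends to a chain map $C_*(K^*;\Bbb Z)\to C_*(\cub(K);\Bbb Z)$; since both complexes compute $H_*(K;\Bbb Z)$, the dual cochain map induces an isomorphism $H^*(\cub(K);A)\cong H^*(K^*;A)$. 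By the standard cellular Poincar\'e duality for combinatorial manifolds (the dual of $\sigma$ is $D\sigma$), the class $[h^\sharp(K)]\in H^n(K;A)$ is Poincar\'e dual to $\bigl[\sum_\sigma h^\sharp(K)([D\sigma])\,\sigma\bigr]\in H_{m-n}(K;A)$. It therefore suffices to show $h^\sharp(K)([D\sigma])=f(\Lk\sigma)$ for every $(m-n)$-simplex $\sigma$.

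Substituting the definition $h^\sharp(K)(C_{\sigma\subset\eta})=h(\Lk\sigma,\eta_\sigma)$ from section~5.6 and using the linearity of $h(\Lk\sigma,\cdot)$ in its second argument gives
\[
h^\sharp(K)([D\sigma])=\sum_\eta\varepsilon(\eta,\sigma)\,h(\Lk\sigma,\eta_\sigma)=h\!\left(\Lk\sigma,\,\sum_\eta\varepsilon(\eta,\sigma)\,\eta_\sigma\right).
\]
The key orientation identity $\sum_\eta\varepsilon(\eta,\sigma)\,\eta_\sigma=[\Lk\sigma]$ then yields $h^\sharp(K)([D\sigma])=h(\Lk\sigma,[\Lk\sigma])=\alpha(h)(\Lk\sigma)=f(\Lk\sigma)$, whence $\sum_{\codim\sigma=n}h^\sharp(K)([D\sigma])\,\sigma=f_\sharp(K)$, as desired.

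The main obstacle is the orientation identity $\sum_\eta\varepsilon(\eta,\sigma)\,\eta_\sigma=[\Lk\sigma]$, which requires matching three sign conventions: the orientation of $D\sigma$ determined by ``orientation of $\sigma$ times orientation of $D\sigma$ equals orientation of $K$''; the orientation of $C_{\sigma\subset\eta}$ fixed in section~5.5 by ``orientation of $\sigma$ times orientation of $C_{\sigma\subset\eta}$ equals orientation of $\eta$''; and the orientation of $\Lk\sigma$ fixed in section~2.1 by ``orientation of $\sigma\ast\Lk\sigma$ equals orientation of $K$''. Because all three conventions are rooted in the same chosen orientation of $K$, working locally in a neighborhood of $b(\sigma)$ inside a single top simplex $\eta\supset\sigma$ shows that $\varepsilon(\eta,\sigma)$ coincides with the sign by which $\eta_\sigma$ enters $[\Lk\sigma]$, and consistency follows.
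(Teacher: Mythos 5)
Your proposal is correct and follows essentially the same route as the paper: the paper likewise passes from $\cub(K)$ to the dual decomposition $K^*$ via the subdivision, evaluates $h^{\sharp}(K)$ on the dual cell $D\tau$ to obtain $\sum_{\eta\supset\tau}h(\Lk\tau,\eta_\tau)=h(\Lk\tau,[\Lk\tau])=f(\Lk\tau)$, and concludes by cellular Poincar\'e duality. The only difference is that you spell out the orientation bookkeeping (the identity $\sum_\eta\varepsilon(\eta,\sigma)\,\eta_\sigma=[\Lk\sigma]$) that the paper treats as immediate from the stated conventions.
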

\begin{proof}
By~$m$ we denote the dimension of~$K$. Let $\tau$ be a simplex
of~$K$ of dimension~$m-n$. The decomposition~$\cub(K)$ is a
subdivision of the decomposition~$K^*$ dual to the
triangulation~$K$. The $n$-dimensional cell $D\tau$ dual to the
simplex~$\tau$ is the union of the cubes~$C_{\tau\subset\sigma}$,
where $\sigma$ runs over all $m$-dimensional simplices
containing~$\tau$. Let $c$ be the image of the
cochain~$h^{\sharp}(K)$ under the natural
homomorphism~$C^n(\cub(K);A)\to C^n(K^*;A)$. Then
$$
c(D\tau)=\mathop{\sum_{\sigma\supset\tau,}}_{\dim\sigma=m}
h^{\sharp}(K)(C_{\tau\subset\sigma})=
h(\Lk\tau,[\Lk\tau])=f(\Lk\tau).
$$
Hence the cocycle~$c$ is the Poincar\'e dual of the
cycle~$f_{\sharp}(K)$.
\end{proof}

Let us consider the isomorphisms
$$
\begin{CD}
H^*(\Cal W^*(\Bbb Q)) @>{\alpha^*}>> H^*(\Cal T^*(\Bbb Q))
@>{\delta^*}>> \Hom(\Omega^{\SPL}_*,\Bbb Q)=\Bbb
Q[p_1,p_2,\ldots].
\end{CD}
$$
Suppose $h\in\Cal W^*(\Bbb Q)$ is a cocycle,
$F(p_1,p_2,\ldots)=\delta^*\alpha^*[h]$ is the corresponding
polynomial in rational Pontryagin classes. Proposition~5.8
immediately implies that the cocycle~$h^{\sharp}(K)$ represents
the cohomology class~$F(p_1(K),p_2(K),\ldots)$ for every
combinatorial manifold~$K$. In particular, the cohomology
class~$\left[h^{\sharp}(K)\right]$ is independent of the choice of
a cocycle~$h$ representing a given cohomology class and of the
choice of a piecewise-linear triangulation~$K$ of a given
manifold. We shall say that such cocycle~$h$ is a {\it local
formula} for the polynomial~$F$ in Pontryagin classes. It follows
from Theorem~5.1 that for every polynomial in rational Pontryagin
classes there is a local formula~$h\in\Cal W^*(\Bbb Q)$ unique up
to a coboundary of~$\Cal W^*(\Bbb Q)$.

\subsection{Multiplication of cocycles of~$\Cal T^*(\Bbb Q)$}
By~$Z^n$ we denote the subgroup of the group~$\Cal T^n(\Bbb Q)$
consisting of all cocycles, that is, of all local formulae for
polynomials in Pontryagin classes (see Proposition~5.1). We shall
construct explicitly a mapping~$\gamma:Z^n\to\Cal W^n(\Bbb Q)$
such that~$\gamma(f)$ is a cocycle for every local formula~$f$ and
$\alpha\circ\gamma$ is the identity homomorphism.

Suppose $f\in Z^n$ is a cocycle. The cochain~$h=\gamma(f)$ must
satisfy the conditions~$\delta h=0$ and~$\alpha(h)=f$, that is,
$h$ must be a solution of the system of equations of the following
two types.

1) $h(Y,\partial\xi)=\sum\limits_{y\in \V(Y)}h(\Lk y,\xi_y)$ for
every combinatorial sphere~$Y$ and every chain $\xi\in
C_{n}(Y;\Bbb Z)$;

2) $h(Y,[Y])=f(Y)$ for every oriented $(n-1)$-dimensional
combinatorial sphere~$Y$.

We shall construct the set of cochains~$h(Y)$ satisfying this
system of equations by the induction on the dimension of~$Y$.

For any oriented $(n-1)$-dimensional combinatorial sphere~$Y$ with
$q$ maximal-dimensional simplices we put
$$
h(Y,\sigma)=\frac{f(Y)}{q}
$$
for every positively oriented $(n-1)$-dimensional simplex~$\sigma$
of~$Y$. Obviously, the cochain~$h(Y)$ is independent of the
orientation of~$Y$.

Suppose $m\geqslant n$. Assume that the cochains~$h(Y)$ are
already defined for all combinatorial spheres~$Y$ of dimension
less than~$m$. Let us determine the cochain~$h(Y)$ for
an~$m$-dimensional combinatorial sphere~$Y$.
Let~$\sigma_1,\sigma_2,\ldots,\sigma_q$ be all~$(n-1)$-dimensional
simplices of~$Y$. We fix an arbitrary orientation of every
simplex~$\sigma_i$. We consider all equations
$$
h(Y,\partial\xi)=\sum_{y\in \V(Y)}h(\Lk y,\xi_y),\eqno(**)
$$
where $\xi\in C_{n}(Y;\Bbb Z)$. Each of these equations can be
regarded as a linear equation in variables
$h(Y,\sigma_1),h(Y,\sigma_2),\ldots,h(Y,\sigma_q)$.
\begin{propos}
The system of equations~$(**)$ is compatible.
\end{propos}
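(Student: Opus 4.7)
The plan is to rephrase compatibility of~$(**)$ as a vanishing statement for a single $\Bbb Q$-valued cochain on~$Y$, and then exploit that~$Y$ is a sphere. Since the left-hand side of~$(**)$ depends on~$\xi$ only through~$\partial\xi$, the system is compatible if and only if the $\Bbb Q$-linear functional
$$
F_h(\xi)=\sum_{y\in\V(Y)}h(\Lk y,\xi_y),\qquad \xi\in C_n(Y;\Bbb Z),
$$
regarded as an element of~$C^n(Y;\Bbb Q)$, vanishes on the subgroup~$Z_n(Y;\Bbb Z)$ of $n$-cycles of~$Y$.

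The first step is to check that~$F_h$ is a simplicial cocycle. For~$\eta\in C_{n+1}(Y;\Bbb Z)$, the identity~$(\partial\eta)_y=-\partial(\eta_y)$ from the proof of Proposition~5.3 gives
$$
(\delta F_h)(\eta)=F_h(\partial\eta)=-\sum_{y\in\V(Y)}h(\Lk y,\partial(\eta_y)).
$$
Since~$\dim\Lk y=m-1<m$, the inductive hypothesis guarantees that~$\delta h=0$ at the sphere~$\Lk y$, so
$$
h(\Lk y,\partial(\eta_y))=\sum_{z\in\V(\Lk y)}h(\Lk_Y(y*z),(\eta_y)_z),
$$
where we used the canonical identification~$\Lk_{\Lk y}z=\Lk_Y(y*z)$. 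Grouping the resulting double sum over oriented edges~$(y,z)$ of~$Y$ by unordered edge~$\{y,z\}$ and invoking the antisymmetry~$(\eta_y)_z=-(\eta_z)_y$ (also established in the proof of Proposition~5.3), together with the orientation-independence of the cochain~$h(\Lk_Y(y*z))$, the two contributions of each edge cancel, so~$\delta F_h=0$.

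It remains to conclude that~$F_h$ vanishes on~$Z_n(Y;\Bbb Z)$, which splits into two cases according to~$m=\dim Y$. If~$n<m$, then~$Y$ is a combinatorial $m$-sphere with~$H^n(Y;\Bbb Q)=0$, hence~$F_h=\delta G$ for some~$G\in C^{n-1}(Y;\Bbb Q)$, and~$F_h(\zeta)=G(\partial\zeta)=0$ for every~$\zeta\in Z_n(Y;\Bbb Z)$. If~$n=m$, then~$Z_n(Y;\Bbb Z)$ is infinite cyclic generated by~$[Y]$; the base case of the inductive definition of~$h$ yields~$h(\Lk y,[\Lk y])=f(\Lk y)$ for every vertex~$y$ of~$Y$, whence
$$
F_h([Y])=\sum_{y\in\V(Y)}f(\Lk y)=0,
$$
the last equality being the cocycle condition~$\delta f=0$ applied to~$Y\in\Cal T_n$. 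The main delicate point is the sign bookkeeping: it is the fact that~$(\partial\eta)_y=-\partial(\eta_y)$ combines with~$(\eta_y)_z=-(\eta_z)_y$ so that the double sum over edges collapses by pairwise cancellation rather than doubling; both identities are already available from earlier in the paper, making the argument essentially a matter of assembling them.
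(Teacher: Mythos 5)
Your proof is correct and follows essentially the same route as the paper's: reduce compatibility to the vanishing of $\sum_{y}h(\Lk y,\xi_y)$ on cycles, handle $n=m$ via $\delta f=0$, and handle $n<m$ via the identities $(\partial\eta)_y=-\partial(\eta_y)$ and $(\eta_x)_y=-(\eta_y)_x$ together with the inductively known equations for the spheres $\Lk y$. The only cosmetic difference is that you package the $n<m$ computation as ``$F_h$ is a cocycle, and $H^n(Y;\Bbb Q)=0$,'' whereas the paper writes the cycle directly as $\partial\eta$ (using $H_n(Y;\Bbb Z)=0$) and performs the same edge-cancellation; the two are interchangeable.
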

\begin{proof}
We need to prove only that
$$\sum_{y\in \V(Y)}h(\Lk y,\xi_y)=0$$
if $\xi$ is a cycle. We consider two cases.

If $m=n$, then $\xi=l[Y]$ for a certain~$l\in\Bbb Z$. Then
$$
\sum_{y\in \V(Y)}h(\Lk y,\xi_y)=l\sum_{y\in \V(Y)}f(\Lk
y)=(-1)^nl(\delta f)(Y)=0.
$$

If $m>n$, then $\xi=\partial\eta$ for a certain chain~$\eta\in
C_{n+1}(Y;\Bbb Z)$. Therefore,
\begin{multline*}
\sum_{y\in \V(Y)}h(\Lk y,\xi_y)=\sum_{y\in \V(Y)}h(\Lk
y,(\partial\eta)_y)=
-\sum_{y\in \V(Y)}h(\Lk y,\partial(\eta_y))=\\
=\sum_{(x,y)}h\left(\Lk( x*y),(\eta_y)_x\right)=0,
\end{multline*}
where the last sum is taken over all pairs of vertices~$(x,y)$
connected by an edge in the complex~$Y$. The last equality holds,
since~$(\eta_x)_y=-(\eta_y)_x$.
\end{proof}
For the cochain~$h(Y)$ we choose that solution of the system of
equations~$(**)$ for which the sum
$$
h(Y,\sigma_1)^2+h(Y,\sigma_2)^2+\cdots+h(Y,\sigma_q)^2
$$
takes its minimal value. Such solution exists, is unique, is
rational, is invariant under all automorphisms of~$Y$, and its
determination reduces to the solution of a system of linear
equation.

Notice that to compute the cochain~$h(Y)$ for an~$m$-dimensional
combinatorial sphere~$Y$ we need to know only the values of the
function~$f$ on the links of all~$(m-n)$-dimensional simplices
of~$Y$.

We define a multiplication
$$
Z^n\times Z^k\to Z^{n+k}
$$
by
$$
f_1\diamond f_2=\alpha(\gamma(f_1)\gamma(f_2)).
$$
This multiplication is neither associative, nor commutative, nor
linear in the first argument. It can be checked that this
multiplication is linear in the second argument. Propositions~5.7
and~5.8 immediately imply that if $f_1$ and $f_2$ are local
formulae for polynomials~$F_1(p_1,p_2,\ldots)$
and~$F_2(p_1,p_2,\ldots)$ respectively, then $f_1\diamond f_2$ is
a local formula for the
polynomial~$F_1(p_1,p_2,\ldots)F_2(p_1,p_2,\ldots)$. Thus the
multiplication~$\diamond$ induces the multiplication in cohomology
of~$\Cal T^*(\Bbb Q)$ coinciding with the multiplication defined
by the isomorphism~$\delta^*$.

\begin{remark}
To compute the value of the function~$f_1\diamond f_2$ on an
$(n+k-1)$-dimensional combinatorial sphere~$Y$ we suffice to know
only the values of~$f_1$ on the links of all $(k-1)$-dimensional
simplices of~$Y$ and the values of~$f_2$ on the links of all
$(n-1)$-dimensional simplices of~$Y$. Besides, the procedure for
computing the values~$(f_1\diamond f_2)(Y)$ reduces to the
solution of systems of linear equations. Thus, by Remark~5.2, to
compute a cycle whose homology class is dual to a given polynomial
in the rational Pontryagin classes of a given combinatorial
manifold~$K$ one does not need to use the described in section~5.2
procedure for the choice of canonical local formulae for the
Hirzebruch $L$-classes. Actually, in the process of computation
one needs to operate only with those combinatorial spheres that
appear as the links of simplices of~$K$.
\end{remark}

\subsection{Proof of Theorem 5.1}
The homomorphism~$\alpha^*$ is an epimorphism since for any
cocycle~$f\in\Cal T^n(\Bbb Q)$ there is the cocycle
$h=\gamma(f)\in\Cal W^n(\Bbb Q)$ such that~$\alpha(h)=f$.

Suppose $h\in\Cal W^n(\Bbb Q)$ is a cocycle, $\alpha(h)=\delta f$.
Then $\alpha(h-\delta\gamma(f))=0$. Hence to prove that $\alpha^*$
is a monomorphism we suffice to prove that for each~$h$ such
that~$\alpha(h)=0$ there is an element~$g\in\Cal W^{n-1}(\Bbb Q)$
such that $h=\delta g$. The condition~$h=\delta g$ can be written
as the system of equations
$$
g(Y,\partial\xi)=\sum_{y\in \V(Y)}g(\Lk
y,\xi_y)+(-1)^{n-1}h(Y,\xi),\eqno(*\!*\!*)
$$
where $Y$ is an arbitrary combinatorial sphere and $\xi\in
C_{n-1}(Y;\Bbb Z)$ is an arbitrary chain.

Let us consequently define the cochains~$g(Y)$. For every
$(n-2)$-dimensional combinatorial sphere~$Y$ we put~$g(Y)=0$.

Suppose that $m\geqslant n-1$. Assume that the cochains~$g(Y)$ are
already defined for all combinatorial spheres of dimension less
than~$m$. Let us define a cochain~$g(Y)$ for an $m$-dimensional
combinatorial sphere~$Y$. Let~$\sigma_1,\sigma_2,\ldots,\sigma_q$
be all~$(n-2)$-dimensional simplices of~$Y$. We fix an arbitrary
orientation of every simplex~$\sigma_i$. We consider all equations
$(*\!*\!*)$, where $\xi\in C_{n-1}(Y;\Bbb Z)$. Each of these
equations can be regarded as a linear equation in variables
$g(Y,\sigma_1),g(Y,\sigma_2),\ldots,g(Y,\sigma_q)$.
\begin{propos}
The considered system of equations is compatible.
\end{propos}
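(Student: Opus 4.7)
The plan is to verify compatibility of the linear system $(*\!*\!*)$ in the unknowns $g(Y,\sigma_1),\dots,g(Y,\sigma_q)$. Since the left-hand side $g(Y,\partial\xi)$ depends only on $\partial\xi$, it suffices to show that the right-hand side vanishes whenever $\xi\in C_{n-1}(Y;\mathbb{Z})$ is a cycle. I will mimic the structure of the proof of Proposition~5.9, splitting into two cases according to $m=\dim Y$.

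For $m=n-1$, one has $Z_{n-1}(Y;\mathbb{Z})=\mathbb{Z}\cdot[Y]$, so $\xi=l[Y]$ and $\xi_y=l[\Lk y]$ for each vertex $y$. The right-hand side becomes $l\sum_{y\in \V(Y)}\alpha(g)(\Lk y)+(-1)^{n-1}l\,\alpha(h)(Y)$. The second summand vanishes by the hypothesis $\alpha(h)=0$, and the first vanishes because the inductive construction starts by setting $g=0$ on every $(n-2)$-dimensional sphere, so $\alpha(g)(\Lk y)=g(\Lk y,[\Lk y])=0$.

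For $m\geq n$, one has $H_{n-1}(Y)=0$, so $\xi=\partial\eta$ for some $\eta\in C_n(Y;\mathbb{Z})$. Using the identity $(\partial\eta)_y=-\partial(\eta_y)$ and applying the inductive instance of $(*\!*\!*)$ on each $\Lk y$ (of dimension $m-1\geq n-1$, where $g$ has already been constructed to satisfy $(*\!*\!*)$) to the chain $\eta_y\in C_{n-1}(\Lk y;\mathbb{Z})$, the right-hand side rewrites as
$$
-\sum_{(x,y)}g(\Lk(x*y),(\eta_y)_x)-(-1)^{n-1}\sum_{y\in\V(Y)}h(\Lk y,\eta_y)+(-1)^{n-1}h(Y,\partial\eta),
$$
where the first sum runs over ordered pairs of vertices $(x,y)$ connected by an edge in $Y$. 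That double sum cancels termwise by the antisymmetry $(\eta_y)_x=-(\eta_x)_y$ together with $\Lk(x*y)=\Lk(y*x)$, and the remaining $h$-terms collect into $\pm(\delta h)(Y,\eta)$, which vanishes because $h$ is a cocycle.

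The main subtlety will be keeping the signs consistent so that the $h$-terms reassemble precisely into $(\delta h)(Y,\eta)=0$; apart from that, the argument is a direct analogue of the one used for Proposition~5.9, shifted by one in cohomological degree to account for the passage from $C_n$ to $C_{n-1}$.
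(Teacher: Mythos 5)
Your proof is correct and is exactly the argument the paper intends: the paper's own ``proof'' of this proposition consists of the single remark that it is completely similar to the proof of Proposition~5.9. Your two cases (top-dimensional cycles $\xi=l[Y]$ killed by $\alpha(h)=0$ and by $g=0$ on $(n-2)$-spheres when $m=n-1$; and $\xi=\partial\eta$ for $m\ge n$, with the edge-pair cancellation from $(\eta_y)_x=-(\eta_x)_y$ and the leftover terms assembling into $-(\delta h)(Y,\eta)=0$) reproduce that argument faithfully, with the sign bookkeeping done correctly.
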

The proof is completely similar to the proof of Proposition~5.9.
For the cochain~$g(Y)$ we choose an arbitrary
(automorphism-invariant) solution of the considered system of
linear equations.


\begin{thebibliography}{99}

\bibitem{Baa73} Baas N.\,A., ``On bordism theory of manifolds
with singularities'', {\it Math. Scand.} \textbf{33}:2 (1973),
p.~279--302.

\bibitem{Buc69} Buchstaber V.\,M., ``Modules of differentials of the
Atiyah-Hirzebruch spectral sequence,~I,~II'', {\it Matem. sbornik}
\textbf{78}:2 (1969), p.~307--320; \textbf{83}:1 (1970),
p.~61--76; English transl., {\it Math. USSR Sb.} \textbf{7}:2,
p.~299--313; \textbf{12}:1, p.~59--75.

\bibitem{Buc78} Buchstaber V.\,M., ``Characteristic cobordism classes and topological
applications of theories of one-valued and two-valued formal
groups'', {\it Itogi Nauki i Tekhniki. Sovrem. Probl. Matem.}
\textbf{10} (1978), p.~5--178; English transl., {\it J. Soviet
Math.} \textbf{11} (1979), p.~815--921.

\bibitem{BuNo71} Buchstaber V.\,M., Novikov S.\,P., ``Formal groups, power
systems and Adams operators'', {\it Matem. sbornik} \textbf{84}:1
(1971), p.~81--118; English transl., {\it Math. USSR Sb.}
\textbf{13}:1, p.~80--116.


\bibitem{BuPa04} Buchstaber V.\,M., Panov T.\,E., {\it Torus
actions and their applications in topology and combinatorics},
MCCME, Moscow 2004; English version, University Lecture Series,
no.~24, Amer. Math. Soc., Providence, RI 2002.

\bibitem{BuVe96} Buchstaber V.\,M., Veselov A.\,P., ``On a
remarkable functional equation in the theory of generalized Dunkl
operators and transformations of elliptic genera'', {\it Math. Z.}
\textbf{223} (1996), p.~595--607.


\bibitem{BuDe80} Buoncristiano S., Ded\`o M., ``On resolving
singularities and relating bordisms to homology'', {\it Annali
della Scuola Normale Superiore di Pisa, Classe di Scienze $4^e$
s\'erie} \textbf{7}:4 (1980), p.~605--624.

\bibitem{Che80} Cheeger J., ``On the Hodge theory of riemannian
pseudomanifolds'', {\it Proc. Sympos. Pure Math.} \textbf{36}
(1980), p.~91--145.


\bibitem{Che83} Cheeger J., ``Spectral geometry of singular
Riemannian spaces'', {\it J. Differential Geom.} \textbf{18}:4
(1983), p.~575--657.


\bibitem{GMPC83} Cheeger J., Goresky M., MacPherson R.,
``$L^2$-cohomology and intersection homology of singular algebraic
varieties'', {\it Seminar on differential geometry}, Yau S.\,T.
(ed.), Princeton University Press, Princeton, NJ, 1982,
p.~303--340.

\bibitem{CoFl64} Conner P.\,E., Floyd E.\,E., {\it Differentiable periodic maps},
Ergeb. Math., no.~33, Springer-Verlag, 1964.




\bibitem{DaJa91} Davis M., Januszkiewicz T., ``Convex
polytopes, Coxeter orbifolds and torus actions'', {\it Duke Math.
J.} \textbf{62}:2 (1991), p.~417--451.

\bibitem{Fer76} Ferri M., ``Una rappresentazione delle
$n$-variet\`a topologiche triangolabili mediante grafi
$(n+1)$-colorati'', {\it Boll. Un. Mat. Ital.} Ser. 5.
\textbf{13-B}:1 (1976), p.~250--260.

\bibitem{FGG86} Ferri M., Gagliardi C., Grasselli L., ``A
graph-theoretical representation of PL-manifolds --- A survey on
crystallizations'', {\it Aequationes Math.} \textbf{31}:2--3
(1986), p.~121--141.

\bibitem{Fur90} Furuta M., ``Homology cobordism group of
homology $3$-spheres'', {\it Invent. Math.} \textbf{100} (1990),
p.~339--355.


\bibitem{GGL75} Gabrielov A.\,M., Gelfand I.\,M., Losik M.\,V.,
``Combinatorial computation of characteristic classes'', {\it
Funktsional. Anal. i Prilozhen.} \textbf{9}:2 (1975), p.~12--28;
3, p.~5--26; English transl., {Math. Anal. Appl.} \textbf{9}
(1975), p.~103--116; 186--202.


\bibitem{Gai04} Gaifullin A.\,A., ``Local formulae for
combinatorial Pontrjagin classes'', {\it Izvestiya RAN, Ser.
matem.} \textbf{68}:5 (2004), p.~13--66; English transl., {\it
Izvestiya: Mathematics} \textbf{68}:5 (2004), p.~861--910.


\bibitem{Gai05} Gaifullin A.\,A., ``Computation of characteristic
classes of a manifold from a triangulation of it'', {\it Uspekhi
matem. nauk} \textbf{60}:4 (2005), p.~37--66; English transl.,
{\it Russian Math. Surveys} \textbf{60}:4 (2005), p.~615--644.

\bibitem{Gai07} Gaifullin A.\,A., ``Explicit construction of
manifolds realizing the prescribed homology classes'', {\it
Uspekhi matem. nauk} \textbf{62}:6 (2007), p.~167--168; English
transl., {\it Russian Math. Surveys} \textbf{62}:6 (2007).

\bibitem{GeMP92} Gelfand I.\,M., MacPherson R.\,D., ``A combinatorial
formula for the Pontrjagin classes'', {\it Bull. Amer. Math. Soc.}
\textbf{26}:2 (1992), p.~304--309.

\bibitem{GoMP80} Goresky M., MacPherson R., ``Intersection
homology theory'', {\it Topology} \textbf{19}:2 (1980),
p.~135--162.


\bibitem{Kat73} Kato M., ``Topological resolution of
singularities'', {\it Topology} \textbf{12}:4 (1973), p.~355--372.


\bibitem{LeRo78} Levitt N., Rourke C., ``The existence of
combinatorial formulae for characteristic classes'', {\it Trans.
Amer. Math. Soc.} \textbf{239} (1978), p.~391--397.

\bibitem{MP77} MacPherson R., ``The combinatorial formula of
Gabrielov, Gelfand and Losik for the first Pontrjagin class'',
{\it Lecture Notes in Math.} \textbf{677} (1978), p.~105--124.

\bibitem{Mar73} Martin N., ``On the difference between homology
and piecewise-linear bundles'', {\it J. London Math. Soc.}
\textbf{6}:2 (1973), p.~197--204.

\bibitem{Mau71} Maunder C.\,R.\,F., ``On the Pontrjagin classes
of homology manifolds'', {\it Topology} \textbf{10}:2 (1971),
p.~111--118.


\bibitem{MiSt79} Milnor J.\,W., Stasheff J.\,D., {\it Characteristic classes},
Ann. Math. Stud., no.~76, 1974.

\bibitem{Mis99} Mischenko A.\,S., ``A local combinatorial Hirzebruch formula'',
{\it Trudy MIAN} \textbf{224} (1999), p.~249--263; English
transl., {\it Proc. Steklov Inst.} \textbf{224} (1999).



\bibitem{Pac87} Pachner U., ``Konstruktionsmethoden und das
kombinatorische Hom\"oomorphieproblem f\"ur Triangulationen
kompakter semilinearer Mannigfaltigkeiten'', {\it Abh. Math. Sem.
Univ. Hamburg} \textbf{57} (1987), p.~69--86.

\bibitem{Pac91} Pachner U., ``P.L. homeomorphic manifolds are
equivalent by elementary shellings'', {\it European J. Combin.}
\textbf{12}:2 (1991), p.~129--145.




\bibitem{Pez75} Pezzana M., ``Diagrammi di Heegaard e
triangolazione contratta'', {\it Boll. Un. Mat. Ital.} Ser. 4.
\textbf {12} (1975), suppl. al no.~3, p. 98--105.


\bibitem{RaSu76} Ranicki A., Sullivan D., ``A semi-local
combinatorial formula for the signature of a $4k$-manifold'', {\it
J. Differential Geom.} \textbf{11}:1 (1976), p.~23--29.


\bibitem{RoSh57} Rohlin V.\,A., Shvarts A.\,S., ``The combinatorial invariance of
Pontryagin classes'', {\it Dokl. Akad. Nauk SSSR} \textbf{114}:3
(1957), p.~490--493. (Russian)


\bibitem{Sul67} Sullivan D., On the Hauptvermutung for
manifolds, {\it Bull. Amer. Math. Soc.} \textbf{73}:4 (1967),
p.~598-600.

\bibitem{Sul99} Sullivan D., {\it Geometric topology, part I. Localization, periodicity
and Galois symmetry}, Massachusetts Inst. Techn. Cambridge,
Massachusetts, 1970.


\bibitem{Sul71} Sullivan D., ``Singularities in spaces'', {\it Lecture notes in
Mathematics} \textbf{209} (1971), p.~196--206.


\bibitem{Tho58a} Thom R., ``Quelques proprietes globales
des varietes differentiables'', {\it Comm. Math. Helv.}
\textbf{28} (1954), p.~17--86.

\bibitem{Tho58b} Thom R., ``Les classes charact\'eristiques de
Pontrjagin des vari\'et\'es triangul\'ees'', {\it Symposium
Internacional de Topologia Algebraica}, La Universidad Nacional
Autonoma de Mexico y la Unesco, Mexico 1958, p.~54--67.
























\end{thebibliography}
\end{document}